\newcommand{\GL}{\operatorname{GL}}
\newcommand{\Or}{\operatorname{O}}
\begin{document}

\title[Tensors of $OSp$]{Khovanov algebras of type B and tensor powers of the natural $\mathrm{OSp}$-representation}
\author{{\rm Thorsten Heidersdorf and Jonas Nehme and Catharina Stroppel}}

\curraddr{T.H.: School of Mathematics, Statistics and Physics, Newcastle University}
\address{T. H.: Mathematisches Institut, Universit\"at Bonn}
\email{heidersdorf.thorsten@gmail.com} 
\address{J. N.: Max-Planck-Institut für Mathematik, Bonn }
\email{nehme@mpim-bonn.mpg.de}
\address{C. S.: Mathematisches Institut, Universit\"at Bonn}
\email{stroppel@math.uni-bonn.de}

\date{}

\begin{abstract} We develop the theory of projective endofunctors for modules of Khovanov algebras $K$ of type B. In particular we compute the composition factors and the graded layers of the image of a simple module under such a projective functor. We then study variants of such functors for a subquotient $e\tilde{K}e$. Via a comparison of two graded lifts of the Brauer algebra we relate the Khovanov algebra to the Brauer algebra and use this to show that projective functors describe translation functors on representations of the orthosymplectic supergroup $\mathrm{OSp}(r|2n)$. As an application we get a description of the Loewy layers of indecomposable summands in tensor powers of the natural representation of $\mathrm{OSp}(r|2n)$.
\end{abstract}

\subjclass[2020]{17B10, 18M05, 18M30}

\keywords{Orthosymplectic supergroup, Deligne categories, Khovanov algebras, Brauer algebra}

\maketitle

\setcounter{tocdepth}{1}
\tableofcontents

\section{Introduction}

\subsection{Decomposition of tensor powers} The decompositon of tensor powers $V^{\otimes d}$ of the standard representation $V = \mathbb{C}^n$ of $\GL(n)$ can be understood via Schur--Weyl duality. Passing to the orthogonal group $\Or(n)$ means replacing the group ring $\mathbb{C}[S_d]$ by the Brauer algebra $\Br_d(n)$. In particular, the description of the primitive idempotents in $\Br_d(n)$ can be used to dcompose $V^{\otimes d}$ into explicitely given irreducible $\Or(n)$-modules.

Remarkably, the statement of Schur--Weyl duality carries over almost unchanged to tensor powers $V^{\otimes d}$ of the standard representation $V = \mathbb{C}^{m|n}$ of the general linear supergroup $\mathrm{GL}(m|n)$. However, when we replace $\Or(n)$ by its super analog, the orthosymplectic supergroup $\mathrm{OSp}(r|2n)$ ($r =2m$ or $2m+1$), the decomposition $V^{\otimes d}$ is no longer completely reducible in general. The problem of describing the decomposition of $V^{\otimes d}$ into indecomposable summands has a rich history. A first step was completed by Benkart, Ram and Shader \cite{BSR98} who in particular constructed highest weight vectors in such tensor powers.

An important ingredient in solving this problem are the Deligne interpolating categories $\Rep_{\delta}$, $\delta \in \mathbb{C}$ \cite{D19}. The indecomposable objects in $\Rep_{\delta}$ are parametrized by the set of all partitions, and we denote the corresponding indecomposable object by $\R_{\delta}(\lambda)$. As Deligne showed, these categories admit symmetric monoidal functors $\mathbb{F}=\mathbb{F}_{(r|2n)}\colon\Rep_{\delta=r-2n}\to \Osp[r][2n]\text{-mod}$ which send the tensor generator $\R_{\delta}((1))$ attached to the partition $(1)$ to $V$, and therefore $\mathbb{F}_{r|2n} (\R_{\delta}((1))^{\otimes d}) = V^{\otimes d}$.

This viewpoint was used by Lehrer--Zhang \cite{LZ17} to establish a fundamental result about $\End(V^{\otimes d})$: They proved that the functor $\mathbb{F}_{r|2n}$ is full which amounts to say that the Brauer algebras surject onto $\End(V^{\otimes d})$. A similar categorical approach had been previously used by Comes--Wilson \cite{CW} to prove a corresponding statement for mixed tensor powers of $\GL(m|n)$. Here the walled Brauer algebra $\mathrm{B}_{r|s}(m-n)$ surjects onto $\End(V^{\otimes r }\otimes (V^{\vee})^{\otimes s} )$ for the standard representation $V= \mathbb{C}^{m|n}$.

The results of Lehrer--Zhang were used by Comes--Heidersdorf \cite{CH}. They classified the indecomposable representations in $\R_{\delta}((1))^{\otimes d}$ in $\Rep_{\delta}$ for any $\delta$ and deduced from the fullness of $\mathbb{F}_{r|2n}$ a description of the kernel of $\mathbb{F}_{r|2n}$ and subsequently a classification of the indecomposable summands in $V^{\otimes d}$ for any $d \in \mathbb{N}$. They moreover gave an explicit tensor product decomposition formula for $\R_{\delta}(\lambda) \otimes \R_{\delta}(\mu)$ and characterized the projective summands in $V^{\otimes d}$.

The results in \cite{CH} give however no method to describe the composition factors or Loewy layers of the indecomposable representation $\mathbb{F}_{r|2n}(\R_{\delta}(\lambda))$ nor do they allow to determine which representations of $\mathrm{OSp}(r|2n)$ are of the form $\mathbb{F}_{r|2n}(\R_{\delta}(\lambda))$ for some partition $\lambda$. We follow in this article the methods of Brundan--Stroppel \cite{BS4}\cite{BS5} (that dealt with mixed tensor powers of $\GL(m|n)$) to completely solve this problem. The essential input is that we can model $V^{\otimes d}$ and its decomposition into indecomposable summands in the world of modules for Khovanov's arc algebra of type B. The $\mathrm{OSp}$-case has however a number of major added difficulties compared to the $\GL(m|n)$-case.

It is important to work with representations of the supergroup $\Osp[r][2n]$ here instead of the connected supergroup $\Sosp[r][2n]$ (or, equivalently, with representations of the Lie superalgebra $\osp[r][2n]$). While there is a symmetric monoidal functor from the Deligne category to $\Sosp[r][2n]\text{-mod}$, it is not full. There is also no diagrammatic description via Khovanov algebras in the $\Sosp[r][2n]$ or $\osp[r][2n]$-case.

\subsection{Khovanov's arc algebra}

Khovanov algebras arise naturally in the representation theory of supergroups as follows. Let $G$ be a quasi-reductive supergroup. The category $\mathcal{F}$ of finite-dimensional algebraic representations decomposes into blocks $\Gamma$. Let $P_{\Gamma}$ be a minimal projective generator of a block, i.e \[ P_{\Gamma} = \bigoplus_{\lambda \in \Gamma} P(\lambda) \] where $P(\lambda)$ is the projective cover of the irreducible representation $L(\lambda) \in \Gamma$. By Morita theory, the category of finite dimensional left modules over the locally finite endomorphism algebra \[ \Mod_{lf}(P_{\Gamma}) = \bigoplus_{\lambda,\mu \in \Gamma} \Hom(P(\lambda),P(\mu) )\] is equivalent to $\Gamma$. 
For $GL(m|n)$ Brundan--Stroppel \cite{BS4} gave a diagrammatic description of $\Mod_{lf}(P_{\Gamma})$: They constructed a diagrammatically defined algebra $K_{\Gamma}$ (a Khovanov algebra of type $A$) which is isomorphic to $\Mod_{lf}(P_{\Gamma})$. Summing over all blocks yields an equivalence of abelian categories \[ K\text{-mod} \cong \GL(m|n)\text{-mod}\] between finite dimensional left $K$-modules and finite dimensional representations of $\GL(m|n)$.

The type B analog of the Khovanov algebra (again denoted $K$) was studied in \cite{ES2}\cite{ES3}. It was shown by Ehrig--Stroppel \cite{ES2} how to relate it to blocks of the orthosymplectic supergroup $\mathrm{OSp}(r|2n)$. Apart from added combinatorial difficulties, there is a substantial difference: The Khovanov arc algebra of type $B$ is not isomorphic to $\Mod_{lf}(P_{\Gamma})$. In \cite{OSPII}*{Theorem 10.5}, Ehrig and Stroppel proved that a subquotient (here called $e\tilde{K}e$) of the Khovanov algebra of type $B$ is in fact isomorphic to the locally finite endomorphism algebra of a projective generator for $\Osp[r][2n]$ and thus gives rise to an equivalence 
\begin{equation}\label{equiv}
	\Psi\colon(e\tilde{K}e)\text{-mod}\to \Osp[r][2n]\text{-mod}
\end{equation}
of categories between the finite dimensional representations of $\Osp[r][2n]$ and finite dimensional $e\tilde{K}e$-modules. Both algebras $K$ and $e\tilde{K}e$ can be endowed with a nonnegative grading and this actually induces a grading on $\Osp[r][2n]$-mod. 

Note that it is not apparent at all yet that all this is useful to analyze $V^{\otimes d}$ since the above equivalences are not monoidal.

\subsection{Translation functors and projective functors}

The crucial idea is to look at $V^{\otimes d}$ as the image of the trivial representation $\mathbf{1}$ under a series of translation functors $\theta_i$. First results about translation functors in the $\osp[r][2n]$-case were obtained by \cite{GS10}\cite{GS13}. Summing over all blocks gives an isomorphism of endofunctors \[ \bigoplus_i \theta_i \cong - \otimes V.\]

The key point is that we can model the effect of translation functors $\theta_i$ in the world of $e\tilde{K}e$-modules even though $\Psi$ is not a monoidal equivalence. The endofunctors in $\Mod_{lf}(e\tilde{K}e)$ that will eventually correspond to translation functors  on $\mathcal{F}$ are called projective functors. For $\GL(m|n)$ the theory of projective functors was developed in \cite{BS2}\cite{BS4}\cite{BS5}, for $\Osp[r][2n]$ some partial results were obtained in \cite{ES18} \cite{HNS}.

In Section \ref{sec:Proj} we describe the effect of projective functors on irreducible $K$-modules. Here many of the proofs from the type A case  \cite{BS2} carry over almost verbatim (with some notable exceptions like Lemma \ref{nodegbilinformadjunction}). In Section \ref{sec:nuclear} we are then discussing the analogous theory for the subquotient $e\tilde{K}e$ for which we can mostly not rely on previous work in type A. Our discussion culminates in the following theorem (Theorem \ref{projfunctorsonsimplenuclear}) about the image of an irreducible $e\tilde{K}e$-module $\overline{L}(\gamma)$ under a projective functor (the terminology is explained in Sections \ref{sec:Proj}--\ref{sec:nuclear}). Part \ref{projfunctorsonsimpleinuclear} describes the composition factors in the grading filtration and part \ref{projfunctorsonsimpleexplicitnuclear} gives structural information about the module.

\begin{thmintro}
	\label{thm:intro-proj}
	Suppose we are given a proper $\Lambda\Gamma$-matching $t$ and $\gamma\in\Gamma$. Then
	\begin{enumerate}
		\item\label{projfunctorsonsimpleinuclear} in the graded Grothendieck group of $\Mod_{lf}(e\tilde{K}_\Lambda e)$
		\begin{equation*}
			[\tilde{G}^t_{\Lambda\Gamma}\overline{L}(\gamma)] = \sum_{\mu}(q+q^{-1})^{n_\mu}[\overline{L}(\mu)],
		\end{equation*}
		where $n_\mu$ denotes the number of lower circles in $\underline{\mu}t$ and we sum over all $\mu\in\Lambda$ such that 
		\begin{enumerate}[label=\normalfont{(\alph*)}]
			\item\label{projfunctorsonsimpleianuclear} $\underline{\gamma}$ is the lower reduction of $\underline{\mu}t$,
			\item\label{projfunctorsonsimpleibnuclear} there exists no lower line in $\underline{\mu}t$,
		\end{enumerate}
		\item\label{projfunctorsonsimplenonzeronuclear} the module $\tilde{G}^t_{\Lambda\Gamma}\overline{L}(\gamma)$ is nonzero if and only if all cups of $t\gamma$ are anticlockwise oriented and
		\item\label{projfunctorsonsimpleexplicitnuclear} under the assumptions of \cref{projfunctorsonsimplenonzeronuclear} define $\lambda\in\Lambda$ such that $\overline{\lambda}$ is the upper reduction of $t\overline{\gamma}$ or alternatively $\lambda t\gamma$ is oriented and every cup and cap is oriented anticlockwise. In this case $\tilde{G}^t_{\Lambda\Gamma}\overline{L}(\gamma)$ is a self-dual indecomposable module with irreducible head $\overline{L}(\lambda)\langle -\ca(t)\rangle$.
	\end{enumerate}
\end{thmintro}
%

%

We then relate two types of endofunctors. On the one hand we consider the decomposition of the endofunctor $\-\otimes V=\bigoplus_{i\in\bbZ}\theta_i$ of $\mathcal{F}$, and on the other hand   we define the functors $\tilde{\Theta}_i\colon e\tilde{K}e\text{-mod}\to e\tilde{K}e\text{-mod}$, which are of the form $\tilde{G}^t_{\Lambda\Gamma}$ for certain choices of $\Lambda,\Gamma$. 

\begin{thmintro} \label{thm:intro-commute}
	We have an equivalence of categories $\Psi\colon(e\tilde{K}e)\text{-mod}\to \Osp[r][2n]\text{-mod}$ such that $\theta_i\circ\Psi\cong\Psi\circ\tilde{\Theta}_i$.
\end{thmintro}
 
The proof is rather involved and involves a comparison of two different graded versions of the Brauer algebra. The basic idea is to consider the analog of $V^{\otimes d}$ on the Khovanov-side $e\tilde{K}e$ which we call \[ T_d\coloneqq \bigoplus_{\mathbf{i}\in(\mathbb{Z}+\frac{\delta+1}{2})^d}\Theta_{\mathbf{i}}\overline{L}(\emptyset_\delta).\] The key theorem \ref{blackboxformainthm} shows that there exists an isomorphism of algebras $\xi_d\colon\Br_d(\delta)\overset{\cong}{\to}\End_K(T_d)$ which intertwines $i$-induction (defined in Definition \ref{iinddef}) and $\Theta_i$. Once this theorem is proven, we can use the known surjection $\Br_d(\delta) \to \End_{\mathcal{F}}(V^{\otimes d})$ to relate endomorphism spaces for $e\tilde{K}e$ with those in $\mathcal{F}$. The difficulty in proving Theorem \ref{blackboxformainthm} is that the idempotents picking out the eigenspaces for the $i$-induction are not part of the definition of the Brauer algebra and very hard to handle. We replace the Brauer algebra with two different graded lifts -- one, $G_d(\delta)$, due to Ge Li \cite{GELI}, has these idempotents build in the definition; the other algebra $C_d(\delta)$ \cite{OSPII}*{Section 11} \cite{diss}*{Section 4} can be easily identified with $\End_K(T_d)$. The isomorphism $\xi_d\colon\Br_d(\delta)\overset{\cong}{\to}\End_K(T_d)$ has been shown in \cite{diss} and it swaps $i$-induction and $\Theta_i$ by construction.
 
Analogous results following the methods of \cite{BS2} \cite{BS5} and the present paper have been obtained recently in the $\mathfrak{p}(n)$-case \cite{nehme2023khovanov} as well.



\subsection{Direct summands and representations of the form $\mathbb{F}\R_{\delta}(\lambda)$}

Using Theorems \ref{thm:intro-proj} and \ref{thm:intro-commute} we are now able to analyze indecomposable summands of $V^{\otimes d}$ via repeatedly tensoring the irreducible $K$-module corresponding to the trivial representation with geometric bimodules. More precisely, every indecomposable summand $\mathbb{F}\R_{\delta}(\lambda)$ is of the form $\Psi(\tilde{G}^t_{\Lambda\Gamma}\overline{L}((\emptyset_\delta)^\owedge_+))$ for some blocks $\Lambda$ and $\Gamma$ in $e\tilde{K}e$ and some $\Lambda \Gamma$-matching $t$. Conversely every such choice of $\Lambda$, $\Gamma$ and $t$ gives in this way an indecomposable summand in some $V^{\otimes d}$.

The corresponding questions for $GL(m|n)$ were studied in \cite{H} based on \cite{BS5}. Our description of $\tilde{G}^t_{\Lambda\Gamma}\overline{L}(\gamma)$ in Theorem \ref{thm:intro-proj} translates to the following corollary. 

\begin{corintro} The modules $\mathbb{F}\R_\delta(\lambda)$ are self-dual with simple head and socle. Their Loewy length is given by $2d(\lambda)+1$, where $d(\lambda)$ denotes the number of caps in the cap diagram of the weight diagram associated to $\lambda$ and their grading filtration agrees with its radical and its socle filtration.
\end{corintro}

We remark that this applies in particular to all projective covers $P(\lambda)$. Of course Theorem \ref{thm:intro-proj} is much stronger than what is listed in the corollary as it provides a diagrammatic description of all socle layers. It also gives a complete description of the action of translation functors on irreducible modules and their projective covers, substantially improving the results of \cite{GS10}\cite{GS13}.

Given this, we investigate the question which irreducible $\Osp[r][2n]$-modules appear as direct summands in $V^{\otimes d}$. We will look at this question from two different angles. First we give in \cref{mudeltairred} different characterizations, when an indecomposable summand $\mathbb{F}\R_\delta(\lambda)$ is irreducible and after that we try to classify the irreducibles $L(\lambda, \eps)$ appearing as a direct summand in \cref{irredasindecsummand}.

For this we will recall the notion of a Kostant module in \cref{kostandkazhdan} and  will also revisit Kazhdan--Lusztig polynomials in our setting.

\begin{corintro} The following statements are equivalent for an indecomposable direct summand $\mathbb{F}\R_\delta(\lambda)$ in $V^{\otimes d}$ associated to a partition $\lambda$ due to \cref{mudeltairred}.
\begin{itemize}
	\setlength\itemsep{0em}
	\item $\mathbb{F}\R_\delta(\lambda)$ is irreducible.
	\item $\lambda$ is a Kostant-Deligne weight.
	\item The Kazhdan--Lusztig polynomials $p_{\mu, \lambda}(q)$ are monomials for all $\mu\leq\lambda$.
	\item The weight diagram associated to $\lambda$ is $\vee\wedge$- and $\wedge\wedge$-avoiding.
	\item The cap diagram associated to $\lambda$ is cap-free.
	\item The weight diagram associated to $\lambda$ is maximal in the Bruhat order.
\end{itemize}
\end{corintro}

In order to classify the irreducible $\Osp[r][2n]$-modules, which appear as a direct summand in some $V^{\otimes d}$, we introduce an automorphism of order $2$ on the category of finite dimensional $\Osp[r][2n]$-modules. It is defined via some manipulation on the Khovanov algebra side and e.g.~interchanges the trivial with the natural representation for $r=2n+1$. It maps $L(\lambda, \eps)$ to $L(\lambda^\Box, -\eps)$ for some combinatorially defined weight $\lambda^\Box$.

\begin{corintro} The following statements are equivalent.
\begin{itemize}
	\setlength\itemsep{0em}
	\item $L(\lambda, \eps)$ is a direct summand of some $V^{\otimes d}$ for some $\eps$.
	\item $\lambda$ or $\lambda^{\Box}$ is a Kostant weight in the sense of \cite{GH}.
\end{itemize}
 And if $r$ is odd or $\mathrm{at}(\lambda)>1$ these are equivalent to
\begin{itemize}
 	\setlength\itemsep{0em}
	\item $L(\lambda)$ or $L(\lambda^{\Box})$ satisfies the Kac--Wakimoto conditions (considered as $\osp[r][2n]$-modules).
\end{itemize}
\end{corintro}

Since \cite{CH} established tensor product decomposition laws for $\mathbb{F}\R_\delta(\lambda) \otimes \mathbb{F}\R_\delta(\mu)$, a direct consequence of these corollaries are such decomposition laws for tensor products between any two Kostant or projective modules.

%
%


\section{The orthosymplectic supergroup} \label{SecL:OSp} The ground field is always assumed to be $\mathbb{C}$. 

\begin{defi}\label{defosp}
	Let $V$ be a vector superspace and $\beta\colon V\times V\to\bbC$ a nondegenerate supersymmetric bilinear form (i.e.~a nondegenerate bilinear form that is symmetric on $V_0$, skewsymmetric on $V_1$ and $0$ on mixed products). Then $\lie{osp}(V)$ is the Lie subsuperalgebra of $\lie{gl}(V)$ given by \[\lie{osp}(V)_i:=\{x\in\lie{gl}(V)_i\mid\beta(x(a), b)=-(-1)^{\abs{x}\abs{a}}\beta(a, x(b))\text{ for all }a,b\in V\}.\]
	If $V=\bbC^{r|2n}$ we write $\osp[r][2n]$ for $\lie{osp}(V)$.
\end{defi}
%

We fix the usual standard basis $\{ \eps_i\}_{1\leq i} \cap \{\delta_j\}_{1\leq j\leq n}$ of the dual $\lie{h}^*$ of the Cartan algebra of diagonal matrices. We will denote by \begin{equation*}
	X(\osp[r][2n])\coloneqq\bigoplus_{i=1}^m\bbZ\eps_i\oplus\bigoplus_{j=1}^n\bbZ\delta_j
\end{equation*} the integral weight lattice. When referring to a weight we will always mean an \emph{integral weight}, i.e.~an element of $X(\osp[r][2n])$.
The parity shift $\Pi$ gives rise to a decomposition of $\osp[r][2n]\-\Mod=\mathcal{F}'\oplus\Pi\mathcal{F}'$, where $\mathcal{F}'$ contains all objects such that the parity of the weight space agrees with the parity of the corresponding weight. By \cite{S11}*{Theorem 9.9} the finite dimensional irreducible $\osp[r][2n]$-modules are all highest weight modules and the finite dimensional irreducible modules are up to isomorphism and parity shift uniquely determined by their highest weight. In the following we will restrict ourselves to $\mathcal{F}'$.

We follow \cite{GS10}*{Section 5} and fix a certain choice of simple roots. This gives then rise to a set of positive roots $\Phi^+$ and the corresponding $\rho$ is given by $\frac{1}{2}(\sum_{\alpha\in\Phi^+_0}\alpha-\sum_{\beta\in\Phi^+_1}\beta)$. 

The following lemma is due to \cite{GS10}*{Cor. 3}.

\begin{lem}\label{integraldominanceosp}
	Let $\lambda\in X(\osp[r][2n])$ and write $\lambda+\rho=\sum_{i=1}^m a_i\eps_i +\sum_{j=1}^n b_j\delta_j$. Then $\lambda$ is integral dominant if and only if $\lambda\in\bigoplus_{i=1}^m\bbZ\eps_i\oplus\bigoplus_{j=1}^n\bbZ\delta_j$ and the following conditions hold:
	\begin{itemize}
		\item If $\lie{g}=\osp[2m+1][2n]$\begin{enumerate}
			\item either $a_1>a_2>\dots >a_m\geq\frac{1}{2}$ and $b_1>b_2>\dots>b_n\geq\frac{1}{2}$,
			\item or $a_1>a_2>\dots>a_{m-l-1}>a_{m-l}=\dots=a_m=-\frac{1}{2}$ and $b_1>b_2>\dots>b_{n-l-1}\geq b_{n-l}=\dots=b_n=\frac{1}{2}$.
		\end{enumerate}
		\item If $\lie{g}=\osp[2m][2n]$\begin{enumerate}
			\item either $a_1>a_2>\dots >\abs{a_m}$ and $b_1>b_2>\dots>b_n>0$,
			\item or $a_1>a_2>\dots>a_{m-l-1}\geq a_{m-l}=\dots=a_m=0$ and $b_1>b_2>\dots>b_{n-l-1}> b_{n-l}=\dots=b_n=0$.
		\end{enumerate}
	\end{itemize}
\end{lem}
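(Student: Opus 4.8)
The statement is a purely combinatorial translation of the abstract dominance condition "$\langle\lambda+\rho,\alpha^\vee\rangle\notin\mathbb{Z}_{<0}$ for all simple roots $\alpha$ (and, more precisely, the Kac condition for highest weights of finite-dimensional modules)" into explicit inequalities on the coordinates $a_i,b_j$. So the approach is bookkeeping: write down the fixed system of simple roots chosen in \cite{GS10}*{Section 5}, compute $\rho$ once and for all from $\rho=\frac12(\sum_{\alpha\in\Phi^+_0}\alpha-\sum_{\beta\in\Phi^+_1}\beta)$, and then for each simple root $\alpha$ spell out what $\langle\lambda+\rho,\alpha^\vee\rangle\geq 0$ (for even $\alpha$) or the appropriate atypicality/Kac-type condition (for the odd simple root) says about $a_1,\dots,a_m,b_1,\dots,b_n$. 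Since \cite{GS10}*{Cor. 3} is cited as the source, the honest thing to do here is: recall their conventions, state that the claim is exactly \emph{loc. cit.}, and indicate how one reads it off; I would not reprove it from scratch.

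Concretely, first I would recall that $\lambda$ integral forces $\lambda\in\bigoplus\mathbb{Z}\eps_i\oplus\bigoplus\mathbb{Z}\delta_j$, which is the first bullet in the conclusion, so only the inequalities remain. Next, with the \cite{GS10} ordering the even simple roots are (up to the usual choices) the differences $\eps_i-\eps_{i+1}$, $\delta_j-\delta_{j+1}$, a "crossing" root relating the $\eps$ and $\delta$ strings, and a terminal root ($2\delta_n$ in the symplectic direction, and $\eps_m$ resp. $\eps_m-\eps_{m-1}$/$\eps_m+\eps_{m-1}$ in the orthogonal direction depending on parity $r=2m+1$ vs $r=2m$). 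Pairing $\lambda+\rho=\sum a_i\eps_i+\sum b_j\delta_j$ against these coroots gives: $a_i\geq a_{i+1}$, $b_j\geq b_{j+1}$, plus the strictness that comes from $\rho$ being strictly dominant on the even part, which is what upgrades these to the strict chains $a_1>a_2>\cdots$ and $b_1>b_2>\cdots$ in case (1). The terminal even root gives $a_m\geq\frac12$ in type $B$ (odd $r$) and $|a_m|$ appearing (so $a_{m-1}>|a_m|$, $b_n>0$) in type $D$ (even $r$), matching the $r=2m+1$ vs $r=2m$ split in the statement. Case (2) — the "tail" where $a_{m-l}=\cdots=a_m$ equals $-\frac12$ (resp. $0$) and correspondingly the last $\delta$-coordinates equal $\frac12$ (resp. $0$) — is exactly the atypical situation: the odd simple root becomes isotropic against $\lambda+\rho$, the chain of inequalities is no longer strict, and the Kac finiteness condition forces the coordinates in the "tail" to pair up between the $\eps$- and $\delta$-parts at the special values dictated by $\rho$. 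This is where the parameter $l$ enters, counting the length of the atypicality tail.

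The main obstacle — really the only nontrivial point — is bookkeeping the \cite{GS10} conventions correctly: which shifted $\rho$ they use, the precise form of the odd simple root and hence which value ($-\frac12$ in type $B$, $0$ in type $D$) the tail coordinates stabilize at, and making sure the $\geq$ between $b_{n-l-1}$ and $b_{n-l}$ in case (2) of type $B$ (as opposed to strict $>$ elsewhere) is reproduced faithfully. Since the result is quoted verbatim from \cite{GS10}*{Cor. 3}, I would simply cite it and remark that the inequalities are obtained by the pairing computation just sketched, deferring the detailed verification of $\rho$ and the odd-root analysis to \emph{loc. cit.}
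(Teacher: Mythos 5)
The paper gives no proof of this lemma at all --- it simply attributes it to \cite{GS10}*{Cor. 3} --- and your proposal likewise defers to that citation while sketching the standard pairing computation behind it. This matches the paper's treatment, and your sketch of how the inequalities arise from the simple roots and $\rho$ is consistent with the stated conventions.
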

\begin{defi}
	We denote the set of integral dominant weights for $\osp[r][2n]$ by $X^+(\osp[r][2n])$. We write $L^{\lie{g}}(\lambda)$ for the finite dimensional irreducible module in $\mathcal{F}'$ with highest weight $\lambda\in X^+(\osp[r][2n])$.
\end{defi}

%

\begin{defi}\label{defiatypicality}
	On $\lie{h}^*$ we have the standard symmetric bilinear form $(\_,\_)$ which is given by $(\eps_i,\eps_j)=\delta_{i,j}$, $(\eps_i,\delta_j)=0$ and $(\delta_i,\delta_j)=-\delta_{i,j}$.
	
	A root $\alpha\in\Phi$ is called \emph{isotropic} if $(\alpha, \alpha)=0$.
	
	The \emph{degree of atypicality} $\mathrm{at}(\lambda)$ of a weight $\lambda\in\lie{h}^*$ is then the maximum number of mutually orthogonal odd isotropic roots $\alpha\in\Phi_1^+$ such that $(\lambda+\rho, \alpha) = 0$. An element $\lambda\in\lie{h}^*$ is called \emph{typical} if $\mathrm{at}(\lambda)=0$ and \emph{atypical} otherwise.
\end{defi}

By \cite{CW12}*{Theorem 2.30} any two weights lying in the same block have the same atypicality.

\begin{defi}\label{orderonhighestweights}
	We define a partial order on $X^+(\osp[r][2n])$ by saying that $\lambda\geq\mu$ for $\lambda$, $\mu\in X^+(\osp[r][2n])$ if $\lambda-\mu\in\sum_{\alpha\in\Phi^+}\mathbb{N}_0\alpha$.
\end{defi}

\subsection{Hook partitions} 

There exists another commonly used labelling set of the integral dominant weights for $\osp[r][2n]$ by \emph{$(n,m)$-hook partitions} (for $r=2m+1$ or $r=2m$).

\begin{defi}\label{defipartition} A partition $\hook\lambda$ is called \emph{$(n,m)$-hook} if $\hook\lambda_{n+1}\leq m$.
	By $\emptyset$ we denote the partition given by $\emptyset = (0,0,\dots)$ and by $\Lambda$ we denote the set of all partitions.
\end{defi}

The following definition from \cite{ES3}*{Definition 2.19} relates integral dominant weights and $(n,m)$-hook partitions.

\begin{defi}\label{highestweighttohookpartitions}
	We associate to an $(n,m)$-hook partition $\hook\lambda$ the weight $\mathrm{wt}(\hook\lambda)\in X^+(\osp[r][2n])$ via $\mathrm{wt}(\hook\lambda)=(a_1,\dots ,a_m|b_1,\dots,b_n)-\rho$, where $a_i$ and $b_j$ are defined as follows:
\begin{itemize}
	\item If $r$ is odd:
	\begin{equation*}
		b_j=\max\Bigl(\hook\lambda_j-j-\frac{\delta}{2}+1,\frac{1}{2}\Bigr)\quad\text{and}\quad a_i=\max\Bigl(\hook\lambda^t_i-i+\frac{\delta}{2},-\frac{1}{2}\Bigr).
	\end{equation*}
\item If $r$ is even:
\begin{equation*}
	b_j=\max\Bigl(\hook\lambda_j-j-\frac{\delta}{2}+1,0\Bigr)\quad\text{and}\quad a_i=\max\Bigl(\hook\lambda^t_i-i+\frac{\delta}{2},0\Bigr).
\end{equation*}
\end{itemize}
\end{defi}

This almost defines an identification of $(n,m)$-hook partitions with $X^+(\osp[r][2n])$. Only the integral dominant weights for $\osp[2m][2n]$ with $a_m<0$ do not correspond to an $(n,m)$-hook partition. For the actual bijection for $\Osp[r][2n]$ we refer to \cref{irreduciblemodulesforOspodd} and \cref{irreduciblemodulesforOspeven}. 

\subsection{Algebraic supergroups}

Instead of representations of $\mathfrak{osp}(r|2n)$ we consider in this paper representations of the algebraic supergroup $\Osp[r][2n]$ and the connected component $\Sosp[r][2n]$ of the identity. We refer to \cite[Section 7.1]{CH} \cite{ES3} for the definition. For $\Sosp[r][2n]$ we have a monoidal isomorphism of categories
\begin{equation}
	\osp[r][2n]\text{-mod}\cong\Sosp[r][2n]\text{-mod}.
\end{equation} 

Representations of $\Osp[r][2n]$ can then be understood via Harish-Chandra induction from $\Sosp[r][2n]$ \cite{ES3}. In order to explicitly describe the irreducible objects we need to distinguish whether $r=2m+1$ is odd or $r=2m$ is even. We will use $L^{\lie{g}}(\lambda)$ to refer to the irreducible $\Sosp[r][2n]$-module of highest weight $\lambda$ for an integral dominant weight $\lambda\in X^+(\osp[r][2n])$.

%


\begin{prop}\label{irreduciblemodulesforOspodd}
	For $G=\Osp[2m+1][2n]$ the set
	\begin{equation}
		X^+(G)=X^+(\lie{g})\times\bbZ/2\bbZ=\{(\lambda, \eps)\mid\lambda\in X^+(\lie{g}), \eps\in\{\pm\}\}
	\end{equation} is a labelling set for the isomorphism classes of finite dimensional irreducible $G$-modules in $\cF$. The irreducible module $L(\lambda, \eps)$ is just the irreducible module $L^{\lie{g}}(\lambda)$ extended to a $G$-module by letting $-\id$ act by $\pm 1$. Moreover the following map is a bijection.
\begin{align*}
	\Psi\colon\{(m,n)\text{-hook partitions}\}\times\bbZ/2\bbZ&\to X^+(G)\\
	(\hook\lambda, \eps)&\mapsto(\mathrm{wt}(\hook\lambda), \eps)
\end{align*}
\end{prop}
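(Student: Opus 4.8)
The plan is to establish Proposition \ref{irreduciblemodulesforOspodd} in two stages: first the classification of irreducible $\Osp[2m+1][2n]$-modules in $\cF$ via the labelling set $X^+(\lie g)\times\bbZ/2\bbZ$, and then the bijectivity of $\Psi$. For the first stage I would use the fact that $-\id$ lies in the center of $G=\Osp[2m+1][2n]$, that it is \emph{not} contained in $\Sosp[2m+1][2n]$ (since for odd $r$ the element $-\id$ has determinant $-1$ on the odd orthogonal part, so $G=\Sosp[2m+1][2n]\times\langle-\id\rangle$ as an (abstract) group, or more precisely $G/\Sosp[2m+1][2n]\cong\bbZ/2\bbZ$ with $-\id$ a nontrivial representative). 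Harish-Chandra induction (equivalently Clifford/Mackey theory for the order-two quotient) then shows that every irreducible $G$-module restricts to an irreducible $\Sosp[r][2n]=$ $\osp[r][2n]$-module, and conversely every irreducible $L^{\lie g}(\lambda)$ extends to $G$ in exactly two ways, differing by the action of $-\id$ as $+1$ or $-1$; these two extensions are non-isomorphic because they are distinguished by the central character of $-\id$. That $-\id$ acts by a scalar $\pm1$ on $L^{\lie g}(\lambda)$ follows from Schur's lemma together with $(-\id)^2=\id$. This gives the bijection $X^+(G)\cong X^+(\lie g)\times\bbZ/2\bbZ$ and identifies $L(\lambda,\eps)$ as claimed.

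For the second stage I would reduce the bijectivity of $\Psi$ to the bijectivity of $\hook\lambda\mapsto\mathrm{wt}(\hook\lambda)$ from $(m,n)$-hook partitions onto $X^+(\lie g)=X^+(\osp[2m+1][2n])$, since the $\bbZ/2\bbZ$-factor is carried along identically on both sides. So the content is: the map $\mathrm{wt}$ of Definition \ref{highestweighttohookpartitions} is a bijection in the odd case. Here I would invoke Lemma \ref{integraldominanceosp} to describe $X^+(\osp[2m+1][2n])$ explicitly in terms of the coordinates $a_i=(\mathrm{wt}(\hook\lambda)+\rho,\eps_i)$ and $b_j$, and then verify directly that the formulas $b_j=\max(\hook\lambda_j-j-\tfrac\delta2+1,\tfrac12)$ and $a_i=\max(\hook\lambda^t_i-i+\tfrac\delta2,-\tfrac12)$ (with $\delta=r-2n=2m+1-2n$) land precisely in the set cut out by conditions (1)--(2) of that lemma's odd case, and that one can reconstruct $\hook\lambda$ from $(a_\bullet|b_\bullet)$. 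The inverse map is essentially: read off $\hook\lambda_j$ from $b_j$ where $b_j>\tfrac12$ (set $\hook\lambda_j=b_j+j+\tfrac\delta2-1$) and from $\hook\lambda^t_i$ via $a_i$ where $a_i>-\tfrac12$; the strict-inequality chains in Lemma \ref{integraldominanceosp} translate exactly into the weak-decreasing condition for a partition, and the $(n,m)$-hook condition $\hook\lambda_{n+1}\le m$ matches the ``stabilization at $\tfrac12$ / $-\tfrac12$'' phenomenon in case (2) of the lemma.

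I would present this as: (i) recall the numerics of $\rho$ for the chosen simple system from \cite{GS10}*{Section 5}, so that $\rho=\sum a^\rho_i\eps_i+\sum b^\rho_j\delta_j$ with explicit half-integer entries; (ii) show $\mathrm{wt}(\hook\lambda)\in X^+(\lie g)$ by checking the two cases of Lemma \ref{integraldominanceosp} — the partition being an $(m,n)$-hook exactly separates the ``case (1)'' regime (where all columns beyond the $n$-th have length $\le m$, forcing the $a_i$ past a certain point to equal $-\tfrac12$ and... no, rather forcing case (2) to occur) from the generic regime; (iii) construct the inverse explicitly and check the two compositions are identities, which is a finite bookkeeping argument in the coordinates. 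The main obstacle is step (ii)--(iii): one must carefully match the combinatorics of the $\max$-truncations in Definition \ref{highestweighttohookpartitions} with the precise ``either/or'' dichotomy and the equality-chains $a_{m-l}=\dots=a_m=-\tfrac12$, $b_{n-l}=\dots=b_n=\tfrac12$ of Lemma \ref{integraldominanceosp}; in particular one has to verify that the integer $l$ appearing there is governed by the same hook data (the difference between the arm at row $n+1$ and $m$), so that the truncation thresholds $\tfrac12$ and $-\tfrac12$ are hit for exactly the right range of indices. Everything else — injectivity, surjectivity of $\Psi$, and the module-theoretic identification of $L(\lambda,\eps)$ — then follows formally.
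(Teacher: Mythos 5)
Your proposal is correct and follows essentially the route the paper takes (by citation to \cite{ES3} and the preceding remark on Harish--Chandra induction from $\Sosp[2m+1][2n]$): for odd $r$ the central element $-\id$ splits $\Osp[2m+1][2n]$ as $\Sosp[2m+1][2n]\times\bbZ/2\bbZ$, so each $L^{\lie g}(\lambda)$ extends in exactly two ways distinguished by the sign $\eps$, and the bijection $\hook\lambda\mapsto\mathrm{wt}(\hook\lambda)$ is the direct combinatorial matching of the $\max$-truncations in \cref{highestweighttohookpartitions} with the dichotomy of \cref{integraldominanceosp}. No gaps beyond the bookkeeping you already flag.
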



\begin{rem} The irreducible representations $L(\lambda, \eps)$ and $L(\mu, \eps')$ lie in the same block if and only if $\eps=\eps'$ and $L^\lie{g}(\lambda)$ and $L^\lie{g}(\mu)$ belong to the same block in $\Sosp[2m+1][2n]$-mod, see \cite{ES3}*{Remark 2.8 + Corollary 2.9}.
\end{rem}

%


\begin{defi}
	For $G=\Osp[2m][2n]$ and $\sigma$ corresponding to the nontrivial element of $\bbZ/2\bbZ$ we introduce the set
\begin{equation}
	X^+(G)\coloneqq\{(\lambda, \eps)\mid\lambda\in X^+(\lie{g})/\sigma\text{ and }\eps\in\Stab_\sigma(\lambda)\}
\end{equation} where $\Stab_\sigma(\lambda)$ is the stabilizer of $\lambda$ for the group generated by $\sigma$.

	Every $\lambda\in X^+(\lie{g})$ is contained in a unique orbit consisting of either one or two elements.
	In the former case we denote the orbit by $\lambda$. The stabilizer has two elements and we will write $(\lambda, +)$ for $(\lambda, e)$ and $(\lambda, -)$ for $(\lambda, \sigma)$.
	In the latter case the stabilizer is trivial and we will denote the orbit by $\lambda^G$ and abbreviate $(\lambda^G, e)$ by $\lambda^G$.
\end{defi}

%

%


\begin{prop}\label{irreduciblemodulesforOspeven}
	Let $G=\Osp[2m][2n]$, $\lie{g}=\osp[2m][2n]$ and $G'=\Sosp[2m][2n]$. Assume that \begin{equation}
		 \lambda=\sum_{i=1}^m a_i\eps_i +\sum_{j=1}^n b_j\delta_j-\rho\in X^+(\lie{g})	
		\end{equation}
	 is an integral dominant weight. Then we have the following:
	\begin{enumerate}
%
	\item The $L(\lambda, \eps)$ for $(\lambda, \eps)\in X^+(G)$ form a complete list of pairwise nonisomorphic irreducible $\Osp[2m][2n]$-modules in $\cF$.
	\item\label{irreduciblemodulesforOspeventhree} We have a bijection
	\begin{align*}
		\Psi\colon\{\text{signed }(n,m)\text{-hook partitions}\}\to X^+(G),\quad\hook\lambda&\mapsto\mathrm{wt}(\lambda),\\
		(\hook\lambda,\pm)&\mapsto(\mathrm{wt}(\lambda),\pm,).
	\end{align*}

	\end{enumerate}
\end{prop}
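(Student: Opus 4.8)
The plan is to deduce both statements from the general theory of Harish--Chandra induction from $\Sosp[2m][2n]$ to $\Osp[2m][2n]=G$, together with the known classification of irreducibles for $G'=\Sosp[2m][2n]$ via integral dominant weights (recalled before Lemma \ref{integraldominanceosp}) and the hook-partition bijection of Definition \ref{highestweighttohookpartitions}. First I would record how the group $\bbZ/2\bbZ$ generated by $\sigma$ (a lift of the nontrivial component of $\Or(2m)$) acts: on the level of Lie algebras $\sigma$ is an outer automorphism of $\osp[2m][2n]$ interchanging $\eps_m$ and $-\eps_m$, hence it acts on $X^+(\lie{g})$ by sending a weight with last coordinate $a_m$ to the one with $-a_m$ (all other coordinates fixed), which is consistent with the description in Lemma \ref{integraldominanceosp}: weights of type (1) with $a_m>0$ come in $\sigma$-orbits of size two, and weights with $a_m=0$ (type (2), and the boundary of type (1)) are $\sigma$-fixed. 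So $X^+(\lie{g})/\sigma$ together with the stabilizer data $\Stab_\sigma(\lambda)$ is exactly the combinatorial set parametrizing what can happen under induction.

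For part (1), I would invoke Clifford theory for the index-two extension $G'\trianglelefteq G$. Given $\lambda\in X^+(\lie{g})$ with $L^{\lie g}(\lambda)$ the corresponding irreducible $G'$-module, there are two cases. If $\sigma\lambda\not\cong\lambda$ (orbit of size two, trivial stabilizer), then $\operatorname{Ind}_{G'}^G L^{\lie g}(\lambda)$ is irreducible, $L^{\lie g}(\lambda)$ and $L^{\lie g}(\sigma\lambda)$ induce to the same $G$-module, and we call it $L(\lambda^G)=\lambda^G$; these are exactly the irreducibles of $\cF$ on which $\sigma$ does not preserve the isomorphism class. If $\sigma\lambda\cong\lambda$, then $L^{\lie g}(\lambda)$ extends to $G$ in exactly two ways, differing by the sign character of $G/G'\cong\bbZ/2\bbZ$; these are $L(\lambda,+)$ and $L(\lambda,-)$. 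That these lists are exhaustive and pairwise non-isomorphic is the standard statement of Clifford theory once one knows the $G'$-classification; I would cite \cite{ES3} (the analogue of the odd case in Proposition \ref{irreduciblemodulesforOspodd}) for the fact that all finite-dimensional irreducibles of $G$ in $\cF$ arise this way and that no parity-shift ambiguity survives on $\cF'$. This yields precisely the indexing set $X^+(G)=\{(\lambda,\eps)\mid \lambda\in X^+(\lie g)/\sigma,\ \eps\in\Stab_\sigma(\lambda)\}$ as defined.

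For part (2), I would transport the bijection of Definition \ref{highestweighttohookpartitions} through the $\sigma$-action. The only place where $(n,m)$-hook partitions fail to cover $X^+(\lie g)$ is, as noted right after that definition, the even-orthogonal weights with $a_m<0$; but these are precisely the $\sigma$-images of the weights with $a_m>0$, which \emph{do} come from hook partitions. Hence $\operatorname{wt}$ induces a bijection from $(n,m)$-hook partitions onto $X^+(\lie g)/\sigma$ once we also remember that a hook partition with $a_m=0$ (i.e.\ $\hook\lambda^t_m\le -\delta/2$, the $\sigma$-fixed case) must be decorated by a sign $\pm$ to record the choice of extension, while a hook partition with $a_m>0$ has trivial stabilizer and gives the single module $\operatorname{wt}(\lambda)^G$. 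Matching these two bookkeeping conventions is exactly the notion of a \emph{signed $(n,m)$-hook partition}, and the resulting map is the claimed $\Psi$. The main obstacle I anticipate is not conceptual but careful case-checking at the boundary: verifying that the condition ``$a_m=0$'' under Definition \ref{highestweighttohookpartitions} corresponds cleanly to ``$\sigma$-fixed,'' and that the sign $\eps$ on the Khovanov/supergroup side (how $-\id$, or rather the component-group generator, acts) is consistent with the sign attached to the partition — this is where one has to be most attentive to conventions, and where I would lean on the explicit comparison with \cite{ES3} rather than re-derive everything.
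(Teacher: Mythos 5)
Your proposal is correct and matches the paper, which does not reprove this statement but simply cites \cite{ES3}*{Proposition 2.12+2.13, Lemma 2.21}; the argument there is exactly the Clifford-theory/Harish--Chandra induction analysis of the index-two extension $G'\trianglelefteq G$ and the $\sigma$-orbit bookkeeping you describe. (Only a minor slip: the $\sigma$-fixed condition $a_m=0$ reads $\hook\lambda^t_m\le m-\tfrac{\delta}{2}=n$, not $\hook\lambda^t_m\le-\tfrac{\delta}{2}$, but this does not affect the structure of the argument.)
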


	The proof can be found in \cite{ES3}*{Proposition 2.12+2.13, Lemma 2.21}.

\subsection{The Deligne category $\Rep_\delta$}

\begin{defi}
	Let $\delta\in\mathbb{C}$. A \emph{Brauer diagram of type $(r,s)$} is a partitioning of the set $\{1, \dots, r+s\}$ into subsets of cardinality $2$. This can be represented diagrammatically by identifying $p\in\{1, \dots, r+s\}$ with the point $(p,0)$ if $1\leq p\leq r$ and $(p-r, 1)$ if $r<p\leq r+s$ in the plane and connecting the points in each subset by an arc inside the rectangle spanned by these points.
\end{defi}


\begin{defi}\label{defibrauercat}
	The \emph{Brauer category} $\Br(\delta)$ (also called the skeletal Deligne category $\Rep^0(O_{\delta})$ in \cite{CH}) for $\delta\in\bbC$ is the 
	category with objects $d\in\mathbb{Z}_{\geq0}$ and $\Hom_{\Br(\delta)}(r,s)$ is the $\mathbb{C}$-vector space with basis given by all $(r,s)$-Brauer diagrams. The multiplication is given by stacking diagrams vertically and evaluating a circle to $\delta$. The Brauer category admits a monoidal structure, given by $m\otimes n=m+n$ on objects and on morphisms it is giving by stacking diagrams horizontally.
	The \emph{Deligne category} $\Rep_\delta$ is the additive Karoubian envelope of $\Br(\delta)$.
	The \emph{Brauer algebra} $\Br_d(\delta)$ is the endomorphism algebra $\End_{\Br(\delta)}(d)$ of $d$.
\end{defi}

The primitive idempotents of $\Br_d(\delta)$ can be constructed from the group algebra of the symmetric group, see \cite{CH}. For any partition $\lambda$ with $|\lambda| < d$ we denote the so-obtained primitive idempotents by $e_{\lambda}^{(i)}$ (for certain $i \in \mathbb{N}$). 

\begin{thm} (\cite{CH}*{Thm 3.4})
	The set $\{e^{(i)}_\lambda\mid\lambda\in\Lambda_d(\delta)\}$ is a complete set of pairwise nonconjugate primitive idempotents in $\Br_d(\delta)$, where $\Lambda_d(\delta)$ denotes the set
	\begin{equation}	
		\Lambda_d(\delta)\coloneqq\begin{cases}
			\{\lambda\in\Lambda\mid\abs{\lambda}=d-2i, 0\leq i\leq\frac{d}{2}\}&\text{if $\delta\neq 0$ or $d$ is odd or $d=0$,}\\
			\{\lambda\in\Lambda\mid\abs{\lambda}=d-2i, 0\leq i<\frac{d}{2}\}&\text{if $\delta=0$ and $d>0$ is even.}
		\end{cases}
	\end{equation}
\end{thm}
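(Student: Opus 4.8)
The plan is to deduce the statement from the structure theory of the Brauer algebra $\Br_d(\delta)$ as a cellular (iterated inflation) algebra, together with the known semisimplicity and representation theory of the symmetric group algebras $\mathbb{C}[S_k]$. Recall that $\Br_d(\delta)$ has a filtration by two-sided ideals whose subquotients are (roughly) $\mathbb{C}[S_{d-2i}]$-bimodules inflated along the combinatorics of ``$i$ pairs of through-strands being capped off''; the primitive idempotents $e_\lambda^{(i)}$ referred to in the excerpt are built by lifting the primitive idempotents of $\mathbb{C}[S_{d-2i}]$ associated to a partition $\lambda$ with $|\lambda| = d-2i$ through this inflation. So the first step is to recall from \cite{CH} the precise construction of the $e_\lambda^{(i)}$ and the fact that they are idempotents; this is input we are allowed to assume.

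Next I would establish that this set is \emph{complete}, i.e.\ that $1 \in \Br_d(\delta)$ decomposes as a sum of conjugates of the $e_\lambda^{(i)}$, equivalently that every indecomposable projective $\Br_d(\delta)$-module appears among the $\Br_d(\delta)e_\lambda^{(i)}$. The clean way to do this is to count: the simple $\Br_d(\delta)$-modules are indexed by the pairs $(i,\lambda)$ with $|\lambda| = d - 2i$ subject to exactly the constraint appearing in the definition of $\Lambda_d(\delta)$ — the subtlety being that when $\delta = 0$ and $d$ is even, the cell module labelled by $i = d/2$ (the ``all strands capped off'' cell, $\lambda = \emptyset$) has zero associated bilinear form / does not give rise to a simple module, which is exactly why the second case of $\Lambda_d(\delta)$ excludes $i = d/2$. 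This is the standard modular-representation-theoretic fact about the Brauer algebra at $\delta = 0$ and can be cited. Granting it, the cardinality of $\{e_\lambda^{(i)} : \lambda \in \Lambda_d(\delta)\}$ matches the number of isomorphism classes of simple modules, and each $e_\lambda^{(i)}$ is primitive (being lifted from a primitive idempotent of a semisimple group algebra through an inflation, which preserves primitivity) and nonzero; a dimension or Grothendieck-group argument then forces the $\Br_d(\delta) e_\lambda^{(i)}$ to exhaust the indecomposable projectives, giving completeness.

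Finally I would check \emph{pairwise non-conjugacy}: two primitive idempotents in a finite-dimensional algebra are conjugate iff the corresponding projective modules are isomorphic iff they have isomorphic heads. The idempotent $e_\lambda^{(i)}$ is designed so that the head of $\Br_d(\delta)e_\lambda^{(i)}$ is the simple module $D(i,\lambda)$ labelled by the pair $(i,\lambda)$; since the $D(i,\lambda)$ for distinct $(i,\lambda) \in \Lambda_d(\delta)$ are pairwise non-isomorphic, the idempotents are pairwise non-conjugate. The main obstacle, and the only place real care is needed, is the $\delta = 0$, $d$ even exceptional case — one must verify both that $e_\emptyset^{(d/2)}$ genuinely fails to be present (its would-be image is not a summand of a projective, or the construction simply degenerates) and that no simple module is lost, so that the counting argument still balances; everything else reduces to transporting the semisimple representation theory of $\mathbb{C}[S_k]$ through the iterated-inflation structure, which is routine.
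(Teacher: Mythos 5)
The paper does not actually prove this statement: it is imported verbatim from \cite{CH}*{Thm 3.4}, so there is no in-text argument to compare against. Your proposal is a legitimate but genuinely different route from the one taken in \cite{CH}. There the result is obtained categorically: conjugacy classes of primitive idempotents in $\Br_d(\delta)=\End_{\Br(\delta)}(d)$ correspond to isomorphism classes of indecomposable direct summands of the object $d$ in $\Rep_\delta$, and the theorem is deduced from the classification of indecomposables $\R_\delta(\lambda)$ by partitions (quoted here as Theorem \ref{classindecbrcat}) together with an explicit construction of $e^{(i)}_\lambda$ whose normalizing scalar vanishes precisely when $\delta=0$ and $i=d/2$. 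Your route instead goes through the cellular (iterated-inflation) structure of $\Br_d(\delta)$ and the Graham--Lehrer determination of which cell modules carry a nonzero bilinear form; this avoids Deligne-category machinery, at the cost of re-proving inside $\Br_d(\delta)$ facts that the categorical approach gets for free.

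Two steps in your sketch carry real content and are stated too casually. First, ``lifted through an inflation, which preserves primitivity'' is not a general fact: the ideal spanned by diagrams with fewer through-strands is not nilpotent, so a primitive idempotent of a layer quotient need not lift, and a lift need not stay primitive. One must either check directly that $e^{(i)}_\lambda\Br_d(\delta)e^{(i)}_\lambda$ is local (in \cite{CH} this is exactly where the classification of indecomposables enters) or invoke the cell-theoretic statement that the projective cover of the simple $D(i,\lambda)$ is generated by an idempotent congruent to the Young idempotent modulo the lower cell ideal. Second, the identification of the head of $\Br_d(\delta)e^{(i)}_\lambda$ with $D(i,\lambda)$, on which both your non-conjugacy claim and your counting argument rest, requires knowing that $e^{(i)}_\lambda$ is nonzero, lies outside the lower cell ideal, and projects onto the correct primitive idempotent of $\mathbb{C}[S_{d-2i}]$; this holds for the explicit elements of \cite{CH} but has to be verified, and it is precisely here that the exceptional case $\delta=0$, $i=d/2$ fails. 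With those two points supplied (both are standard and citable), your logic is sound: pairwise non-conjugacy plus the equality of $\abs{\Lambda_d(\delta)}$ with the number of simple $\Br_d(\delta)$-modules forces completeness.
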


\begin{defi}
	In $\Rep_\delta$ every idempotent has an image and we set $\R_\delta(\lambda)\coloneqq\im e^0_\lambda$.
\end{defi}

\begin{thm}\label{classindecbrcat} \cite{CH}*{Thm. 3.5}
	The assignment $\lambda\mapsto\R_\delta(\lambda)$ defines a bijection between the set $\Lambda$ of all partitions and isomorphism classes of nonzero indecomposable objects in $\Rep_\delta$.
\end{thm}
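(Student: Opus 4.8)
The plan is to establish the bijection in two directions, using the idempotent calculus set up in the preceding theorems. First I would observe that by Theorem~\cite{CH}*{Thm 3.4} every object of $\Rep_\delta$ is, up to isomorphism, a finite direct sum of images of primitive idempotents $e^{(i)}_\lambda$ of the algebras $\Br_d(\delta)$, and that $\im e^{(i)}_\lambda$ is by construction a nonzero indecomposable object since $e^{(i)}_\lambda$ is primitive. Two such images are isomorphic if and only if the corresponding idempotents are conjugate (after stabilizing to a common $\Br_d(\delta)$ via the standard nonunital inclusions $\Br_d(\delta)\hookrightarrow\Br_{d+1}(\delta)$), and here is the crucial combinatorial input: adding a through-strand to $e^{(i)}_\lambda$ produces a primitive idempotent in $\Br_{d+1}(\delta)$ which is again conjugate to some $e^{(i')}_\lambda$ for the \emph{same} partition $\lambda$. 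Hence the isomorphism class of $\im e^{(i)}_\lambda$ depends only on $\lambda$, and one may unambiguously set $\R_\delta(\lambda) := \im e^0_\lambda$, taking the idempotent in the smallest algebra $\Br_{|\lambda|}(\delta)$ where a primitive idempotent labelled $\lambda$ exists (this also explains why $\lambda=\emptyset$ in characteristic $\delta=0,\ d$ even is not an issue: for any $\lambda$ there is \emph{some} $d$ with $\lambda\in\Lambda_d(\delta)$).

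Next I would prove surjectivity of $\lambda\mapsto\R_\delta(\lambda)$: every indecomposable object $X$ of $\Rep_\delta$ is a summand of some object $d$ of the Brauer category, hence isomorphic to $\im e$ for some idempotent $e\in\Br_d(\delta)$; decomposing $e$ into primitive orthogonal idempotents and using that $X$ is indecomposable, $X\cong\im e^{(i)}_\lambda$ for some $\lambda\in\Lambda_d(\delta)$, so $X\cong\R_\delta(\lambda)$. For injectivity I would show that if $\R_\delta(\lambda)\cong\R_\delta(\mu)$ then, transporting both to a common $\Br_d(\delta)$ by adding through-strands, the idempotents $e^0_\lambda$ and $e^0_\mu$ become conjugate in $\Br_d(\delta)$; but by Theorem~\cite{CH}*{Thm 3.4} the $\{e^{(i)}_\nu\}$ are pairwise nonconjugate, and the through-strand stabilization preserves the partition label, so $\lambda=\mu$. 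Nonvanishing of each $\R_\delta(\lambda)$ is immediate from primitivity of $e^0_\lambda$ (a primitive idempotent is in particular nonzero, and in a Karoubian category a nonzero idempotent has nonzero image).

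The main obstacle is the stability statement: that $e^{(i)}_\lambda$ with a through-strand added remains primitive and stays within the conjugacy class attached to the \emph{same} $\lambda$, equivalently that the restriction functors between $\Rep_\delta$-data at consecutive $d$ are compatible with the labelling. This is where the explicit construction of the $e^{(i)}_\lambda$ from symmetric-group idempotents in \cite{CH} is essential, and it is really the content that makes the bijection well-defined rather than just a map to conjugacy classes indexed by pairs $(\lambda,i)$. Since the excerpt allows us to assume the results of \cite{CH}, in the actual write-up I would simply cite the relevant structural lemma there; the remaining bookkeeping — additivity, Karoubian-ness, and translating ``conjugate idempotents'' into ``isomorphic images'' — is routine category theory and can be dispatched quickly.
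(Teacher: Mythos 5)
The paper does not actually prove this statement; it is quoted verbatim from \cite{CH}*{Thm.\ 3.5}, so I am comparing your outline against the argument given there. Your overall architecture is the right one and matches \cite{CH}: Krull--Schmidt in the additive Karoubi envelope, indecomposables correspond to primitive idempotents in the $\Br_d(\delta)$, Theorem~3.4 supplies the complete list of nonconjugate primitives for each fixed $d$, and what remains is a stabilization lemma comparing idempotents across different $d$.

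However, the stabilization mechanism you propose is wrong, and the ``crucial combinatorial input'' you plan to cite is false as stated. The through-strand inclusion $\Br_d(\delta)\hookrightarrow\Br_{d+1}(\delta)$, $f\mapsto f\otimes 1$, induces $-\otimes\R_\delta(\ydiagram{1})$ on images, \emph{not} the identity: $\im(e^{(i)}_\lambda\otimes 1)\cong\R_\delta(\lambda)\otimes V$, which is almost never indecomposable (so $e^{(i)}_\lambda\otimes 1$ is generally not primitive), and even in the one case where it is indecomposable the label changes ($\lambda=\emptyset$ gives $V=\R_\delta((1))$). Consequently neither your well-definedness argument nor your injectivity argument goes through with this embedding. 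The correct comparison, and the one used in \cite{CH}, is via the cup--cap morphisms exhibiting the object $d$ as a direct summand of $d+2$, i.e.\ the identification $\Br_d(\delta)\cong \bar{e}\,\Br_{d+2}(\delta)\,\bar{e}$ for a suitable idempotent $\bar{e}$; under this, $e^{(i)}_\lambda$ corresponds to $e^{(i+1)}_\lambda$ and the images are genuinely isomorphic, which is the actual content of ``the label depends only on $\lambda$''. This splitting is also precisely where $\delta=0$ causes trouble ($\mathbf{1}$ fails to be a summand of $d$ for even $d>0$), which is the real reason $\emptyset$ is excluded from $\Lambda_d(0)$ — a point your parenthetical misreads. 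Finally, note that indecomposables supported on tensor powers of different parity are distinguished for free, since $\Hom_{\Br(\delta)}(r,s)=0$ when $r\not\equiv s\pmod 2$, so the stabilization is only ever needed within a fixed parity class.
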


\begin{rem} For the object $\R_{\delta}((1))$ we write also $\R_\delta(\ydiagram{1})$. It agrees with the object $1$ in the Brauer category $\Br(\delta)$.
\end{rem}

\subsection{From $\Rep_{\delta}$ to $\mathrm{OSp}$}

As the natural representation $V$ of $\Osp[r][2n]$ has superdimension $\delta=r-2n$ there exists a symmetric monoidal functor $\mathbb{F}=\mathbb{F}_{(r|2n)}\colon\Rep_\delta\to\cF$ by the universal property of the Deligne category $\Rep_\delta$, which is given by sending $1$ to $V$ (see \cite{D19}*{Proposition 9.4}). Then we have $\mathbb{F}(d)=V^{\otimes d}$ and we get an action of $\Br_d(\delta)$ on $V^{\otimes d}$. This functor is actually full (see \cite{LZ17}*{Thm. 5.6}) and thus we have in particular a surjective algebra homomorphism
\begin{equation}\label{FromBrauerToOspIsFull}
	\Phi_{d, \delta}\colon \Br_d(\delta)\to \End_{\cF}(V^{\otimes d}).
\end{equation}

%

\begin{thm}\label{classificationofindecsummandsabstract} \cite{CH}*{Thm. 7.3} The assignment $\lambda\mapsto\mathbb{F}\R_\delta(\lambda)$ defines a bijection between the set $\Lambda(d,r,n)\coloneqq\{\lambda\in\Lambda_d(\delta)\mid\mathbb{F}\R_\delta(\lambda)\neq 0\}$ and a set of representatives of isomorphism classes of nonzero indecomposable summands in $V^{\otimes d}$.
\end{thm}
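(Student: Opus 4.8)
The plan is to deduce Theorem \ref{classificationofindecsummandsabstract} from the already-established classification of indecomposables in $\Rep_\delta$ (Theorem \ref{classindecbrcat}) together with the fullness of $\mathbb{F}$ (\ref{FromBrauerToOspIsFull}), i.e.\ from the surjectivity of $\Phi_{d,\delta}$. First I would note that since $\mathbb{F}$ is additive and $d=\bigoplus$ (as an object of $\Rep_\delta$) decomposes, via the primitive idempotents $e^{(i)}_\lambda$, into a direct sum of the indecomposables $\R_\delta(\mu)$ with $\mu\in\Lambda_d(\delta)$ (each appearing with some multiplicity), applying $\mathbb{F}$ gives
\begin{equation*}
	V^{\otimes d}=\mathbb{F}(d)=\bigoplus_{\mu\in\Lambda_d(\delta)}\mathbb{F}\R_\delta(\mu)^{\oplus m_\mu}
\end{equation*}
for suitable multiplicities $m_\mu\geq 1$. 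So every indecomposable summand of $V^{\otimes d}$ is a summand of some $\mathbb{F}\R_\delta(\mu)$; it then suffices to show (i) each nonzero $\mathbb{F}\R_\delta(\mu)$ is itself indecomposable, and (ii) $\mathbb{F}\R_\delta(\lambda)\cong\mathbb{F}\R_\delta(\mu)$ forces $\lambda=\mu$ (for $\lambda,\mu\in\Lambda_d(\delta)$ with both images nonzero).

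For (i), the key step is to transfer indecomposability along the full functor $\mathbb{F}$. Since $\R_\delta(\mu)=\operatorname{im} e^0_\mu$ with $e^0_\mu$ a \emph{primitive} idempotent in $\Br_d(\delta)=\End_{\Br(\delta)}(d)$, the endomorphism ring $\End_{\Rep_\delta}(\R_\delta(\mu))\cong e^0_\mu\,\Br_d(\delta)\,e^0_\mu$ is local. Fullness of $\mathbb{F}$ (equivalently surjectivity of $\Phi_{d,\delta}$, which also implies surjectivity of the induced map $e^0_\mu\Br_d(\delta)e^0_\mu\twoheadrightarrow\End_{\cF}(\mathbb{F}\R_\delta(\mu))$ after applying $\mathbb{F}(e^0_\mu)$) shows $\End_{\cF}(\mathbb{F}\R_\delta(\mu))$ is a quotient of a local ring, hence local (a nonzero quotient of a local ring is local), hence has no nontrivial idempotents; therefore $\mathbb{F}\R_\delta(\mu)$ is indecomposable whenever it is nonzero.

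For (ii), suppose $\lambda\neq\mu$ in $\Lambda_d(\delta)$ with $\mathbb{F}\R_\delta(\lambda),\mathbb{F}\R_\delta(\mu)$ both nonzero but isomorphic. An isomorphism would be an invertible element of $\Hom_{\cF}(\mathbb{F}\R_\delta(\lambda),\mathbb{F}\R_\delta(\mu))$, which by fullness is the image under $\mathbb{F}$ of some $f\in\Hom_{\Rep_\delta}(\R_\delta(\lambda),\R_\delta(\mu))=e^0_\mu\Br_d(\delta)e^0_\lambda$ (and similarly an inverse is hit). Composing, one gets $g,h$ with $\mathbb{F}(gh)=\mathrm{id}_{\mathbb{F}\R_\delta(\mu)}$, $\mathbb{F}(hg)=\mathrm{id}_{\mathbb{F}\R_\delta(\lambda)}$; since $\R_\delta(\lambda),\R_\delta(\mu)$ are non-isomorphic indecomposables in the Krull--Schmidt category $\Rep_\delta$, every morphism between them lies in the (nilpotent, in the local $\mathrm{End}$ picture) radical, so $gh$ and $hg$ are non-invertible, hence $\mathbb{F}(gh)$, $\mathbb{F}(hg)$ lie in the radical of the respective (local) endomorphism rings — contradicting that they are identities on nonzero objects. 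Thus $\lambda=\mu$. Combining, $\lambda\mapsto\mathbb{F}\R_\delta(\lambda)$ restricted to $\Lambda(d,r,n)=\{\lambda\in\Lambda_d(\delta)\mid\mathbb{F}\R_\delta(\lambda)\neq0\}$ is a bijection onto the set of iso-classes of nonzero indecomposable summands of $V^{\otimes d}$.

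The main obstacle is making the transfer-of-local-endomorphism-ring argument clean: one must check that $\mathbb{F}$ applied to the idempotent decomposition of $d$ really does land inside the summands $\mathbb{F}\R_\delta(\mu)$ compatibly, and that "full" upgrades to surjectivity on the relevant $\mathrm{Hom}$- and $\mathrm{End}$-spaces between the cut-out summands (this is formal from $\mathbb{F}(e^0_\mu) V^{\otimes d}\cong\mathbb{F}\R_\delta(\mu)$ and additivity, but needs to be said). Everything else is standard Krull--Schmidt/local-ring bookkeeping, which is presumably why \cite{CH} states it as a direct consequence of Theorems \ref{classindecbrcat} and the fullness \eqref{FromBrauerToOspIsFull}.
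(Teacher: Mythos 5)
Your argument is correct and is essentially the proof given in the cited source \cite{CH}*{Thm.~7.3} (the paper itself only cites this result): one decomposes $V^{\otimes d}=\mathbb{F}(d)$ via the primitive idempotents, and uses fullness of $\mathbb{F}$ (i.e.\ surjectivity of $\Phi_{d,\delta}$ in \eqref{FromBrauerToOspIsFull}) to transfer local endomorphism rings, so that nonzero images of indecomposables are indecomposable and pairwise nonisomorphic. The Krull--Schmidt bookkeeping you flag at the end is exactly the formal content that makes this work, and your handling of it is fine.
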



\subsection{Endofunctors}

In the category $\Rep_\delta$, we have the endofunctor $\ind=\_\boxtimes\R_\delta(\ydiagram{1})$ which is given by tensoring with $\R_\delta(\ydiagram{1})$. Note that diagrammatically it adds to each basis morphism one strand to the right. On the other hand we can consider the endofunctor $\_\otimes V$ in the category $\Osp[r][2n]$-mod, and as the functor $\mathbb{F}$ is monoidal, we also have 
\begin{equation*}
	(\_\otimes V)\circ\mathbb{F}\cong\mathbb{F}\circ\ind.
\end{equation*}
In the following we would like to refine this isomorphism by decomposing $\_\otimes V$ and $\ind$ into a direct sum of functors. For this we are going to introduce the so called \emph{Jucys--Murphy elements}, originally defined by Nazarov in \cite{Naz96}.
\begin{defi}
	The \emph{Jucys--Murphy elements} $\xi_k\in\Br_d(\delta)$ for $1\leq k\leq d$ are the elements 
	\begin{equation*}
		\xi_k\coloneqq\frac{\delta-1}{2}+\sum_{1\leq i<k}(s_{i,k}-\tau_{i,k}).
	\end{equation*}
Furthermore we define $\Omega_d\coloneqq2(\xi_1+\dots+\xi_d)$.
\end{defi}
\begin{lem}
	The Jucys--Murphy elements generate a commutative subalgebra $\mathrm{GZ}_d(\delta)$ of $\Br_d(\delta)$ and the element $\Omega_d$ is central in $\Br_d(\delta)$.
\end{lem}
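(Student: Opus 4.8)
The plan is to establish the two assertions — that $\mathrm{GZ}_d(\delta) = \langle \xi_1,\dots,\xi_d\rangle$ is commutative and that $\Omega_d = 2\sum_k \xi_k$ is central — by direct but careful computation in the Brauer algebra, using the standard presentation in terms of the transpositions $s_{i,j}$ and the "Temperley–Lieb" contraction elements $\tau_{i,j}$. First I would record the needed straightening relations among the $s_{i,j}$ and $\tau_{i,j}$: the symmetric group relations among the $s_{i,j}$, the relations $\tau_{i,j}^2 = \delta\,\tau_{i,j}$, $s_{i,j}\tau_{i,j} = \tau_{i,j}s_{i,j} = \tau_{i,j}$, the "braid-like" interactions $s_{i,j}\tau_{j,k} = \tau_{i,k}s_{i,j}$ (and its variants) for distinct indices, $\tau_{i,j}\tau_{j,k} = \tau_{i,j}s_{i,k}$-type relations, and the fact that elements with disjoint index sets commute. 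These are exactly the relations Nazarov uses, and everything below is a consequence of them.

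For commutativity of $\mathrm{GZ}_d(\delta)$, the cleanest route is the classical observation (Nazarov) that the partial sums stabilize: setting $B_k = \Br_k(\delta) \subseteq \Br_d(\delta)$ (the subalgebra on the first $k$ strands), one shows that $\xi_k$ commutes with all of $B_{k-1}$. Indeed $\xi_k - \frac{\delta-1}{2} = \sum_{i<k}(s_{i,k}-\tau_{i,k})$ is, up to the central scalar, the difference of the "sum over arcs hitting strand $k$" and by a direct relation-chase $\sum_{i<k}(s_{i,k}-\tau_{i,k})$ is fixed under conjugation by each generator $s_{j,j+1}$ and each $\tau_{j,j+1}$ with $j+1 < k$ — conjugating by such a generator merely permutes/recombines the terms $s_{i,k}, \tau_{i,k}$ among themselves. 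Since $\xi_j \in B_j \subseteq B_{k-1}$ for $j < k$, it follows that $\xi_j$ and $\xi_k$ commute for all $j<k$, hence the $\xi_k$ generate a commutative subalgebra.

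For centrality of $\Omega_d$, I would argue by induction on $d$, or equivalently by checking that $\Omega_d$ commutes with each algebra generator $s_{j,j+1}$ and $\tau_{j,j+1}$ for $1 \le j \le d-1$. Write $\Omega_d = (\delta-1)d + 2\sum_{1\le i<k\le d}(s_{i,k}-\tau_{i,k})$; the scalar part is obviously central, so it suffices to show $\sum_{i<k}(s_{i,k}-\tau_{i,k})$ is central. Conjugation by $s_{j,j+1}$ acts on the index set $\{1,\dots,d\}$ by the transposition $(j\ {j+1})$, and one checks using the straightening relations that $s_{j,j+1}(s_{i,k}-\tau_{i,k})s_{j,j+1}^{-1} = s_{\sigma(i),\sigma(k)} - \tau_{\sigma(i),\sigma(k)}$ whenever $\{i,k\}$ avoids trouble, while the "bad" pairs (those involving both $j$ and $j+1$, or exactly one of them) reorganize among themselves so that the total sum is preserved; the analogous but slightly more delicate check for conjugation/multiplication by $\tau_{j,j+1}$ uses the relations $\tau_{j,j+1}s_{i,k} = \dots$ and $\tau_{j,j+1}\tau_{i,k} = \dots$ to show $\tau_{j,j+1}$ commutes with the full sum. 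Summing over all pairs makes the reorganizations cancel, giving centrality.

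The main obstacle is the $\tau$-part of the centrality computation: unlike the symmetric-group generators, the $\tau_{j,j+1}$ do not act by a mere permutation of indices, and the relations $\tau_{j,j+1}\tau_{i,j} = \tau_{j,j+1}s_{i,j+1}$, $\tau_{j,j+1}s_{i,j} = \tau_{j,j+1}\tau_{i,j+1}$, etc., mix $s$-terms and $\tau$-terms together. One must track these cross-terms carefully and verify that the "extra" contributions produced when $\tau_{j,j+1}$ passes $s_{i,j}, \tau_{i,j}, s_{i,j+1}, \tau_{i,j+1}$ cancel in pairs after summing over $i$. This is a finite, purely combinatorial check but it is where all the real content (and all the $\delta$-bookkeeping) lives; the commutativity statement, by contrast, follows almost formally once the stabilization property of the partial sums is set up. I would isolate the cross-term cancellation as the one lemma to verify in detail and treat the rest as routine relation-chasing.
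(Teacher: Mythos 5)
The paper does not actually prove this lemma --- it simply cites Nazarov's Corollary~2.2 --- and your argument is essentially a correct reconstruction of the standard proof given there: commutativity via the telescoping observation that $\xi_k-\tfrac{\delta-1}{2}=z_k-z_{k-1}$ with $z_k=\sum_{i<j\le k}(s_{i,j}-\tau_{i,j})$ central in the subalgebra on the first $k$ strands, and centrality of $\Omega_d$ by checking invariance under each generator, with the $\tau$-cross-terms cancelling in pairs (e.g.\ $\tau_{j,j+1}s_{j,k}=\tau_{j,j+1}\tau_{j+1,k}$ and $\tau_{j,j+1}\tau_{j,k}=\tau_{j,j+1}s_{j+1,k}$, so $\tau_{j,j+1}\bigl[(s_{j,k}-\tau_{j,k})+(s_{j+1,k}-\tau_{j+1,k})\bigr]=0$). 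The only cosmetic caveat is that ``conjugation by $\tau_{j,j+1}$'' is not literally meaningful since $\tau_{j,j+1}$ is not invertible; what you need, and in effect verify, is the two-sided identity $\tau_{j,j+1}X=X\tau_{j,j+1}$.
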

\begin{proof}
	This is \cite{Naz96}*{Cor. 2.2}.
\end{proof}
This leads us to the following refinement of the induction functor for $\Rep_\delta$. For the well-definedness, we invite the reader to consult \cite{OSPII}*{Lemma 2.15}.
\begin{defi}\label{iinddef}
	For $i\in\bbZ+\frac{\delta}{2}$ we define the \emph{$i$-induction functor}
	\begin{equation*}
	\begin{aligned}
		i\mind\colon\Rep_\delta&\to\Rep_\delta,\\
		M&\mapsto\text{proj}_i(M\boxtimes\R_\delta(\ydiagram{1})).
	\end{aligned}
	\end{equation*}
Here, for an indecomposable object $\R_\delta(\lambda)$, $\text{proj}_i$ is the projection onto the generalized $i$-eigenspace of $\xi_{\abs{\lambda}+1}$ viewed as an element in $\End_{\Rep_\delta}(\R_\delta(\lambda)\boxtimes\R_\delta(\ydiagram{1}))$, which is then extended to arbitrary objects $M$.
\end{defi}
We clearly have $\bigoplus_{i\in\bbZ+\frac{\delta}{2}}i\mind =\ind$.

On the other hand we have the Casimir element $C$ in the universal enveloping algebra $U(\osp[r][2n]$) (see \cite{M}*{Lemma 8.5.1}). This is central and thus multiplication by $C$ denotes an endomorphism of every $\Osp[r][2n]$-module and we can look at the eigenvalue of this endomorphism. 
\begin{defi}\label{defiitranslation}
	The endofunctor $\_\otimes V$ of $\cF$ decomposes as $\_\otimes V=\bigoplus_{i\in\bbZ+\frac{\delta}{2}}\theta_i$, where $\theta_i$ denotes the projection onto the summand, which changes the generalized eigenvalue of $C$ by $2i$. We call $\theta_i$ the \emph{$i$-translation functor}.
\end{defi}

The following theorem from \cite{OSPII}*{Thm. 8.10} relates the notions of $i$-induction and $i$-translation.
\begin{thm}\label{iinductionswapsitranslation}
	The functor $\mathbb{F}_{(r|2n)}$ intertwines $i$-induction with $i$-translation for any $i\in\bbZ+\frac{\delta}{2}$, i.e.\begin{equation*}
		\mathbb{F}_{(r|2n)}\circ i\mind\cong\theta_i\circ\mathbb{F}_{(r|2n)}.
	\end{equation*}
\end{thm}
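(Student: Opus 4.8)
The plan is to establish the intertwining property by comparing the two central elements that govern the respective decompositions, namely the Jucys--Murphy element $\xi_{\abs{\lambda}+1}$ on the Deligne side and the Casimir element $C$ on the $\Osp$-side, and to show that $\mathbb{F}_{(r|2n)}$ matches up their generalized eigenspaces. First I would recall that $\mathbb{F}=\mathbb{F}_{(r|2n)}$ is symmetric monoidal, so that $\mathbb{F}(M\boxtimes\R_\delta(\ydiagram{1}))\cong\mathbb{F}(M)\otimes V$; this already gives $\mathbb{F}\circ\ind\cong(\_\otimes V)\circ\mathbb{F}$, and the whole point is to refine it summand by summand. Since $\ind=\bigoplus_i i\mind$ and $\_\otimes V=\bigoplus_i\theta_i$ are decompositions into the (generalized) eigenspace projections of $\xi_{\abs{\lambda}+1}$ and $C$ respectively, it suffices to prove that $\mathbb{F}$ carries the $\xi$-eigenspace projection $\mathrm{proj}_i$ to the $C$-eigenvalue-shift projection defining $\theta_i$.

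The key step is a functoriality/naturality argument: because $\xi_{\abs{\lambda}+1}\in\End_{\Rep_\delta}(\R_\delta(\lambda)\boxtimes\R_\delta(\ydiagram{1}))$ is a specific endomorphism built from the Brauer-diagrammatic generators, its image under $\mathbb{F}$ is an explicit operator on $\mathbb{F}\R_\delta(\lambda)\otimes V$. I would show this image equals (up to an additive scalar that only depends on the block of $\mathbb{F}\R_\delta(\lambda)$, hence does not affect the eigenspace decomposition) the operator by which $C$ acts, minus the sum of the $C$-eigenvalues on the two tensor factors. Concretely: the Casimir of $\mathfrak{g}\times\mathfrak{g}$ acting on $\mathbb{F}\R_\delta(\lambda)\otimes V$ differs from $C_{\mathbb{F}\R_\delta(\lambda)}\otimes 1 + 1\otimes C_V$ by twice the ``mixed'' Casimir tensor $\sum x_a\otimes x^a$, and this mixed term is, under Schur--Weyl-type duality for $\Osp$ (the surjection $\Phi_{d,\delta}$ of \eqref{FromBrauerToOspIsFull}), precisely the image of the Brauer-algebra element $\sum_{i<k}(s_{i,k}-\tau_{i,k})$ that enters the definition of $\xi_k$. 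Thus $\mathbb{F}(\xi_{\abs{\lambda}+1})$ acts on $\mathbb{F}\R_\delta(\lambda)\otimes V$ by $\tfrac{1}{2}(C - c_\lambda - c_V)$ for the scalars $c_\lambda,c_V$ by which $C$ acts on $\mathbb{F}\R_\delta(\lambda)$ and on $V$; note that $\delta=r-2n$ is exactly engineered so that the constant term $\tfrac{\delta-1}{2}$ in $\xi_k$ matches the normalization of $\tfrac12 c_V$. An affine change of variable $i\mapsto 2i + \mathrm{const}$ relating the $\xi$-eigenvalue $i$ to the $C$-eigenvalue-shift $2i$ then identifies $\mathrm{proj}_i$ with the projection defining $\theta_i$, and applying $\mathbb{F}$ to the decomposition $M\boxtimes\R_\delta(\ydiagram{1}) = \bigoplus_i i\mind(M)$ gives the claim.

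Two points need care. The first is the normalization: I would carefully track the eigenvalue-relabelling constant, using the explicit description of the $\xi_k$-eigenvalues on $\R_\delta(\lambda)\boxtimes\R_\delta(\ydiagram{1})$ (the eigenvalue attached to adding a box to $\lambda$ in a given content) against the standard computation of Casimir eigenvalues for $\osp(r|2n)$ on the irreducible pieces of $L(\lambda)\otimes V$; Definition \ref{highestweighttohookpartitions} and the content formulas make this a finite bookkeeping check. The second, and the main obstacle, is that $\mathbb{F}\R_\delta(\lambda)$ need not be irreducible and $\mathbb{F}\R_\delta(\lambda)\otimes V$ need not be semisimple, so one cannot simply argue eigenvalue-by-eigenvalue on simple constituents; one must instead work with \emph{generalized} eigenspaces and exploit that $\mathbb{F}(\xi_{\abs{\lambda}+1})$ and the $C$-action on $\mathbb{F}\R_\delta(\lambda)\otimes V$ are genuinely equal operators (not merely having the same semisimplifications). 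This is why the naturality statement ``$\mathbb{F}$ of the diagrammatic $\xi$ equals the honest operator $\tfrac12(C - c_\lambda - c_V)$'' must be proved at the level of operators via the mixed-Casimir identity together with the fullness of $\mathbb{F}$ from \eqref{FromBrauerToOspIsFull}, rather than by a character computation. Once that operator identity is in hand, the generalized eigenspace decompositions coincide tautologically and the theorem follows. (Indeed, since the statement is quoted from \cite{OSPII}*{Thm. 8.10}, I expect the actual proof to be a reference; but the above is the route I would reconstruct.)
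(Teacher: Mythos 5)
The paper offers no proof of this statement at all: it is quoted verbatim from \cite{OSPII}*{Thm. 8.10}, so there is nothing internal to compare against. Your reconstruction --- proving the operator identity $2\,\mathbb{F}(\xi_{d+1})=\Delta(C)-C\otimes 1$ on $\mathbb{F}(M)\otimes V$ via the mixed-Casimir term and the explicit Brauer action, then matching generalized eigenspaces --- is precisely the argument of the cited source, and is correct as sketched (the only cosmetic point being that fullness of $\mathbb{F}$ is not needed for this identity; the mere existence of the algebra homomorphism $\Phi_{d,\delta}$ suffices).
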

\begin{ex}
	The endofunctor $0$-ind maps $\R_\delta(\emptyset)$ to $\R_\delta(\ydiagram{1})$. On $\cF$ this corresponds to the fact that $0$-translation applied to the trivial representation gives the natural representation.
\end{ex}


\section{Weight diagrams}\label{wdiag}

Throughout this section we fix natural numbers $r$ and $n$ and set $\delta=r-2n$, $m=\lfloor\frac{\delta}{2}\rfloor$ and denote by $L=\Bigl(\bbZ+\frac{\delta}{2}\Bigr)\cap\mathbb{R}_{\geq 0}$ the nonnegative (half) integer line. Furthermore we call elements of $L$ \emph{vertices}.

For every $\R_\delta(\lambda)$ we will later get an irreducible $K$-module (where $K$ is the Khovanov algebra defined in Section \ref{sec:Proj}). These irreducible modules will be labeled by \emph{Deligne weight diagrams} and we will present the correspondence between partitions and Deligne weight diagrams.
On the other hand the highest weights of irreducible $\Osp[r][2n]$-modules are parametrized by $(n,m)$-hook partitions (together with a sign $\eps$) and we will also recall their associated weight diagrams (the \emph{hook weight diagrams}).

 Ehrig and Stroppel provided in \cite{OSPII}*{Definition 7.7} a combinatorially defined map $\dagger$ which associates to the Deligne weight diagram the hook weight diagram of the head of the corresponding $\mathbb{F}\R_\delta(\lambda)$ (see \cref{daggermapgiveshwofrlambda}).

Ehrig and Stroppel provided a third set of weight diagrams, the so called \emph{super weight diagrams}. For proving a Morita equivalence between $\Osp[r][2n]$-mod and a subquotient $e\tilde{K}e$ of the Khovanov algebra, they identified $e\tilde{K}e$ with a projective generator for $\Osp[r][2n]$ (see \cite{OSPII}*{Theorem 10.5} and also the proof of \cref{mainthm}). The super weight diagrams are mainly the Deligne weight diagrams which are associated to partitions which give rise to projective $\mathbb{F}\R_\delta(\lambda)$. 

\subsection{Basic definitions}
\begin{defi}
	A \emph{weight diagram} $\mu$ is a map $\mu\colon L\to\{\times,\circ,\vee,\wedge,\diamond\}$ such that $\diamond$ can only occur as image of $0$ and conversely the image of $0$ can only be $\circ$ or $\diamond$. Furthermore for $?\in\{\circ, \times, \vee, \wedge, \diamond\}$, we denote by $\#?(\mu)\in\bbZ_{\geq0}\cup\{\infty\}$ the number of $?$'s appearing in $\mu$.
	
	The symbols $\circ$, $\times$, $\vee$, $\wedge$ and $\diamond$ are called \emph{nought}, \emph{cross}, \emph{down}, \emph{up} and \emph{diamond}, respectively.
\end{defi}
\begin{defi}
	We call a weight diagram $\mu$ \emph{admissible} if $\#{\circ}(\mu)+\#{\times}(\mu)+\#{\wedge}(\mu)<\infty$ and \emph{flipped} if $\#{\circ}(\mu)+\#{\times}(\mu)+\#{\vee}(\mu)<\infty$.
\end{defi}
\begin{defi}\label{defiblock}
	Two admissible weight diagrams $\lambda$ and $\mu$ belong to the same block $\Lambda$ if the position of $\circ$'s and $\times$'s agree and either $\#{\wedge}(\lambda)\equiv\#{\wedge}(\mu)$(mod $2$) or they both start with $\diamond$.
\end{defi}

We usually draw a weight diagram as a sequence together with the lowest number of $L$ (sometimes we also omit this number), i.e.
\begin{center}
\begin{tikzpicture}[scale=0.7]
	\node at (0, 0.5) {$0$};
	\wdiagnoline{d v v w o x v v {$\dots$}}
\end{tikzpicture}\\
or\\
\begin{tikzpicture}[scale=0.7]
	\node at (0, 0.5) {$\frac{1}{2}$};
	\wdiagnoline{w w x x o v o v {$\dots$}}
\end{tikzpicture}
\end{center}
By turning every symbol upside down (i.e.~exchanging $\vee$'s and $\wedge$'s) we obtain a bijection between admissible and flipped weight diagrams.

\begin{defi}\label{defbruhatmove}We call two symbols neighbored if they are only separated by $\circ$'s and $\times$'s. For the following a $\diamond$ can be interpreted either as $\vee$ or $\wedge$.
	For two admissible weight diagrams $\mu$, $\lambda$ we say that $\mu$ is obtained from $\lambda$ by a \emph{Bruhat move}, if one of the following holds:
	\begin{itemize}
		\item $\lambda$ has a pair of neighboring labels $\vee\wedge$ (say at positions $i$,$j$) and $\mu$ is obtained by replacing these by $\wedge\vee$. This is called a \emph{type A move} applied at positions $i$ and $j$.
		\item $\lambda$ starts (up to some $\circ$'s and $\times$'s) with neighboring labels $\wedge\wedge$ at positions $i$ and $j$ and $\mu$ is obtained by replacing these with $\vee\vee$. This is called a \emph{type D move} applied at positions $i$ and $j$.
	\end{itemize}
	We define a partial order on the set of admissible weight diagrams by saying $\lambda\leq\mu$ if $\mu$ can be obtained from $\lambda$ by a sequence of Bruhat moves. Note that $\lambda\leq\mu$ implies that $\lambda$ and $\mu$ lie in the same block.
\end{defi}

%
%

\begin{defi}
	Let $\lambda$ and $\mu$ be two admissible weight diagrams belonging to the same block. 
	Suppose that $\mu$ has $m$ symbols $\wedge$ and $\lambda$ has $m+2k$ symbols $\wedge$. We define then $l_0(\lambda, \mu)\coloneqq2k$ and for $i\in L$ we set $l_i(\lambda, \mu)=0$ if $\lambda(i)\in\{\times, \circ\}$ and otherwise
	\begin{equation*}
		l_i(\lambda, \mu)\coloneqq2k+\#\{i\geq j\in L\mid \lambda(j)=\vee\}-\#\{i\geq j\in L\mid\mu(j)=\vee\}.
	\end{equation*}
	Note that, as $\lambda$ and $\mu$ are admissible, we have $l_n(\lambda, \mu)=0$ for big enough $n$. Therefore $l(\lambda, \mu)=\sum_{i\geq 1}l_i(\lambda, \mu)$ is well-defined and finite.
\end{defi}

\begin{lem}Let $\lambda$ and $\mu$ be admissible weight diagrams. Then $\lambda\leq\mu$ if and only if $l_i(\lambda, \mu)\geq 0$ for all $i\geq 0$. 
\end{lem}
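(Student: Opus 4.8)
The statement is an equivalence between $\lambda\leq\mu$ (defined via a sequence of Bruhat moves) and the positivity condition $l_i(\lambda,\mu)\geq 0$ for all $i\geq 0$. I would prove the two implications separately.

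\textbf{Necessity ($\lambda\leq\mu \Rightarrow l_i\geq 0$).} Since $l_i$ is additive along chains in the obvious sense only after checking the base case, I would first show that a single Bruhat move applied to $\lambda$ to produce some $\lambda'$ with $\lambda\leq\lambda'\leq\mu$ can only \emph{decrease} each $l_i(\cdot,\mu)$ by a controlled amount, or rather: it suffices to show that if $\lambda'$ is obtained from $\lambda$ by one Bruhat move (so $\lambda\le\lambda'$), then $l_i(\lambda,\lambda')\geq 0$ for all $i$, and then establish a cocycle/additivity identity $l_i(\lambda,\mu)=l_i(\lambda,\lambda')+l_i(\lambda',\mu)$ whenever $\lambda,\lambda',\mu$ lie in the same block. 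Given additivity, necessity follows by induction on the length of a Bruhat chain from $\lambda$ to $\mu$, using that the positivity is preserved under sums. So the two things to check here are: (i) the additivity identity $l_i(\lambda,\mu)=l_i(\lambda,\nu)+l_i(\nu,\mu)$, which is a direct computation from the definition since the counts $\#\{i\ge j: \lambda(j)=\vee\}$ telescope and the global shifts $2k$ add up (one must be slightly careful that $\nu$ genuinely lies in the same block and that the parities work out); and (ii) for a single type A move $\vee\wedge\mapsto\wedge\vee$ at positions $p<q$, one has $k=0$ (the number of $\wedge$'s is unchanged), and $l_i(\lambda,\lambda')$ equals the difference of $\vee$-counts, which is $0$ for $i<p$, is $1$ for $p\le i<q$ (since $\lambda$ has an extra $\vee$ at position $p$ that $\lambda'$ lacks there), and is $0$ for $i\ge q$; hence $l_i(\lambda,\lambda')\in\{0,1\}\geq 0$. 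For a type D move $\wedge\wedge\mapsto\vee\vee$ at positions $p<q$ (at the start of the diagram up to $\circ,\times$), $\lambda$ has two more $\wedge$'s than $\lambda'$, so $k=-1$ and $l_0(\lambda,\lambda')=-2$... wait, that is negative. The resolution is that the convention in this paper has $\mu$ below $\lambda$ in the sense that $\mu$ has \emph{fewer} $\wedge$'s, matching "$\mu$ is obtained from $\lambda$ by a Bruhat move" meaning moves go \emph{upward}; so in a type D move producing the \emph{larger} element one replaces $\wedge\wedge$ by $\vee\vee$, i.e. the larger element $\mu$ has fewer $\wedge$'s than the smaller $\lambda$. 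Thus in $l_i(\lambda,\mu)$ with $\lambda\le\mu$, the number $m$ of $\wedge$'s in $\mu$ is \emph{at most} that in $\lambda$, so $2k\ge 0$ and $l_0=2k\ge0$, and for $i>0$ the positive shift $2k$ dominates. One must carefully track this sign convention throughout — this is where the argument is delicate but not deep.

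\textbf{Sufficiency ($l_i\geq 0$ for all $i \Rightarrow \lambda\leq\mu$).} I would argue by induction on $l(\lambda,\mu)=\sum_i l_i(\lambda,\mu)$. If $l(\lambda,\mu)=0$ then all $l_i=0$; I claim this forces $\lambda=\mu$. Indeed $l_0=0$ gives $k=0$ so $\lambda$ and $\mu$ have the same number of $\wedge$'s, and then $l_i=0$ for all $i$ together with equality of the total $\vee$-count forces the partial sums $\#\{j\le i:\lambda(j)=\vee\}=\#\{j\le i:\mu(j)=\vee\}$ to agree at every $i$, whence $\lambda(i)=\mu(i)$ at every vertex not labelled $\circ,\times$ (these positions are fixed since $\lambda,\mu$ are in the same block). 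If $l(\lambda,\mu)>0$, I need to find an intermediate $\nu$ with $\lambda\leq\nu$ by a single Bruhat move and $l_i(\nu,\mu)\geq 0$ for all $i$ with strictly smaller total; by additivity $l_i(\nu,\mu)=l_i(\lambda,\mu)-l_i(\lambda,\nu)$, so I need to produce a Bruhat move $\lambda\mapsto\nu$ with $0\le l_i(\lambda,\nu)\le l_i(\lambda,\mu)$ for all $i$ and $\sum_i l_i(\lambda,\nu)>0$. Concretely: since not all $l_i$ vanish, either $k>0$ or some $l_i>0$ with $k=0$. In the first case, $\lambda$ has strictly more $\wedge$'s than $\mu$; I would locate the leftmost position where a type D move is available (the first two "neighboring" $\wedge$'s at the start, ignoring $\circ,\times$), perform it to get $\nu$, and check via the monotonicity of partial $\vee$-counts that $l_i(\lambda,\nu)\le l_i(\lambda,\mu)$ — the point being that $\mu$ must have turned those $\wedge$'s into $\vee$'s eventually, so the "room" is there. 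In the second case ($k=0$, some $l_p>0$): positivity of $l_p$ with the parity constraint means that reading from the left, at some point $\lambda$ has accumulated more $\vee$'s than $\mu$; choose the leftmost such witness and locate a neighboring pair $\vee\wedge$ in $\lambda$ to the left of it (which must exist, else $\lambda$'s initial segment would be a block of $\wedge$'s followed by $\vee$'s contradicting that it has a $\vee$ too early, or one falls back into the type D situation) and apply a type A move. Verifying that the resulting $l_i(\lambda,\nu)$ stays $\le l_i(\lambda,\mu)$ for all $i$ is the crux.

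\textbf{Main obstacle.} The genuinely fiddly step is the sufficiency direction: producing, from the mere positivity data $(l_i)$, an \emph{explicit} first Bruhat move toward $\mu$ that does not "overshoot" any coordinate $l_i$. This amounts to a careful combinatorial bookkeeping argument on partial sums of $\vee$-labels together with the global $\wedge$-count, and getting the interaction of type A and type D moves (and the special role of position $0$ and the $\diamond$) exactly right. The additivity identity $l_i(\lambda,\mu)=l_i(\lambda,\nu)+l_i(\nu,\mu)$ and the sign convention on $k$ are the technical lubricant that makes the induction on $l(\lambda,\mu)$ go through; once those are in place the rest is a (somewhat tedious) case analysis. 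I would not expect any conceptual difficulty beyond this.
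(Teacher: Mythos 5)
Your plan takes a genuinely different route from the paper: the paper's entire proof is a reduction to the finite case --- since $\lambda$ and $\mu$ are admissible they satisfy $\lambda(i)=\mu(i)=\vee$ for all large $i$, so all the data lives on a finite initial segment and the statement is exactly the finite type $D$ result \cite{TS}*{Prop.~1.1.8}, which is simply cited. You instead propose to reprove that combinatorial fact from scratch via additivity of the $l_i$ and induction on $l(\lambda,\mu)$. That is a legitimate strategy (it is essentially what a self-contained proof of the cited result looks like), and your necessity direction is sound once the sign convention is settled: a type A move contributes $l_i\in\{0,1\}$, a type D move gives $k=1$ and $l_i\in\{0,1,2\}$, and the cocycle identity $l_i(\lambda,\mu)=l_i(\lambda,\nu)+l_i(\nu,\mu)$ does telescope (the truncation to $0$ at positions labelled $\circ,\times$ is harmless since those positions agree across the block).

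However, the sufficiency direction as written has a genuine gap, and you flag but do not close it. Your case split is ``$k>0$: perform the type D move at the start'' versus ``$k=0$ and some $l_p>0$: perform a type A move''. In the first case a type D move need not be available: by \cref{defbruhatmove} it only applies when $\lambda$ begins (up to $\circ$'s and $\times$'s) with $\wedge\wedge$, and e.g.\ $\lambda=\vee\wedge\wedge\vee\cdots$, $\mu=\vee\vee\vee\vee\cdots$ has $k=1$ and all $l_i\geq 0$ but admits no type D move; one must first perform type A moves to bring the two $\wedge$'s to the front, and choosing which move and verifying $0\leq l_i(\lambda,\nu)\leq l_i(\lambda,\mu)$ for all $i$ is precisely the step you defer as ``the crux''. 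Until that construction is carried out (including the interaction with $\diamond$ at position $0$, where the two diagrams may have $\wedge$-counts of different parity and $k$ is only defined after interpreting $\diamond$ as $\vee$ or $\wedge$), the hard implication is not established. Either complete that bookkeeping or do what the paper does: observe that only finitely many positions carry nontrivial information and quote the finite-rank result.
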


\begin{proof}
	This is \cite{TS}*{Prop. 1.1.8} after observing that we can restrict to the finite case as $\lambda(i)=\mu(i)=\vee$ for all $i\in L$ big enough.
\end{proof}

%

\begin{defi}\label{assoccupdiag}The \emph{cup diagram $\underline{\mu}$} associated to an admissible or a flipped weight diagram $\mu$ is obtained by applying the following steps.
	\begin{enumerate}[label=(C-\arabic*), itemsep=-0.25ex]
		\item First connect neighbored vertices labeled $\vee\wedge$ successively by a cup, i.e.~we connect the vertices by an arc forming a cup below. Repeat this step as long as possible, ignoring already joint vertices. Note that the result is independent of the order in which the connections are made
		\item Attach a vertical ray to each remaining $\vee$.
		\item\label{enum:dotone} Connect pairs of neighbored $\wedge$'s from left to right by cups (we interpret $\diamond$ for this as a $\wedge$). It might be necessary to attach infinitely many cups in this step.
		\item\label{enum:dottwo} If a single $\wedge$ or $\diamond$ remains, attach a vertical ray.
		\item\label{enum:dotthree} Put a marker $\bullet$ on each cup created in \cref{enum:dotone} and each ray created in \cref{enum:dottwo}.
		\item We erase the marker from the component that contains the $\diamond$ if the number of placed markers in \cref{enum:dotthree} is finite and odd.
		\item Finally delete all $\vee$ and $\wedge$ labels at vertices.
	\end{enumerate} 
	
The cups and rays are always drawn without intersections, and two cup diagrams are said to be the same if there is a bijection between the cups and rays, respecting the connected vertices and the markers $\bullet$. We call cups and rays with a marker \emph{dotted} and those without $\bullet$ \emph{undotted}.

A \emph{cap diagram} is just the horizontal mirror image of a cup diagram, for a cup diagram $a$ we denote by $a^*$ the cap diagram obtained by horizontal mirroring and vice versa.


\end{defi}

\begin{defi}
	Given a weight diagram $\mu$, we call the total number of cups (dotted as well as undotted) in its weight diagram $\underline{\mu}$ the \emph{defect} $\Def(\mu)$ of $\mu$. The \emph{rank} of $\mu$ is defined to be $\rk(\mu)\coloneqq\min(\#{\circ}(\mu),\#{\times}(\mu))$. The \emph{layer number} of $\mu$ is $\kappa(\mu)\coloneqq\Def(\mu)+\rk(\mu)$.
\end{defi}

\begin{defi}\label{wdiagtosubset}
	We associate to a subset $S\subseteq\bbZ+\frac{\delta}{2}$ the \emph{weight diagram} $\lambda_S$, which is given at position $i\in L$ by $\diamond$ if $i=0\in S$, and otherwise
	\begin{equation*}
		\begin{cases}
			\wedge&\text{if $i\in S$ but $-i\notin S$,}\\
			\vee&\text{if $-i\in S$ but $i\notin S$,}\\
			\times&\text{if $i\in S$ and $-i\in S$,}\\
			\circ&\text{if $i\notin S$ and $-i\notin S$.}
		\end{cases}
	\end{equation*}
\end{defi}

\begin{defi}
	An \emph{oriented cup diagram} $a\lambda$ is a cup diagram $a$ together with a weight diagram $\lambda$ such that the positions of the appearing $\circ$'s (resp.~$\times$'s) agree and every cup (resp.~ray) is oriented as in \Cref{orient}.
	An oriented cap diagram $\lambda b$ is just a cap diagram $b$ together with a Deligne weight diagram $\lambda$ such that $b^*\lambda$ is an oriented cup diagram.
\end{defi}

\begin{figure}[h]
	\begin{tikzpicture}[scale=0.7]
		\FPset\stddiff{2}
		\wdiagnoline{v w w v d w d v w w v v d w d v v d w d}\cups{0 1, 2 3, 4 5, 6 7, 8 9 d, 10 11 d, 12 13 d, 14 15 d}\rays{i 16, i 17, i 18 d, i 19 d}\node at (0.5, {\currh+0.6}) {0};\node at (2.5, {\currh+0.6}) {1};\node at (4.5, {\currh+0.6}) {0};\node at (6.5, {\currh+0.6}) {1};
		\node at (8.5, {\currh+0.6}) {0};\node at (10.5, {\currh+0.6}) {1};\node at (12.5, {\currh+0.6}) {0};\node at (14.5, {\currh+0.6}) {1};
		\node at (16, {\currh+0.6}) {0};\node at (17, {\currh+0.6}) {0};\node at (18, {\currh+0.6}) {0};\node at (19, {\currh+0.6}) {0};
		\begin{scope}[yshift={2*\currh cm + 1.2cm}]
			\caps{0 1, 2 3, 4 5, 6 7, 8 9 d, 10 11 d, 12 13 d, 14 15 d}\rays{16 i, 17 i, 18 i d, 19 i d}\wdiagnoline{v w w v d w d v w w v v d w d v v d w d}
		\end{scope}
	\FPset\stddiff{1}
	\end{tikzpicture}
\caption{Orientations and degrees}\label{orient}
\end{figure}
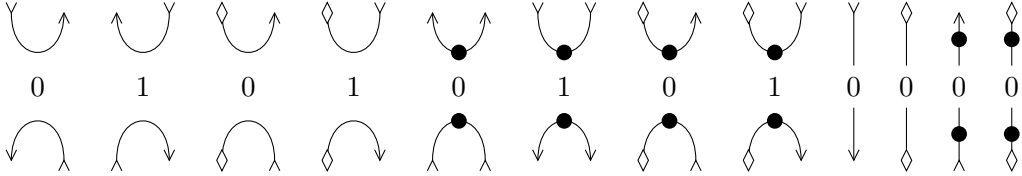

\begin{defi}
	A \emph{circle diagram} $ab$ is a cup diagram $a$ put beneath a cap diagram $b$, such that the positions of the appearing $\circ$'s (resp.~$\times$'s) agree. An \emph{oriented circle diagram} $a\lambda b$ is a circle diagram $ab$ together with a Deligne weight diagram $\lambda$ such that $a\lambda$ and $\lambda b$ are oriented cup (resp.~cap) diagrams.
\end{defi}

\begin{defi}Given an oriented cup diagram $a\lambda$ each cup and ray has an associated integer according to \Cref{orient}. The sum of all these integers is called the \emph{degree} $\deg(a\lambda)$ of the oriented cup diagram $a\lambda$. The degree of an oriented cap diagram $\lambda b$ is defined as $\deg(\lambda b)\coloneqq\deg (b^*\lambda)$.
	For an oriented circle diagram $a\lambda b$, we define $\deg(a\lambda b)=\deg(a\lambda)+\deg(\lambda b)$.
	
	The cups and caps in \Cref{orient} with a $1$ are called \emph{clockwise} and those with a $0$ \emph{anticlockwise}.
\end{defi}

%

\subsection{Deligne weight diagrams}

\begin{defi}\label{defdeligneweightdiagram}
	Given a partition $\lambda\in\Lambda$, we associate to it the set 
	\begin{equation*}
		X(\lambda)\coloneqq\{\lambda^t_i-i+1-\frac{\delta}{2}\mid i\geq 1\}\subset\bbZ+\frac{\delta}{2}.
	\end{equation*}
Using \cref{wdiagtosubset} we can associate a weight diagram to $X(\lambda)$. We denote it by $\lambda_\delta$ and call it \emph{Deligne weight diagram}. Furthermore we denote the set of all Deligne weight diagrams by $\Lambda_\delta$.
\end{defi}

\begin{lem}\label{bijectionpartitiondeligneweightdiag}
The assignment $\lambda\mapsto\lambda_\delta$ defines a bijection \begin{equation*}
	\{\text{partitions}\}\to\{\text{admissible weight diagrams $\mu$ such that $\#{\circ}(\mu)-\#{\times}(\mu)=\lfloor\frac{\delta}{2}\rfloor$}\}
\end{equation*}
\end{lem}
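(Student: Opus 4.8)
The plan is to show that $\lambda \mapsto \lambda_\delta$ is a bijection onto admissible weight diagrams $\mu$ satisfying $\#{\circ}(\mu) - \#{\times}(\mu) = \lfloor \tfrac\delta 2\rfloor$ by constructing the inverse explicitly from the sequence $X(\lambda)$ and analyzing its combinatorics. The starting observation is that for a partition $\lambda$, the sequence $(\lambda^t_i - i + 1)_{i\geq 1}$ is strictly decreasing and, for $i$ large (once $i$ exceeds the length of $\lambda^t$, i.e. exceeds $\lambda_1$), equals $1-i$; thus $X(\lambda)$ is a subset of $\mathbb{Z} + \tfrac\delta 2$ that is ``cofinite below'' in the sense that it contains all sufficiently negative elements of $\mathbb{Z}+\tfrac\delta2$ and only finitely many elements $\geq$ any fixed bound that are not forced. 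This is the standard Maya-diagram / Frobenius-type encoding of a partition by a subset $S$ of the shifted integer line with $S \supseteq (\text{everything below some point})$ and $\mathbb{Z}+\tfrac\delta2 \setminus S \subseteq (\text{everything below some point})$ — equivalently $S$ and its complement differ from the ``ground state'' $\{j < 0\} = \{\lambda^t = 0\}$-set only in finitely many places, with equally many added as removed. The well-known fact (Maya diagrams) is that such subsets $S$ are in bijection with partitions via $S = X(\lambda)$, with the charge/level being fixed; here the charge is pinned down precisely by $\#{\circ} - \#{\times} = \lfloor\tfrac\delta2\rfloor$.

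Concretely, I would carry out the following steps. First, recall from Definition \ref{wdiagtosubset} how the weight diagram $\lambda_S$ at position $i \in L$ records the pair $(i \in S, -i\in S)$: $\times$ when both, $\circ$ when neither, $\wedge$ when $i\in S$ only, $\vee$ when $-i\in S$ only, $\diamond$ at $0$ when $0\in S$. So the data of $\lambda_S$ is equivalent to the data of $S$ itself (as a subset of $\mathbb{Z}+\tfrac\delta2$, not just of $L$), since from the symbols one reconstructs membership of both $i$ and $-i$ for every $i\geq 0$. Second, I would check admissibility: $X(\lambda)$ contains all sufficiently negative half-integers (as noted above), so its complement in the negative part is finite, hence $\#{\circ} + \#{\times} + \#{\wedge} < \infty$ — the $\wedge$'s come from $i \in S$, $-i\notin S$ with $i > 0$, which happens only finitely often because $S$ misses only finitely many negatives and contains only finitely many ``large'' positives. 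Third, compute the invariant: $\#{\circ}(\lambda_\delta) - \#{\times}(\lambda_\delta)$ counts (positions $i>0$ with $i,-i\notin S$) minus (positions $i>0$ with $i,-i \in S$), and a counting argument comparing $S$ to the ground-state set $X(\emptyset) = \{1-i - \tfrac\delta2 : i\geq 1\} = \{\tfrac\delta2 - i : i \geq 1\}$ together with $\lfloor \tfrac\delta2\rfloor = m$ gives exactly $m$; this is really the statement that the ``charge'' of the Maya diagram $X(\lambda)$ equals that of $X(\emptyset)$, which is built into the normalization $-i+1-\tfrac\delta2$ in Definition \ref{defdeligneweightdiagram}.

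For injectivity and surjectivity: injectivity is immediate once we know $\lambda_\delta$ determines $X(\lambda)$ (shown in step one) and $X(\lambda)$ determines $\lambda^t$ hence $\lambda$ (the sequence $(\lambda^t_i - i + 1 - \tfrac\delta2)$ is the strictly decreasing enumeration of $X(\lambda)$, so sorting recovers $\lambda^t_i - i$, hence $\lambda^t_i$, hence $\lambda$). For surjectivity, given any admissible $\mu$ with $\#{\circ}(\mu) - \#{\times}(\mu) = m$, one reads off from $\mu$ a subset $S \subseteq \mathbb{Z}+\tfrac\delta2$ via the recipe inverse to Definition \ref{wdiagtosubset} (at position $i$: put $i,-i$ into $S$ or not according to whether the symbol is $\times/\wedge$ resp. $\times/\vee$, and $0\in S$ iff symbol is $\diamond$); admissibility guarantees $S$ misses only finitely many negatives and contains only finitely many positives not of the ``default'' sign pattern, and the charge condition $\#{\circ} - \#{\times} = m$ guarantees that when we list $S$ in strictly decreasing order as $(s_i)_{i\geq 1}$ we get $s_i = -i + 1 - \tfrac\delta 2$ for all large $i$ and $s_i + i - 1 + \tfrac\delta2 \geq 0$ is non-increasing — hence $s_i + i - 1 + \tfrac\delta2 =: \mu^t_i$ defines a partition (a weakly decreasing sequence of non-negative integers, eventually zero) whose transpose $\lambda$ satisfies $\lambda_\delta = \mu$. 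The main obstacle I expect is bookkeeping: being careful that the normalization constant $-i + 1 - \tfrac\delta2$ pins down the charge correctly for both parities of $\delta$ (so that the image really is the $\#{\circ} - \#{\times} = \lfloor\tfrac\delta2\rfloor$ slice and not an adjacent one), and that the eventual-behaviour conditions (``$s_i = -i+1-\tfrac\delta2$ for large $i$'' versus ``$\mu^t_i$ eventually $0$'') match up — this is where admissibility plus the charge equation must be used together, and a clean way to organize it is to compare everything with $X(\emptyset)$ and count the finite symmetric difference. None of the individual steps is deep; the content is simply the classical Maya-diagram correspondence between partitions and charged subsets of a shifted integer lattice, transported through Definition \ref{wdiagtosubset}.
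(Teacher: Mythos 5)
The paper gives no argument here at all---it simply cites \cite{OSPII}*{Lemma 7.1}---so there is nothing internal to compare against; your proposal is a self-contained proof, it is the standard Maya-diagram/charge argument transported through Definition~\ref{wdiagtosubset}, and its structure is correct and complete: $\lambda_\delta$ determines $S=X(\lambda)$ because every element of $\mathbb{Z}+\frac{\delta}{2}$ is $\pm i$ for a unique $i\in L$; sorting $S$ decreasingly recovers $\lambda^t$ (injectivity); admissibility holds because $S$ contains only finitely many positive elements and omits only finitely many negative ones; and the quantity $\#{\circ}-\#{\times}$ is invariant under modifications of $S$ that add and remove equally many elements, so it is pinned to its value on $X(\emptyset)$, which together with the reverse construction gives surjectivity. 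The one thing you must repair before writing this up is the identification of the ground state: $X(\emptyset)=\{1-i-\frac{\delta}{2}\mid i\geq 1\}$ is the set of all $x\leq-\frac{\delta}{2}$ in the coset, which is \emph{not} $\{\frac{\delta}{2}-i\mid i\geq 1\}$ (these agree only when $\delta=1$) and not $\{j<0\}$ either. Since your whole charge computation is ``compare $S$ with $X(\emptyset)$ and count,'' using the wrong ground state would land you in an adjacent charge sector; with the correct one the base case does check out for every parity and sign of $\delta$ (e.g.\ for $\delta=2m\geq 0$ the diagram of $\emptyset$ has $m$ noughts and no crosses, for $\delta=2m<0$ it has $|m|$ crosses and no noughts, and analogously in the odd case), after which the invariance argument and the rest of your proof go through exactly as you describe.
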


\begin{proof}
	This is \cite{OSPII}*{Lemma. 7.1}
\end{proof}

With the description of partitions in terms of Deligne weight diagrams, we are now able to classify the set $\Lambda(d,r,n)$ \cite{CH}*{Cor. 7.14}.

\begin{thm}\label{classificationofindecomposablesummandsvialayernumberofdelignewdiag}
	There is an equality of sets $\Lambda(d,r,n)=\{\lambda\in\Lambda_d(\delta)\mid\kappa(\lambda_\delta)\leq\min(m,n)\}$.
\end{thm}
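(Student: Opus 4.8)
The plan is to reduce the statement to its known counterpart \cite{CH}*{Cor.~7.14}, which is phrased purely in terms of partitions, by translating the combinatorial condition appearing there into the language of Deligne weight diagrams via Lemma \ref{bijectionpartitiondeligneweightdiag}. The result of \cite{CH} characterizes $\Lambda(d,r,n)$, i.e.\ those $\lambda \in \Lambda_d(\delta)$ with $\mathbb{F}\R_\delta(\lambda) \ne 0$, by an explicit inequality on the shape of $\lambda$ relative to the $(n,m)$-hook; concretely one checks that $\mathbb{F}\R_\delta(\lambda)$ vanishes precisely when the partition is ``too big'' to fit the hook in the relevant sense. So the real content is to show that this partition-theoretic condition is equivalent to $\kappa(\lambda_\delta) \le \min(m,n)$.

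First I would unwind the three statistics $\Def(\lambda_\delta)$, $\rk(\lambda_\delta)$ and hence $\kappa(\lambda_\delta) = \Def(\lambda_\delta) + \rk(\lambda_\delta)$ in terms of the set $X(\lambda) = \{\lambda^t_i - i + 1 - \tfrac{\delta}{2} \mid i \ge 1\}$ from Definition \ref{defdeligneweightdiagram}. The quantity $\rk(\lambda_\delta) = \min(\#\circ(\lambda_\delta), \#\times(\lambda_\delta))$ counts the overlap of $X(\lambda)$ with its own negation, which can be read off directly from how far the partition $\lambda$ protrudes past the arm/leg bounds $n$ and $m$; by Lemma \ref{bijectionpartitiondeligneweightdiag} we already know $\#\circ(\lambda_\delta) - \#\times(\lambda_\delta) = \lfloor \delta/2 \rfloor = m$, so $\rk(\lambda_\delta)$ is controlled by $\#\times(\lambda_\delta)$ alone. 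For the defect, $\Def(\lambda_\delta)$ is the total number of cups in $\underline{\lambda_\delta}$, which by the cup-construction in Definition \ref{assoccupdiag} counts (roughly) the number of $\vee\wedge$ pairs plus the number of $\wedge\wedge$ pairs, and this again has a transparent reading in terms of the $(n,m)$-hook: each cup corresponds to a box of $\lambda$ sitting outside the hook in a controlled position. Collecting these identifications, $\kappa(\lambda_\delta)$ becomes exactly the statistic measuring the total ``excess'' of $\lambda$ over the $(n,m)$-hook, which is what \cite{CH}*{Cor.~7.14} uses.

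The main obstacle I anticipate is bookkeeping: correctly matching up the several off-by-constant shifts (the $-\tfrac{\delta}{2}$ and $+1$ in $X(\lambda)$, the floor $m = \lfloor \delta/2 \rfloor$, the distinction between $r$ even and $r$ odd, and the special role of the vertex $0$ with its $\diamond$) so that the cup-and-ray count genuinely equals $\Def + \rk$ on the nose rather than up to an error term. In particular one must handle the marked (dotted) cups and the diamond-erasure rule in step (C-6) of Definition \ref{assoccupdiag} carefully, checking they do not change the \emph{total} cup count (only which cups are dotted). I would organize this by splitting $L$ into the range of vertices that can be a $\times$ (those $i$ with both $i, -i \in X(\lambda)$, forced by large parts of $\lambda$) and the remaining range carrying $\vee$'s and $\wedge$'s, and then argue that on the first range every added $\times$ increases $\rk$ by one while ``using up'' a potential cup, so the two contributions to $\kappa$ trade off cleanly against the partition's size. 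Once the identity $\kappa(\lambda_\delta) = (\text{hook excess of }\lambda)$ is established, the theorem follows immediately from \cite{CH}*{Cor.~7.14} together with Theorem \ref{classificationofindecsummandsabstract} and the bijection of Lemma \ref{bijectionpartitiondeligneweightdiag}.
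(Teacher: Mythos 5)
Your overall strategy---deduce the statement from \cite{CH}*{Thm.~7.3} together with \cite{CH}*{Cor.~7.14}, using the dictionary between partitions and Deligne weight diagrams---is in essence what the paper does: the theorem is presented there as a direct citation of \cite{CH}*{Cor.~7.14}, with \cref{bijectionpartitiondeligneweightdiag} (itself quoted from \cite{OSPII}) supplying the translation of conventions, and no further proof is given. The issue with your write-up is the premise that \cite{CH}*{Cor.~7.14} is ``phrased purely in terms of partitions'' via a hook-shape inequality, so that the real content would be an identity of the form $\kappa(\lambda_\delta)=(\text{hook excess of }\lambda)$. That premise is not accurate: Comes--Heidersdorf already formulate their criterion in terms of the same Ehrig--Stroppel-style weight diagrams and the statistic $\mathrm{defect}+\mathrm{rank}$ (this is also visible in the paper's use of \cite{CH}*{Lemma 7.16}, which characterizes projectivity by $\kappa(\lambda_\delta)=\min(m,n)$), so the translation you plan to perform is essentially already contained in \cref{bijectionpartitiondeligneweightdiag} rather than being new work. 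To the extent your argument genuinely relies on the identity ``$\kappa$ equals hook excess,'' note that you never establish it: the assertions that each cup corresponds to a box of $\lambda$ outside the $(n,m)$-hook, and that the dotted-cup and diamond-erasure rules of \cref{assoccupdiag} leave the total cup count unaffected, are stated as expectations rather than proved, so the central step of your proposed proof remains a sketch. The theorem does hold, but it follows from the citation itself (after matching conventions) rather than from the combinatorial identification you outline.
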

We will call these diagrams \emph{tensor weight diagrams}. Furthermore we have that $\mathbb{F}\R_\delta(\lambda)$ is projective if and only if $\kappa(\lambda_\delta)=\min(n,m)$ (see \cite{CH}*{Lemma 7.16}).

So far we have no idea what the head of the indecomposable summands $\mathbb{F}\R_\delta(\lambda)$ looks like. The next section is going to address this and for this purpose introduces \emph{hook weight diagrams}.

\subsection{Hook weight diagrams}

Given a partition $\lambda$, we denote by $\lambda^\infty$ the weight diagram, which is obtained from $\lambda_\delta$ by turning all symbols upside down (or equivalently swapping $\vee$'s and $\wedge$'s). 
Note that these diagrams are then \emph{flipped}.

The bijection in \cref{bijectionpartitiondeligneweightdiag} clearly induces a bijection 
\begin{equation}\label{bijectionpartitionflippedweightdiag}
	\{\text{partitions}\}\to\{\text{flipped weight diagrams $\mu$ such that $\#{\circ}(\mu)-\#{\times}(\mu)=\lfloor\frac{\delta}{2}\rfloor$}\}
\end{equation} which is given by $\lambda\mapsto\lambda^\infty$.
Integral dominant weights for $\osp[r][2n]$ are characterized by $(n,m)$-hook partitions and thus we restrict the bijection \eqref{bijectionpartitionflippedweightdiag} to $(n,m)$-hook partitions. 

\begin{lem}\label{hookpartitionstohookweightdiag}
	The map $\lambda\mapsto\lambda^\infty$ gives rise to a bijections of sets
	\begin{equation*}
		\{\text{$(n,m)$-hook partitions}\}\to\left\{\substack{\text{flipped weight diagrams $\mu$ such that}\\\text{ $\#{\circ}(\mu)-\#{\times}(\mu)=\lfloor\frac{\delta}{2}\rfloor$ and}\\\text{ $\#{\vee}(\mu)\leq\min(m,n)-\rk(\mu)$}}\right\}=:\Gamma_\delta(n,m).
	\end{equation*}
\end{lem}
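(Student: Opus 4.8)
\textbf{Proof strategy for Lemma \ref{hookpartitionstohookweightdiag}.}

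The plan is to combine the bijection \eqref{bijectionpartitionflippedweightdiag} with a careful analysis of how the $(n,m)$-hook condition on a partition $\lambda$ translates into a condition on the symbols of the flipped weight diagram $\lambda^\infty$. Since \eqref{bijectionpartitionflippedweightdiag} is already a bijection between all partitions and all flipped weight diagrams $\mu$ with $\#{\circ}(\mu)-\#{\times}(\mu)=\lfloor\frac{\delta}{2}\rfloor$, it suffices to show that, under this bijection, the set of $(n,m)$-hook partitions corresponds exactly to the subset $\Gamma_\delta(n,m)$ cut out by the extra inequality $\#{\vee}(\mu)\leq\min(m,n)-\rk(\mu)$. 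So the whole content is: $\lambda$ is $(n,m)$-hook $\iff$ $\#{\vee}(\lambda^\infty)+\rk(\lambda^\infty)\leq\min(m,n)$.

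First I would unwind the definitions on the Deligne side. By \cref{defdeligneweightdiagram}, the symbols of $\lambda_\delta$ are controlled by the set $X(\lambda)=\{\lambda^t_i-i+1-\tfrac{\delta}{2}\mid i\geq 1\}$, and passing to $\lambda^\infty$ swaps $\vee$ and $\wedge$. The symbol $\wedge$ in $\lambda^\infty$ at a positive vertex $j$ corresponds to the case "$j\in X(\lambda)$ but $-j\notin X(\lambda)$" being turned upside down, i.e.\ to "$-j\in X(\lambda)$ but $j\notin X(\lambda)$" (and similarly $\vee$ in $\lambda^\infty$ corresponds to $j\in X(\lambda),\ -j\notin X(\lambda)$). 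I would then translate "$j\in X(\lambda)$" into a statement about the columns of $\lambda$ (equivalently the rows of $\lambda^t$): the value $\lambda^t_i - i + 1 - \tfrac{\delta}{2}$ encodes the $i$-th column height, and the comparison of $X(\lambda)$ with its negative $-X(\lambda)$ is exactly the mechanism that reads off where the boundary path of the Young diagram $\lambda$ crosses, relative to the staircase determined by $\delta$. Concretely I expect to show: $\rk(\lambda^\infty)=\min(\#{\circ},\#{\times})$ counts a "diagonal shift" contribution and $\#{\vee}(\lambda^\infty)$ counts how far certain columns of $\lambda$ protrude; their sum measures precisely how deep the diagram goes below row $n$ or to the right of column $m$, i.e.\ whether $\lambda_{n+1}^t\le m$ (equivalently $\lambda_{n+1}\le m$ in the usual hook normalization). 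This is the "layer number" bookkeeping already visible in the definitions of $\Def$, $\rk$ and $\kappa$, and one should be able to cite or mimic the corresponding computation behind \cref{bijectionpartitiondeligneweightdiag} and \cref{classificationofindecomposablesummandsvialayernumberofdelignewdiag}.

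The cleanest route is probably to avoid re-deriving everything by hand and instead argue as follows. We already know (Lemma \ref{bijectionpartitiondeligneweightdiag}) that $\lambda\mapsto\lambda_\delta$ is a bijection onto admissible diagrams with $\#{\circ}-\#{\times}=\lfloor\tfrac{\delta}{2}\rfloor$, so $\lambda\mapsto\lambda^\infty$ is a bijection onto the flipped ones with the same numerical constraint. Under turning upside down, $\#{\vee}(\lambda^\infty)=\#{\wedge}(\lambda_\delta)$ and $\rk(\lambda^\infty)=\rk(\lambda_\delta)=\min(\#{\circ}(\lambda_\delta),\#{\times}(\lambda_\delta))$, and since $\Def(\lambda_\delta)$ equals (the number of cups, hence) $\#{\wedge}(\lambda_\delta)$ up to the usual correction by rays, one gets $\#{\vee}(\lambda^\infty)+\rk(\lambda^\infty)$ comparable to $\kappa(\lambda_\delta)=\Def(\lambda_\delta)+\rk(\lambda_\delta)$. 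Then the asserted equivalence "$\lambda$ is $(n,m)$-hook $\iff \#{\vee}(\lambda^\infty)\le\min(m,n)-\rk(\lambda^\infty)$" is essentially the content of the reformulation $\kappa(\lambda_\delta)\le\min(m,n)$ from \cite{OSPII}; so I would either cite the relevant lemma from \cite{OSPII}*{Section 7} directly, or spell out the short identification of $\#{\wedge}(\lambda_\delta)+\rk(\lambda_\delta)$ with the defect-plus-rank quantity and invoke the hook characterization of integral dominant weights (Lemma \ref{integraldominanceosp}) together with Definition \ref{highestweighttohookpartitions}.

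\textbf{Main obstacle.} The genuinely fiddly part is the precise combinatorial dictionary between "$\lambda_{n+1}^t\le m$" and the count $\#{\vee}(\lambda^\infty)+\rk(\lambda^\infty)\le\min(m,n)$: one has to be careful about the two cases $\delta$ even versus odd (which shift the staircase by a half-integer and change whether $0$ carries a $\diamond$), about the correction in steps (C-5)--(C-6) of \cref{assoccupdiag} involving the marker on the $\diamond$-component, and about the asymmetry between $m$ and $n$ when $\delta<0$ (so $m<0$) versus $\delta\ge 0$. My expectation is that the cleanest write-up sidesteps most of this by reducing to the already-proven facts in \cite{OSPII}*{Section 7} and \cite{CH}, and only verifies the translation of symbols under "turning upside down," which is immediate from \cref{wdiagtosubset} and \cref{defdeligneweightdiagram}.
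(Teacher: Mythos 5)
Your overall strategy --- start from the bijection \eqref{bijectionpartitionflippedweightdiag} and show that the $(n,m)$-hook condition on $\lambda$ corresponds exactly to the inequality $\#{\vee}(\lambda^\infty)+\rk(\lambda^\infty)\le\min(m,n)$ --- is the right one, and it is essentially all the paper does (the lemma is stated without proof, as the restriction of the cited bijection to hook partitions). However, the route you single out as ``cleanest'', namely identifying this inequality with the condition $\kappa(\lambda_\delta)=\Def(\lambda_\delta)+\rk(\lambda_\delta)\le\min(m,n)$ from \cref{classificationofindecomposablesummandsvialayernumberofdelignewdiag} and \cite{OSPII}/\cite{CH}, does not work. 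Under the flip one has $\#{\vee}(\lambda^\infty)=\#{\wedge}(\lambda_\delta)$ and $\rk(\lambda^\infty)=\rk(\lambda_\delta)$, but $\Def(\lambda_\delta)$ is the number of cups in $\underline{\lambda_\delta}$, and this is \emph{not} $\#{\wedge}(\lambda_\delta)$ ``up to a correction by rays'': two $\wedge$'s joined in step \cref{enum:dotone} of \cref{assoccupdiag} contribute a single dotted cup. Concretely, for $\Osp[3][2]$ ($\delta=1$, $m=n=1$) and $\lambda=(2,2)$ one computes $X(\lambda)=\{\tfrac32,\tfrac12,-\tfrac52,-\tfrac72,\dots\}$, so $\lambda_\delta=\wedge\wedge\vee\vee\cdots$; here $\#{\wedge}(\lambda_\delta)+\rk(\lambda_\delta)=2>1$ while $\Def(\lambda_\delta)+\rk(\lambda_\delta)=1\le 1$. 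Thus $(2,2)$ satisfies the $\kappa$-criterion (it indexes a nonzero summand $\mathbb{F}\R_\delta((2,2))$ of $V^{\otimes4}$) but is not a $(1,1)$-hook partition. The two conditions cut out genuinely different subsets of partitions: tensor weight diagrams are matched with hook weights via the map $\dagger$, not via $\lambda\mapsto\lambda_\delta$ itself, so citing the $\kappa$-bound proves the wrong statement.

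What then remains is precisely the direct verification you sketch but do not carry out: using $X(\lambda)=\{\lambda^t_i-i+1-\tfrac\delta2\}$, show that $\#{\wedge}(\lambda_\delta)+\min(\#{\circ}(\lambda_\delta),\#{\times}(\lambda_\delta))\le\min(m,n)$ holds if and only if $\hook\lambda_{n+1}\le m$. This is a routine but genuinely case-sensitive Maya-diagram computation (the $\delta$ even/odd distinction and the $\diamond$ at position $0$ that you flag as the ``main obstacle'' live exactly here), and your proposal only promises it. So as written the argument has a gap: the preferred shortcut fails, and the fallback is not executed.
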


We can transport the equivalence relation on $X^+(\lie{g})\times\{\pm\}$ (see \cref{irreduciblemodulesforOspodd} and \cref{irreduciblemodulesforOspeven}) to $\Gamma_\delta(n,m)\times\{\pm\}$. We denote the set of equivalence classes by $s\Gamma_\delta(n,m)$ and call such equivalence classes \emph{signed hook weight diagrams}.  Hence we have a bijection between $X^+(G)$ and $s\Gamma_\delta(n,m)$. We will abuse notation and write $(\lambda, \eps)$ for the equivalence class of $(\lambda, \eps)$ in $s\Gamma_\delta(n,m)$.


\begin{defi}\label{blocksforhookweigthdiag}
	Two signed hook weight diagrams $(\lambda, \eps)$, $(\mu, \eps')$ belong to the same block if the positions of $\circ$ and $\times$ in $\lambda$ and $\mu$ agree and if $\eps=\eps'$ for some representatives of the respective equivalence classes. 
	
	For two signed hook weight diagrams $(\lambda, \eps)$ and $(\mu, \eps')$ belonging to the same block, we have $(\lambda, \eps)\leq(\mu, \eps')$ if $\mu$ can be obtained from $\lambda$ via changing some $\wedge$'s into $\vee$'s or by changing $\vee\wedge$'s into $\wedge\vee$'s.
\end{defi}

\begin{rem}\label{hookweightdiagatypicalityandorder}

The notion of blocks according to \cref{blocksforhookweigthdiag} agrees with the one given before \cref{defiatypicality} by \cite{GS10}*{Section 6} (see also \cite{GS13}*{Section 4.5}) after translating their combinatorics to the one of Ehrig and Stroppel using \cite{ES3}*{Section 6}.

For the degree of atypicality from \cref{defiatypicality} for a weight $\lambda\in X^+(\lie{g})$ (with $a_m\geq 0$ in the notation of \cref{integraldominanceosp} if $r=2m$), we have $\mathrm{at}(\lambda)=\min(m,n)-\rk(\lambda^\infty)$. The condition $a_m\geq 0$ in the even case is necessary because for those weights with $a_m<0$ we did not define an associated $(n,m)$-hook partition. 
This follows from \cite{GS13}*{Section 4.5} by translating their combinatorics to our setting.

Additionally we can see that if such a weight is typical, $\lambda^\infty$ is actually $\vee$-avoiding (i.e.~no $\vee$ occurs) as $\min(m,n)=\rk(\lambda^\infty)$ and thus $\#{\vee}(\mu)=0$ by \cref{hookpartitionstohookweightdiag}.

Furthermore \cref{blocksforhookweigthdiag} agrees with \cref{orderonhighestweights} under the identifications in \cref{highestweighttohookpartitions} and \cref{hookpartitionstohookweightdiag} for two weights of the same block.
\end{rem}
%
%
%

Given a tensor weight diagram $\lambda_\delta$ (i.e.~a Deligne weight diagram with $\kappa(\lambda_\delta)\leq\min(n,m)$), we would like to determine the head of the associated indecomposable $\Osp[r][2n]$-module $\mathbb{F}\R_\delta(\lambda)$. It will turn out that the head is actually irreducible, and its highest weight can be obtained via the map $\dagger$ defined below.

\begin{defi}
	The map $\dagger\colon\{\text{tensor weight diagrams}\}\to s\Gamma_\delta(n,m)$ is defined as follows.
For  a Deligne weight diagram $\lambda_\delta$ with $\kappa(\lambda_\delta)=\min(m,n)$ (i.e.~$\mathbb{F}\R_\delta(\lambda)$ is projective), we define $\lambda_\delta^\dagger\coloneqq(\Phi(\lambda), \eps)$, where $\Phi(\lambda)$ is the weight diagram obtained from $\lambda_\delta$ by turning all symbols $\vee$ corresponding to rays in $\underline{\lambda_\delta}$ into $\wedge$'s. In case that $\delta$ is odd, the sign $\eps$ is given by $+$ (resp.~$-$) if the parity of the partition $\lambda$ (under the bijection from \cref{bijectionpartitiondeligneweightdiag}) is even (resp.~odd).
In case that $\delta$ is even, the sign $\eps$ is $+$ (resp.~$-$) if the leftmost ray of $\underline{\lambda_\delta}$ is undotted (resp.~dotted) and not at position zero and $\eps=\pm$ is the leftmost ray is at position zero.
For a tensor weight diagram $\lambda_\delta$ with $\kappa(\lambda_\delta)<\min(n,m)$, we define $\lambda_\delta^\dagger\coloneqq(\Phi(\lambda), \eps)$, where $\Phi(\lambda)$ is given by turning \underline{all} symbols corresponding to rays in $\underline{\lambda_\delta}$ upside down. The sign is defined in the same way as for projective tensor weight diagrams if $\delta$ is odd. In case that $\delta$ is even, we always set $\eps=+$.
\end{defi}

The main result  of this section is the classification theorem from \cite{OSPII}*{Theorem 7.8}.

\begin{thm}\label{daggermapgiveshwofrlambda}
	Let $\lambda\in\Lambda(d,r,n)$, then:
	\begin{enumerate}
		\item The indecomposable summand $\mathbb{F}\R_\delta(\lambda)$ of the $\Osp[r][2n]$-module $V^{\otimes d}$ has irreducible head isomorphic to $L(\lambda_\delta^\dagger)$.
		\item In particular, if $\mathbb{F}\R_\delta(\lambda)$ is projective, then $\mathbb{F}\R_\delta(\lambda)\cong P(\lambda_\delta^\dagger)$.
		\item Any indecomposable projective in $\Osp[r][2n]$-mod is obtained in this way for some $\lambda$ and $d$.
	\end{enumerate}
\end{thm}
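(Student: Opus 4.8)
The plan is to deduce all three parts from the machinery already set up: the classification of indecomposable summands in $V^{\otimes d}$ (Theorem~\ref{classificationofindecsummandsabstract} and Theorem~\ref{classificationofindecomposablesummandsvialayernumberofdelignewdiag}), the combinatorics of Deligne weight diagrams and the $\dagger$-map, and -- crucially -- the diagrammatic model via the Khovanov algebra $e\tilde Ke$ together with the equivalence $\Psi$ from \eqref{equiv}. The key point is that $\mathbb{F}\mathcal{R}_\delta(\lambda)$ can be realized on the Khovanov side as the image under a suitable composite of projective functors $\tilde G^t_{\Lambda\Gamma}$ applied to the simple module attached to the trivial representation, and then Theorem~\ref{thm:intro-proj} computes its head. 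So the real content is a careful bookkeeping argument matching weight-diagram combinatorics on both sides; there is no genuinely new analytic input needed here.

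\textbf{Step 1: Realize $\mathbb{F}\mathcal{R}_\delta(\lambda)$ as an image of a projective functor.} Since $\mathbb{F}$ is monoidal and sends $\mathcal{R}_\delta(\ydiagram{1})$ to $V$, one has $\mathbb{F}(\mathcal{R}_\delta(\ydiagram{1})^{\otimes d}) = V^{\otimes d}$, and the indecomposable summand $\mathbb{F}\mathcal{R}_\delta(\lambda)$ is cut out by a primitive idempotent $e^{0}_\lambda \in \Br_d(\delta)$ acting via $\Phi_{d,\delta}$. Using the decomposition $\mathrm{ind} = \bigoplus_i i\text{-}\mathrm{ind}$ and Theorem~\ref{iinductionswapsitranslation} relating $i$-induction on $\Rep_\delta$ with $i$-translation $\theta_i$ on $\mathcal F$, iterate: $V^{\otimes d}$ is obtained from the trivial representation $\mathbf 1$ by a sequence of $\theta_i$'s, and the summand $\mathbb{F}\mathcal{R}_\delta(\lambda)$ is picked out by choosing, at each step, the appropriate eigenvalue of the Jucys--Murphy element. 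Transporting through $\Psi$ and the key isomorphism $\xi_d$ (Theorem~\ref{blackboxformainthm}), this says precisely that $\mathbb{F}\mathcal{R}_\delta(\lambda) \cong \Psi(\tilde G^{t}_{\Lambda\Gamma}\overline L(\emptyset_\delta^{\owedge}))$ for the appropriate blocks $\Lambda,\Gamma$ in $e\tilde Ke$ and a proper $\Lambda\Gamma$-matching $t$ determined combinatorially by $\lambda_\delta$. (One should be careful to start not from $\overline L(\emptyset_\delta)$ but from the simple module labelling the trivial $\Osp$-representation, which accounts for the $\owedge$.)

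\textbf{Step 2: Apply Theorem~\ref{thm:intro-proj} and identify the head.} Now part~\ref{projfunctorsonsimpleexplicitnuclear} of Theorem~\ref{thm:intro-proj} applies: the module $\tilde G^{t}_{\Lambda\Gamma}\overline L(\gamma)$ is self-dual indecomposable with irreducible head $\overline L(\lambda)\langle -\ca(t)\rangle$, where $\overline\lambda$ is the upper reduction of $t\overline\gamma$. It remains to check that, under the chosen $t$ and the identification of weight diagrams (Lemmas~\ref{bijectionpartitiondeligneweightdiag}, \ref{hookpartitionstohookweightdiag}), this combinatorially computed head corresponds exactly to $L(\lambda_\delta^\dagger)$ under $\Psi$ -- i.e.\ that "take the upper reduction along $t$'' on the Khovanov side matches "turn the symbols at rays upside down'' in the definition of $\dagger$ (including the sign $\eps$, which tracks dottedness of the leftmost ray in the even case and the parity of $\lambda$ in the odd case). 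This gives part~(1). For part~(2): $\mathbb{F}\mathcal{R}_\delta(\lambda)$ is projective exactly when $\kappa(\lambda_\delta)=\min(m,n)$ (stated after Theorem~\ref{classificationofindecomposablesummandsvialayernumberofdelignewdiag}); a self-dual indecomposable module with irreducible head $L(\mu)$ which is projective must be the projective cover $P(\mu)$ (a projective with simple head is a projective cover), so $\mathbb{F}\mathcal{R}_\delta(\lambda)\cong P(\lambda_\delta^\dagger)$. For part~(3): every block of $\Osp[r][2n]$-mod contains finitely many indecomposable projectives $P(\mu)$, and one checks surjectivity of $\lambda\mapsto\lambda_\delta^\dagger$ onto projective labels $\mu$ with $\kappa=\min(m,n)$ by exhibiting, for each such $\mu$ (a signed hook weight diagram in the right block), a tensor weight diagram $\lambda_\delta$ mapping to it -- concretely, invert $\Phi$ by turning the appropriate $\wedge$'s at rays back into $\vee$'s so that all cups become undotted rays, then pull $\lambda$ back via Lemma~\ref{bijectionpartitiondeligneweightdiag}; choosing $d = |\lambda| + 2\rk(\lambda_\delta)$ (or similar, so that $\lambda \in \Lambda_d(\delta)$) lands it in $V^{\otimes d}$.

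\textbf{Main obstacle.} The substantive difficulty is Step~2's combinatorial matching: verifying that the upper-reduction operation along the matching $t$ built in Step~1 really does implement the $\dagger$-map, in particular getting the sign $\eps$ right in all the case distinctions ($\delta$ odd vs.\ even, leftmost ray at position zero or not, projective vs.\ non-projective tensor diagram). This is precisely the kind of delicate weight-diagram bookkeeping where the $\Osp$-case differs from $\GL(m|n)$, and where one must invoke the explicit comparison of graded Brauer-algebra lifts behind Theorem~\ref{blackboxformainthm}. Everything else -- self-duality, "projective with simple head is a projective cover'', surjectivity onto projectives -- is formal once the identification in Step~1 is in place.
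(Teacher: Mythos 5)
The first thing to note is that the paper does not prove this statement at all: it is imported verbatim from Ehrig--Stroppel, \cite{OSPII}*{Theorem 7.8} (the text immediately before the theorem says so), and it functions in this paper as an external input rather than a result to be established. Measured against that, your proposal is not a reconstruction of the paper's argument but an attempt to rederive the theorem from the paper's later machinery, and as written it is circular. Your Step~1 realizes $\mathbb{F}\R_\delta(\lambda)$ as $\Psi(\tilde G^t_{\Lambda\Gamma}\overline L(\cdot))$ by invoking the equivalence $\Psi$ together with \cref{blackboxformainthm}; but the construction of $\Psi$ in \cref{mainthm} (and hence the realization in \cref{indecsummandinkhovanov}) explicitly uses \cref{daggermapgiveshwofrlambda} together with \cref{classificationofindecomposablesummandsvialayernumberofdelignewdiag} to identify the idempotent of $P(\lambda,\eps)$ in $A_{(r|2n)}$ with the idempotent $e_{\lambda^\owedge_\eps}$ of the corresponding super weight diagram. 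Knowing \emph{which} simple head $\mathbb{F}\R_\delta(\lambda)$ has is precisely the datum needed to set up that identification, so it cannot also be its output. The self-duality and head computation via \cref{projfunctorsonsimplenuclear}\cref{projfunctorsonsimpleexplicitnuclear}, and the deduction of parts (2) and (3) from part (1), are fine as formal consequences \emph{once} the Khovanov-side realization is available, but the realization itself is the thing that presupposes the theorem.

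To make the argument non-circular one must compute the head of $\mathbb{F}\R_\delta(\lambda)$ directly on the $\Osp[r][2n]$ side --- e.g.\ via the surjection \eqref{FromBrauerToOspIsFull}, the behaviour of the primitive idempotents $e^{(i)}_\lambda$ under $\Phi_{d,\delta}$, and an explicit highest-weight-vector analysis in $V^{\otimes d}$ in the spirit of \cite{BSR98} --- which is what \cite{OSPII} does and what none of the ingredients you cite can substitute for. A secondary, smaller issue: in Step~1 you suggest the summand $\mathbb{F}\R_\delta(\lambda)$ is "picked out by choosing, at each step, the appropriate eigenvalue of the Jucys--Murphy element"; a single sequence of $i$-inductions applied to $\R_\delta(\emptyset)$ generally still decomposes further, so even granting the (circular) equivalence one needs the reduction \cref{reductiontogeombimodnuclear} and the indecomposability statement of \cref{projfunctorsonsimplenuclear} to isolate one summand, as in \cref{indecsummandinkhovanov}.
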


%

We describe all socle/radical layers of $\mathbb{F}\R_\delta(\lambda)$ later in Section \ref{applications}.

\subsection{Super weight diagrams}


%

The ``problem'' with hook weight diagrams is that the associated cup diagrams always have infinitely many dotted cups. To define the Khovanov algebra we ``only need'' the Deligne weight diagrams $\lambda$ such that $\mathbb{F}\R_\delta(\lambda)$ is projective. Up to some technicalities these are the \emph{super weight diagrams}.


\begin{defi}\label{defsuperweightdiag}
	Given a signed hook weight diagram $(\lambda, \eps)\in s\Gamma_\delta(n,m)$, we define the associated \emph{super weight diagram} $\lambda_\eps^\owedge$ as the unique admissible weight diagram $\mu$ with $\kappa(\mu)=\rk(\mu)+\Def(\mu)=\min(m,n)$ such that 
	\begin{itemize}
		\item $\underline{\mu}$ is obtained from $\underline{\lambda}$ by replacing (infinitely many) dotted cups by two vertical rays each
		\item and possibly a dot on the resulting leftmost ray depending on $\eps$ according to the following rule:
		\begin{itemize}
			\item If $\delta$ is even, we put a dot on the leftmost ray if $\eps=+$ and no dot if $\eps=-$. 
			\item If $\delta$ is odd, we do the following: For each symbol $\circ$ or $\times$ we count the number of endpoints of rays and cups in $\lambda$ to the left of this symbol (this is the same as the number of $\vee$'s and $\wedge$'s to the left), and take the sum plus the total number of undotted cups in $\lambda$ (this equals the number of $\vee$'s). Let this be $s$. If $s$ is even, we put a dot on the first ray if $\eps=+$ and no dot if $\eps=-$. If $s$ is odd, we put a dot if $\eps=-$ and no dot if $\eps=+$.
		\end{itemize}
	\end{itemize}
\end{defi}

If one follows the explicit construction steps, one sees the following (or consult \cite{OSPII}*{Proposition 8.4}) 

\begin{prop}\label{projsuperweightdiag}
	Let $\lambda_\delta$ be a Deligne weight diagram associated to a projective $\mathbb{F}\R_\delta(\lambda)$. We denote the super weight diagram $(\lambda_\delta^\dagger)^\owedge_\eps$ by $\mu$. Then we have that $\underline{\mu}$ and $\underline{\lambda_\delta}$ agree up to a dot on the leftmost ray, and additionally a dot on the cup attached to $\diamond$ in case there is such a cup.
\end{prop}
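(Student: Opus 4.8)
The plan is to prove \Cref{projsuperweightdiag} by directly comparing the two construction recipes---the passage $\lambda_\delta \mapsto \lambda_\delta^\dagger \mapsto (\lambda_\delta^\dagger)^\owedge_\eps$ on one side, and the cup diagram $\underline{\lambda_\delta}$ on the other---and checking that they produce the same cup diagram up to the two marker adjustments claimed. First I would unwind the hypothesis: since $\mathbb{F}\R_\delta(\lambda)$ is projective, \Cref{classificationofindecomposablesummandsvialayernumberofdelignewdiag} (and the remark after it) gives $\kappa(\lambda_\delta) = \Def(\lambda_\delta) + \rk(\lambda_\delta) = \min(m,n)$. By definition of $\dagger$ in the projective case, $\lambda_\delta^\dagger = (\Phi(\lambda), \eps)$ where $\Phi(\lambda)$ is obtained from $\lambda_\delta$ by turning exactly the $\vee$'s sitting at rays of $\underline{\lambda_\delta}$ into $\wedge$'s, leaving the $\vee$'s at undotted cups and the cups themselves untouched. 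I would record the key structural fact: in $\underline{\lambda_\delta}$ the rays are precisely the dotted ray(s) coming from step \ref{enum:dottwo} together with the undotted rays attached to the leftover $\vee$'s from step (C-2), and under $\Phi$ all of the latter disappear (they become $\wedge$'s), so $\underline{\Phi(\lambda)}$ has the \emph{same cups} as $\underline{\lambda_\delta}$ and its only rays are those forced by the $\wedge$/$\diamond$-steps \ref{enum:dotone}--\ref{enum:dottwo}.

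Next I would run the super weight diagram construction of \Cref{defsuperweightdiag} on $(\Phi(\lambda), \eps)$. Its cup diagram $\underline{\Phi(\lambda)}$ has $\Def = \min(m,n) - \rk$ ordinary cups plus infinitely many dotted cups (this is exactly the ``problem'' flagged in the text, and follows because $\Phi$ turned every ray-$\vee$ into a $\wedge$, creating infinitely many $\wedge$-pairs). The recipe replaces each dotted cup by two vertical rays, yielding a diagram $\underline{\mu}$ with $\kappa(\mu) = \rk(\mu) + \Def(\mu) = \min(m,n)$ and whose cups are precisely the non-dotted cups of $\underline{\Phi(\lambda)}$, i.e. precisely the cups of $\underline{\lambda_\delta}$. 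Thus $\underline{\mu}$ and $\underline{\lambda_\delta}$ have identical cup sets; the only possible discrepancy is in the rays and the placement of the single remaining marker. I would then check the markers: in $\underline{\lambda_\delta}$ the dot bookkeeping is governed by steps \ref{enum:dotthree}--(C-6), which may park a dot on the cup attached to $\diamond$ or on the leftmost ray; in $\underline{\mu}$ the dot is placed on the leftmost ray by the $\eps$-rule of \Cref{defsuperweightdiag}. Comparing the parity counts in the two recipes ($s$ in \Cref{defsuperweightdiag} versus the number of markers placed in \ref{enum:dotthree}), together with the sign conventions in the definition of $\dagger$, shows the two agree exactly up to the asserted slack: a dot on the leftmost ray, plus (when a cup hangs off $\diamond$) a dot on that cup.

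The main obstacle I expect is the parity/sign bookkeeping---matching the quantity $s$ in \Cref{defsuperweightdiag} (number of $\vee$'s and $\wedge$'s left of each $\circ/\times$, plus total undotted cups) against the mechanism in (C-5)--(C-6) that erases a marker from the $\diamond$-component when the number of step-\ref{enum:dotthree} markers is finite and odd, and then reconciling both with the $\pm$ assigned by $\dagger$ via the parity of the partition $\lambda$ (odd $\delta$) or via the leftmost ray being dotted/undotted (even $\delta$). This is genuinely a case analysis on the parity of $\delta$, on whether the weight diagram starts with $\diamond$, and on whether a cup is attached to $\diamond$; in each case one tracks how many dots are ``morally'' present before and after the cup$\to$ray replacement and where the one surviving dot ends up. Once this is done carefully, the statement follows. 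As the excerpt indicates, a fully detailed verification is in \cite{OSPII}*{Proposition 8.4}, so I would present the comparison of cup sets in full and then refer to the parity computation there for the marker placement, or reproduce it as a short lemma if self-containedness is wanted.
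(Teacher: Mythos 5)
Your overall strategy -- unwinding the two constructions $\lambda_\delta\mapsto\underline{\lambda_\delta}$ and $\lambda_\delta\mapsto\lambda_\delta^\dagger\mapsto(\lambda_\delta^\dagger)^\owedge_\eps$ and comparing them step by step, deferring the dot/parity bookkeeping to \cite{OSPII}*{Proposition 8.4} -- is exactly what the paper does (its ``proof'' is the single sentence preceding the proposition). However, there is one concrete error in the middle of your argument. You assert that the super weight diagram recipe ``replaces each dotted cup by two vertical rays, yielding a diagram $\underline{\mu}$ \dots whose cups are precisely the non-dotted cups of $\underline{\Phi(\lambda)}$, i.e.\ precisely the cups of $\underline{\lambda_\delta}$.'' This conflates the undotted cups of $\underline{\lambda_\delta}$ with all of its cups. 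A projective Deligne weight diagram can perfectly well have dotted cups (e.g.\ for $\delta=0$, $m=n=2$, the diagram $\diamond\wedge\wedge\wedge\vee\vee\cdots$ has $\kappa=2=\min(m,n)$ and two dotted cups). In that situation your step is internally inconsistent: if every dotted cup of $\underline{\Phi(\lambda)}$ were replaced by rays, then $\Def(\mu)$ would equal the number of \emph{undotted} cups of $\underline{\lambda_\delta}$, which is $\Def(\lambda_\delta)$ minus the number of dotted cups, so $\kappa(\mu)<\min(m,n)$ and $\mu$ would not be the super weight diagram; and the resulting cup set would genuinely differ from that of $\underline{\lambda_\delta}$, contradicting the statement you are proving. \Cref{defsuperweightdiag} only replaces enough dotted cups (infinitely many, but not all) to force $\kappa(\mu)=\min(m,n)$; finitely many dotted cups survive.

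The repair is not hard but is a real step: you must show that the surviving dotted cups of $\underline{\mu}$ sit at the same positions as the dotted cups of $\underline{\lambda_\delta}$. The key observation is the standard bracket-matching fact that after step (C-1) the unmatched $\wedge$'s of $\lambda_\delta$ all lie to the left of the unmatched $\vee$'s. Hence in $\Phi(\lambda)$ the left-to-right pairing of step \ref{enum:dotone} first reproduces the dotted cups of $\underline{\lambda_\delta}$ (pairing the old unmatched $\wedge$'s among themselves) and only then starts pairing up the former ray positions; the defect condition $\kappa(\mu)=\min(m,n)$ forces exactly the initial segment of this pairing to be retained, which is why the cup sets coincide. (When the number of unmatched $\wedge$'s is odd one also has to track the dotted ray of $\underline{\lambda_\delta}$ through this process, which feeds into the ``dot on the leftmost ray'' slack.) With this inserted, the rest of your plan -- identity of undotted cups, which does hold because the unmatched $\vee$'s are never nested inside a (C-1) cup, and the parity comparison of $s$ against the marker count of \ref{enum:dotthree}--(C-6) and the sign in $\dagger$ -- goes through as you describe.
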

\begin{rem}\label{swdiagdifferentsignrules}
	We would like to emphasize here that the rule whether or not to put a dot, can be altered. We could have also chosen the reverse association, but we decided to stick with the convention of \cite{OSPII}*{Definition 8.1}.
	In case of the reverse association, the analogue of \cref{projsuperweightdiag} would say that $(\lambda_\delta^\dagger)^\owedge_\eps=\lambda_\delta$ for a Deligne weight diagram associated to a projective $\mathbb{F}\R_\delta(\lambda)$.
\end{rem}

\section{Khovanov algebras and projective functors} \label{sec:Proj}

Throughout this chapter we fix $\delta\in\bbZ$. By a weight diagram we mean a Deligne weight diagram corresponding to this $\delta$ and by a cup or cap diagram, we mean a cup or cap diagram associated to some Deligne weight diagram for $\delta$.

It was proven by Ehrig--Stroppel \cite{OSPII}*{Theorem 10.5} that there is an equivalence $\Psi$ of categories between $\cF$ and $e\tilde{K}e$-mod where $e\tilde{K}e$ is a certain subquotient of the Khovanov algebra $K$ of type B. We will later refine this equivalence in \cref{mainthm}. This equivalence is however not monoidal. That means that we have no direct analogue of $V^{\otimes d}$ on the Khovanov algebra side.
The key idea to overcome this problem is to look at the endofunctor $\_\otimes V=\bigoplus_{i\in\bbZ+\frac{\delta}{2}}\theta_i$ and find an endofunctor on the Khovanov side, which identifies with $\_\otimes V$ under $\Psi$.

This approach was also successfully taken by Brundan and Stroppel for $\gl[m][n]$ and the Khovanov algebra of type $A$ in \cite{BS2} and \cite{BS4}. They defined certain \emph{geometric bimodules} $K^t_{\Lambda\Gamma}$ and proved that tensoring with these actually corresponds to $\_\otimes V$ for $\gl[m][n]$.

We follow their ideas and adapt the definitions to the type $B$ setting. Many properties of $K$ and the geometric bimodules carry over immediately from type A with the same proof; and in such a case we simply refer to \cite{BS2} for the statements and proofs. A full account of the theory of geometric bimodules of type $B$ can be found also in \cite{Nehme}. We will look at two different versions of geometric bimodules.  Ehrig and Stroppel proved in \cite{OSPII}*{Theorem 6.22} that $K$ is related to Brauer algebras and we will see that tensoring with this geometric bimodules then corresponds to the $i$-induction from \cref{iinddef} on the Brauer category, for the precise statement consider \cref{blackboxformainthm}. 
However, $\cF$ is only equivalent to a subquotient of $K$, called $e\tilde{K}e$ here (see \cite{OSPII}*{Theorem 10.5} or \cref{mainthm}). So we also define geometric bimodules for $e\tilde{K}e$. But in this case, even though the statements are very similar to \cite{BS2}*{Sections 3--4}, the proofs differ markedly. In \cref{mainthm} we will see that tensoring with these geometric bimodules translates to $i$-translation from \cref{defiitranslation}.

\begin{defi}
	The Khovanov algebra $K$ is the graded associative algebra with underlying basis given by all oriented circle diagrams $a\lambda b$, where $a\lambda b$ is homogeneous of degree $\deg(a\lambda b)$.
	The multiplication $(a\lambda b)(c\mu d)$ is defined to be $0$ whenever $b^*\neq c$ and if $b^*=c$ we draw the circle diagram $(a\lambda b)$ under the circle diagram $(b^*\mu d)$, where we connect the rays of $b$ and $b^*$ and apply a certain surgery procedure (see e.g. \cite[Section 5]{ES2}\cite{ES3}). All these surgery procedures take a cup-cap-pair and replace it by straight lines. After every cup-cap-pair is removed, one collapses the middle section and defines to tbe $(a\lambda b)(c \mu d)$.
	
%
%
\end{defi}

%

Drawing $b^*\mu d$ on top of $a\lambda b$ gives a so called \emph{oriented stacked circle diagram of height $2$}. This can be generalized to arbitrary height by stacking more compatible diagrams (for details we refer to \cite{ES2}*{Section 5.1}). We give the vertices the coordinate $(x, l-1)$ if it appears in the $l$-th diagram at position $x$ for $l\in\bbZ_{>0}$ and $x\in L$. 
Note that in an oriented stacked circle diagram the positions of $\circ$ and $\times$ in each of the weight diagrams agree.

A \emph{tag} of a stacked circle diagram associates to each circle $C$ a rightmost vertex $t(C)$, i.e.~a vertex such that the horizontal coordinate is maximal among all vertices $C$.
Given an orientable stacked circle diagram $D$, a tag $t$ and a coordinate $(x,l)$ such that the connected component of $(x,l)$ in $D$ is a circle, we define 
\begin{equation}
	\mathrm{sign}_D(i,l) = (-1)^{\#\{j\mid\gamma_j\text{ is a dotted cup/cap}\}},
\end{equation}
where $\gamma_1, \dots, \gamma_t$ is a sequence of arcs in $D$ such that their concatenation is a path from $(i,l)$ to $t(C)$. 
This sign is actually independent of the chosen tag $t$ and the sequence of arcs. A proof of this can be found in \cite{ES2}*{Lemma 5.7}. 

\begin{defi}\label{defianticlock}
	A circle $C$ in an oriented stacked circle diagram is oriented \emph{clockwise} if the symbol at $t(C)$ is $\vee$ and \emph{anticlockwise} if it is $\wedge$. A line is always oriented \emph{anticlockwise} by convention.
\end{defi}

For any Deligne weight diagram $\lambda$ the circle diagram $e_{\lambda}\coloneqq\underline{\lambda}\lambda\overline{\lambda}$ is an idempotent in $K$ and $e_\lambda e_\mu=0$ whenever $\lambda\neq \mu$. This gives the algebra $K=\bigoplus_{\lambda, \mu\in\Lambda_\delta}e_\lambda Ke_\mu$ the structure of a locally unital algebra. By $\Mod_{lf}(K)$ we refer to locally finite dimensional graded modules over $K$, i.e.~graded modules $M$ such that $\dim e_\lambda M<\infty$ for all $\lambda\in\Lambda_\delta$.

The irreducible locally finite dimensional $K$-modules are in bijection with $\Lambda_\delta$. Given $\lambda\in\Lambda_\delta$ we construct a one dimensional irreducible $K$-module $L(\lambda)$ as follows. As a vector space it is just $\bbC$ and $e_\mu$ acts by $1$ if $\lambda=\mu$ and $0$ otherwise.
The indecomposable projective objects in $\Mod_{lf}(K)$ are given by $P(\lambda)\coloneqq Ke_\lambda$ for $\lambda\in\Lambda_\delta$.

We have an anti-involution $*$ on $K$ which is given by sending $a\lambda b$ to $b^*\lambda a^*$. And this gives rise to a duality (also denoted $*$) on $\Mod_{lf}(K)$. For a locally finite dimensional graded $K$-module $M$, we define the graded piece $(M^{\dual})_j\coloneqq\Hom_{\bbC}(M_{-j}, \bbC)$ and $x\in K$ acts on $f\in M^{\dual}$ by $(xf)(m)\coloneqq f(x^*m)$. We also easily see that $L(\lambda)^{\dual}=L(\lambda)$.

The indecomposable injective objects are then $I_\delta(\lambda)\coloneqq (P(\lambda))^{\dual}$ for $\lambda\in\Lambda_\delta$.

Furthermore, we define \emph{standard modules} $V(\mu)$ for $\mu\in\Lambda_\delta$. These are the cell modules associated to the cellular structure (in the sense of \cite{GL}) of $K$ in \cite{ES2}*{Theorem 7.1}. As a vector space it has a basis given by formal symbols $(\underline{\gamma}\mu|$  for all $\gamma\in\Lambda_\delta$ such that $\underline{\gamma}\mu$ is oriented. The multiplication is defined as \begin{equation}\label{deficellmodule}
	(a\lambda b)(\underline{\gamma}\mu|=\begin{cases}
	s_{a\lambda b}(\mu)(a\mu|&\text{if $b\neq\overline{\gamma}$ and $a\mu$ is oriented,}\\
	0&\text{otherwise,}
\end{cases}
\end{equation}where $s_{a\lambda b}(\mu)$ is either the coefficient from \cite{ES2}*{Thm. 7.1} or as in \cite[Theorem 3.1]{BS2}.
The standard module $V(\mu)$ is also the quotient of $P(\mu)$ and the $K$-submodule generated by all oriented circle diagrams $a\lambda\overline{\mu}$ with $\lambda\neq\mu$ (then we necessarily have $\lambda>\mu$).
The irreducible module $L(\mu)$ is the quotient of $V(\mu)$ and the $K$-submodule generated by all $(\underline{\gamma}\mu|$ with $\gamma\neq\mu$ (and hence $\gamma>\mu$).


\begin{thm} \cite[Theorem 4.5]{HNS} \label{upperfinitehighweight}
	The category $\Mod_{lf}(K)$ is an upper finite highest weight category in the sense of \cite{BS21} with standard objects $V(\lambda)$, $\lambda\in\Lambda_\delta$.
\end{thm}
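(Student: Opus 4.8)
The plan is to verify the axioms of an \emph{upper finite highest weight category} in the sense of \cite{BS21} directly, using the explicit combinatorics of the Khovanov algebra $K$ that has been set up in this section. Recall that the relevant data are: the poset $(\Lambda_\delta, \leq)$ from \cref{defbruhatmove}, the irreducibles $L(\lambda)$, their projective covers $P(\lambda) = Ke_\lambda$ in $\Mod_{lf}(K)$, and the standard objects $V(\lambda)$ defined via the cellular structure. The conditions to check are essentially: (i) the poset is \emph{upper finite}, i.e.\ every down-set $\{\mu \mid \mu \leq \lambda\}$ is finite; (ii) $\End_K(V(\lambda)) = \mathbb{C}$ and more precisely $[V(\lambda) : L(\mu)] \neq 0$ implies $\mu \leq \lambda$ with $[V(\lambda):L(\lambda)] = 1$; (iii) the projective cover $P(\lambda)$ has a (possibly infinite, but with each standard appearing finitely often and only at weights $\geq \lambda$) filtration by standard modules $V(\mu)$, $\mu \geq \lambda$, with $V(\lambda)$ at the top appearing once.

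First I would establish the combinatorial finiteness statement (i): since two weights in the same block agree on the positions of $\circ$ and $\times$, and $\lambda \leq \mu$ forces them into the same block, a down-set of $\lambda$ is contained in a single block; within that block a Bruhat move strictly decreases the statistic $l(\cdot, \mu)$ (for a fixed large $\mu$) or, more cleanly, one sees from the description of type A and type D moves that only finitely many weights $\mu'$ can satisfy $\mu' \leq \lambda$ because the number of $\wedge$'s is bounded and their positions are confined to a finite interval determined by $\lambda$. This is where a short but genuine combinatorial argument is needed; I would phrase it via the functions $l_i(\lambda,\mu)$ already introduced. Next, for (ii), I would read off the composition factors of $V(\mu)$ from its cellular basis $\{(\underline{\gamma}\mu| : \underline{\gamma}\mu \text{ oriented}\}$ together with the multiplication rule \eqref{deficellmodule}: the submodule structure is governed by the Bruhat order exactly as in \cite[Theorem 3.2, Corollary 5.2]{BS2}, so that $L(\mu)$ is the head of $V(\mu)$ appearing with multiplicity one and every other composition factor $L(\gamma)$ has $\gamma > \mu$. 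The degree-zero part argument gives $\End_K(V(\mu)) = \mathbb{C}$.

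Then for the standard filtration of $P(\lambda)$ I would argue exactly as in type A \cite[Theorem 5.1]{BS2}: the projective $P(\lambda) = Ke_\lambda$ has a basis of oriented circle diagrams $a\nu\overline{\lambda}$, and grouping these by the cup diagram $a$ (equivalently by the weight $\mu$ with $\underline{\mu} = a$) exhibits a filtration whose subquotients are isomorphic, up to a grading shift, to the standard modules $V(\mu)$ for those $\mu$ with $\underline{\mu}\lambda$ oriented; such $\mu$ necessarily satisfy $\mu \geq \lambda$, the top subquotient is $V(\lambda)$ occurring once, and by the finiteness in (i) each $V(\mu)$ occurs finitely often in each graded degree. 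This gives a $\Delta$-flag in the upper-finite sense. Combining (i)--(iii) with the general recognition criterion of \cite{BS21} for upper finite highest weight categories then yields the theorem. The main obstacle I anticipate is not conceptual but bookkeeping: one must be careful that all the type A statements from \cite{BS2} about cellular bases, the surgery multiplication rule, and standard filtrations genuinely transfer to the type B Khovanov algebra with its dotted cups/caps and the $\diamond$ at position $0$ — in particular the sign function $\mathrm{sign}_D$ and \cref{defianticlock} must be used consistently so that the multiplication coefficients $s_{a\lambda b}(\mu)$ behave as in the cited type A theorems; this is precisely the kind of "same proof as in type A" step the section warns about, and the cleanest route is to cite \cite{ES2}*{Theorem 7.1} for cellularity and \cite{HNS}*{Theorem 4.5} itself, of which this is essentially a restatement in the \cite{BS21} language.
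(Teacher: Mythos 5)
The paper itself contains no argument here: the theorem is imported wholesale from \cite{HNS}*{Theorem 4.5}, so your closing observation that the cleanest route is to cite that result is in fact exactly what the authors do. Your overall strategy --- checking the \cite{BS21} axioms directly from the cellular basis, the Bruhat combinatorics and the type A template of Brundan--Stroppel --- is the natural one and is presumably how the cited reference proceeds, and your points (ii) and (iii) are sound in outline.

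There is, however, a concrete error in your step (i). In \cite{BS21} a poset is \emph{upper finite} when every \emph{up-set} $\{\mu : \mu\geq\lambda\}$ is finite, not every down-set as you state, and for the order of \cref{defbruhatmove} the two behave completely differently. Going up, a type A move shifts a $\wedge$ strictly to the left and a type D move deletes a leading pair of $\wedge$'s, so any $\mu\geq\lambda$ has at most $\#{\wedge}(\lambda)$ symbols $\wedge$, all confined to the finite interval occupied by the $\wedge$'s of $\lambda$; hence up-sets are finite. Going down is the opposite: an inverse type A move pushes a $\wedge$ past any of the infinitely many $\vee$'s to its right, and an inverse type D move creates two new leading $\wedge$'s, so the down-set $\{\mu : \mu\leq\lambda\}$ is \emph{infinite} for every admissible $\lambda$. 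Thus the statement you actually wrote down and set out to prove is false, while your combinatorial argument (boundedly many $\wedge$'s confined to a finite region, phrased via the $l_i$) is precisely the correct proof of the finiteness that \cite{BS21} requires --- applied to the wrong set. This is not merely cosmetic: upper versus lower finiteness determines which axiom package of \cite{BS21} must be verified (finite $\Delta$-flags on projectives versus the dual conditions), and the slip propagates into your (iii), where you index the flag of $P(\lambda)$ by $\{\mu : \underline{\mu}\lambda \text{ oriented}\}$ (a condition forcing $\mu\leq\lambda$, since the orienting weight dominates the label of the cup diagram) yet assert in the same sentence that these $\mu$ satisfy $\mu\geq\lambda$; the correct grouping is by the middle weight $\nu$ of the basis vectors $a\nu\overline{\lambda}$, for which $\nu\overline{\lambda}$ oriented does force $\nu\geq\lambda$, and finiteness of the flag then follows from finiteness of the up-set rather than from your (i). Once the directions are straightened out, the rest of your outline goes through.
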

%

\subsection{Geometric bimodules}
In this section we generalize the diagrammatics of Khovanov's arc algebra by incorporating \emph{crossingless matchings} (of type B). This section proves furthermore analogous results to \cite{BS2}*{Sections 2--4} and many ideas from the proofs there can be directly applied to our setting.

A \emph{crossingless matching} is a diagram $t$, which is obtained by drawing an admissible cap diagram $c$ underneath an admissible cup diagram $d$ and connecting the rays in $c$ to the rays in $d$ from left to right. This means that we allow dotted cups, caps and lines but each dot necessarily needs to be able to be connected to the left boundary without crossing anything, just as in the case of admissible circle diagrams (see \cite{ES2}*{Def. 3.5}). Furthermore we delete pairs of dots on each segment, such that each line segment contains at most one dot. Any crossingless matching is a union of (dotted) cups, caps and line segments, for example:
\begin{center}
	\begin{tikzpicture}[scale=0.6]
		\cups{1 2 d, 5 7}
		\caps{2 5, 3 4}
		\rays{1 3 d, 6 4}
	\end{tikzpicture}
\end{center}
We denote by $\cu(t)$ respectively $\ca(t)$ the number of cups respectively caps in $t$. Furthermore let $t^*$ be the horizontally reflected image of $t$.

We say that $t$ is a \emph{$\Lambda\Gamma$-matching} if the bottom and top number lines of $t$ agree with the number lines of $\Lambda$ respectively $\Gamma$.
More generally, given a sequence of blocks $\LAMBDA = \Lambda_k\dots\Lambda_0$, we define a \emph{$\LAMBDA$-matching} to be a diagram $\t = t_k\dots t_1$ obtained by glueing a sequence $t_1, \dots, t_k$ of crossingless matchings together from top to bottom such that
\begin{itemize}
	\item each $t_i$ is a $\Lambda_i\Lambda_{i-1}$-matching for each $i=1, \dots, k$,
	\item the free vertices at the bottom of $t_i$ are in the same position with the free vertices at the top of $t_{i+1}$ for $i=1, \dots, k-1$.
\end{itemize}
Given additionally a cup diagram $a$ and a cap diagram $b$ such that their number lines agree with the bottom number lines of $t_k$ respectively the top number line of $t_1$, we can glue them together and obtain a \emph{$\Lambda$-circle diagram} $a\t b = at_k\dots t_1b$.

Let $\Lambda$ and $\Gamma$ be blocks and let $t$ be a $\Lambda\Gamma$-matching. Given weights $\lambda\in\Lambda$ and $\mu\in\Gamma$ we can glue these together from bottom to top to obtain a new diagram $\lambda t\mu$. We call this an \emph{oriented $\Lambda\Gamma$-matching} if
\begin{itemize}
	\item each pair of vertices lying on the same dotted cup or the same undotted line segment is labeled such that both are either $\vee$ or both are $\wedge$,
	\item each pair of vertices lying on the same undotted cup or the same dotted line segment is labeled such that one is $\vee$ and one is $\wedge$,
	\item all other vertices are labeled $\circ$ or $\times$.
\end{itemize}
A diamond $\diamond$ can be interpreted as either $\vee$ or $\wedge$.

More generally an \emph{oriented $\LAMBDA$-matching} for a sequence of blocks $\LAMBDA=\Lambda_k\dots\Lambda_0$ is a composite diagram of the form 
\[
\tl = \lambda_kt_k\lambda_{k-1}\dots\lambda_1t_1\lambda_0
\]
where $\lam=\lambda_k\dots \lambda_0$ is a sequence of weights such that $\lambda_it_i\lambda_{i-1}$ is an oriented $\Lambda_i\Lambda_{i-1}$-matching for each $i=1, \dots k$.

Finally given an oriented $\LAMBDA$-matching and cap and cup diagrams $a$ and $b$ such that $a\lambda_k$ (resp.~$\lambda_0b$) is an oriented cup (resp cap) diagram we can glue these together to obtain an \emph{oriented $\LAMBDA$-circle diagram} $a\tl b$.

We call a $\LAMBDA$-matching $\t$ \emph{proper} if there exists at least one oriented $\LAMBDA$-matching for $\t$.
By a rightmost vertex $x$ on a circle $C$ we mean a vertex lying on $C$ such that on this numberline, there is no vertex to the right of $x$. In the bottom picture every rightmost vertex is marked by $x$. 
\begin{center}
	\begin{tikzpicture}[scale=0.5]
		\caps{0 1 d, 2 5 d, 3 4}\wdiag{- - - - - -}\node[anchor=south west] at (5, \currh) {$x$};\cups{1 2, 4 5}\caps{1 2}\rays{0 0 d, 3 3}\wdiag{- - - - - -}\node[anchor=south west] at (3, \currh) {$x$};\cups{2 3}\rays{0 0, 1 1}\wdiag{- - - - - -}\node[anchor=south west] at (1, \currh) {$x$};\cups{0 1 d}
	\end{tikzpicture}
\end{center}
We refer to a circle in an oriented $\LAMBDA$-diagram as \emph{clockwise} respectively \emph{anticlockwise} if a rightmost vertex on the circle is labeled $\vee$ respectively $\wedge$. It can be checked similarly to \cite[Corollary 5.9]{ES2} that this notion is well-defined.

\begin{lem}\label{rightmostvertex}
	Let $a\tl b$ be an oriented $\LAMBDA$-circle diagram and let $C$ be a closed component of this diagram. Then the rightmost vertices of $C$ all have the same orientation.
\end{lem}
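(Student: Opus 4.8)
The statement is the type B analogue of \cite[Corollary 5.9]{ES2}, and the natural strategy is to mimic that argument while carefully tracking the bookkeeping of dots. First I would fix a closed component $C$ in the oriented $\LAMBDA$-circle diagram $a\tl b$, and let $x$ and $y$ be two rightmost vertices on $C$, possibly lying on different number lines (coordinates $(x_0,l)$ and $(y_0,l')$). The goal is to show that the symbols of $\tl$ at $x$ and $y$ are both $\vee$ or both $\wedge$. Since $x$ and $y$ lie on the same circle $C$, there is a path $\gamma_1,\dots,\gamma_t$ of arcs (cups, caps and line segments of the stacked diagram) from $x$ to $y$ along $C$; I would trace the orientation along this path.

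The key local bookkeeping is: along an \emph{undotted} cup, cap or an \emph{undotted} line segment of the stacked diagram the two endpoints carry the \emph{same} label in $\{\vee,\wedge\}$ only for line segments, and \emph{opposite} labels for cups/caps (by the definition of an oriented $\LAMBDA$-matching recalled just above the lemma), and the roles are swapped for \emph{dotted} arcs. Combining these, the label at $y$ equals the label at $x$ flipped once for each undotted cup/cap and each dotted line segment traversed, i.e.\ flipped $N$ times where $N = \#\{j : \gamma_j \text{ is an undotted cup/cap or a dotted line segment}\}$. So it suffices to prove $N$ is even. I would reformulate this: $N$ has the same parity as $(\text{number of cups/caps in the path}) + (\text{number of dotted arcs in the path})$, because an arc is either a cup/cap or a line segment. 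Since $C$ is a \emph{closed} component, a standard parity argument as in \cite{ES2} shows the number of cups/caps in a closed path is even (the path alternates between ``going up'' via cups and ``going down'' via caps, up to the stacked structure — more precisely each maximal ascending/descending run is controlled by the layering). Hence I must show the number of dotted arcs along $C$ is even.

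That last point is exactly where the type B subtleties enter and where I expect the main obstacle to lie: it is the closed-component analogue of the well-definedness of $\mathrm{sign}_D$, which the paper recalls via \cite[Lemma 5.7]{ES2}. Concretely, in an admissible/oriented stacked circle diagram every dot can be connected to the left boundary without crossings, and the number of dotted arcs on a \emph{closed} circle is forced to be even — otherwise the circle would enclose a left-boundary marker an odd number of times, contradicting the planarity/admissibility conditions built into the definition of oriented $\LAMBDA$-matchings. I would make this precise by invoking that $\mathrm{sign}_D(x,l)=\mathrm{sign}_D(y,l')$ is independent of the chosen path and tag (the cited well-definedness result, whose proof adapts verbatim since it only uses planarity), and noting that $\mathrm{sign}_D(x,l)/\mathrm{sign}_D(y,l') = (-1)^{\#\{j:\gamma_j \text{ dotted}\}}$; independence of the path forces this to be $+1$, so the number of dotted arcs is even.

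Putting the two parity statements together: $N$ is even, so the labels at $x$ and $y$ agree, which is the assertion. I would remark that the only genuinely new input compared to \cite{ES2} is handling the stacked (height $>2$) situation and the interaction of dots with line segments, both of which are routine once the height-$2$ and dot-parity facts are in place; the argument is otherwise formally identical to \cite[Corollary 5.9]{ES2}, so in the write-up I would state the parity lemma for dotted arcs, cite \cite{ES2}*{Lemma 5.7} for its proof, and deduce the result in a short paragraph.
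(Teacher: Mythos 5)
Your reduction of the statement to two parity claims is fine: along a path $\gamma_1,\dots,\gamma_t$ from $x$ to $y$ the label flips exactly at undotted cups/caps and dotted line segments, and this count is congruent mod $2$ to $(\#\text{cups and caps})+(\#\text{dotted arcs})$. The problem is that neither of your two parity claims is actually established, and the telltale sign is that your argument never uses the hypothesis that $x$ and $y$ are \emph{rightmost} vertices. The conclusion is false for arbitrary vertices of $C$ — the two vertices of a small undotted circle (one cup, one cap) carry opposite labels — so any correct proof must invoke rightmost-ness somewhere. Concretely: (1) the path from $x$ to $y$ is \emph{not} closed, only the whole circle is. Cups and caps do alternate along any path, but whether their total is even or odd depends on whether the path leaves $y$ and enters $x$ on opposite sides of their respective number lines; evenness of the count around the full circle only tells you the two $x$--$y$ paths have the same parity, which is vacuous (it is equivalent to the consistency of the given orientation). (2) Likewise, evenness of the number of dots around the whole circle only gives equal dot-parity for the two paths, not that either is even; and the cited well-definedness of $\mathrm{sign}_D$ is \emph{tag}-independence, where tags are \emph{globally} rightmost vertices, whereas the lemma's rightmost vertices are only rightmost on their own number line, so $\mathrm{sign}_D(x)=\mathrm{sign}_D(y)$ is not something you can quote — it is essentially (the dotted half of) what you are trying to prove.

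The missing geometric input, which is what the paper's proof supplies, is this: of the two paths joining $y$ to $x$, let $C_1$ be the one leaving $y$ below its number line and $C_2$ the one leaving above. Because $y$ is rightmost, $C_1$ must re-cross $y$'s number line to the \emph{left} of $y$, so $C_1$ separates $C_2$ from the left boundary of the diagram. Admissibility (every dot must be connectable to the left boundary without crossings) then forces $C_2$ to carry no dots at all, and the same separation argument shows $C_2$ is the path entering $x$ from below; with those departure/arrival sides its cups and caps pair up, so the flip count along $C_2$ is even. If you want to keep your two-parities formulation, you must prove both parities for this specific path using the cut-off argument — at which point you have reproduced the paper's proof rather than shortcut it via the closed-circle parity or the sign function.
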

\begin{proof}
	Take two rightmost vertices $x$ and $y$ in a circle $c$ and assume that $x\neq y$. Then there are exactly two paths connecting $x$ with $y$ in $C$. The crucial observation is that the ``right'' one of them is cut off by the other one from the left boundary of the diagram and thus cannot contain any dots.
	Without loss of generality assume that $y$ appears on a lower number line as the picture indicates.
	\begin{center}
		\begin{tikzpicture}[scale =0.8]
			\draw[dotted] (0,0)--(0, -2) node[midway, anchor=east] {$C_1$} (2, 0) -- (2, -1) .. controls+(0,-0.5) and+(0, -0.5).. (1.5, -1) ..controls+(0, 0.5)and+(0,0.5).. (1, -1) -- (1, -2) node[midway, anchor=west] {$C_2$};
			\draw (-0.5,0) -- (2.5,0) (-0.5, -2) -- (1.5, -2);
			\draw[dotted] (0,0) .. controls +(0,0.5) and +(0,0.5) .. (2,0);
			\draw[dotted] (0,-2) -- (0,-2.5) .. controls +(0,-0.5) and +(0,-0.5) .. (1,-2.5) -- (1, -2);
			\draw[->, shorten >=3pt] (2.5,0.5) node[anchor = south west] {$x$} -- (2,0);
			\draw[->, shorten >=3pt] (1.5,-2.5) node[anchor=north west] {$y$} -- (1,-2);
		\end{tikzpicture}
	\end{center}
	One of the paths leaves the vertex $y$ to the top ($C_2$) and one to the bottom ($C_1$). Note that $C_1$ has to cross the number line of $y$ again but by our assumption this happens to the left of $y$. Then this paths always stays to the left of $C_2$, hence $C_2$ is ``cut off'' by $C_1$. So $C_2$ cannot contain any dots as otherwise those could not be connected to the left boundary (contradicting the admissiblity assumption in the definition of crossingless matching). By a similar reasoning $C_2$ is also the path which enters $x$ from the bottom and hence $C_2$ has to contain an even number of cups which are all undotted. So the symbol ($\vee$ or $\wedge$) gets changed an even number of times, when moving from $y$ to $x$ along $C_2$ and thus the orientations agree.
\end{proof}

\begin{defi}
	The \emph{degree} of a circle or a line in an oriented $\LAMBDA$-circle diagram is the total number of clockwise cups or caps that it contains. The \emph{degree} of an oriented $\LAMBDA$-circle diagram is the sum of the degrees of each of its circles and lines.
	We call a circle only consisting of one cup and one cap a \emph{small circle}.
\end{defi}

The following lemma can be verified similar to \cite[Proposition 1.2.12, Proposition 1.2.13]{ES2}. 

\begin{lem}\label{onemoreoneless}
	The degree of an anticlockwise circle in an oriented $\LAMBDA$-circle diagram is one less than the total number of caps (equivalently, cups) that it contains. The degree of a clockwise circle is one more than the total number of caps (equivalently, cups) that it contains. The degree of a line is equal to the number of caps or the number of cups that it contains, whichever is greater.
\end{lem}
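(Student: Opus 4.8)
The plan is to prove the three claims about the degree of a circle or line in an oriented $\LAMBDA$-circle diagram $a\tl b$ by a local/combinatorial analysis of how orientations behave along the components, closely mirroring the argument already used for ordinary oriented circle diagrams in \cite{ES2}.

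\textbf{Setup and the key local observation.} First I would fix a closed component $C$ of $a\tl b$ and choose a rightmost vertex $x$ of $C$, which by \Cref{rightmostvertex} is well-defined up to orientation: $C$ is clockwise iff $x$ is labeled $\vee$, anticlockwise iff $x$ is labeled $\wedge$. The key local observation is the one recorded in \Cref{orient} (and in the definition of oriented $\LAMBDA$-matching): on an \emph{undotted} cup or cap the two endpoints carry opposite labels ($\vee$/$\wedge$), while on a \emph{dotted} cup or cap they carry the same label; moreover a cup or cap is clockwise precisely when traversing it from its ``lower-index'' endpoint to its ``higher-index'' endpoint the orientation arrow points in the clockwise sense, which combinatorially means the higher endpoint is $\vee$ (for a cup) or a $\wedge$ (matching the conventions in \Cref{defianticlock}). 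The plan is then to traverse $C$ once, starting and ending at $x$, and count how often the $\vee$/$\wedge$ label changes.

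\textbf{Counting along a circle.} Traversing $C$ once returns to the starting label, so the total number of label changes along the traversal is even. Each cup or cap contributes a label change iff it is undotted; each dotted arc contributes no change. Hence the number of undotted arcs of $C$ is even. Now I would argue as in \cite{ES2}*{Prop.~1.2.12--1.2.13}: among the arcs of $C$, let $k$ be the number of caps (equivalently cups, since a circle alternates between them as one goes around, so there are equally many). The clockwise caps are exactly those caps where, locally, one ``descends'' with a $\vee$ at the rightmost local point; tracking the orientation around $C$ one sees that the number of clockwise caps minus the number of clockwise cups, or rather the parity bookkeeping, forces: if $C$ is anticlockwise (rightmost vertex $\wedge$) then it has exactly $k-1$ clockwise cups (equivalently $k-1$ clockwise caps, since $\deg(a\tl b)=\deg$ of the cup part equals the cap part by the mirror symmetry $t\leftrightarrow t^*$), and if $C$ is clockwise it has exactly $k+1$. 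This is the content of \Cref{onemoreoneless} for circles; the proof is the standard induction on $k$, peeling off an innermost (small) circle or a ``kink'' consisting of an adjacent cup--cap pair, using that removing such a pair changes $k$ by one and the number of clockwise caps by one in the expected direction, together with the base case of a small circle (one cup, one cap): if the cup--cap pair is undotted it is oriented with the rightmost vertex $\wedge$ and degree $0=k-1$; if dotted, degree is $1=k+1$ — one checks both cases directly from \Cref{orient}.

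\textbf{Lines.} For a line component I would run essentially the same traversal, but now the two endpoints are rays reaching the left boundary, and a line is by convention anticlockwise (\Cref{defianticlock}), i.e.\ both of its boundary rays are compatibly oriented. Cutting the line at its leftmost (boundary) points, I count clockwise caps versus clockwise cups along it. Unlike a circle, a line need not have equally many cups and caps — it can have one more of either. The claim is that $\deg$ of the line equals $\max(\#\text{caps on the line},\#\text{cups on the line})$. I would prove this by the same innermost-pair induction: removing an adjacent undotted cup--cap (or cap--cup) pair internal to the line decreases both counts by one and decreases the number of clockwise caps by exactly one, reducing to the base case where the line is a single ray (degree $0$, zero cups and caps), a single cup with two rays, or a single cap with two rays; in each base case \Cref{orient} gives degree $0$, matching $\max(\cdot,\cdot)$ since there the two counts differ by at most one and the extra arc, being forced undotted/dotted by the ray conventions, is anticlockwise.

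\textbf{Main obstacle.} The genuinely delicate point — as the excerpt itself flags by only saying the lemma ``can be verified similar to'' \cite{ES2} — is the bookkeeping of \emph{dots}: in type B, cups, caps and rays may be dotted, and dottedness reverses the ``opposite labels'' rule to ``equal labels'', so the naive parity count from type A must be redone with care. The hard part will be checking that in the inductive step the dotted arcs are handled consistently (e.g.\ that peeling off a kink never creates an illegal dot configuration, using the admissibility constraint that every dot connects to the left boundary, exactly as exploited in the proof of \Cref{rightmostvertex}), and that the base cases with dotted small circles / dotted rays give precisely the $+1$ versus $-1$ discrepancy claimed. Once the dot bookkeeping is pinned down, the rest is the routine induction and the statement follows.
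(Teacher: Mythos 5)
Your overall strategy --- traverse each component, use the local rules of \Cref{orient} (an undotted cup or cap flips the $\vee/\wedge$ label, a dotted one preserves it), and induct by peeling off innermost cup--cap pairs --- is exactly what the paper intends: it gives no proof of its own and simply defers to the corresponding propositions of \cite{ES2}, which argue this way. However, several of your concrete claims are false, and taken at face value your induction would establish the wrong formulas. Your base cases are the problem. An undotted small circle is \emph{not} forced to be anticlockwise: it admits both orientations, of degrees $0$ and $2$. A dotted small circle (note that both arcs must be dotted, since a dotted cup paired with an undotted cap on the same two vertices would impose contradictory label conditions) likewise admits both orientations, again of degrees $0$ and $2$ --- not degree $1$ as you assert (and $k+1=2$ here in any case). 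For lines, the base cases ``a single cup with two rays'' and ``a single cap with two rays'' have degree $1=\max(1,0)$, not $0$: admissibility of dots (every dot must be connectable to the left boundary, so at most the leftmost ray of such a component can be dotted) forces the unique arc to be clockwise in every legal labelling. Getting these base cases right is precisely where the dot bookkeeping you correctly flag as the main obstacle actually bites, so asserting they all have degree $0$ skips the hard part and gets it wrong.

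There is also a misreading of the statement: ``one less than the number of caps (equivalently, cups)'' means the \emph{total} number of clockwise arcs --- cups and caps together --- equals $k-1$, where $k=\#\mathrm{caps}=\#\mathrm{cups}$; it does not assert $k-1$ clockwise cups and separately $k-1$ clockwise caps, and no mirror symmetry forces the clockwise arcs to split evenly between cups and caps (an anticlockwise circle with two cups and two caps has exactly one clockwise arc among the four). A smaller slip: rays do not reach the left boundary --- dots do; rays run to the top or bottom of the diagram. Once the base cases and the count are corrected, your peeling induction does go through, the one genuinely type-B ingredient being the observation already exploited in the proof of \Cref{rightmostvertex} that admissibility confines dots to the ``leftmost'' part of each component, which is what pins down the degrees of the dotted arcs.
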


\begin{defi}
	Suppose we have a $\LAMBDA$-matching $\t=t_k\dots t_1$ for some sequence $\LAMBDA=\Lambda_k\dots\Lambda_0$ of blocks. We refer to circles in $\t$ not meeting the top or bottom number line as \emph{internal} circles. The \emph{reduction} of $\t$ is the $\Lambda_k\Lambda_0$-matching which is obtained by removing all internal circles, all but the top and bottom number line and maintaining the parity of dots on each component. 
\end{defi}


\begin{lem}\label{degreeformula}
	Assume that $a\tl b$ is an oriented $\LAMBDA$-circle diagram for some sequence $\lam = \lambda_k\dots\lambda_0$ of weights. Let $u$ be the reduction of $\t$. Then $a\lambda_ku\lambda_0b$ is an oriented $\Lambda_k\Lambda_0$-circle diagram and
	\begin{align*}
		\deg(a\tl b)&=\deg(a\lambda_ku\lambda_0b)+\ca(t_1)+\dots+\ca(t_k)-\ca(u)+p-q\\
		&=\deg(a\lambda_ku\lambda_0b)+\cu(t_1)+\dots+\cu(t_k)-\cu(u)+p-q,
	\end{align*}
	where $p$ (resp.~$q$) denotes the number of internal circles of $\t$ that are clockwise (resp.~anticlockwise) in the diagram $a\tl b$.
\end{lem}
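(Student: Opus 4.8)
The statement relates the degree of an oriented $\LAMBDA$-circle diagram $a\tl b$ to the degree of the oriented $\Lambda_k\Lambda_0$-circle diagram $a\lambda_k u \lambda_0 b$ obtained from its reduction $u$, with correction terms counting caps (or cups) lost in the reduction and internal circles weighted by orientation. My strategy is to reduce the whole statement to a local, inductive bookkeeping argument using \cref{onemoreoneless}. First I would check the claim that $a\lambda_k u \lambda_0 b$ is again oriented: this is essentially immediate, since the reduction only deletes internal circles and collapses stacked number lines, preserving the parity of dots on each surviving component, so each cup/cap/line of $u$ inherits a consistent orientation from the labels $\lambda_0,\lambda_k$ on the boundary.

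**Key steps.** The degree of any oriented $\LAMBDA$-circle diagram is, by definition, the total number of clockwise cups and caps summed over all its circles and lines. So I would split the set of circles and lines of $a\tl b$ into (i) the internal circles, and (ii) the components meeting the top or bottom number line. For the internal circles: by \cref{onemoreoneless}, a clockwise internal circle contributes (number of caps it contains)$+1$ to the degree, and an anticlockwise one contributes (number of caps)$-1$. Summing over all internal circles, the total degree contributed by them equals (total number of caps on internal circles) $+ p - q$. For the non-internal components (those surviving in the reduced diagram, possibly after collapsing intermediate number lines and merging with internal arcs): I claim the total number of clockwise cups/caps among these equals $\deg(a\lambda_k u\lambda_0 b)$ plus (the number of caps on non-internal components of $\t$ minus $\ca(u)$). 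The point is that passing from $\t$ to its reduction $u$ deletes only internal circles and straightens the middle section, so the non-internal components of $a\tl b$ are in orientation-preserving bijection with the components of $a\lambda_k u\lambda_0 b$; the clockwise/anticlockwise type of each is determined by its rightmost vertex (well-defined by \cref{rightmostvertex}), which is unchanged. Hence the degree difference on these components is accounted for purely by the caps that get removed in the reduction — but on a single component the number of clockwise caps changes in lockstep with the total number of caps only up to the $\pm 1$ already extracted, so one must be a bit careful and instead argue component-by-component using \cref{onemoreoneless} on both sides. Adding (i) and (ii): the total caps on internal circles plus the total caps on non-internal components of $\t$ is $\ca(t_1)+\dots+\ca(t_k)$, and subtracting the $\ca(u)$ bookkeeping term gives exactly the claimed formula. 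The second equality (with cups instead of caps) follows since in any crossingless matching between two number lines the number of cups plus number of caps is fixed and the difference $\cu - \ca$ of the whole picture equals $\cu(u)-\ca(u)$, or alternatively just by running the same argument with the roles of cups and caps exchanged.

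**Main obstacle.** The delicate point is making the component-by-component comparison between $a\tl b$ and $a\lambda_k u\lambda_0 b$ rigorous: one needs that a non-internal component of $\t$, after reduction, has the same orientation type (clockwise/anticlockwise) and that the change in its number of clockwise caps is governed correctly by \cref{onemoreoneless} applied on both sides. A clean way to do this is to not track clockwise caps directly but to apply \cref{onemoreoneless} to each component of $a\tl b$ and to each component of $a\lambda_k u\lambda_0 b$ separately, turning everything into counts of caps and counts of clockwise-versus-anticlockwise components, and then observe that reduction preserves orientation types of the surviving components (via the rightmost-vertex criterion of \cref{rightmostvertex}) and merely deletes internal caps. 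I expect this reduction-to-\cref{onemoreoneless} step — and in particular the verification that collapsing the middle section of the stacked diagram does not alter the orientation type of any surviving component — to be the part requiring the most care, though it is conceptually routine once one sets up the bijection of components carefully. This is exactly the type $B$ analogue of \cite[Proposition 1.2.12, Proposition 1.2.13]{ES2} and \cite[Section 3]{BS2}, so the argument should follow the same template.
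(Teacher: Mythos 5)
Your argument is correct and is essentially the paper's proof: the paper simply cites \cite[Lemma 2.3]{BS2}, and the component-by-component bookkeeping you describe --- splitting off the internal circles, applying \cref{onemoreoneless} to each component of both $a\tl b$ and $a\lambda_ku\lambda_0 b$, and using that reduction preserves the orientation type (via the rightmost-vertex criterion) while removing equal numbers of cups and caps from each surviving component --- is exactly the content of that cited proof, transported to type~B. The subtlety you flag about lines is real but resolves as you suggest, since $\cu-\ca$ is unchanged on each non-internal component under reduction, so $\max(\cu,\ca)$ drops by the same amount as $\ca$ does.
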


\begin{proof} See \cite[Lemma 2.3]{BS2}.
\end{proof}

\begin{defi}\label{defiuplowreduction}
	Let $t$ be a $\Lambda\Gamma$-matching for some blocks $\Lambda$ and $\Gamma$. Let $a$ be a cup diagram such that its number line agrees with the bottom one of $t$. We refer to circles or lines not meeting the top number line in $at$ as \emph{lower} circles or lines. The \emph{lower reduction} of $at$ refers to the cup diagram which is obtained by removing all lower circles and lines as well as the bottom number line.
	
	Similarly if $b$ is a cap diagram whose number line agrees with the top one of $t$, we call each circle or line not meeting the bottom number line \emph{upper} circle or line. Similarly the \emph{upper reduction} of $bt$ means removing all upper lines or circles and the top number line.
\end{defi}


\begin{lem}\label{degreeformulaupperreduction} (\cite[Lemma 5.5]{HNS})
	If $a\lambda t\mu b$ is an oriented $\Lambda\Gamma$-circle diagram and $c$ is the lower reduction of $at$, then $c\mu b$ is an oriented circle diagram and
	\[
	\deg(a\lambda t\mu b) = \deg(c\mu b)+\ca(t) + p-q,
	\]
	where $p$ (resp.~$q$) is the number of lower circles that are clockwise (resp.~anticlockwise) in the diagram $a\lambda t\mu b$. 
	For the dual statement about upper reduction one needs to replace $\ca(t)$ by $\cu(t)$.
\end{lem}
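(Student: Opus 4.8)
The statement is the ``height-one'' special case of the more general degree formula in Lemma \ref{degreeformula}, specialized to a sequence of blocks of length $k=1$, i.e. $\LAMBDA = \Lambda\Gamma$ and $\t = t$. However the phrasing here is slightly different: instead of reducing the interior of a stacked matching, we reduce with respect to a cup diagram $a$ at the bottom and ask about the \emph{lower} reduction $c$ of $at$. So the first thing I would do is set up the correspondence between the two situations. Given the oriented circle diagram $a\lambda t\mu b$, glue $a$ onto the bottom of $t$; the components of $at$ that do not touch the top number line of $t$ are exactly the ``lower circles and lines'' of Definition \ref{defiuplowreduction}. The lower circles play the role of the internal circles in Lemma \ref{degreeformula}, and removing them (keeping parity of dots) yields $c\mu b$, which is the analogue of $a\lambda_k u \lambda_0 b$.

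The cleanest route is a direct accounting argument rather than invoking Lemma \ref{degreeformula} verbatim, since the setup (a fixed cup diagram at the bottom, not two number lines) is genuinely a bit different. First I would check that $c\mu b$ really is an oriented $\Lambda\Gamma$-circle diagram: removing a lower circle or line does not affect the orientability of the remaining components, because each such component is a union of cups/caps/rays of $t$, $a$, $b$ together with the weight $\mu$ on the top line, and deleting whole connected components together with the bottom line leaves the orientation conditions on the surviving components untouched; the only subtlety is the parity of dots, which is exactly why the lower reduction is defined to preserve it. Next, for the degree count: the degree of $a\lambda t\mu b$ is the sum of the degrees of its circles and lines, which splits as (contribution of lower circles and lines) $+$ (contribution of the components meeting the top line). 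For the components meeting the top line, I would argue that after removing the lower components, the number of clockwise cups/caps among them is unchanged, so their total contribution to the degree is $\deg(c\mu b) + (\text{number of caps of } t \text{ that lie on lower components})$ — wait, more carefully: $\deg(c\mu b)$ counts clockwise caps among the surviving components only relative to $c$ and $b$, whereas in $a\lambda t\mu b$ those same surviving components also pass through caps of $t$. The bookkeeping here is: each surviving circle or line in $a\lambda t\mu b$ has degree equal to its number of clockwise caps, and passing to $c\mu b$ replaces the ``bottom half'' (the cups of $a$ and the part of $t$ below) by a cup diagram $c$ in such a way that the clockwise/anticlockwise type of each surviving cup of $t$ is preserved but $c$ itself contributes as a genuine cup diagram; applying Lemma \ref{onemoreoneless} to each surviving circle in both diagrams and subtracting gives the discrepancy $\ca(t) - (\text{caps of } t \text{ on lower components}) = (\text{caps of } t \text{ on surviving components})$, and the caps on lower circles contribute $p - q$ (each clockwise lower circle contributes one more than its caps, each anticlockwise one less, by Lemma \ref{onemoreoneless}, while a lower line contributes exactly its number of caps and there is no net $p,q$ correction from lines). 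Reassembling the pieces yields $\deg(a\lambda t\mu b) = \deg(c\mu b) + \ca(t) + p - q$.

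The main obstacle, and the step I would be most careful about, is the last paragraph's bookkeeping: correctly tracking how the number of clockwise caps gets redistributed when we collapse the lower part of $t$ and $a$ down to the cup diagram $c$, in particular making sure that a cap of $t$ lying on a \emph{surviving} circle is counted in $\deg(a\lambda t\mu b)$ but \emph{not} in $\deg(c\mu b)$ (since in $c\mu b$ the surviving circle no longer passes through any cap of $t$, only through caps of $b$ and cups of $c$), which is precisely what produces the $+\ca(t)$ term; and that caps of $t$ on lower circles, together with the one-more/one-less phenomenon of Lemma \ref{onemoreoneless}, produce exactly $p-q$ rather than some other combination. The rest — orientability of $c\mu b$ and the dual statement for upper reduction, which follows by the horizontal-reflection symmetry swapping cups with caps (so $\ca(t)$ becomes $\cu(t)$) — is routine. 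Since this is essentially \cite[Lemma 5.5]{HNS}, I would in practice simply cite that, but the argument above is how I would reconstruct it.
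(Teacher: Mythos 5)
The paper gives no argument of its own here --- it simply cites \cite[Lemma 5.5]{HNS}, just as the parallel Lemma \ref{degreeformula} is ``proved'' by citing \cite[Lemma 2.3]{BS2} --- so your instinct to cite and your reconstruction cannot be compared against an in-paper proof; but the reconstruction itself is the right argument and, up to two loose ends, it does close. The component-by-component accounting works exactly as you describe: lower circles contribute $(\#\text{caps of }t\text{ on them})\pm 1$ by Lemma \ref{onemoreoneless}, lower lines contribute exactly their caps of $t$ (they have one more cap than cup, so the max is the cap count and there is no $\pm 1$), and each surviving component loses in degree precisely the caps of $t$ it carries, which reassembles to $\ca(t)+p-q$. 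Your observation that orientability of $c\mu b$ reduces to a dot-parity check along each collapsed ``dip'' is also correct.

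The two points you should tighten. First, the sentence ``each surviving circle or line has degree equal to its number of clockwise caps'' is a slip --- degree counts clockwise cups \emph{and} caps --- and the correct route, which you then take, is to apply Lemma \ref{onemoreoneless} to both $C$ and its image $C'$ and subtract. But for a surviving \emph{circle} that subtraction only yields $\#(\text{caps of }t\text{ on }C)$ if $C$ and $C'$ carry the same clockwise/anticlockwise orientation, so that the $\pm 1$ terms cancel; this is not automatic from ``each surviving cup of $t$ keeps its type'' and needs the rightmost-vertex/admissibility argument of Lemma \ref{rightmostvertex} (the rightmost vertex of $C$ may sit on the bottom number line while that of $C'$ sits on the top one). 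Second, you leave the surviving \emph{lines} untreated: there Lemma \ref{onemoreoneless} gives a max of two counts, and one needs the identity $\max(\alpha+\delta,\beta+\gamma)=\alpha+\max(\delta,\beta+\gamma-\alpha)$, where $\alpha,\beta,\gamma,\delta$ are the numbers of caps of $t$, cups of $a$, cups of $t$, caps of $b$ on the component; this holds because every excursion of the component below the top number line has exactly one more cup of $a$ than caps of $t$, so the cups of $c$ number $\gamma+\beta-\alpha$. With those two checks added, your outline is a complete proof.
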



\begin{defi}
	Let $\LAMBDA=\Lambda_k\dots\Lambda_0$ be a sequence of blocks, and let $\t=t_k\dots t_1$ be a $\LAMBDA$-matching. Define $K^{\t}_{\LAMBDA}$ to be the graded vector space with homogeneous basis
	\begin{equation*}
	\{(a\tl b)\mid\text{ for all closed oriented $\LAMBDA$-circle diagrams } a\tl b\}.
	\end{equation*}
	Define a degree preserving linear map
	\begin{equation}\label{definvolution}
		*\colon K^{\t}_{\LAMBDA}\to K^{\t^*}_{\LAMBDA^*}, \quad (a\tl b)\mapsto (b^*\t^*[\lam^*]a^*),
	\end{equation}
	where $\LAMBDA^*=\Lambda_0\dots\Lambda_k$, $\lam^*=\lambda_0\dots\lambda_k$, $t^*=t_1^*\dots t_k^*$ and $t_i^*$, $a^*$ and $b^*$ denote the mirror images of $t_i$, $a$, $b$ in the horizontal axis.
\end{defi}


Let $\GAMMA=\Gamma_l\dots\Gamma_0$ be another sequence of blocks with $\Lambda_0=\Gamma_l$. We denote by $\LAMBDA\bm{\wr\Gamma}$ the block sequence $\Lambda_k\dots\Lambda_1\Gamma_l\dots\Gamma_0$. Observe that one copy of $\Lambda_0$ is left out in comparison to the concatenation of the block sequences. Furthermore note that if $\u=u_l\dots u_1$ is a $\GAMMA$-matching the concatenation $\t\u=t_k\dots t_1u_l\dots u_1$ is a $\LAMBDA\bm{\wr\Gamma}$-matching. We then define a degree preserving linear multiplication
\begin{equation}\label{defmultiplication}
	m\colon K^{\t}_{\LAMBDA}\otimes K^{\u}_{\GAMMA}\to K^{\t\u}_{\LAMBDA\bm{\wr\Gamma}}
\end{equation}
as follows. The product $(a\tl b)(c\u[\MU]d)$ is defined to be $0$ whenever $b\neq c^*$. In the case $b=c^*$ we draw $(a\tl b)$ underneath $(c\u[\MU]d)$ and we then smooth out the symmetric middle section using surgery procedures exactly as in the Khovanov algebra $K$ of type $B$. Then we collapse the middle section by identifying the number lines adjacent to the middle section and declaring the product to be this sum of oriented $\LAMBDA\bm{\wr\Gamma}$-circle diagrams. That this is well-defined and homogeneous of degree 0 can be verified in the same manner as in \cite{ES2}*{Section 5}. 

In the special case $k=l=0$ this simplifies to the ordinary multiplication in the Khovanov algebra $K$ of type $B$. Additionally, given a third sequence of blocks $\bm{\Upsilon}$ with $\Upsilon_0=\Lambda_k$, this multiplication is associative in the sense that the following diagram commutes, which again can be verified analogously to \cite{ES2}*{Section 5}:
\begin{equation}\label{assocgeombimod}
	\begin{tikzcd}
		K^{\bm{s}}_{\bm{\Upsilon}}\otimes K^{\t}_{\LAMBDA}\otimes K^{\u}_{\GAMMA}\arrow[r, "1\otimes m"]\arrow[d, "m\otimes 1"]&K^{\bm{s}}_{\bm{\Upsilon}}\otimes K^{\t\u}_{\LAMBDA\bm{\wr\Gamma}}\arrow[d, "m"]\\
		K^{\bm{s}\t}_{\bm{\Upsilon\wr}\LAMBDA}\otimes  K^{\u}_{\GAMMA}\arrow[r, "m"]& K^{\bm{s}\t\u}_{\bm{\Upsilon\wr}\LAMBDA\bm{\wr\Gamma}}
	\end{tikzcd}
\end{equation}

Finally the linear map $*$ is antimultiplicative in the sense that the following diagram commutes ($P$ denotes the flip $x\otimes y\mapsto y\otimes x$):

\begin{equation}\label{antimultgeombimod}
	\begin{tikzcd}
		K^{\t}_{\LAMBDA}\otimes K^{\u}_{\GAMMA}\arrow[r, "P\circ(*\otimes *)"]\arrow[d, "m"]& K^{\u^*}_{\GAMMA^*}\otimes K^{\t^*}_{\LAMBDA^*}\arrow[d, "m"]\\
		K^{\t\u}_{\LAMBDA\bm{\wr\Gamma}}\arrow[r, "*"]&K^{\u^* \t^*}_{\GAMMA^*\bm{\wr\Lambda}^*}
	\end{tikzcd}
\end{equation}

\begin{rem}
	Letting $\bm{\Upsilon}=\Lambda_k$ and $\GAMMA=\Lambda_0$ we see by \eqref{assocgeombimod} that the multiplication $m$ turns $K^{\t}_{\LAMBDA}$ into a $(K_{\Lambda_k},K_{\Lambda_0})$-bimodule.
	
	Recalling the primitive idempotents $e_\alpha\in K_{\Lambda_k}$ and $e_\beta\in K_{\Lambda_0}$, we have that
	\begin{align}
		e_\alpha(a\tl b) = \begin{cases}
			(a\tl b)&\text{if } \underline{\alpha}=a,\\
			0&\text{otherwise},
		\end{cases}\\
		(a\tl b)e_\beta = \begin{cases}
			(a\tl b)&\text{if } \overline{\beta}=b,\\
			0&\text{otherwise}.
		\end{cases}
	\end{align}
\end{rem}

Using these definitions many statements from \cite{BS2} and \cite{BS5} carry over verbatim, namely \cite[Theorem 3.1, Corollary 3.2 and 3.3, Theorem 3.5 and 3.6]{BS2}, see also \cite{Nehme} for a detailed verification in our type B case. In particular, the analog of \cite[Theorem 3.6]{BS2} allows us to reduce the study of bimodules $K^{\t}_{\LAMBDA}$ for arbitrary sequences $\LAMBDA=\Lambda_k\dots\Lambda_0$ and $\t=t_k\dots t_1$ to the bimodules $K^t_{\Lambda\Gamma}$ for a single $\Lambda\Gamma$-matching $t$, markedly simplifying our notation. In other words, in order to understand $K^{\t}_{\LAMBDA}$ as a bimodule, it actually suffices to understand the bimodule $K^u_{\Lambda_k\Lambda_0}$ instead. This justifies why we are restricting ourselves to the latter case in the following section.

\subsection{Projective functors}

In this section we develop the theory of projective functors and compute their effect on simple, standard and projective modules.  

\begin{defi}\label{defiprojfunctor}
	Let $t$ be a proper $\Lambda\Gamma$-matching. Define the functor
	\begin{equation*}
		G^t_{\Lambda\Gamma}\coloneqq K^t_{\Lambda\Gamma}\langle-\ca(t)\rangle\otimes\_\colon \Mod_{lf}(K_\Gamma)\to \Mod_{lf}(K_\Lambda).
	\end{equation*}
	We call any functor which is isomorphic to a finite direct sum of the above functors (possibly shifted) a \emph{projective functor}.
\end{defi}

\begin{rem}
	The degree shift in the definition ensures that $G^t_{\Lambda\Gamma}$ commutes with duality, see \cite[Theorem 4.10]{BS2} (which will hold in type B as well). 
	Furthermore \cite[Theorem 3.5]{BS2} \cite[Theorem 3.6]{BS2} imply that the composition of projective functors is again projective.
\end{rem}

In most statements the additional dots do not play a role and we can therefore apply the theory of \cite{BS2}. It can then be immediately verified that \cite[Lemma 4.1]{BS2}, \cite[Theorem 4.2]{BS2} (noting that the map $f\colon K^t_{\Lambda\Gamma}e_{\gamma}\to K_\Lambda e_\lambda\otimes R^{\otimes n}$ is $K_\Lambda$-linear as every tag gets altered by an even number of undotted arcs (see \cref{rightmostvertex})), \cite[Corollary 4.3]{BS2} and \cite[Theorem 4.5]{BS2} (see also \cite[Theorem 5.11]{HNS}) and their proofs carry over verbatim. In particular we have a description of the grading filtration of the $K_\Lambda$-module $G^t_{\Lambda\Gamma}V(\gamma)$.

We would like to analyse the effect of projective functors on irreducible modules. For this we are interested in proving that the projective functors $G^t_{\Lambda\Gamma}$ and $G^{t^*}_{\Gamma\Lambda}$ form up to degree shift an adjoint pair (as in \cite{BS2}*{Section 4} for type $A$), so that we can understand the composition factors of $G^t_{\Lambda\Gamma}L(\gamma)$ in terms of $G^{t^*}_{\Gamma\Lambda}P(\mu)$. For this we define a linear map
\begin{equation}
	\phi\colon K^{t^*}_{\Gamma\Lambda}\otimes K^t_{\Lambda\Gamma}\to K_\Gamma
\end{equation}
by declaring that $\phi\coloneqq 0$ if $t$ is not a proper $\Lambda\Gamma$-matching. If $t$ is proper and given basis vectors $(a\lambda t^*\nu d)\in K^{t^*}_{\Gamma\Lambda}$ and $(d'\kappa t\mu b)\in K^t_{\Lambda\Gamma}$, we denote by $c$ the upper reduction of $t^*d$. Then if $d'=d^*$ and all mirror image pairs of upper respectively lower circles in $t^*d$ respectively $d^*t$ are oriented in \emph{opposite} ways in the corresponding basis vectors, we set
\begin{equation}\label{defiphi}
	\phi((a\lambda t^*\nu d)\otimes(d'\kappa t\mu b)) \coloneqq \pm(a\lambda c)(c^*\mu b)
\end{equation}
and otherwise we set $	\phi((a\lambda t^*\nu d)\otimes(d'\kappa t\mu b))\coloneqq0$. 
The sign in \eqref{defiphi} depends only on $t$ and $d$ and is defined inductively by the argument given in the next proof, i.e.~by the induction argument in the next proof one can reconstruct the sign for each $t$ and $d$.

\begin{lem}\label{nodegbilinformadjunction}
	The map $\phi\colon K^{t^*}_{\Gamma\Lambda}\otimes K^t_{\Lambda\Gamma}\to K_\Gamma$ is a homogeneous $(K_\Gamma, K_\Gamma)$-bimodule homomorphism of degree $-2\ca(t)$. Moreover it is $K_\Lambda$-balanced and thus induces a map $\overline{\phi}\colon K^{t^*}_{\Gamma\Lambda}\otimes_{K_\Lambda} K^t_{\Lambda\Gamma}\to K_\Gamma$.
\end{lem}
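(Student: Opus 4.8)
The strategy is to follow the blueprint of \cite[Theorem 4.2, Lemma 4.4]{BS2}, modified so as to track the extra combinatorial data coming from dots, and to reduce everything to a computation about a single surgery. First I would verify the three assertions in turn: homogeneity of degree $-2\ca(t)$, the bimodule homomorphism property, and the $K_\Lambda$-balancing; the sign issue is handled \emph{en route} by the induction that produces it.

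\emph{Degree.} Fix basis vectors $(a\lambda t^*\nu d)$ and $(d^*\kappa t\mu b)$ with $\phi$ nonzero on their tensor, so $c$ is the upper reduction of $t^*d$ and all mirror pairs of upper/lower circles are oriented oppositely. Applying \cref{degreeformulaupperreduction} to the oriented cap diagram $t^*d$ (reading it as an upper reduction situation) gives $\deg(a\lambda t^*\nu d)=\deg(a\lambda c)+\cu(t^*)+p-q$ where $p$, $q$ count clockwise, resp. anticlockwise, upper circles; dually $\deg(d^*\kappa t\mu b)=\deg(c^*\mu b)+\ca(t)+p'-q'$ for the lower circles of $d^*t$. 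Because the matchings $t^*d$ and $d^*t$ are horizontal mirror images of each other and the orientations of each mirror pair are \emph{opposite}, a clockwise upper circle pairs with an anticlockwise lower circle and vice versa, so $p=q'$ and $q=p'$; hence $(p-q)+(p'-q')=0$. Since $\cu(t^*)=\ca(t)$, we get
\begin{equation*}
	\deg(a\lambda t^*\nu d)+\deg(d^*\kappa t\mu b)=\deg\bigl((a\lambda c)(c^*\mu b)\bigr)+2\ca(t),
\end{equation*}
and as $(a\lambda c)(c^*\mu b)$ is a sum of homogeneous diagrams of degree $\deg(a\lambda c)+\deg(c^*\mu b)$ (multiplication in $K_\Gamma$ is degree-preserving), $\phi$ has degree $-2\ca(t)$. (Here one uses, exactly as in the remark before \cref{nodegbilinformadjunction}, that every tag in these diagrams is altered only by an even number of undotted arcs, so the sign rule of the Khovanov algebra interacts cleanly with the reductions.)

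\emph{Bimodule map and balancing.} The left and right $K_\Gamma$-actions only touch the top cap diagram $a$ and the bottom cup diagram $b$, which are untouched by the surgeries defining $\phi$; so $K_\Gamma$-linearity on both sides follows formally, as in \cite[Theorem 4.2]{BS2}. The crux is the $K_\Lambda$-balancing: one must show $\phi\bigl((a\lambda t^*\nu d)\,x\otimes y\bigr)=\phi\bigl((a\lambda t^*\nu d)\otimes x\,y\bigr)$ for $x\in K_\Lambda$. By linearity it suffices to take $x=e_\alpha(a'\sigma b')e_\beta$ a basis vector of $K_\Lambda$, and then, using the associativity diagram \eqref{assocgeombimod} for the geometric bimodule multiplication, to reduce to the case where $x$ effects a \emph{single} surgery move. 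Thus everything comes down to checking that $\phi$ intertwines one elementary surgery on the $d$-side of $K^{t^*}_{\Gamma\Lambda}$ with the mirror-image surgery on the $d^*$-side of $K^t_{\Lambda\Gamma}$, where the result of the surgery is then read off after taking upper reductions. This is a finite case analysis over the surgery types (a cup-cap pair turning into two lines, two lines turning into a cup-cap pair, together with whether the strands involved are dotted) and over whether the circles involved are upper/lower or meet the boundary; in each case one compares the two sides, and the requirement that mirror pairs be oriented \emph{oppositely} is precisely what makes the two outcomes match. The sign $\pm$ in \eqref{defiphi} is \emph{defined} by this comparison: once a base case is fixed, each elementary surgery forces the sign on the larger diagram, and one checks the forced value is consistent (independent of the order of surgeries) using the associativity \eqref{assocgeombimod} and \eqref{antimultgeombimod} together with the fact that, type B surgery coefficients being the type A ones up to dotted-arc signs that always come in even numbers on a given component (\cref{rightmostvertex}), the type A consistency of \cite[Lemma 4.4]{BS2} carries over.

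\emph{Main obstacle.} The routine parts (degree, $K_\Gamma$-linearity) are as in type A. The genuine work, and the place where type B differs from \cite{BS2}, is the balancing/well-definedness of the sign: one has to be sure that the dotted arcs, the marker on the component through $\diamond$, and the opposite-orientation condition all propagate correctly through a single surgery so that the inductively defined sign is consistent. I expect this to be exactly the ``notable exception'' alluded to before \cref{nodegbilinformadjunction}; it is handled by isolating the claim to one surgery via \eqref{assocgeombimod}, and then by a bookkeeping argument showing the parity of dotted arcs on each affected component is preserved, so that the sign computation reduces to the type A one of \cite[Theorem 4.2, Lemma 4.4]{BS2}. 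Once $\phi$ is $K_\Lambda$-balanced it descends to $\overline{\phi}$ on the tensor product over $K_\Lambda$, completing the proof.
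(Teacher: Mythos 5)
Your degree computation and the $K_\Gamma$-bimodule part are correct and match the paper. The gap is in the $K_\Lambda$-balancing, which is the whole content of the lemma. Your plan is to verify $\phi(vx\otimes w)=\phi(v\otimes xw)$ directly by reducing to the case where $x$ ``effects a single surgery move''. This reduction is not justified (a basis vector of $K_\Lambda$ acts by a whole sequence of surgeries, and there is no canonical factorization of that action into single surgeries inside $K_\Lambda$), and even granting it, the two sides of the identity are computed from upper reductions of \emph{different} diagrams: the right action of $x$ replaces the cup diagram $d$ by some $b'$, so one side uses the upper reduction of $t^*b'$ and the other the upper reduction of $t^*d$. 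You give no mechanism for comparing these. The paper's proof avoids this entirely by factoring $\phi=\omega\circ m$, where $m\colon K^{t^*}_{\Gamma\Lambda}\otimes K^t_{\Lambda\Gamma}\to K^{t^*t}_{\Gamma\Lambda\Gamma}$ is the geometric-bimodule multiplication (already known to be $K_\Lambda$-balanced by associativity \eqref{assocgeombimod}) and $\omega$ kills any basis vector with an anticlockwise internal circle in $t^*t$ and otherwise reduces $t^*t$ and applies surgeries; balancing is then automatic, and the real work is proving $\phi=\omega\circ m$ by induction on $\ca(t)$ via a five-case local analysis of $t^*d$.

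Your fallback for the sign consistency --- that type B surgery coefficients differ from type A only by dotted-arc signs which ``always come in even numbers on a given component'', so the consistency of \cite[Lemma 4.4]{BS2} carries over --- is not correct. \cref{rightmostvertex} controls the parity of dots along a closed circle, not the coefficients of the surgery moves: the type B \emph{Split} carries a position-dependent sign $(-1)^{\po(i)}$ or $(-1)^{\po(i)+1}$ with no type A counterpart, and these signs are exactly what appear in the paper's Cases 3 and 4 and what force the $\pm$ into the definition \eqref{defiphi} in the first place. Any correct proof has to track these signs through the reduction of $t^*d$ (small circles, upper lines with one cup, the dotted cup--cap configurations, and the general nested case); asserting that the problem collapses to type A skips precisely the part of the argument that makes this lemma the ``notable exception''.
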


\begin{rem} This lemma is analogous to \cite[Lemma 4.6]{BS2} but considerably harder to prove due to the appearance of additional sign factors.
\end{rem}

\begin{proof}
	If $t$ is not a proper $\Lambda\Gamma$-matching the claim is trivial, thus we assume in the following that $t$ is proper.
	
	First of all, we are going to show that $\phi$ is homogeneous of degree $-2\ca(t)$. For this take again basis vectors as in \eqref{defiphi} (in every other case $\phi$ is $0$ by definition). Suppose that $p$ (resp.~$q$) of the upper circles in $t^*d$ are oriented clockwise (resp.~anticlockwise) in $a\lambda t^*\nu d$. Then by our assumptions on the basis vectors $q$ (resp.~$p$) of the lower circles in $d^*t$ are oriented clockwise (resp.~anticlockwise) in $d^*\kappa t\mu b$. By \cref{degreeformulaupperreduction} we have 
	\begin{align*}
		\deg(a\lambda t^*\nu d) &=\deg(a\lambda c) + \cu(t^*) + p-q\;\text{ and}\\
		\deg(d^*\kappa t\mu b) &=\deg(c^*\mu b) + \ca(t) + q-p.
	\end{align*}
	By definition of $t^*$, we have $\cu(t^*)=\ca(t)$, thus 
	\begin{equation}
		\deg((a\lambda t^*\nu d)\otimes(d^*\kappa t\mu b))=\deg((a\lambda c)(c^*\mu b))+2\ca(t).
	\end{equation}
	
	Secondly, the map $\phi$ is a left $K_\Gamma$-homomorphism as in the proof of \cite[Theorem 4.2]{BS2}, which showed that mapping $(a\lambda t^*\nu d)$ to $(a\lambda c)$ is a left $K_\Gamma$-homomorphism, and one argues similarly for the right action.
	
	Lastly we are going to prove that $\phi$ is $K_\Lambda$-balanced. For this we introduce the map
	\begin{equation*}
		\omega\colon K^{t^*t}_{\Gamma\Lambda\Gamma}\to K_\Gamma
	\end{equation*} as follows. Take a basis vector $(a\lambda t^*\mu t\nu b)\in K^{t^*t}_{\Gamma\Lambda\Gamma}$. If any of its internal circles in the diagram $t^*t$ are oriented anticlockwise, we declare that its image is $0$. Otherwise we define $u$ to be the reduction of $t^*t$ and consider the diagram $a\lambda u\nu b$. This contains a symmetric middle section as $u$ was the reduction of the symmetric diagram $t^*t$, so it makes sense to apply the surgery procedure to smooth this section out and obtain a linear combination of basis vectors of $K_\Gamma$. We define the image of $(a\lambda t^*\mu t\nu b)$ to be this linear combination. We claim that 
	\begin{equation}\label{phimultclaimtoproof}
		\phi=\omega\circ m,
	\end{equation} where $m$ is the multiplication map $m\colon K^{t^*}_{\Gamma\Lambda}\otimes K^{t}_{\Lambda\Gamma}\to K^{t^*t}_{\Gamma\Lambda\Gamma}$ from \eqref{defmultiplication}. As we know that $m$ is $K_\Lambda$-balanced (by associativity), this shows that $\phi$ is $K_\Lambda$-balanced.

	In some sense, we are trying to prove that first reducing and then multiplying ($\phi$) is ``the same as'' first multiplying and then reducing ($\omega\circ m$). 
	The general idea of the proof is to replace $t^*d$ by some easier $t_1^*d_1$ (for which we know the claim) such that both have the same upper reduction, and then trying to show that $\omega((a\lambda t^*\nu d)(d^*\kappa t\mu b))=\omega((a\lambda t_1^*\nu_1d_1)(d_1^*\kappa_1t_1\mu b))$. But in general, the above equality holds only up to sign and that is why we incorporated a sign in the definition of $\phi$.

	To prove the claim, we proceed by induction on $\ca(t)$. If $\ca(t)=0$, then there are neither upper circles in $t^*d$ nor internal circles in $t^*t$. Thus in this case applying the upper reduction to $t^*d$ gives a bijection between the caps in $t^*d$ and the caps in $c$, which is just given by reducing straight lines in $t^*d$. Hence the signs involved in surgery procedures will exactly be the same.
	Computing $\phi(a\lambda t^*\nu d)\otimes (d^*\kappa t\mu b)=(a\lambda c)(c^*\mu b)$ means that every cap in $c$ gets eliminated by surgeries. On the other hand applying $m$ eliminates each cap in $d$ and $\omega$ eliminates then the remaining caps in $t^*$. Thus by the above comment, the results are the same.
	
	For the induction step assume $\ca(t)>0$ and that \eqref{phimultclaimtoproof} is proven for all smaller cases. We will consider five different cases depending on certain subpictures of $t^*d$, the last one being the general case.
	
	\emph{Case 1:} Suppose  that $t^*d$ contains a small circle, i.e.~a circle consisting of only one cap and cup. If this circle in $t^*\nu d$ and its mirror image in $d^*\kappa t$ are oriented in the same way, then $\phi$ gives $0$ by definition. On the other hand $(a\lambda t^*\nu d)(d^*\kappa t\mu b)$ produces $0$ if both are oriented clockwise and the product produces an anticlockwise circle if both were oriented anticlockwise, but in this case $\omega$ produces $0$. Thus we may assume that these two circles are oriented in opposite ways in $t^*\nu d$ and $d^*\kappa t$. Now we can remove these two circles (and the vertices involved) to obtain diagrams $a\lambda t_1^*\nu_1d_1$ and $d_1^*\kappa_1t_1\mu b$ with $\ca(t_1)<\ca(t)$. Using the definitions, one can easily verify that
	\begin{align*}
		\phi((a\lambda t^*\nu d)\otimes (d^*\kappa t\mu b)) &= \phi((a\lambda t_1^*\nu_1d_1)\otimes(d_1^*\kappa_1t_1\mu b)),\\
		\omega((a\lambda t^*\nu d)(d^*\kappa t\mu b)) &= \omega((a\lambda t_1^*\nu_1d_1)(d_1^*\kappa_1t_1\mu b)).
	\end{align*}
	The first equality holds because the small circle is removed in the process of upper reduction, and thus it does not matter whether we remove it in the process of upper reduction or whether we remove it first and do the upper reduction after that.
	The second equality holds as merging the two small circles in a surgery for the left hand side produces exactly one small clockwise circle with no further signs, which then gets removed by $\omega$. But these circles play no role for the other surgeries, hence it agrees with the right hand side.
	Using the induction hypothesis the right hand sides coincide, thus the left hand sides agree as well.
	
	\emph{Case 2:} Suppose that $t^*d$ contains an upper line containing only one cup. Denote by $a\lambda t_1^*\nu_1 d_1$ and $d_1^*\kappa_1t_1\mu b$ the diagrams where this upper line and its mirror image in $d^*t$ get removed. When computing the product $(a\lambda t^*\nu d)(d^*\kappa t\mu b)$ one can apply the same surgery procedures as for $(a\lambda t_1^*\nu_1 d)(d^*\kappa_1t_1\mu b)$. There is no further surgery needed as the upper line contains only one cup. Now notice that, when drawing $(a\lambda t^*\nu d)$ underneath $(d^*\kappa t\mu b)$ the upper line and its mirror image form a clockwise circle. This is not changed throughout the whole surgery procedure and $(a\lambda t^*\nu d)(d^*\kappa t\mu b)$ and $(a\lambda t_1^*\nu_1 d)(d_1^*\kappa_1t_1\mu b)$ differ only by this clockwise circle, which is removed when applying $\omega$. 
	And as both of them clearly have the same upper reduction (upper lines get removed in this process) we get
	
	\begin{align*}
				\phi((a\lambda t^*\nu d)\otimes (d^*\kappa t\mu b)) &= \phi((a\lambda t_1^*\nu_1d_1)\otimes(d_1^*\kappa_1t_1\mu b))=
				\omega((a\lambda t_1^*\nu_1d_1)(d_1^*\kappa_1t_1\mu b))\\&=\omega((a\lambda t^*\nu d)(d^*\kappa t\mu b)).
		\end{align*}

%
	\emph{Case 3:} Suppose then that $t^*d$ contains one of the following local pictures on the top number line of $t^*d$ with a mirror image in $d^*t$. A dashed dot means that there may be a dot present and different dashing patterns correspond to different choices whether a dot is present or not. In any case, the parity of the number of dots stays the same.
	\begin{center}
		\begin{tabular}{m{0.4\textwidth}m{0.4\textwidth}}
			\begin{tikzpicture}[scale=0.6]
				\node[anchor=east] at (-0.5, 0.5) {$d$}; \node[anchor=east] at (-0.5, -0.5) {$t^*$};
				\caps{0 1}\draw[dotted] (0,{\currh}) --(0,{\currh-1}) (2, {\currh})--(2, {\currh+1});\draw (-0.5, {\currh}) -- (2.5, {\currh});\draw[->, snake=snake] (2.75, \currh) -- (3.25, \currh);\node[anchor=east] at (-1.25, \currh) {(a)};
				\cups{1 2}
				\begin{scope}[xshift=4cm]
					\draw (0,-0.5) -- (0,0.5) (-0.5,0) -- (0.5,0);
					\draw[dotted] (0, -1) -- (0,1);
				\end{scope}
			\end{tikzpicture}&
			\begin{tikzpicture}[scale=0.6]\node[anchor=east] at (-0.5, 0.5) {$d$}; \node[anchor=east] at (-0.5, -0.5) {$t^*$};\caps{1 2}\draw[dotted] (0,{\currh}) --(0,{\currh+1}) (2, {\currh})--(2, {\currh-1});\draw (-0.5, {\currh}) -- (2.5, {\currh});\draw[->, snake=snake] (2.75, \currh) -- (3.25, \currh);\node[anchor=east] at (-1.25, \currh) {(b)};
				\cups{0 1}
				\begin{scope}[xshift=4cm]
					\draw (0,-0.5) -- (0,0.5) (-0.5,0) -- (0.5,0);
					\draw[dotted] (0, -1) -- (0,1);
				\end{scope}
			\end{tikzpicture}\\
			\begin{tikzpicture}[scale=0.6]\node[anchor=east] at (-0.5, 0.5) {$d$}; \node[anchor=east] at (-0.5, -0.5) {$t^*$};
				\caps{0 1 d}\rays{2 i}\fill[pattern=north west lines] (2, 0.5) circle (5pt);\draw[dotted] (0,{\currh}) --(0,{\currh-1});\draw (-0.5, {\currh}) -- (2.5, {\currh});\draw[->, snake=snake] (2.75, \currh) -- (3.25, \currh);\node[anchor=east] at (-1.25, \currh) {(c)};
				\cups{1 2}
				\begin{scope}[xshift=4cm]
					\draw (0,-0.5) -- (0,1) (-0.5,0) -- (0.5,0);
					\fill[pattern=north east lines] (0, 0.5) circle (5pt);
					\draw[dotted] (0, -1) -- (0,1);
				\end{scope}
			\end{tikzpicture}&
			\begin{tikzpicture}[scale=0.6]\node[anchor=east] at (-0.5, 0.5) {$d$}; \node[anchor=east] at (-0.5, -0.5) {$t^*$};\caps{1 2}\draw[dotted] (0, {\currh})--(0, {\currh+1});\wdiag{- - -}\draw[->, snake=snake] (2.75, \currh) -- (3.25, \currh);\node[anchor=east] at (-1.25, \currh) {(d)};\cups{0 1 d}\rays{i 2}\fill[pattern=north west lines] (2, -0.5) circle (5pt);
				\begin{scope}[xshift=4cm, yscale=-1]
					\draw (0,-0.5) -- (0,1) (-0.5,0) -- (0.5,0);
					\fill[pattern=north east lines] (0, 0.5) circle (5pt);
					\draw[dotted] (0, -1) -- (0,1);
				\end{scope}
			\end{tikzpicture}
		\end{tabular}
	\end{center}
	Denote by $a\lambda t_1^*\nu_1d_1$ and $d_1^*\kappa_1t_1\mu b$ the diagrams obtained by straightening these curved lines as in the picture above. We are then again in the situation that $\ca(t_1)<\ca(t)$. 
	In order to compute $(a\lambda t^*\nu d)(d^*\kappa t\mu b)$ we can apply exactly the same surgery procedures in the same order as for $(a\lambda t_1^*\nu_1d_1)(d_1^*\kappa_1t_1\mu b)$ and apply an additional one somewhere in the middle, which involves these curved lines we straightened. \Cref{addsurgtdthree} shows this additional surgery. Note that the dashed dots in the reduction process appear directly beneath each other when multiplying, thus they appear in pairs and get removed at the beginning of the multiplication process.
	\begin{figure}[h]\centering
		\setlength{\extrarowheight}{2.7cm}
		\begin{tabular}{c||c}
			\begin{tikzpicture}[scale=0.5]
				\caps{1 2}\draw[dotted] (0, \currh) -- (0, {\currh+1});\wdiag{- - -}
				\cups{0 1}\caps{0 1}\rays{2 2}\draw[->, snake=snake] (2.75, \currm) -- (3.25,\currm);\wdiag{- - -}
				\draw[dotted] (0, \currh) -- (0, {\currh-1}); \cups{1 2}
				\begin{scope}[xshift=4cm]
					\caps{1 2}\draw[dotted] (0, \currh) -- (0, {\currh+1});\wdiag{- - -}
					\rays{0 0, 1 1, 2 2}\wdiag{- - -}
					\draw[dotted] (0, \currh) -- (0, {\currh-1}); \cups{1 2}
				\end{scope}
			\end{tikzpicture}&
			\begin{tikzpicture}[scale=0.5]
				\caps{0 1}\draw[dotted] (2, \currh) -- (2, {\currh+1});\wdiag{- - -}
				\cups{1 2}\caps{1 2}\rays{0 0}\draw[->, snake=snake] (2.75, \currm) -- (3.25,\currm);\wdiag{- - -}
				\draw[dotted] (2, \currh) -- (2, {\currh-1}); \cups{0 1}
				\begin{scope}[xshift=4cm]
					\caps{0 1}\draw[dotted] (2, \currh) -- (2, {\currh+1});\wdiag{- - -}
					\rays{0 0, 1 1, 2 2}\wdiag{- - -}
					\draw[dotted] (2, \currh) -- (2, {\currh-1}); \cups{0 1}
				\end{scope}
			\end{tikzpicture}\\\hline\hline
			\begin{tikzpicture}[scale=0.5]
				\caps{1 2}\draw[dotted] (0, \currh) -- (0, {\currh+1});\wdiag{- - -}
				\cups{0 1 d}\caps{0 1 d}\rays{2 2}\draw[->, snake=snake] (2.75, \currm) -- (3.25,\currm);\wdiag{- - -}
				\draw[dotted] (0, \currh) -- (0, {\currh-1}); \cups{1 2}
				\begin{scope}[xshift=4cm]
					\caps{1 2}\draw[dotted] (0, \currh) -- (0, {\currh+1});\wdiag{- - -}
					\rays{0 0, 1 1, 2 2}\wdiag{- - -}
					\draw[dotted] (0, \currh) -- (0, {\currh-1}); \cups{1 2}
				\end{scope}
			\end{tikzpicture}&
			\begin{tikzpicture}[scale=0.5]
				\caps{0 1 d}\draw[dotted] (2, \currh) -- (2, {\currh+1});\wdiag{- - -}
				\cups{1 2}\caps{1 2}\rays{0 0}\draw[->, snake=snake] (2.75, \currm) -- (3.25,\currm);\wdiag{- - -}
				\draw[dotted] (2, \currh) -- (2, {\currh-1}); \cups{0 1 d}
				\begin{scope}[xshift=4cm]
					\caps{0 1 d}\draw[dotted] (2, \currh) -- (2, {\currh+1});\wdiag{- - -}
					\rays{0 0, 1 1, 2 2}\wdiag{- - -}
					\draw[dotted] (2, \currh) -- (2, {\currh-1}); \cups{0 1 d}
				\end{scope}
			\end{tikzpicture}
		\end{tabular}
	\caption{Additional surgeries for $t^*d$: \emph{Case 3}}\label{addsurgtdthree}
	\end{figure}
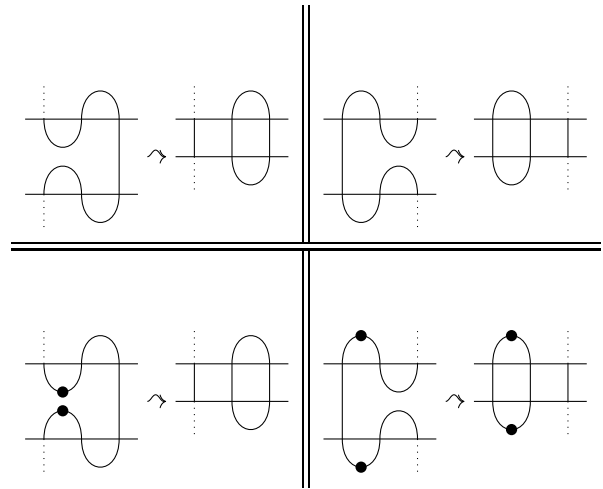
	Up to the point of the additional surgery procedure, the results of the surgeries applied so far is the same (except in the local spot that we changed).
	The additional surgery procedure is a split and produces one extra internal circle. We can concentrate on the case where this circle is oriented clockwise, as otherwise $\omega$ produces $0$. But in this case the other component is oriented in the same way as before, thus leaving ourselves only with a few possible signs. Looking at the definition of the surgery procedure \emph{Split}, one gets that the involved signs are $(-1)^{\po(i)+1}$ in all four cases. After this all the remaining surgeries produce exactly the same result (except for the additional circle produced by the split). This circle does not get altered by any other surgery and is later removed by $\omega$. Thus we have $\omega((a\lambda t^*\nu d)(d^*\kappa t\mu b))=(-1)^{\po(i)+1}\omega((a\lambda t_1^*\nu_1d_1)(d_1^*\kappa_1t_1\mu b))$.
	On the other hand $t^*d$ and $t_1^*d_1$ clearly have the same upper reduction. Defining the involved sign in the definition of $\phi$ to be exactly $(-1)^{\po(i)+1}$ times the sign associated to $t_1^*d_1$, we conclude $\phi((a\lambda t^*\nu d)\otimes (d^*\kappa t\mu b)) = \phi((a\lambda t_1^*\nu_1d_1)\otimes(d_1^*\kappa_1t_1\mu b))$, thus finishing this case.
	
	\emph{Case 4:} Suppose that $t^*d$ contains one of the following subpictures.
	\begin{center}
		\begin{tabular}{c}
			\begin{tikzpicture}[scale=0.5]
				\node[anchor=east] at (-0.5, 0.5) {$d$}; \node[anchor=east] at (-0.5, -0.5) {$t^*$};
				\caps{0 1 d, 2 3 d}\draw[dotted] (0, \currh) -- (0, {\currh-1}) (3, \currh)--(3, {\currh-1});\draw[->, snake=snake] (3.75,\currh) -- (4.25, \currh);\node[anchor=east] at (-1.25, \currh) {(a)};\wdiag{- - - -}
				\cups{1 2}
				\begin{scope}[xshift=5cm]
					\caps{0 1}\draw[dotted] (0, \currh) -- (0, {\currh-1}) (1, \currh) -- (1, {\currh-1});\wdiag{- -}	
				\end{scope}
			\end{tikzpicture}\\[15pt]
			\begin{tikzpicture}[yscale=-1, scale=0.5]
				\node[anchor=east] at (-0.5, 0.5) {$t^*$}; \node[anchor=east] at (-0.5, -0.5) {$d$};
				\caps{0 1 d, 2 3 d}\draw[dotted] (0, \currh) -- (0, {\currh-1}) (3, \currh)--(3, {\currh-1});\draw[->, snake=snake] (3.75,\currh) -- (4.25, \currh);\node[anchor=east] at (-1.25, \currh) {(b)};\wdiag{- - - -}
				\cups{1 2}
				\begin{scope}[xshift=5cm]
					\caps{0 1}\draw[dotted] (0, \currh) -- (0, {\currh-1}) (1, \currh) -- (1, {\currh-1});\wdiag{- -}	
				\end{scope}
			\end{tikzpicture}
		\end{tabular}
	\end{center}

We can apply the indicated reduction and we denote the reduced diagrams by $a\lambda t_1^*\nu_1d_1$ and $d_1^*\kappa_1t_1\mu b$ respectively.
	Let us first look at the second case. The com\-putation of $(a\lambda t^*\nu d)(d^*\kappa t\mu b)$ involves applying the same surgery procedures as for $(a\lambda t_1^*\nu_1d_1)(d_1^*\kappa_1t_1\mu b)$ and one additional surgery at the end. These first surgeries are actually the same because the orientations of every component agree and the tags are the same, as in this case we change two undotted cups and a cap into an undotted cup. 
	Then in the end we apply the following surgery procedure: the circle which gets split is either oriented anticlockwise or clockwise, but it has the same orientation as the one in the reduced picture (on the right).
	\begin{figure}[h]
		\centering
		\begin{tabular}{c||c}
			\begin{tikzpicture}[scale=0.5]
				\caps{0 1 d, 2 3 d}\wdiag{- - - -}
				\cups{1 2}
				\caps{1 2}
				\rays{0 0, 3 3}\draw[->, snake=snake] (3.75, \currm) -- (4.25, \currm);\wdiag{- - - -}
				\cups{0 1 d, 2 3 d}
				\begin{scope}[xshift=5cm]
					\FPset\stddiff{2}
					\caps{0 1 d, 2 3 d}\wdiag{- - - -}
					\rays{0 0,1 1,2 2,3 3}\wdiag{- - - -}
					\cups{0 1 d, 2 3 d}
				\end{scope}
			\end{tikzpicture}&
			\begin{tikzpicture}[scale=0.5]
				\FPset\stddiff{2}
				\caps{0 1}\wdiag{- -}
				\rays{0 0,1 1}\wdiag{- -}
				\cups{0 1}
			\end{tikzpicture}
		\end{tabular}
	\caption{Additional surgery for $t^*d$ in contrast to $t_1^*d_1$: \emph{Case 4a}}\label{addsurgtdfoura}
	\end{figure}
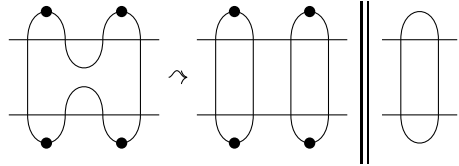
	This last additional surgery is a split and it either splits an anticlockwise or a clockwise circle. If the circle is oriented anticlockwise, it produces the sum of two basis vectors and in each of these, one circle is oriented anticlockwise. Thus $\omega$ produces $0$ and in the reduced picture (see right hand side of \cref{addsurgtdfoura}) we also have an anticlockwise circle and hence $\omega$ produces $0$ there as well. All together we have (as both pictures have the same upper reduction)
	\begin{align*}	\phi((a\lambda t^*\nu d)\otimes (d^*\kappa t\mu b)) &= \phi((a\lambda t_1^*\nu_1d_1)\otimes(d_1^*\kappa_1t_1\mu b))=
		\omega((a\lambda t_1^*\nu_1d_1)(d_1^*\kappa_1t_1\mu b))\\&=0=\omega((a\lambda t^*\nu d)(d^*\kappa t\mu b)).
	\end{align*}
	
	 If on the other hand the circles is oriented clockwise, the split produces exactly two clockwise circles and the involved sign is $(-1)^{\po(i)+1}$. In the end comparing  $(a\lambda t^*\nu d)(d^*\kappa t\mu b)$ with $(a\lambda t_1^*\nu_1d_1)(d_1^*\kappa_1t_1\mu b)$, we see that both agree up to the sign $(-1)^{\po(i)+1}$ and clockwise internal circles. But these internal clockwise circles get removed by $\omega$, hence we get up to the sign the same result
	\begin{equation*}
		\omega((a\lambda t^*\nu d)(d^*\kappa t\mu b))=(-1)^{\po(i)+1}\omega((a\lambda t_1^*\nu_1d_1)(d_1^*\kappa_1t_1\mu b)).
	\end{equation*}
	On the other hand both pictures have the same upper reduction and thus defining the involved sign for $t^*d$ to be $(-1)^{\po(i)+1}$ times the one associated to $t_1^*d_1$ we get
	\begin{equation*}
		\phi((a\lambda t^*\nu d)\otimes(d^*\kappa t\mu b))=(-1)^{\po(i)+1}\phi((a\lambda t_1^*\nu_1d_1)(d_1^*\kappa_1t_1\mu b)).
	\end{equation*}
	As $\ca(t_1)<\ca(t)$ we are done by induction.
	
	Now for the last case we again have the same surgeries for $(a\lambda t^*\nu d)(d^*\kappa t\mu b)$ and $(a\lambda t_1^*\nu_1d_1)(d_1^*\kappa_1t_1\mu b)$ and an additional one for $(a\lambda t_1^*\nu_1d_1)(d_1^*\kappa_1t_1\mu b)$ and two additional ones for $(a\lambda t^*\nu d)(d^*\kappa t\mu b)$ somewhere in the middle (but for both at the same point) (see \cref{addsurgtdfourb}).
	\begin{figure}[h]
		\begin{tabular}{c||c}
		\begin{tikzpicture}[xscale=0.69, yscale=0.73]
			\caps{1 2}\draw[dotted] (0, \currh) -- (0, {\currh+1}) (3, \currh)--(3, {\currh+1});\wdiag{- - - -}
			\cups{0 1 d, 2 3 d}\caps{0 1 d, 2 3 d}\draw[dotted] (0, \currh) -- (0, {\currh-1}) (3, \currh)--(3, {\currh-1});\draw[->, snake=snake] (3.75, \currm) -- (4.25, \currm);\wdiag{- - - -}
			\cups{1 2}
			\begin{scope}[xshift=5cm]
				\caps{1 2}\draw[dotted] (0, \currh) -- (0, {\currh+1}) (3, \currh)--(3, {\currh+1});\wdiag{- - - -}
				\cups{0 1 d}\caps{0 1 d}\rays{2 2,3 3}\draw[dotted] (0, \currh) -- (0, {\currh-1}) (3, \currh)--(3, {\currh-1});\draw[->, snake=snake] (3.75, \currm) -- (4.25, \currm);\wdiag{- - - -}
				\cups{1 2}
			\end{scope}
			\begin{scope}[xshift=10cm]
				\FPset\stddiff{2}
				\caps{1 2}\draw[dotted] (0, \currh) -- (0, {\currh+1}) (3, \currh)--(3, {\currh+1});\wdiag{- - - -}
				\rays{0 0, 1 1,2 2,3 3}\draw[dotted] (0, \currh) -- (0, {\currh-1}) (3, \currh)--(3, {\currh-1});\wdiag{- - - -}
				\cups{1 2}
			\end{scope}
		\end{tikzpicture}&
	\begin{tikzpicture}[scale=0.69]
		\draw[dotted] (0,\currh) -- (0, {\currh+1}) (1,\currh) -- (1, {\currh+1});\wdiag{- -}
		\cups{0 1}
		\caps{0 1}\draw[->, snake=snake] (1.75, \currm) -- (2.25, \currm);
		\wdiag{- -}
		\draw[dotted] (0,\currh) -- (0, {\currh-1}) (1,\currh) -- (1, {\currh-1});
		\begin{scope}[xshift=3cm]
			\FPset\stddiff{2}
			\draw[dotted] (0,\currh) -- (0, {\currh+1}) (1,\currh) -- (1, {\currh+1});\wdiag{- -}
			\rays{0 0,1 1}
			\wdiag{- -}
			\draw[dotted] (0,\currh) -- (0, {\currh-1}) (1,\currh) -- (1, {\currh-1});
		\end{scope}
	\end{tikzpicture}
	\end{tabular}
	\caption{The additional surgeries for $t^*d$ in contrast to $t_1^*d_1$: \emph{Case 4b}}\label{addsurgtdfourb}
	\end{figure}
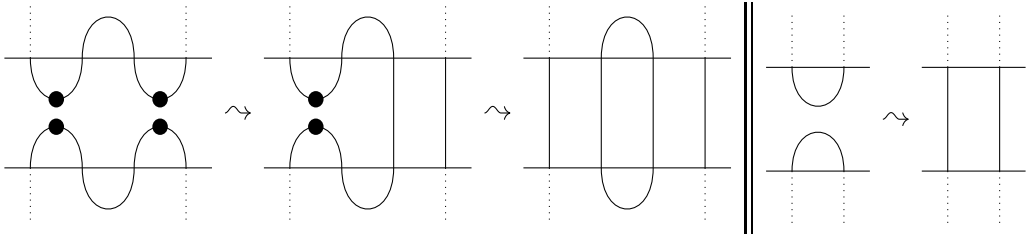
	Up to this point the applied surgery procedures again give the same result, as the component which is reduced is oriented in the same way as before and no sign involved in the multiplication process gets changed. Both first additional surgeries are of the same type and produce the same diagram just differing in this local spot (see \cref{addsurgtdfourb}) and maybe some additional signs. By looking at the definition of the multiplication one sees that the upcoming signs in a merge or a split turn out to be the same and for a reconnect at least one line would not be nonpropagating by admissibility, thus the results would be $0$ anyway.
	The second additional surgery is then splitting off the circle in the middle in \cref{addsurgtdfourb}. If the component is oriented clockwise, the split produces two clockwise oriented components and the involved sign is $(-1)^{\po(i)}$. So up to the sign and this clockwise oriented circle in between, the linear combinations agree. But $\omega$ removes the clockwise circle. If the component is oriented anticlockwise, the split produces a sum of two diagrams. In one the extra circle is oriented anticlockwise but then $\omega$ would produce $0$. So we can concentrate on the summand, where the extra circle is oriented clockwise. Then the other component is necessarily oriented anticlockwise (and thus as before) and the involved sign is $(-1)^{\po(i)}$. 
	
	All surgeries applied after these ones yield the same results, thus one finishes with the same linear combination of diagrams all just multiplied by $(-1)^{\po(i)}$.
	Again just as before the upper reduction of the reduced and the original picture is the same and furthermore $\ca(t_1)<\ca(t)$, and defining the sign for $\phi$ accordingly, we are done by induction and finished with this case.
	
	\emph{Case 5:} In the general setting, we may assume (using the base case of the induction, \emph{Case 1} and \emph{Case 2}) that we have cups in $t^*$ but neither a small circle in $t^*d$ nor an upper line containing only one cup.
	Then we have to have one of the subpictures
	\begin{center}
		\begin{tabular}{m{0.4\textwidth}m{0.4\textwidth}}
			\centering
		\begin{tikzpicture}[scale=0.7]
		\caps{1 2}\fill[pattern=north east lines] (1.5, {0.75}) circle (5pt);\draw[dotted] (0, \currh) -- (0, {\currh+1}) (2, \currh) -- (2, {\currh-1});\wdiag{- - -}\cups{0 1}\fill[pattern=north west lines] (0.5, {-0.75}) circle (5pt); 
	\end{tikzpicture}&
\centering
\begin{tikzpicture}[xscale=-1, scale=0.7]
	\caps{1 2}\fill[pattern=north east lines] (1.5, {0.75}) circle (5pt);\draw[dotted] (0, \currh) -- (0, {\currh+1}) (2, \currh) -- (2, {\currh-1});\wdiag{- - -}\cups{0 1}\fill[pattern=north west lines] (0.5, {-0.75}) circle (5pt); 
\end{tikzpicture}
\end{tabular}
\end{center}

where a dashed dot means that a dot can be present or not. 
We may assume that we choose a picture such that the horizontal distance between the endpoints is minimal.
This means that no attached cup or cap can end ``inside'' the cap or cup of the subpicture, i.e.~one endpoint is at one of the dashed lines and the other one is either to the left or to the right of the picture.
First observe that there cannot be two dots because then the picture would not be admissible, as the left dashed line would necessarily cut off one of the dots from the left boundary.
If no dot is present we are either in \emph{Case 3a} or \emph{Case 3b}.
If one dot is present, \Cref{casedistincgencase} makes a case distinction between which of the arcs is dotted and what happens on the dotted line attached to the undotted arc. This concludes the proof as in each case we can apply one of \emph{Case 3} and \emph{Case 4} and for those we have seen the claim before.

\begin{figure}[h]
	\centering
\scalebox{0.5}{
\setlength{\extrarowheight}{2.5cm}
\begin{tabular}{c||ccc}
	\begin{tikzpicture}
	\caps{1 2 d}\draw[dotted] (0, \currh) -- (0, {\currh+1}) (2, \currh) -- (2, {\currh-1});\wdiag{- - -}\cups{0 1}
	\end{tikzpicture}&
	\begin{tikzpicture}
		\caps{0 1, 2 3 d}\draw[dotted] (3, \currh) -- (3, {\currh-1})(0, \currh) -- (0, {\currh-1});\wdiag{- - - -}\draw[->, snake=snake] (3.75, \currm) -- (4.25, \currm);\node at (4.75, \currm) {\emph{3a}};\cups{1 2}
	\end{tikzpicture}&
\begin{tikzpicture}
\caps{0 1 d, 2 3 d}\draw[dotted] (3, \currh) -- (3, {\currh-1})(0, \currh) -- (0, {\currh-1});\wdiag{- - - -}\draw[->, snake=snake] (3.75, \currm) -- (4.25, \currm);\node at (4.75, \currm) {\emph{4a}};\cups{1 2}
\end{tikzpicture}&\\
	\begin{tikzpicture}
	\caps{1 2}\draw[dotted] (0, \currh) -- (0, {\currh+1}) (2, \currh) -- (2, {\currh-1});\wdiag{- - -}\cups{0 1 d}
\end{tikzpicture}&
\begin{tikzpicture}
	\caps{1 2}\draw[dotted] (3, \currh) -- (3, {\currh+1})(0, \currh) -- (0, {\currh+1});\wdiag{- - - -}\draw[->, snake=snake] (3.75, \currm) -- (4.25, \currm);\node at (4.75, \currm) {\emph{3b}};\cups{0 1 d, 2 3}
\end{tikzpicture}&
\begin{tikzpicture}
	\caps{1 2}\draw[dotted] (3, \currh) -- (3, {\currh+1})(0, \currh) -- (0, {\currh+1});\wdiag{- - - -}\draw[->, snake=snake] (3.75, \currm) -- (4.25, \currm);\node at (4.75, \currm) {\emph{4b}};\cups{0 1 d, 2 3 d}
\end{tikzpicture}&
\begin{tikzpicture}
	\caps{1 2}\draw[dotted](0, \currh) -- (0, {\currh+1});\wdiag{- - -}\draw[->, snake=snake] (2.75, \currm) -- (3.25, \currm);\node at (3.75, \currm) {\emph{3d}};\fill[pattern=north west lines] (2, {\currh-0.5}) circle (5pt); \cups{0 1 d}\rays{i 2}
\end{tikzpicture}\\ 
\begin{tikzpicture}[yscale=-1]
	\caps{1 2 d}\draw[dotted] (0, \currh) -- (0, {\currh+1}) (2, \currh) -- (2, {\currh-1});\wdiag{- - -}\cups{0 1}
\end{tikzpicture}&
\begin{tikzpicture}[yscale=-1]
	\caps{0 1, 2 3 d}\draw[dotted] (3, \currh) -- (3, {\currh-1})(0, \currh) -- (0, {\currh-1});\wdiag{- - - -}\draw[->, snake=snake] (3.75, \currm) -- (4.25, \currm);\node at (4.75, \currm) {\emph{3b}};\cups{1 2}
\end{tikzpicture}&
\begin{tikzpicture}[yscale=-1]
	\caps{0 1 d, 2 3 d}\draw[dotted] (3, \currh) -- (3, {\currh-1})(0, \currh) -- (0, {\currh-1});\wdiag{- - - -}\draw[->, snake=snake] (3.75, \currm) -- (4.25, \currm);\node at (4.75, \currm) {\emph{4b}};\cups{1 2}
\end{tikzpicture}&\\
\begin{tikzpicture}[yscale=-1]
	\caps{1 2}\draw[dotted] (0, \currh) -- (0, {\currh+1}) (2, \currh) -- (2, {\currh-1});\wdiag{- - -}\cups{0 1 d}
\end{tikzpicture}&
\begin{tikzpicture}[yscale=-1]
	\caps{1 2}\draw[dotted] (3, \currh) -- (3, {\currh+1})(0, \currh) -- (0, {\currh+1});\wdiag{- - - -}\draw[->, snake=snake] (3.75, \currm) -- (4.25, \currm);\node at (4.75, \currm) {\emph{3a}};\cups{0 1 d, 2 3}
\end{tikzpicture}&
\begin{tikzpicture}[yscale=-1]
	\caps{1 2}\draw[dotted] (3, \currh) -- (3, {\currh+1})(0, \currh) -- (0, {\currh+1});\wdiag{- - - -}\draw[->, snake=snake] (3.75, \currm) -- (4.25, \currm);\node at (4.75, \currm) {\emph{4a}};\cups{0 1 d, 2 3 d}
\end{tikzpicture}&
\begin{tikzpicture}[yscale=-1]
\caps{1 2}\draw[dotted](0, \currh) -- (0, {\currh+1});\wdiag{- - -}\draw[->, snake=snake] (2.75, \currm) -- (3.25, \currm);\node at (3.75, \currm) {\emph{3c}};\fill[pattern=north west lines] (2, {\currh-0.5}) circle (5pt); \cups{0 1 d}\rays{i 2}
\end{tikzpicture}
\end{tabular}}
\caption{Case distinction for the general case}\label{casedistincgencase}
\end{figure}
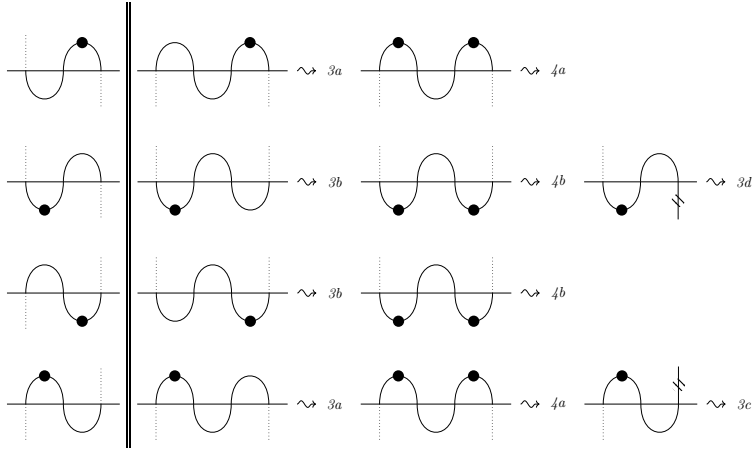
\end{proof}

With this key lemma at hand, the statements and proofs of \cite[Theorem 4.7]{BS2}, \cite[Corollary 4.8]{BS2}, \cite[Corollary 4.9]{BS2}, \cite[Theorem 4.10]{BS2}, \cite[Theorem 4.11]{BS2} and \cite[Corollary 4.12]{BS2} follow (see also \cite{Nehme}).

\section{Nuclear diagrams and projective functors}\label{sec:nuclear}
In this section we are going to introduce nuclear circle diagrams, define an analogue of the projective functors incorporating nuclear diagrams and study these. These nuclear circle diagrams do not appear in type $A$, hence we give mostly complete proofs from now on.

\begin{defi}
	A nuclear circle diagram $a\lambda b\in K_\Lambda$ is an oriented circle diagram with at least one nonpropagating line. We denote by $\mathbb{I}_\Lambda\subseteq K_\Lambda$ the span of all nuclear circle diagrams. 
\end{defi}

\begin{lem}\label{twosidednuclearideal}\label{nuclearideal}
	The vector space $\mathbb{I}_\Lambda$ is a two-sided ideal in $K_\Lambda$.
\end{lem}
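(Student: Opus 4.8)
The plan is to show that $\mathbb{I}_\Lambda$ is closed under left and right multiplication by arbitrary basis vectors $(a\lambda b)$ of $K_\Lambda$; since $*$ sends $\mathbb{I}_\Lambda$ to $\mathbb{I}_\Lambda$ (a nonpropagating line in $a\lambda b$ mirrors to a nonpropagating line in $b^*\lambda a^*$) and is antimultiplicative, it suffices to treat one side, say left multiplication. So I would take a nuclear basis vector $(c\mu d) \in \mathbb{I}_\Lambda$, i.e. an oriented circle diagram possessing at least one nonpropagating line, and an arbitrary basis vector $(a\lambda b) \in K_\Lambda$, and analyze the product $(a\lambda b)(c\mu d)$, which is zero unless $b^* = c$, in which case it is computed by the surgery procedure on the stacked diagram. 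The goal is to see that every oriented circle diagram appearing with nonzero coefficient in the result still contains a nonpropagating line.

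The key observation is a topological one about how the surgery procedure (merge/split/reconnect of cup-cap pairs) affects lines. First I would fix, in the stacked diagram $a\lambda b$ under $c^* \mu d = b\mu d$, a nonpropagating line $\ell$ of $c\mu d$; since $\ell$ does not reach the bottom number line of $c\mu d$, after stacking it is a path both of whose endpoints lie on the \emph{middle} number line (the one that gets collapsed) or on the top number line, but in any case $\ell$ together with the part of the diagram it connects to forms, after all surgeries and after collapsing the middle section, a component of the final diagram. The crucial point is that surgery never creates a propagating line out of purely ``internal'' material: each elementary surgery move either merges two components, splits one component into two, or reconnects, and in each case one checks directly from the definitions (as recorded in the surgery rules of \cite{ES2}*{Section 5}) that the number of endpoints of each component on the two outer number lines is preserved — in particular a line with both endpoints away from the bottom (or away from the top) stays that way, or is absorbed into a circle, but can never become a propagating line connecting top to bottom. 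Hence in every resulting oriented circle diagram the image of $\ell$ (or the component it has merged into) is again a line not touching one of the two outer number lines, i.e.\ a nonpropagating line, so the product lies in $\mathbb{I}_\Lambda$. The same argument, or an application of $*$, handles right multiplication.

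Concretely I would organize it as: (1) recall that $\mathbb{I}_\Lambda$ is spanned by nuclear basis vectors and reduce to checking $(a\lambda b)\cdot(c\mu d)$ and $(c\mu d)\cdot(a\lambda b)$ for basis vectors; (2) reduce the right-multiplication case to the left one via the anti-involution $*$, using that $*$ preserves $\mathbb{I}_\Lambda$; (3) for the left case, assume $b^* = c$ (else the product is $0 \in \mathbb{I}_\Lambda$ trivially), track a chosen nonpropagating line through the surgery algorithm, invoking the invariance of ``number of boundary points on each outer number line per component'' under each surgery move; (4) conclude that each summand of the product is nuclear.

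The main obstacle I expect is step (3): making precise and verifying the claim that surgery moves cannot turn a nonpropagating line into a propagating one. This requires a careful but elementary case check of the three surgery operations (and their dotted variants) on the relevant local configurations, paying attention to the collapsing of the middle section — one must be sure that two nonpropagating lines of $c\mu d$ whose middle-line endpoints get glued cannot combine into something reaching the bottom line, which follows because all the glued endpoints lie on the middle line, strictly between top and bottom, so the glued path still has no endpoint on the bottom line. Once this topological bookkeeping is set up cleanly the rest is formal.
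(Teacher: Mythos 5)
Your overall strategy -- reduce to one-sided multiplication via the anti-involution $*$ and then track a nonpropagating line through the surgery procedure -- is the right one; note, though, that the paper itself gives no argument here at all but simply cites \cite{ES2}*{Proposition 5.3}, so you are supplying the content that the paper outsources. Against that standard your sketch has a genuine gap at exactly the step you flag as the ``main obstacle,'' and the justification you offer for it is not correct as stated.

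First, the claimed invariant -- that each surgery move preserves ``the number of endpoints of each component on the two outer number lines'' -- is false for the reconnect move. When the cup and the mirrored cap lie on two \emph{different} open components, the surgery re-pairs the four ray-ends of the two lines: a line with ends $\{e_1,e_2\}$ and a line with ends $\{e_3,e_4\}$ become lines with ends $\{e_1,e_3\}$ and $\{e_2,e_4\}$. Topologically this can convert an upper nonpropagating line together with a lower nonpropagating line into two propagating lines. The lemma survives only because the surgery rules of \cite{ES2} \emph{declare} the reconnect to be zero in precisely these configurations (this convention is what the present paper is invoking in Case~4 of the proof of \cref{nodegbilinformadjunction}); a correct proof must cite that convention explicitly rather than appeal to a topological conservation law. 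Second, your initial reduction already fails for a nonpropagating line $\ell$ of $c\mu d$ whose two ends are rays of $c=b^*$: once the rays of $b$ and $b^*$ are joined, $\ell$ is prolonged by components of $a\lambda b$, and the resulting component of the stacked diagram can close up into a circle (when the two continuations meet) or acquire completely different ends -- so there is in general no ``image of $\ell$'' left to point at, and one must instead produce a \emph{different} nonpropagating line in each summand of the product. Doing so requires a more global argument (for instance a count of upper versus lower lines in terms of the defects of $a$ and $d$, in the spirit of the observation in the proof of \cref{mutliplicationisonuclear} that $\kappa(a)\neq\kappa(b)$ forces a nonpropagating line), not just local bookkeeping. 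Until these two points are addressed, step (3) of your plan does not go through.
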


\begin{proof}
	This is \cite{ES2}*{Proposition 5.3}.
\end{proof}
Using \cref{twosidednuclearideal} above, we get an induced multiplication on $\tilde{K}_\Lambda\coloneqq K_\Lambda/\mathbb{I}_\Lambda$ turning this into a graded algebra. As for $K$, the $e_\lambda, \lambda\in\Lambda$ (or rather their equivalence classes) provide a set of local units. Thus in this algebra the simple modules are again characterized by $\lambda\in\Lambda$. They are one-dimensional and  $e_\lambda$ acts by $1$ and every other circle diagram by $0$. 
Furthermore the projective indecomposable modules are given by $(K_\Lambda/\mathbb{I}_\Lambda)e_\lambda$ and these are in fact self-dual and hence prinjective (see \cite{ES3}*{Section II.4}). We will denote the simple and the projective indecomposable modules by $\overline{L}(\lambda)$ respectively $\overline{P}(\lambda)$. 
The statement from \cite{ES1}*{Theorem 6.10} that $K$ is generated in degrees $0$ and $1$ directly gives us the following result.

\begin{lem}\label{tildekgendegreeone}
	The algebra $\tilde{K}$ is generated in degrees $0$ and $1$.
\end{lem}


On the next pages, we are going to extend the notion of nuclear morphisms to $\LAMBDA$-circle diagrams and for this we fix  notation as follows. Let $\LAMBDA=\Lambda_k\dots\Lambda_0$ and $\GAMMA=\Gamma_l\dots\Gamma_0$ be sequences of blocks such that $\Lambda_0=\Gamma_l$. Let $\t=t_k\dots t_1$ (resp.~$\u=u_l\dots u_1$) be an oriented $\LAMBDA$-matching (resp.~$\GAMMA$-matching). As before denote the block sequence $\Lambda_k\dots\Lambda_1\Gamma_l\dots\Gamma_0$ by $\LAMBDA\WR\GAMMA$ and let $\t\u=t_k\dots t_1u_l\dots u_1$.

\begin{defi}
	An oriented $\LAMBDA$-circle diagram is called \emph{nuclear} if it contains at least one nonpropagating strand.
	Denote the span of these circle diagrams by $\mathbb{I}^{\t}_{\LAMBDA}$. Furthermore we will abbreviate $\tilde{K}^{\t}_{\LAMBDA}\coloneqq K^{\t}_{\LAMBDA}/\mathbb{I}^{\t}_{\LAMBDA}$.	
\end{defi}
\begin{lem}\label{mutliplicationnuclear}
	The multiplication $m$ from \eqref{defmultiplication} induces a degree preserving map 
	\begin{equation*}
		\tilde{m}\colon \tilde{K}^{\t}_{\LAMBDA}\otimes\tilde{K}^{\u}_{\GAMMA}\to\tilde{K}^{\t\u}_{\LAMBDA\WR\GAMMA}
	\end{equation*} which is associative and antimultiplicative in the same sense as in  \eqref{assocgeombimod} and \eqref{antimultgeombimod}.
\end{lem}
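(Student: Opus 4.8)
The statement is the analogue, modulo nuclear ideals, of the well-definedness and associativity of the multiplication $m\colon K^{\t}_{\LAMBDA}\otimes K^{\u}_{\GAMMA}\to K^{\t\u}_{\LAMBDA\WR\GAMMA}$ from \eqref{defmultiplication}, together with the compatibility diagrams \eqref{assocgeombimod} and \eqref{antimultgeombimod}. So the proof is essentially a bookkeeping argument: show that $m$ descends to the quotients by the nuclear subspaces $\mathbb{I}^{\t}_{\LAMBDA}$, and then observe that associativity and antimultiplicativity are inherited automatically.

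**Key steps.** First I would reduce everything to the single claim that
\[
m\bigl(\mathbb{I}^{\t}_{\LAMBDA}\otimes K^{\u}_{\GAMMA}\bigr)\subseteq\mathbb{I}^{\t\u}_{\LAMBDA\WR\GAMMA}\quad\text{and}\quad m\bigl(K^{\t}_{\LAMBDA}\otimes\mathbb{I}^{\u}_{\GAMMA}\bigr)\subseteq\mathbb{I}^{\t\u}_{\LAMBDA\WR\GAMMA},
\]
since then $m$ passes to the quotient and defines $\tilde m$ uniquely on $\tilde K^{\t}_{\LAMBDA}\otimes\tilde K^{\u}_{\GAMMA}$, degree preservation being immediate from the degree-$0$ property of $m$. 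By the anti-automorphism $*$ from \eqref{definvolution} together with the commuting square \eqref{antimultgeombimod}, the second inclusion follows from the first applied to $\u^*, \t^*$, so it suffices to treat a basis vector $(a\tl b)\in\mathbb{I}^{\t}_{\LAMBDA}$, i.e.\ one whose underlying $\LAMBDA$-circle diagram contains a nonpropagating strand, multiplied by an arbitrary basis vector $(c\,\u[\MU]\,d)\in K^{\u}_{\GAMMA}$. The product is $0$ unless $b=c^*$, in which case it is computed by the surgery procedure on the symmetric middle section followed by collapsing; I would argue, exactly as in the proof of \cite{ES2}*{Proposition 5.3} (our \cref{nuclearideal}), that a nonpropagating strand of the bottom diagram $(a\tl b)$ cannot be turned into a strand meeting \emph{both} the top and the bottom number line of $\t\u$ by any sequence of the surgery moves (\emph{merge}, \emph{split}, \emph{reconnect}): each move either leaves the strand's connectivity to the two outer number lines unchanged or, in a reconnect, can only produce a new nonpropagating strand when one was nonpropagating before. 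Hence every basis vector appearing in $m\bigl((a\tl b)\otimes(c\,\u[\MU]\,d)\bigr)$ still has a nonpropagating strand, so lies in $\mathbb{I}^{\t\u}_{\LAMBDA\WR\GAMMA}$. Finally, associativity of $\tilde m$ in the sense of \eqref{assocgeombimod} and its antimultiplicativity in the sense of \eqref{antimultgeombimod} are obtained by simply passing the already-established diagrams \eqref{assocgeombimod}, \eqref{antimultgeombimod} for $m$ to the quotients, using that the quotient maps $K^{\t}_{\LAMBDA}\twoheadrightarrow\tilde K^{\t}_{\LAMBDA}$ are algebra (resp.\ bimodule) maps compatible with $*$.

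**Main obstacle.** The only genuine content is the stability statement for nonpropagating strands under surgery; everything else is formal. The subtlety there, compared to the ambient algebra case \cite{ES2}*{Proposition 5.3}, is that in a $\LAMBDA$- or $\GAMMA$-circle diagram the surgeries act on \emph{internal} number lines far from the outer boundary, and one must be careful that collapsing the symmetric middle section does not accidentally join a nonpropagating arc to both outer boundaries. I expect this to be handled exactly as in \cite{ES2}: a strand meeting neither the top of $\t$ nor the bottom of $\u$ (equivalently, a component of the middle section not reaching either outer number line) can never acquire endpoints on both after surgery, because surgery is a local move on a single cup-cap pair and collapsing only identifies adjacent \emph{internal} number lines. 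Phrased via \cref{nuclearideal} applied levelwise, or by the direct local analysis of the three surgery moves, this is routine but is the step that actually requires checking.
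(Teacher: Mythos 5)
Your proposal is correct and follows essentially the same route as the paper, whose proof simply asserts that ``by the definition of the multiplication'' the nuclear subspaces $\mathbb{I}^{\t}_{\LAMBDA}\otimes K^{\u}_{\GAMMA}$ and $K^{\t}_{\LAMBDA}\otimes\mathbb{I}^{\u}_{\GAMMA}$ land in $\mathbb{I}^{\t\u}_{\LAMBDA\WR\GAMMA}$ and that degree preservation, associativity and antimultiplicativity are inherited formally; you merely make the surgery argument (via \cite{ES2}*{Proposition 5.3}) and the reduction by $*$ explicit. The only caveat is that the invariant preserved by surgery is best phrased as ``the number of propagating lines never increases'' rather than tracking an individual nonpropagating strand, but this is exactly the content of the cited result and does not affect the argument.
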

\begin{proof}
	By the definition of the multiplication it is easy to see that under $m$, $\mathbb{I}^{\t}_{\LAMBDA}\otimes K^{\u}_{\GAMMA}$ and $K^{\t}_{\LAMBDA}\otimes \mathbb{I}^{\u}_{\GAMMA}$ are sent to $\mathbb{I}^{\t\u}_{\LAMBDA\WR\GAMMA}$, and thus the multiplication factors as claimed. It is degree preserving because $m$ is and the subspaces $\mathbb{I}^{\t}_{\LAMBDA}$ are homogeneous by definition. Antimultiplicativity and associativity follow directly from the analogous statements for $m$.
\end{proof}
\begin{rem}
	In the special case that $\u$ is empty (and using the mirrored argument), we see that $\mathbb{I}^{\t}_{\LAMBDA}$ is a $(K_{\Lambda_k}, K_{\Lambda_0})$-bisubmodule of $K^{\t}_{\LAMBDA}$. In the subcase that $\t$ and $\u$ are empty we recover \cref{nuclearideal}. \end{rem}
\begin{lem}\label{mutliplicationisonuclear}
	The map $\tilde{m}$ is $\tilde{K}_{\Lambda_0}$-balanced and thus induces a map 
	\begin{equation*}
		\tilde{m}\colon \tilde{K}^{\t}_{\LAMBDA}\otimes_{\tilde{K}_{\Lambda_0}}\tilde{K}^{\u}_{\GAMMA}\to\tilde{K}^{\t\u}_{\LAMBDA\WR\GAMMA}
	\end{equation*}
	which is in fact an isomorphism.
\end{lem}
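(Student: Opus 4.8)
The plan is to reduce everything to the non-nuclear analogue. Recall that the statements of \cite[Theorem 3.5, Theorem 3.6]{BS2} carry over to our type B setting (as noted after \eqref{antimultgeombimod}); in particular the multiplication map $m\colon K^{\t}_{\LAMBDA}\otimes_{K_{\Lambda_0}} K^{\u}_{\GAMMA}\to K^{\t\u}_{\LAMBDA\WR\GAMMA}$ is an isomorphism of graded $(K_{\Lambda_k},K_{\Gamma_0})$-bimodules. First I would check that $\tilde m$ is $\tilde K_{\Lambda_0}$-balanced: this is immediate because $m$ is $K_{\Lambda_0}$-balanced by associativity of \eqref{assocgeombimod} and because the ideal $\mathbb{I}_{\Lambda_0}$ maps into nuclear circle diagrams under the relevant multiplications, so the induced actions on the quotients agree; the balancedness descends to the quotients. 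Hence $\tilde m$ factors through $\tilde K^{\t}_{\LAMBDA}\otimes_{\tilde K_{\Lambda_0}}\tilde K^{\u}_{\GAMMA}$ as claimed.

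For the isomorphism statement, the key point is to identify the source. I would argue that the canonical surjection $K^{\t}_{\LAMBDA}\otimes_{K_{\Lambda_0}} K^{\u}_{\GAMMA}\twoheadrightarrow \tilde K^{\t}_{\LAMBDA}\otimes_{\tilde K_{\Lambda_0}}\tilde K^{\u}_{\GAMMA}$ has kernel exactly the span of those elementary tensors $(a\tl b)\otimes (c\u[\MU]d)$ in which at least one of $a\tl b$, $c\u[\MU]d$ is nuclear, \emph{or} which become nuclear after using the relation coming from $\mathbb{I}_{\Lambda_0}$. The cleanest way to do this is to transport the question through the isomorphism $m$: under $m$ the image of the span of $\mathbb{I}^{\t}_{\LAMBDA}\otimes K^{\u}_{\GAMMA}+K^{\t}_{\LAMBDA}\otimes\mathbb{I}^{\u}_{\GAMMA}$ lands in $\mathbb{I}^{\t\u}_{\LAMBDA\WR\GAMMA}$, and conversely every nuclear $\LAMBDA\WR\GAMMA$-circle diagram, being a summand of some product, has a nonpropagating strand which, after tracing through the surgery procedure, forces a nonpropagating strand in one of the two factors. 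Concretely: if $a\tl[\lam^*\MU] d$ (notation slight abuse) appearing in $m((a\tl b)\otimes(b^*\u[\MU]d))$ has a nonpropagating line $\ell$, then $\ell$ meets neither the top number line of $\t$ nor the bottom of $\u$; since surgery only ever splits, merges, reconnects or removes arcs, the preimage of $\ell$ is built from arcs already present in $a\tl b$ or in $b^*\u[\MU]d$ that together fail to connect top to bottom — so one of the two original diagrams already contained a nonpropagating strand. This shows $m$ restricts to an isomorphism between the two ``nuclear'' subspaces, hence descends to an isomorphism on the quotients $\tilde m$.

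The main obstacle I anticipate is the last combinatorial claim: making precise the statement that a nonpropagating strand in a product term must come from a nonpropagating strand in one of the factors. One has to be careful that the surgery procedure can \emph{create} nonpropagating components (internal circles), but those are closed, not lines; a nonpropagating \emph{line} must have both its endpoints on number lines that survive in $\t\u$, i.e. on the outer boundary, and connectivity along the diagram is preserved by surgery up to the insertion/deletion of internal circles, so a top-to-bottom path in the product would lift to a top-to-bottom path in the stacked diagram before collapsing, contradicting nuclearity of both factors. I would phrase this via the component graph of the stacked diagram $(a\tl b)$ under $(b^*\u[\MU]d)$ and note that collapsing the middle section only identifies number lines, never disconnects a propagating strand. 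Once this is in place, together with the already-established fact that $m$ is a graded bimodule isomorphism, the induced map $\tilde m$ on the quotients is an isomorphism, which is the assertion of the lemma.
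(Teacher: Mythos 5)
Your treatment of balancedness and the final descent to the quotients are fine, but the key step --- identifying the kernel, i.e.\ showing that $m$ carries the nuclear subspace $\mathbb{I}^{\t}_{\LAMBDA}\otimes K^{\u}_{\GAMMA}+K^{\t}_{\LAMBDA}\otimes\mathbb{I}^{\u}_{\GAMMA}$ \emph{onto} $\mathbb{I}^{\t\u}_{\LAMBDA\WR\GAMMA}$ --- is where your argument has a genuine gap. You derive this from the claim that a nonpropagating line in a summand of a product forces a nonpropagating strand in one of the two factors, and you justify that claim by asserting that ``connectivity along the diagram is preserved by surgery up to the insertion/deletion of internal circles.'' That principle is false: the reconnect move exists precisely to alter top-to-bottom connectivity. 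If a cap of $b$ and the matching cup of $b^*$ lie on two \emph{distinct} propagating lines of the stacked diagram, the surgery redistributes the four endpoints among the two new components and can pair the two bottom endpoints together and the two top endpoints together, turning two propagating lines into two nonpropagating ones. Whether such terms actually survive in the product is governed by the vanishing conditions built into the reconnect rule of \cite{ES2}, not by any connectivity-preservation principle, so your ``component graph'' argument cannot be completed as stated; at minimum you would have to check the reconnect case explicitly, and your stronger claim (non-nuclear factors never produce nuclear summands) is exactly what is at stake there.

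The paper sidesteps this entirely by proving only the surjectivity that is actually needed, and doing so constructively. Given a nuclear basis element $a(\t\u)[\bm{\nu}]b$, assume without loss of generality that a nonpropagating line ends at the bottom, let $c$ be the upper reduction of the top portion $\u[\MU']b$, and consider the explicit preimage $(a\t[\lam]c)\otimes(c^*\u[\MU]b)$, where the orientation $\MU$ is chosen so that every component of $c^*\u b$ meeting $c^*$ is anticlockwise. With this choice every surgery occurring in that particular product is a merge with an anticlockwise circle --- no reconnects and no splits arise --- and the product equals $\pm\, a(\t\u)[\bm{\nu}]b$. Injectivity of $\tilde m$ then follows by essentially the diagram chase you sketch, using that $\bar m$ is an isomorphism by the type~B analogue of \cite[Theorem 3.5]{BS2}. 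So either verify the reconnect rule case by case to rescue your route, or adopt the explicit-preimage construction, which avoids the issue altogether.
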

\begin{proof}
	That it is $\tilde{K}_{\Lambda_0}$-balanced follows from the associativity of $\tilde{m}$ and hence it factors as desired.
%
%
	
	In order to see that $\tilde{m}$ is an isomorphism, note that it is surjective because $m$ is. For injectivity we first prove that the restriction of the multiplication map $\mathbb{I}^{\t}_{\LAMBDA}\otimes_\bbC K^{\u}_{\GAMMA}+K^{\t}_{\LAMBDA}\otimes_\bbC \mathbb{I}^{\u}_{\GAMMA}\to\mathbb{I}^{\t\u}_{\LAMBDA\WR\Gamma}$ is surjective.
	For this let $a(\t\u)[\bm{\nu}]b\in\mathbb{I}^{\t\u}_{\LAMBDA\WR\GAMMA}$. Define $\lam\coloneqq \nu_{k+l}\dots\nu_{l}$ and $\MU'\coloneqq \nu_{l}\dots\nu_0$. Without loss of generality we may assume that one nonpropagating line ends at the bottom. Define $c$ to be the upper reduction of $\u[\MU']b$. Hence by definition of the upper reduction, $a\t[\lam]c$ contains a nonpropagating line, hence we have $a\t[\lam]c\in\mathbb{I}^{\t}_{\LAMBDA}$. By definition of the upper reduction $c^*\u[\MU']b$ is oriented. We define $\MU$ to be the same as $\MU'$ except that all components in $c^*\u b$ which lie partly in $c^*$ are oriented anticlockwise.
	We claim then that $(a\t[\lam]c)(c^*\u[\MU]b)=\pm a(\t\u)[\nu]b$. Observe that every surgery that needs to be applied is a merge and it always merges a component in $a\t[\lam]c$ with an anticlockwise circle in $c^*\u[\MU]b$. But this means that the vertices belonging to the anticlockwise circle in $c^*\u[\MU]b$ are exactly reoriented to agree with the parts in $\bm{\nu}$.
	Thus the surgery procedure produces up to possibly a sign the circle diagram $a(\t\u)[\bm{\nu}]b$, which finishes the proof of the claim.
	
	Now consider the following commutative diagram (the horizontal maps are all induced by the multiplication)
	\begin{center}
		\begin{tikzcd}[baseline={(X.base)}]
			\mathbb{I}^{\t}_{\LAMBDA}\otimes_\bbC K^{\u}_{\GAMMA}+K^{\t}_{\LAMBDA}\otimes_\bbC \mathbb{I}^{\u}_{\GAMMA}\arrow[rr, two heads]\arrow[d, hook]&&\mathbb{I}^{\t\u}_{\GAMMA\WR\LAMBDA}\arrow[d, hook]\\
			K^{\t}_{\LAMBDA}\otimes_{\bbC}K^{\u}_{\GAMMA}\arrow[r, two heads]\arrow[d, two heads]&K^{\t}_{\LAMBDA}\otimes_{K_{\Lambda_0}}K^{\u}_{\GAMMA}\arrow[d, dotted]\arrow[r, "\bar{m}"]&K^{\t\u}_{\LAMBDA\WR\GAMMA}\arrow[d, two heads]\\
			\tilde{K}^{\t}_{\LAMBDA}\otimes_{\bbC}\tilde{K}^{\u}_{\GAMMA}\arrow[r, two heads]&\tilde{K}^{\t}_{\LAMBDA}\otimes_{\tilde{K}_{\Lambda_0}}\tilde{K}^{\u}_{\GAMMA}\arrow[r, "\tilde{m}"]&|[alias=X]| \tilde{K}^{\t\u}_{\LAMBDA\WR\GAMMA}
		\end{tikzcd}.
	\end{center}
	The right and the left column are both short exact by definition and the map $\bar{m}$ is an isomorphism by \cite[Theorem 3.5]{BS2}.
	
	
	Now suppose $x\in \tilde{K}^{\t}_{\LAMBDA}\otimes_{\tilde{K}_{\Lambda_0}}\tilde{K}^{\u}_{\GAMMA}$ is mapped to $0$ by $\tilde{m}$.
	Lift this to an element $x'\in K^{\t}_{\LAMBDA}\otimes_\bbC K^{\u}_{\GAMMA}$. As $\tilde{m}(x)=0$ we must have $m(x')\in\mathbb{I}^{\t\u}_{\LAMBDA\WR\GAMMA}$. By the above claim we find some $x''\in\mathbb{I}^{\t}_{\LAMBDA}\otimes_\bbC K^{\u}_{\GAMMA}+K^{\t}_{\LAMBDA}\otimes_\bbC \mathbb{I}^{\u}_{\GAMMA}$ such that $m(x'')=m(x')$.
	Hence they agree in $K^{\t}_{\LAMBDA}\otimes_{K_{\Lambda_0}}K^{\u}_{\GAMMA}$ as $\bar{m}$ is an isomorphism by \cite[Theorem 3.5(iii)]{BS2}, and thus they also agree in $\tilde{K}^{\t}_{\LAMBDA}\otimes_{\tilde{K}_{\Lambda_0}}\tilde{K}^{\u}_{\GAMMA}$.
But as $x''\in\mathbb{I}^{\t}_{\LAMBDA}\otimes_\bbC K^{\u}_{\GAMMA}+K^{\t}_{\LAMBDA}\otimes_\bbC \mathbb{I}^{\u}_{\GAMMA}$ it becomes $0$ in $\tilde{K}^{\t}_{\LAMBDA}\otimes_{\tilde{K}_{\Lambda_0}}\tilde{K}^{\u}_{\GAMMA}$ and as $x'$ was a lift of $x$ we have $0=x\in\tilde{K}^{\t}_{\LAMBDA}\otimes_{\tilde{K}_{\Lambda_0}}\tilde{K}^{\u}_{\GAMMA}$. Thus $\tilde{m}$ is injective, finishing the proof.
\end{proof}

\begin{thm}\label{reductiontogeombimodnuclear}
	Let $\t=t_k\dots t_1$ be a proper $\LAMBDA$-matching. Denote the reduction of $\t$ by $u$ and let $n$ be the number of internal circles getting removed in the reduction process. Then we have
	\begin{align*}
		\tilde{K}^{\t}_{\LAMBDA}&\cong \tilde{K}^u_{\Lambda_k\Lambda_0}\otimes R^{\otimes n}\langle\ca(t_1)+\dots+\ca(t_k)-\ca(u)\rangle\\
		&\cong \tilde{K}^u_{\Lambda_k\Lambda_0}\otimes R^{\otimes n}\langle\cu(t_1)+\dots+\cu(t_k)-\cu(u)\rangle
	\end{align*}
	as graded $(\tilde{K}_{\Lambda_k}, \tilde{K}_{\Lambda_0})$-bimodules, viewing $\tilde{K}^u_{\Lambda_k\Lambda_0}\otimes R^{\otimes n}$ as a bimodule via acting on the first tensor factor.
\end{thm}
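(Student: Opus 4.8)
The plan is to deduce the statement from the non-nuclear reduction isomorphism together with the observation that nuclearity is preserved under reduction. Recall that the type B analogue of \cite[Theorem 3.6]{BS2} — which holds by the discussion following \cref{nodegbilinformadjunction}, with a detailed verification in \cite{Nehme} — provides an isomorphism of graded $(K_{\Lambda_k},K_{\Lambda_0})$-bimodules
\[
\Xi\colon K^{\t}_{\LAMBDA}\ \xrightarrow{\ \sim\ }\ K^u_{\Lambda_k\Lambda_0}\otimes R^{\otimes n}\langle\ca(t_1)+\dots+\ca(t_k)-\ca(u)\rangle .
\]
The first thing I would do is recall the explicit shape of $\Xi$: it is defined on the distinguished bases, sending an oriented $\LAMBDA$-circle diagram $a\tl b$ to $\pm(a\lambda_k u\lambda_0 b)\otimes m$, where $a\lambda_k u\lambda_0 b$ is the oriented $\Lambda_k\Lambda_0$-circle diagram obtained by deleting the $n$ internal circles of $\t$ and collapsing the intermediate number lines (this is oriented by \cref{degreeformula}), and $m\in R^{\otimes n}$ is the monomial recording, for each internal circle, whether it is clockwise or anticlockwise in $a\tl b$. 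I would also record the elementary counting identity $\ca(t_1)+\dots+\ca(t_k)-\ca(u)=\cu(t_1)+\dots+\cu(t_k)-\cu(u)$ — both sides equal half the difference between the numbers of vertices of $\Lambda_k$ and of $\Lambda_0$ — which is what relates the two forms of the degree shift in the statement.

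The only real point is then to check that $\Xi$ restricts to an isomorphism $\mathbb{I}^{\t}_{\LAMBDA}\cong\mathbb{I}^u_{\Lambda_k\Lambda_0}\otimes R^{\otimes n}$. Deleting internal circles and collapsing intermediate number lines neither merges nor breaks the non-circular components of $a\tl b$, so $a\tl b$ contains a nonpropagating strand if and only if its reduced diagram $a\lambda_k u\lambda_0 b$ does; equivalently, $a\tl b$ is nuclear if and only if $\Xi(a\tl b)$ lies in $\mathbb{I}^u_{\Lambda_k\Lambda_0}\otimes R^{\otimes n}$. Since $\Xi$ identifies the distinguished bases, it therefore identifies the sub-bases spanning the two nuclear spaces. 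Both are $(K_{\Lambda_k},K_{\Lambda_0})$-bisubmodules — the source by the $\LAMBDA$-matching version of \cref{nuclearideal} noted in the remark after \cref{mutliplicationnuclear}, the target because $R^{\otimes n}$ carries the trivial bimodule structure — so $\Xi$ descends to an isomorphism of graded $(\tilde{K}_{\Lambda_k},\tilde{K}_{\Lambda_0})$-bimodules $\tilde{K}^{\t}_{\LAMBDA}\cong\tilde{K}^u_{\Lambda_k\Lambda_0}\otimes R^{\otimes n}\langle\ca(t_1)+\dots+\ca(t_k)-\ca(u)\rangle$, which is the first asserted isomorphism; the second follows from the counting identity above.

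As a consistency check, and staying closer to the tools of this section, one can alternatively induct on $k$ using \cref{mutliplicationisonuclear}: for $k=1$ there is nothing to prove ($u=t_1$, $n=0$), and for $k\ge 2$ one writes $\t=\t't_1$ with $\t'=t_k\dots t_2$, splits off $t_1$ via \cref{mutliplicationisonuclear}, applies the inductive hypothesis to $\t'$, and folds the result back up with \cref{mutliplicationisonuclear} to land in the two-layer situation $\tilde{K}^{u't_1}_{\Lambda_k\Lambda_1\Lambda_0}$ of a pair of single matchings, which is again handled by the argument above. I expect the only genuinely fiddly point — in either approach, but most visibly in the inductive one — to be the bookkeeping around the reduction: checking that the reduction of $\t$ coincides with the reduction of the stack of $t_1$ and the reduction of $\t'$, that the internal-circle count and the degree shifts are additive along this decomposition, and that all signs introduced by $\Xi$ (or by the surgeries implicit in \cref{mutliplicationisonuclear}) are absorbed. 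There is no new geometric phenomenon beyond the two observations already used in type A: the reduction isomorphism is diagonal on bases, and nuclearity is detected on the reduced diagram.
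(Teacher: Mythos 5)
Your proposal is correct and follows essentially the same route as the paper: the paper's proof is precisely to invoke the type B analogue of \cite[Theorem 3.6]{BS2}, observe that $a\t b$ contains a nonpropagating line if and only if its reduction $aub$ does, and use that $\mathbb{I}^{\t}_{\LAMBDA}$ is a $(K_{\Lambda_k},K_{\Lambda_0})$-bisubmodule so the isomorphism descends to the quotients. Your write-up simply spells out these same steps in more detail (the alternative induction via \cref{mutliplicationisonuclear} is a fine consistency check but not needed).
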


\begin{proof}
	Follow the proof of \cite[Theorem 3.6]{BS2} by observing that $a\t b$  contains a nonpropagating line if and only if $aub$ does and using that $\mathbb{I}^{\t}_{\LAMBDA}$ is a $(K_{\Lambda_k}, K_{\Lambda_0})$-bisubmodule of $K^{\t}_{\LAMBDA}$ by the proof of \cref{mutliplicationnuclear}.
\end{proof}

\begin{defi}
	A $\Lambda\Gamma$-matching is called a \emph{translation diagram} if the difference of the numbers of $\circ$'s and $\times$'s in $\Lambda$ (resp.~$\Gamma$) agrees. A $\LAMBDA$-matching $\t=t_k\dots t_1$ is called a \emph{translation diagram} if every $t_i$ is.
\end{defi}
\begin{rem}
	If the $\LAMBDA$-matching $\t$ is a translation diagram, so is its reduction. 
\end{rem}
From now on we will assume implicitly that every $\LAMBDA$-matching is in fact a translation diagram. Furthermore we will only consider the idempotent truncation by super weight diagrams. For this we make the following definition:

\begin{defi}\label{defieke}
	 Let $eK^{\t}_{\LAMBDA}e$ be the subalgebra of $K^{\t}_{\LAMBDA}$ spanned by all oriented stretched circle diagrams $a\lambda t\mu b^*$, where $a$ and $b$ are super weight diagrams. Additionally we let $e\mathbb{I}^{\t}_{\LAMBDA}e$ be the intersection of $eK^{\t}_{\LAMBDA}e$ and $\mathbb{I}^{\t}_{\LAMBDA}$ and $e\tilde{K}^{\t}_{\LAMBDA}e\coloneqq eK^{\t}_{\LAMBDA}e/e\mathbb{I}^{\t}_{\LAMBDA}e$.
We observe that the multiplication on $K_\Lambda$ induces a multiplication on $eK_\Lambda e$ as well and we get an induced $(eK_{\Lambda_k}e, eK_{\Lambda_0}e)$-bimodule structure on $eK^{\t}_{\LAMBDA}e$ (and analogously for $e\tilde{K}^{\t}_{\LAMBDA}e$).
We will abuse notation and denote the simple and indecomposable projective modules for the algebra $eK_\Lambda e$ (resp.~$e\tilde{K}_\Lambda e$) by $L(\lambda)$ and $P(\lambda)$ (resp.~$\overline{L}(\lambda)$ and $\overline{P}(\lambda)$).
	
We define $eKe\coloneqq \bigoplus_{\Lambda}eK_\Lambda e$, where the sum runs over all blocks and similarly $e\tilde{K}e\coloneqq\bigoplus_{\Lambda}e\tilde{K}_\Lambda e$. Using the projections $e\tilde{K}e\to e\tilde{K}_\Lambda e$ we might think of $e\tilde{K}^{\t}_{\LAMBDA}e$ as an $e\tilde{K}e$-bimodule.
\end{defi}

\begin{rem}\label{decompofbarproj}
	The algebra $e\tilde{K}e$ has a set of local units given by super weight diagrams. For a super weight diagram $\lambda$ we obtain $\overline{P}(\lambda)=e\tilde{K}e_\lambda$ and thus in the Grothendieck group we have 
	\begin{equation*}
		[\overline{P(\lambda)}]=\sum_{\underline{\mu}\nu\overline{\lambda}} q^{\deg(\underline{\mu}\nu\overline{\lambda})}[L(\mu)]=\sum_{\underline{\mu}\overline{\lambda}} (q+q^{-1})^{n_\mu}q^{\Def(\lambda)}[L(\mu)],
	\end{equation*}
where $n_\mu$ denotes the number of circles in the circle diagram $\underline{\mu}\overline{\lambda}$ and we sum over all (oriented) not nuclear circle diagrams $\underline{\mu}\overline{\lambda}$ (resp. $\underline{\mu}\nu\overline{\lambda}$) for a super weight diagram $\mu$. Compare this also with \cref{projfunctorsonsimplenuclear} and \cref{remprojfunctorsobtainproj}. Note that there are only finitely many $\mu$ such that $\underline{\mu}\overline{\nu}$ is oriented and not nuclear, and hence $\overline{P(\lambda)}$ is finite dimensional.
\end{rem}

Using this notation we obtain the following result:
\begin{lem}
	The map $\tilde{m}$ from \cref{mutliplicationisonuclear} restricts to an isomorphism
	\begin{equation*}
		\tilde{m}_e\colon e\tilde{K}^{\t}_{\LAMBDA}e\otimes_{e\tilde{K}_{\Lambda_0}e}e\tilde{K}^{\u}_{\GAMMA}e\to e\tilde{K}^{\t\u}_{\LAMBDA\WR\GAMMA}e.
	\end{equation*}
\end{lem}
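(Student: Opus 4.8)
The plan is to realise $\tilde m_e$ as the corestriction of the isomorphism $\tilde m$ from \cref{mutliplicationisonuclear} to the idempotent truncations, and then to reduce its bijectivity to a single comparison statement. Concretely, since the surgery procedure defining $m$ (and hence $\tilde m$) never changes the outermost cup diagram of the bottom factor nor the outermost cap diagram of the top factor, the product of two diagrams whose bounding cup and cap diagrams are super weight diagrams is again a sum of such diagrams. Thus $\tilde m$ sends $e\tilde K^{\t}_{\LAMBDA}e\otimes_{\bbC}e\tilde K^{\u}_{\GAMMA}e$ into $e\tilde K^{\t\u}_{\LAMBDA\WR\GAMMA}e$; as $\tilde m$ is already $\tilde K_{\Lambda_0}$-balanced it is in particular $e\tilde K_{\Lambda_0}e$-balanced, so it factors through a well-defined homogeneous map $\tilde m_e$ on the relative tensor product over $e\tilde K_{\Lambda_0}e$. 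It remains to show that $\tilde m_e$ is bijective.

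To this end I would factor $\tilde m_e$ as
\begin{equation*}
e\tilde K^{\t}_{\LAMBDA}e\otimes_{e\tilde K_{\Lambda_0}e}e\tilde K^{\u}_{\GAMMA}e \xrightarrow{\ \alpha\ } e\bigl(\tilde K^{\t}_{\LAMBDA}\otimes_{\tilde K_{\Lambda_0}}\tilde K^{\u}_{\GAMMA}\bigr)e \xrightarrow{\ e\tilde m e\ } e\tilde K^{\t\u}_{\LAMBDA\WR\GAMMA}e,
\end{equation*}
where $\alpha$ is the canonical comparison map for idempotent truncation and $e\tilde m e$ is the restriction of $\tilde m$. Applying $e(-)e$ to the $(\tilde K_{\Lambda_k},\tilde K_{\Gamma_0})$-bimodule isomorphism $\tilde m$ of \cref{mutliplicationisonuclear} shows at once that $e\tilde m e$ is an isomorphism, so the whole problem is to prove that $\alpha$ is an isomorphism.

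For $\alpha$ I would rerun the proof of \cref{mutliplicationisonuclear} with idempotent truncation applied throughout. The key points are: the functor $x\mapsto exe$ on graded vector spaces is exact, so the two short exact columns of the commutative diagram in that proof stay short exact after truncation; the role played there by the isomorphism $\overline m$ of \cite[Theorem 3.5]{BS2} is now played by its $eKe$-analogue $eK^{\t}_{\LAMBDA}e\otimes_{eK_{\Lambda_0}e}eK^{\u}_{\GAMMA}e\to eK^{\t\u}_{\LAMBDA\WR\GAMMA}e$ (which holds together with the other $eKe$-versions of the results of \cite[Sections 3--4]{BS2} already invoked in the setup of \cref{defieke}); and the surjectivity of the truncated multiplication $e\mathbb{I}^{\t}_{\LAMBDA}e\otimes_{\bbC}eK^{\u}_{\GAMMA}e+eK^{\t}_{\LAMBDA}e\otimes_{\bbC}e\mathbb{I}^{\u}_{\GAMMA}e\to e\mathbb{I}^{\t\u}_{\LAMBDA\WR\GAMMA}e$ follows from the very same reduction-and-surgery construction as in \cref{mutliplicationisonuclear}, since in that construction the bounding super diagrams $a$ and $b$ are never touched and the intermediate cap diagram one introduces is a reduction of a diagram bounded by $a$ or $b$, hence remains in the super-truncated setting. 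Feeding these inputs into the diagram chase of \cref{mutliplicationisonuclear} gives that $\alpha$, and therefore $\tilde m_e$, is bijective.

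The main obstacle is precisely the bijectivity of $\alpha$: because the idempotent $e$ (the sum of the primitive idempotents attached to super weight diagrams) is \emph{not} a full idempotent in $\tilde K_{\Lambda_0}$, one cannot simply invoke the general fact that idempotent truncation commutes with relative tensor products; one genuinely has to reuse the structural features of the geometric bimodules $\tilde K^{\t}_{\LAMBDA}$ — their projectivity as one-sided modules coming from the theory of projective functors, together with the explicit compatibility of the surgery procedure with the passage to super weight diagrams — in the same way the proof of \cref{mutliplicationisonuclear} used them for the passage from $K$ to $\tilde K$.
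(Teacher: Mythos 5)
You correctly isolate the difficulty in the comparison map $\alpha\colon e\tilde{K}^{\t}_{\LAMBDA}e\otimes_{e\tilde{K}_{\Lambda_0}e}e\tilde{K}^{\u}_{\GAMMA}e\to e\bigl(\tilde{K}^{\t}_{\LAMBDA}\otimes_{\tilde{K}_{\Lambda_0}}\tilde{K}^{\u}_{\GAMMA}\bigr)e$, and the outer steps (well-definedness of $\tilde{m}_e$, and $e\tilde{m}e$ being an isomorphism) are fine. But your proof that $\alpha$ is bijective rests on two claims that are neither established in the paper nor true in the form you need them. First, the ``$eKe$-analogue of \cite[Theorem 3.5]{BS2}'' is not among the results invoked in \cref{defieke}, and it is problematic at the level of $K$: the target $eK^{\t\u}_{\LAMBDA\WR\GAMMA}e$ contains basis diagrams whose middle weight $\mu$ is not a super weight diagram, the factorization through the cup diagram $\underline{\mu}$ used in the proof of \cite[Theorem 3.5]{BS2} then leaves the truncated tensor product, and such diagrams are nuclear but nuclear diagrams are \emph{nonzero} in $K$, so they cannot be discarded before passing to $\tilde{K}$. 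Second, in your surjectivity argument onto $e\mathbb{I}^{\t\u}_{\LAMBDA\WR\GAMMA}e$ the intermediate cup diagram $c$ is an \emph{upper reduction}, and upper reduction removes upper lines and can therefore strictly decrease the layer number; precisely for nuclear elements (the ones you are trying to hit) $c$ may fail to be super, so the factorization $(a\t[\lam]c)\otimes(c^*\u[\MU]b)$ need not lie in the truncated sum $e\mathbb{I}^{\t}_{\LAMBDA}e\otimes_{\bbC}eK^{\u}_{\GAMMA}e+eK^{\t}_{\LAMBDA}e\otimes_{\bbC}e\mathbb{I}^{\u}_{\GAMMA}e$. So the truncated rerun of the diagram chase does not go through as described.

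The single observation that repairs (and drastically shortens) the argument is the one the paper's proof turns on: all matchings here are translation diagrams, so an oriented diagram $a\lambda t\mu b^*$ without a nonpropagating line satisfies $\kappa(a)=\kappa(b)$; since super weight diagrams are exactly those of maximal layer number $\min(m,n)$, any basis diagram of $K^{\t}_{\LAMBDA}$ with one boundary a super weight diagram and the other not is automatically nuclear. Hence in the quotient one has $e\tilde{K}^{\t}_{\LAMBDA}e_\mu=0$ and $e_\mu\tilde{K}^{\u}_{\GAMMA}e=0$ for every non-super $\mu\in\Lambda_0$, so $e\tilde{K}^{\t}_{\LAMBDA}e=e\tilde{K}^{\t}_{\LAMBDA}$ and $e\tilde{K}^{\u}_{\GAMMA}e=\tilde{K}^{\u}_{\GAMMA}e$, and balancing over $e\tilde{K}_{\Lambda_0}e$ coincides with balancing over $\tilde{K}_{\Lambda_0}$. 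With this observation your map $\alpha$ is an isomorphism for essentially formal reasons and your factorization closes up; without it, the proposed argument has a genuine gap.
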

\begin{proof}
	This follows easily by noting that if we have an oriented stretched circle diagram $a\lambda t\mu b^*$ such that $a$ is a super weight diagram, but if $b$ is not, we necessarily have $a\lambda t\mu b^*\in\mathbb{I}^{\t}_{\LAMBDA}$. Assume the layer numbers of $a$ and $b$ agree, i.e.~$\kappa(a)=\kappa(b)$, then $b$ would be a super weight diagram as well, because $t$ is a translation diagram. So we have $\kappa(a)\neq\kappa(b)$ and this means that $a\lambda t\mu b^*$ has to contain a nonpropagating line.
\end{proof}

In \cref{tildekgendegreeone} we have seen that $\tilde{K}$ is generated in degrees $0$ and $1$. In general idempotent truncations do not preserve this property. 

\begin{thm}\label{nucleargenerateddegreeone}
	The algebra $e\tilde{K}e$ is generated by its degree $0$ and $1$ part.
\end{thm}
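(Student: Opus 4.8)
The plan is to deduce this from the fact that $\tilde K$ itself is generated in degrees $0$ and $1$ (Lemma \ref{tildekgendegreeone}), together with enough control over the defect/layer combinatorics to show that the passage to the idempotent truncation $e\tilde K e$ does not destroy this property. The issue, as noted in the text, is that idempotent truncation is not exact for this property in general: a degree $d$ element of $e\tilde K e$ is a sum of products of degree $1$ elements of $\tilde K$, but the intermediate factors pass through idempotents $e_\nu$ at non-super weight diagrams $\nu$. So the key point is that whenever a product of generators of $\tilde K$ lands in $e\tilde K e$, one can reroute it so that all intermediate weight diagrams are super weight diagrams.

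First I would reduce to a single block $\Lambda$ and recall that $e\tilde K_\Lambda e = e K_\Lambda e/e\mathbb{I}_\Lambda e$, with a basis of (images of) oriented circle diagrams $a\lambda b$ where $a,b$ are super weight diagrams and the diagram has no nonpropagating line. It suffices to show: every such $a\lambda b$ of degree $d\ge 2$ is a $\bbC$-linear combination of products $(a\lambda' c)(c\lambda'' b)$ where $c$ is again a super weight diagram and both factors have strictly positive degree (hence degree $<d$ each), after which induction on $d$ finishes. Since super weight diagrams are exactly the admissible $\mu$ with $\kappa(\mu)=\min(m,n)$, the whole ``numberline'' of a block of $e\tilde K e$ is rigid: all these diagrams have the same number $\Def(\mu)=\min(m,n)-\rk(\mu)$ of cups. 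This is the structural advantage over general $\tilde K$ and is what should make the factorization possible. Concretely, I would use the surgery description of multiplication: to produce $a\lambda b$ as a product I pick the cup diagram $c=\underline{\lambda}$ (so $a\lambda c$ has lower part $a$, upper part $\underline{\lambda}$, with weight $\lambda$, and analogously for the second factor), and then argue $(a\lambda\,\underline\lambda{}^{\,*})(\underline\lambda\,\lambda b)$ recovers $a\lambda b$ up to invertible scalar, exactly as in the arguments already quoted from \cite{BS2}; here $\underline\lambda$ is a super weight diagram because $\lambda$ is. The only remaining task is to split off one positive-degree ``layer'' at a time: using Lemma \ref{onemoreoneless}, in a diagram of degree $\ge 2$ with no nonpropagating line there is at least one clockwise cup or cap, and I would perform a single surgery (merge/split) at that clockwise arc to write $a\lambda b$ as a combination of products where one factor has degree $1$ (it is built from an elementary ``cup--cap'' move changing one orientation) and the other has degree $d-1$, with the intermediate diagram still super (its layer number is unchanged because no cup was created or destroyed — a merge/split at an existing clockwise arc reorients but preserves the cup count, and reorientation never changes $\kappa$). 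Iterating gives a presentation by degree-$1$ factors.

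The main obstacle I anticipate is precisely the claim in the last parenthesis: that one can always peel off a degree $1$ factor while keeping all intermediate weight diagrams inside the super-weight-diagram block, i.e.\ keeping $\kappa$ constant at $\min(m,n)$ and avoiding the nuclear ideal. Degree-$1$ generators of $\tilde K$ correspond to elementary Bruhat-type moves ($\vee\wedge\leftrightarrow\wedge\vee$ adjacent, or the type $D$ move $\wedge\wedge\leftrightarrow\vee\vee$ at the start), and I must check that each such move, applied inside a super-weight block, yields again a super weight diagram and does not produce a nonpropagating line in the relevant circle diagram — otherwise the factor would die in $e\tilde K e$. This is a finite combinatorial verification using the explicit description of super weight diagrams (Definition \ref{defsuperweightdiag}) and the defect bookkeeping from Lemma \ref{onemoreoneless} and Lemma \ref{degreeformulaupperreduction}; the type $D$ move at the boundary (where a $\diamond$ and the dot-placement rules interact) is the delicate case and will require care with the marker conventions. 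Once that combinatorial fact is in place, the induction on degree is routine and the theorem follows.
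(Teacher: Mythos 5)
Your overall strategy (induct on the degree, factor each basis diagram through an intermediate \emph{super} weight diagram, and use \cref{onemoreoneless} to control degrees) is the right shape, but both of the concrete factorization mechanisms you propose break down, and the actual work of the paper's proof is exactly what is missing. First, the factorization $(a\lambda\,\overline{\lambda})(\underline{\lambda}\,\lambda\,b)$ through $c=\underline{\lambda}$ is not available: for a basis vector $a\lambda b\in eK_\Lambda e$ only $a$ and $b$ are required to come from super weight diagrams, while the orienting weight $\lambda$ generally is not one. Reorienting a cup of $a$ clockwise to produce $\lambda$ typically destroys that cup in $\underline{\lambda}$ (e.g.\ $\vee\wedge\mapsto\wedge\vee$ turns one cup into two rays), so $\Def(\lambda)$ and hence $\kappa(\lambda)$ drops below $\min(m,n)$ and the factorization passes through an idempotent $e_\lambda$ that does not live in $e\tilde{K}e$ --- precisely the pitfall you identified in your opening paragraph. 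Even when $\lambda$ happens to be super, the two factors have degrees $\deg(a\lambda)$ and $\deg(\lambda b)$, one of which may be $0$, so no inductive degree reduction is obtained.

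Second, the ``peel off a degree-$1$ layer by a surgery at a clockwise arc, leaving the cup diagram unchanged'' step cannot work, because degree-$1$ elements of $e\tilde{K}e$ necessarily connect two \emph{different} cup diagrams: in $\underline{\alpha}\,\overline{\alpha}$ every circle is small, so any diagram of the form $\underline{\alpha}\nu\overline{\alpha}$ has even degree (these are the elements $X_{i,\alpha}$, of degree $2$). Consequently a pure reorientation is never a degree-$1$ generator, and moreover the \emph{minimal} orientation of a fixed circle diagram already has degree $\sum_C(\#\mathrm{cups}(C)-1)$ plus line contributions, which can be $\geq 2$ and which no reorientation lowers. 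The paper's proof therefore has two separate ingredients you would need to supply: (i) a dedicated argument that the degree-$2$ reorientation elements $X_{i,\lambda}$ are products of degree-$1$ elements (done by a case analysis on what sits to the right of, or nests, the relevant cup), and (ii) a degree-reduction step that genuinely \emph{changes} the underlying cup diagram --- swapping a ray with an adjacent cup, or un-nesting a pair of nested cups --- producing an explicit intermediate weight $\mu'$ with $\underline{\lambda}\nu'\overline{\mu'}$ of degree $1$ and $\underline{\mu'}\nu''\overline{\mu}$ of degree $d-1$. The correct version of your parenthetical observation is that \emph{these} moves preserve the total number of cups (a ray--cup swap and a nesting change both fix $\Def$, hence $\kappa$), which is why all intermediate weights remain super; but identifying and verifying these moves, together with the sign bookkeeping in the surgery computations, is the substance of the proof rather than a routine finiteness check.
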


\begin{rem}
	The basic idea of this proof is the same as \cite{ES1}*{Theorem 6.10}. However Ehrig and Stroppel use some reduction process in the finite case to only consider circle diagrams without lines. We will not do this for two reasons. First our weight diagrams are infinite opposed to finite, so one would first need to do some reduction to finite weight diagrams to adapt this idea to our setting. Secondly we want to make sure that in every step we actually use only circle diagrams which actually live in $e\tilde{K}e$.
\end{rem}
\begin{proof}
	We will prove the statement via induction. For this we are going to change weight diagrams locally. Every local change will either change the positions between a ray and a cup or the positions of two cups relative to each other. In particular, every local change in this proof preserves the property of being a super weight diagram, and we will not mention this later.
	We first define elements $X_{i,\lambda}$ which are based on the circle diagram $\underline{\lambda}\lambda\overline{\lambda}$. In case that the vertex at position $i$ is not part of a circle we set $X_{i,\lambda}\coloneqq 0$, and otherwise we reverse the orientation of this circle in contrast to $\underline{\lambda}\lambda\overline{\lambda}$.
	
	The proof of this theorem is split into two parts. First we are going to show that the elements $X_{i,\lambda}$ for every $i$ and $\lambda$ are generated by degree $1$ and $0$ elements.
	
	The second part is going to be an induction over the degree, where the first part allows us to consider only anticlockwise oriented circles.
	
	Suppose that we are given $X_{i,\lambda}$. We may assume $X_{i,\lambda}\neq 0$, as otherwise the claim is trivial. Furthermore denote the circle containing the vertex $i$ by $C$. We are going to consider three different cases, depending on what happens directly to the right of $C$. Either there is a line, the starting point of a cup, or the endpoint of a cup.
	
	If there is a line to the right of $C$, we can look schematically (meaning that there might be dots involved, which we omit here) at the picture 
	\begin{center}
		\begin{tikzpicture}[scale=0.6]
			\begin{scope}[yscale=-1]
				\caps{0 1}\rays{2 i}\wdiagnoline{- - -}\cups{1 2}\rays{i 0}\node at (2.5,0) {$\cdot$};
			\end{scope}
			\begin{scope}[xshift=3cm]
				\caps{0 1}\rays{2 i}\wdiagnoline{- - -}\cups{1 2}\rays{i 0}\node at (2.75,0) {$=\;\pm$};
			\end{scope}
			\begin{scope}[xshift=6.5cm]
				\caps{0 1}\rays{2 i}\wdiagnoline{- v -}\cups{0 1}\rays{i 2}
			\end{scope}
		\end{tikzpicture}.
	\end{center}
	Let $\mu$ be the weight such that $\underline{\mu}$ agrees with $\underline{\lambda}$ except that the cup belonging to $C$ and the line to the right of it are swapped. If the cup and the line in $\underline{\lambda}$ contain a dot, we choose  $\underline{\mu}$ such that it has no dot on either of them. If exactly one of the cup and the line are dotted in $\underline{\lambda}$ we require the corresponding line in $\underline{\mu}$ to be dotted. In other words, we want to have an even number of dots on this curved line in the above picture and the picture should be admissible.
	
	Then the circle diagram $\underline{\lambda}\overline{\mu}$ admits a unique degree $1$ orientation $\nu$ (i.e.~every circle is oriented anticlockwise). Then by the definition of the surgery procedure we have 
	\begin{equation*}
		\underline{\lambda}\nu\overline{\mu}\cdot\underline{\mu}\nu\overline{\lambda}=\pm X_{i,\lambda}.
	\end{equation*}
	
	If there is a circle directly to the right of $C$ we look schematically at \cref{circdirectright}.
	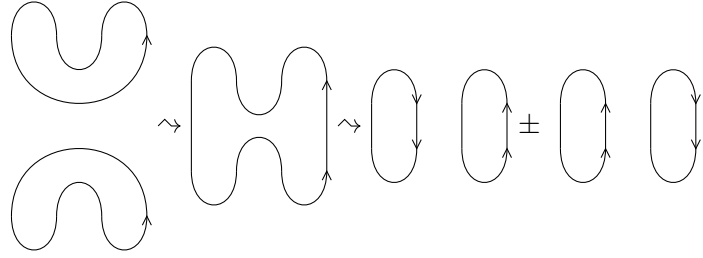
\begin{figure}[ht]\centering
		\begin{tikzpicture}[scale=0.6, baseline={(current bounding box.center)}]
			\caps{0 1, 2 3}\wdiagnoline{- - - w}\cups{0 3, 1 2}\caps{0 3, 1 2}\draw[snake, ->] (3.25, \currm) -- +(0.5, 0);\wdiagnoline{- - - w}\cups{0 1, 2 3}
		\end{tikzpicture}
		\begin{tikzpicture}[scale=0.6, baseline={(current bounding box.center)}]
			\caps{0 1, 2 3}\wdiagnoline{- - - w}\cups{1 2}\caps{1 2}\rays{0 0, 3 3}\draw[snake, ->] (3.25, \currm) -- +(0.5, 0);\wdiagnoline{- - - w}\cups{0 1, 2 3}
		\end{tikzpicture}
		\begin{tikzpicture}	[scale=0.6, baseline={(current bounding box.center)}]
			\caps{0 1, 2 3}\wdiagnoline{- v - w}\rays{0 0, 1 1, 2 2, 3 3}\node at (3.5, \currm) {$\pm$};\wdiagnoline{- v - w}\cups{0 1, 2 3}	
		\end{tikzpicture}
		\begin{tikzpicture}[scale=0.6, baseline={(current bounding box.center)}]
			\caps{0 1, 2 3}\wdiagnoline{- w - v}\rays{0 0, 1 1, 2 2, 3 3}\wdiagnoline{- w - v}\cups{0 1, 2 3}	
		\end{tikzpicture}
		\caption{The case that a circle is directly to the right of $C$}\label{circdirectright}
	\end{figure}
	We choose $\mu$ such that there are two nested cups in $\underline{\mu}$ instead of the two next to each other. We may equip the outer cup in $\underline{\mu}$ with a dot if exactly one of the cups in $\underline{\lambda}$ is dotted. If we denote the unique orientation of degree $1$ by $\nu$, we have by definition of the surgery procedures (see picture above)
	\begin{equation*}
		\underline{\lambda}\nu\overline{\mu}\cdot\underline{\mu}\nu\overline{\lambda}=\pm X_{i,\lambda}\pm X_{i+2, \lambda}.
	\end{equation*}
	Now we can repeat the argument for $X_{i+2, \lambda}$ and see that $X_{i+2, \lambda}$ is generated by degree $1$ elements. Hence this is true for $X_{i,\lambda}$. Note that this recursion has to stop at some point as $\underline{\lambda}$ has only finitely many cups.
	
	Lastly the cup corresponding to $C$ may be nested in some other cup. Then we can proceed as indicated in \cref{cnestedinsomeothercup}.
	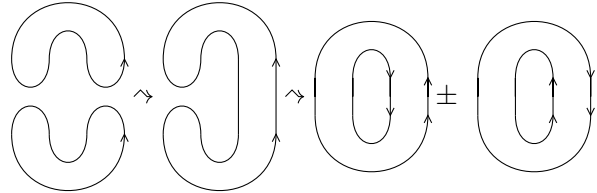
\begin{figure}[ht]\centering
		\begin{tikzpicture}[scale=0.5, baseline={(current bounding box.center)}]
			\caps{0 3, 1 2}\wdiagnoline{- - - w}\cups{0 1, 2 3}\caps{0 1, 2 3}\draw[snake, ->] (3.25, \currm) -- +(0.5, 0);\wdiagnoline{- - - w}\cups{0 3, 1 2}
		\end{tikzpicture}
		\begin{tikzpicture}[scale=0.5, baseline={(current bounding box.center)}]
			\caps{0 3, 1 2}\wdiagnoline{- - - w}\cups{0 1}\caps{0 1}\rays{2 2, 3 3}\draw[snake, ->] (3.25, \currm) -- +(0.5, 0);\wdiagnoline{- - - w}\cups{0 3, 1 2}	
		\end{tikzpicture}
		\begin{tikzpicture}[scale=0.5, baseline={(current bounding box.center)}]
			\caps{0 3, 1 2}\wdiagnoline{- - v w}\rays{0 0, 1 1, 2 2, 3 3}\node at (3.5, \currm) {$\pm$};\wdiagnoline{- - v w}\cups{0 3, 1 2}		
		\end{tikzpicture}
		\begin{tikzpicture}[scale=0.5, baseline={(current bounding box.center)}]
			\caps{0 3, 1 2}\wdiagnoline{- - w v}\rays{0 0, 1 1, 2 2, 3 3}\wdiagnoline{- - w v}\cups{0 3, 1 2}		
		\end{tikzpicture}
		\caption{The case that the cup of $C$ is nested in some other cup}\label{cnestedinsomeothercup}
	\end{figure}
	We choose $\mu$ such that there are two cups in $\underline{\mu}$ next to each other instead of two nested ones. We may equip the left cup in $\underline{\mu}$ with a dot if exactly one of the cups in $\underline{\lambda}$ is dotted. If we denote the unique orientation of degree $1$ by $\nu$, we have by definition of the surgery procedures (see picture above)
	\begin{equation*}
		\underline{\lambda}\nu\overline{\mu}\cdot\underline{\mu}\nu\overline{\lambda}=\pm X_{i,\lambda}\pm X_{j, \lambda},
	\end{equation*}
	where $j$ denotes a vertex belonging to the outer cup.
	Now similar as before we can repeat the argument for $X_{j, \lambda}$ and see that $X_{j, \lambda}$ is generated by degree $1$ elements and hence $X_{i,\lambda}$ as well. Note that this recursion has to stop at some point as $\underline{\lambda}$ has only finitely many cups. 
	This finishes the first part of the proof.
	
	The second step is to show the general statement. We prove this via induction over the degree of the circle diagram. If the degree is $0$ or $1$ the statement is trivial, so let $\underline{\lambda}\nu\overline{\mu}$ be any circle diagram of degree $>1$. By the first step, we may assume that $\nu$ is the orientation $\nu_{\min}$ of $\underline{\lambda}\overline{\mu}$ of minimal degree, as any other orientation arises from $\underline{\lambda}\nu_{\min}\overline{\mu}$ by multiplying with some $X_{i,\lambda}$.
	Take any component $C$ of $\underline{\lambda}\nu\overline{\mu}$ of degree $\geq 1$. This is either a circle or a line.
	
	If it is a line, it (or its horizontal mirror image) looks schematically like
	\begin{center}
		\begin{tikzpicture}[scale=0.5]
			\caps{0 1, 2 3, 6 7}\rays{8 i}\wdiagnoline{- - - - {$\dots$} - - - -}\cups{1 2, 5 6, 7 8}\rays{i 0}
		\end{tikzpicture}.
	\end{center}
	We let $\mu'$ be the weight such that $\underline{\mu'}$ differs from $\underline{\lambda}$ in the way that the ray (corresponding to the line) is swapped with the cup to the right. The cup and the ray are decorated with dots, in the unique way such that $\underline{\mu'}\overline{\mu}$ is orientable. We furthermore let $\nu'$ be the unique orientation of $\underline{\lambda}\overline{\mu'}$ of degree $1$ and $\nu''$ be the unique orientation of minimal degree of $\underline{\mu'}\overline{\mu}$.
	This then looks locally as in \cref{redprocessline}.
	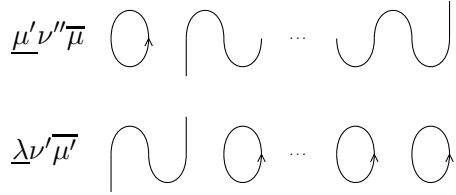
\begin{figure}[ht]\centering
		\[
		\begin{array}{ll}
			\underline{\mu'}\nu''\overline{\mu}&
			\begin{tikzpicture}[baseline = {([yshift=-0.4ex] current bounding box.center)}, scale=0.5]
				\caps{0 1, 2 3, 7 8}\rays{9 i}\wdiagnoline{- w - - - {$\dots$} - - - -}\cups{0 1, 3 4, 6 7, 8 9}\rays{i 2}
			\end{tikzpicture}\vspace{15pt}\\
			\underline{\lambda}\nu'\overline{\mu'}&
			\begin{tikzpicture}[baseline = {([yshift=-0.4ex] current bounding box.center)}, scale=0.5]\caps{0 1, 3 4, 6 7, 8 9}\rays{2 i}\wdiagnoline{- - - - w {$\dots$} - w - w}\cups{1 2, 3 4, 6 7, 8 9}\rays{i 0}
			\end{tikzpicture}
		\end{array}
		\]
		\caption{The reduction process for a line}\label{redprocessline}
	\end{figure}
	Looking at the above pictures and the definition of the surgery procedures one easily checks that 
	\begin{equation*}
		\underline{\lambda}\nu'\overline{\mu'}\cdot\underline{\mu'}\nu''\overline{\mu}=\pm\underline{\lambda}\nu\overline{\mu}.
	\end{equation*}
	By \cref{onemoreoneless} we have that the degree of $\underline{\mu'}\nu''\overline{\mu}$ is one less than the degree of $\underline{\lambda}\nu\overline{\mu}$, hence it is generated by degree $0$ and $1$ elements by induction. Therefore we see that $\underline{\lambda}\nu\overline{\mu}$ is generated by degree $0$ and $1$ elements.
	
	However, if the component $C$ is a circle, it necessarily consists of at least two cups and caps by \cref{onemoreoneless}, and we need to have a pair of cups or a pair of caps $\gamma_1$ and $\gamma_2$ nested in each other. Without loss of generality, we may assume that this is a pair of cups. We choose $\gamma_1$ such that it is not contained in any other cup and $\gamma_2$ such that it is only contained in $\gamma_1$. \Cref{excircledegreeonegen} gives an overview about our choices of $\gamma_1$ and $\gamma_2$.
	\begin{figure}[ht]\centering
		\begin{tikzpicture}[scale=0.4]
			\draw[dotted, black, out=270, in=270] (0,0) .. controls+(0,1)  and +(0,1) .. (1, 0);
			\draw[dotted, black, out=270, in=270] (2,0) .. controls+(0,1)  and +(0,1) .. (3, 0);
			\draw[dotted, black, out=270, in=270] (4,0) .. controls+(0,1)  and +(0,1) .. (5, 0);
			\draw[dotted, black, out=270, in=270] (6,0) .. controls+(0,1)  and +(0,1) .. (7, 0);
			\wdiagnoline{- - - - - - - w}\cups{0 7, 3 4}
			\node at (3.5, -3.5) {$\gamma_1$};
			\node at (3.5, -1.2) {$\gamma_2$};
			\draw[dotted, black, out=270, in=270] (1,0) .. controls+(0,-1)  and +(0,-1) .. (2, 0);
			\draw[dotted, black, out=270, in=270] (5,0) .. controls+(0,-1)  and +(0,-1) .. (6, 0);
		\end{tikzpicture}
	\vspace{-10pt}
		\caption{The choice of $\gamma_1$ and $\gamma_2$ in $C$}\label{excircledegreeonegen}
	\end{figure}
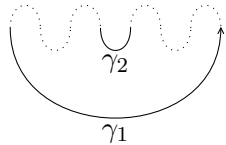
	Then we choose $\mu'$ such that $\underline{\mu'}$ is the same as $\underline{\lambda}$ except that these nested cups are replaced by two neighbored ones. \Cref{lastcasegendegone} describes the definition of $\mu'$. Then $\underline{\lambda}\overline{\mu'}$ admits a unique orientation of degree $1$, which we call $\nu'$. Additionally, we define $\nu''$ to be the orientation of minimal degree of $\underline{\mu'}\overline{\mu}$.
	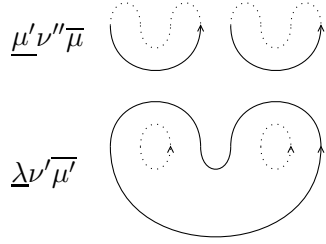
\begin{figure}[ht]\centering
		\[
		\begin{array}{ll}
			\underline{\mu'}\nu''\overline{\mu}&
			\begin{tikzpicture}[baseline = {([yshift=-0.4ex] current bounding box.center)}, scale=0.4]
				\draw[dotted, black, out=270, in=270] (0,0) .. controls+(0,1)  and +(0,1) .. (1, 0);
				\draw[dotted, black, out=270, in=270] (2,0) .. controls+(0,1)  and +(0,1) .. (3, 0);
				\draw[dotted, black, out=270, in=270] (4,0) .. controls+(0,1)  and +(0,1) .. (5, 0);
				\draw[dotted, black, out=270, in=270] (6,0) .. controls+(0,1)  and +(0,1) .. (7, 0);
				\wdiagnoline{- - - w - - - w}\cups{0 3, 4 7}
				\draw[dotted, black, out=270, in=270] (1,0) .. controls+(0,-1)  and +(0,-1) .. (2, 0);
				\draw[dotted, black, out=270, in=270] (5,0) .. controls+(0,-1)  and +(0,-1) .. (6, 0);
			\end{tikzpicture}\vspace{0pt}\\
			\underline{\lambda}\nu'\overline{\mu'}&
			\begin{tikzpicture}[baseline = {([yshift=-0.4ex] current bounding box.center)}, scale=0.4]\caps{0 3, 4 7}
				\draw[dotted, black, out=270, in=270] (1,0) .. controls+(0,1)  and +(0,1) .. (2, 0);
				\draw[dotted, black, out=270, in=270] (5,0) .. controls+(0,1)  and +(0,1) .. (6, 0);\wdiagnoline{- - w - - - w w}\cups{0 7, 3 4}\draw[dotted, black, out=270, in=270] (1,0) .. controls+(0,-1)  and +(0,-1) .. (2, 0);
				\draw[dotted, black, out=270, in=270] (5,0) .. controls+(0,-1)  and +(0,-1) .. (6, 0);
			\end{tikzpicture}
		\end{array}
		\]
		\vspace{-30pt}
		\caption{Definition of $\mu'$.}\label{lastcasegendegone}
	\end{figure}
	Then by construction and \cref{onemoreoneless} the degree of $\underline{\mu'}\nu''\overline{\mu}$ is one less than the degree of $\underline{\lambda}\nu\overline{\mu}$. Hence it is generated by degree $0$ and $1$ elements by induction.
	Furthermore by the definition of the surgery procedure we have 
	\begin{equation}
		\underline{\lambda}\nu'\overline{\mu'}\cdot\underline{\mu'}\nu''\overline{\mu}=\pm\underline{\lambda}\nu\overline{\mu}.
	\end{equation}
	Thus we see that $\underline{\lambda}\nu\overline{\mu}$ is generated by degree $1$ and $0$ elements, finishing the proof. 
\end{proof}

\begin{defi}\label{defiprojfunctornuclear}
	Let $t$ be a proper $\Lambda\Gamma$-matching and define $\tilde{G}^t_{\Lambda\Gamma}$ to be the functor $e\tilde{K}^t_{\Lambda\Gamma}e\langle-\ca(t)\rangle\otimes_{e\tilde{K}_\Gamma e}\_$. We call (possibly shifted) direct sums of these functors \emph{projective functors} as well.
\end{defi}

With these definitions the analog of \cite[Theorem 4.2]{BS2} holds for $\tilde{G}^t_{\Lambda\Gamma}\overline{P}(\gamma)$ by replacing ${K}^t_{\Lambda\Gamma})e_\gamma\langle-\ca(t)\rangle$  with $(e\tilde{K}^t_{\Lambda\Gamma}e)e_\gamma\langle-\ca(t)\rangle$.

\begin{lem}\label{nodegbilinformadjunctionnuclear}
	The map $\phi$ from \cref{defiphi} induces a homogeneous $(e\tilde{K}_\Gamma e, e\tilde{K}_\Gamma e)$-bimodule  map $\tilde{\phi}\colon e\tilde{K}^{t^*}_{\Gamma\Lambda}e\otimes e\tilde{K}^t_{\Lambda\Gamma}e\to e\tilde{K}_\Gamma e$ of degree $-2\ca(t)$, which is also $e\tilde{K}_\Lambda e$-balanced.
\end{lem}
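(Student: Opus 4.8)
The statement to prove is that the bimodule map $\phi$ of \cref{nodegbilinformadjunction} descends to the nuclear/idempotent-truncated level. Since $\phi$ is already known to be a homogeneous $(K_\Gamma,K_\Gamma)$-bimodule map of degree $-2\ca(t)$ which is $K_\Lambda$-balanced, the only real work is to show compatibility with two kinds of quotients and one kind of idempotent truncation: the nuclear ideals $\mathbb I$, and the idempotent $e$ cutting out super weight diagrams. I would first reduce to checking that $\phi$ restricts to a map on the truncated spaces $eK^{t^*}_{\Gamma\Lambda}e\otimes eK^t_{\Lambda\Gamma}e$ and then that it sends the ``nuclear part'' of the source into $e\mathbb I_\Gamma e$.

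The first step (compatibility with $e$) is essentially formal: $\phi$ is a bimodule map, so $\phi(e\,x\otimes y\,e)=e\,\phi(x\otimes y)\,e\in eK_\Gamma e$; moreover in the defining formula \eqref{defiphi}, $\phi((a\lambda t^*\nu d)\otimes(d^*\kappa t\mu b)) = \pm (a\lambda c)(c^*\mu b)$, the outer cup diagram of the output is $a$ (a cup diagram) and the outer cap diagram is $b$ (a cap diagram), so if $a$ and $b$ are super weight diagrams then the image already lies in $eK_\Gamma e$. Hence $\phi$ restricts to $\phi_e\colon eK^{t^*}_{\Gamma\Lambda}e\otimes eK^t_{\Lambda\Gamma}e\to eK_\Gamma e$, still a homogeneous bimodule map of the same degree, still $eK_\Lambda e$-balanced (balancedness over the smaller algebra is weaker, and follows by restricting the $K_\Lambda$-balancing identity and inserting $e$'s).

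The second step (compatibility with the nuclear ideals) is the substantive one. I need: (i) $\phi$ maps $e\mathbb I^{t^*}_{\Gamma\Lambda}e\otimes eK^t_{\Lambda\Gamma}e + eK^{t^*}_{\Gamma\Lambda}e\otimes e\mathbb I^t_{\Lambda\Gamma}e$ into $e\mathbb I_\Gamma e$; and (ii) the induced map on the quotient is well-defined and independent of representatives, i.e. still $e\tilde K_\Lambda e$-balanced. For (i) the key observation is the degree/reduction formula \cref{degreeformulaupperreduction}: if $(a\lambda t^*\nu d)$ contains a nonpropagating strand, then after upper reduction to $c$, the diagram $(a\lambda c)(c^*\mu b)$ must also contain a nonpropagating strand — a nonpropagating line in $a t^*$ either survives the upper reduction into $at$'s reduction $c$ as a nonpropagating line, or it was an upper line/circle removed by the reduction, but in the latter case the matching condition forces a corresponding nonpropagating strand on the $\Gamma\Gamma$-level; concretely, a nonpropagating line from the top of $t^*$ must connect to a cap in $d$, and after gluing with $d^*$ and reducing, propagation cannot be restored. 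So the output lies in $\mathbb I_\Gamma$, hence in $e\mathbb I_\Gamma e$ after truncation. Symmetrically for a nuclear element in the second factor. For (ii), balancedness over $e\tilde K_\Lambda e$ is inherited: given $z\in e\mathbb I_\Lambda e$, the identity $\phi(xz\otimes y)=\phi(x\otimes zy)$ already holds in $eK_\Gamma e$, and both sides map to the same element of $e\tilde K_\Gamma e$; and the balancing relations involving the extra nuclear relations in $e\tilde K_\Lambda e$ follow because $xz\otimes y$ and $x\otimes zy$ for $z$ nuclear both lie in the kernel subspace that maps into $e\mathbb I_\Gamma e$ by part (i)-type arguments.

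The main obstacle I anticipate is verifying (i) carefully — that a nonpropagating strand genuinely persists through the upper reduction and the subsequent surgery, given that the output $\phi(x\otimes y)$ is a \emph{sum} of oriented circle diagrams rather than a single one, and one must check \emph{every} summand is nuclear. Here I would lean on the proof of \cref{nodegbilinformadjunction}, specifically the identity $\phi = \omega\circ m$ from \eqref{phimultclaimtoproof}: since $m$ is a multiplication of geometric bimodules and $\mathbb I$-ideals are bi-submodules (as recorded in the remark after \cref{mutliplicationnuclear}), $m$ sends $\mathbb I^{t^*}_{\Gamma\Lambda}\otimes K^t_{\Lambda\Gamma}+K^{t^*}_{\Gamma\Lambda}\otimes\mathbb I^t_{\Lambda\Gamma}$ into $\mathbb I^{t^*t}_{\Gamma\Lambda\Gamma}$, and $\omega$ — being defined by reduction plus surgery — visibly sends nuclear diagrams to nuclear ones (a nonpropagating strand not meeting the middle section is untouched; one meeting it stays nonpropagating after the symmetric surgery, by the same argument as in \cref{mutliplicationisonuclear}). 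Thus $\phi$ factors as claimed, and inserting $e$'s throughout gives $\tilde\phi$ with all the asserted properties.
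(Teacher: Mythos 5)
Your final argument (the factorization route) is correct, but it is genuinely different from the paper's. The paper argues directly on basis vectors: the decisive observation is that, because $a$ and $d$ are \emph{super weight diagrams} (hence have equal layer number) and $t$ is a translation diagram, every nuclear basis vector of $e\mathbb{I}^{t^*}_{\Gamma\Lambda}e$ must contain at least one \emph{lower} nonpropagating line (one with both endpoints at rays of $a$) in addition to an upper one; this lower line survives the upper reduction of $t^*d$ and all subsequent surgeries, so $\phi(x)=\pm(a\gamma c)(c^*\gamma'b)\in e\mathbb{I}_\Gamma e$. Your first, ``direct'' sketch misses exactly this point: if the only nonpropagating line were an upper one it would simply be deleted by the upper reduction, $(a\gamma c)$ need not be nuclear, and ``propagation cannot be restored'' does not close the case --- it is the idempotent truncation together with the translation-diagram hypothesis that rules this configuration out. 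Your fallback via $\phi=\omega\circ m$ repairs this differently: it outsources the problem to the fact that $m$ preserves the nuclear ideals (proof of \cref{mutliplicationnuclear}) together with the new claim that $\omega$ does too. That claim about $\omega$ is nowhere in the paper and is the one step you must actually write out: it follows because the reduction of $t^*t$ keeps every line together with its outer endpoints, and the surgeries on the symmetric middle section preserve nonpropagating lines or give zero (the same mechanism as in \cref{twosidednuclearideal}), but it deserves an argument rather than ``visibly''. As for what each route buys: the paper's proof is self-contained and makes transparent why the truncation by super weight diagrams is needed at all, while yours is shorter once $\phi=\omega\circ m$ is in hand and, taken at face value, would even yield the descent before truncating by $e$. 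The remaining assertions (degree, bimodule structure, balancedness) are handled identically in both proofs, by inheritance from \cref{nodegbilinformadjunction}.
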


\begin{proof}
	Let $x\coloneqq a\gamma t^*\mu d\otimes d^*\mu't\gamma'b\in e\mathbb{I}^{t^*}_{\Gamma\Lambda}e\otimes e {K}^t_{\Lambda\Gamma}e$. We want to show that $\phi(x)\in e\mathbb{I}_\Gamma e$.
	First of all observe that each basis vector of $e\mathbb{I}^t_{\Gamma\Lambda}e$ contains at least one upper and one lower line by definition and assumptions on the matching $t$. Now note that the process of upper reduction preserves lower lines, and thus if $a\gamma t^*\mu d\in e\mathbb{I}^t_{\Gamma\Lambda}e$ and if $c$ denotes the upper reduction of $t^*d$, then $a\gamma c$ contains a nonpropagating line ending at the bottom. But this line is preserved under surgeries for $(a\gamma c)(c^*\gamma'b)$ and hence $\phi(x)=\pm(a\gamma c)(c^*\gamma'b)\in e\mathbb{I}_\Gamma e$. Using this and the dual argument for  
	$ e{K}^{t^*}_{\Gamma\Lambda}e\otimes e\mathbb{I}^t_{\Lambda\Gamma}e$, we see that $\phi$ indeed factors as claimed in the statement of the lemma. The remaining properties follow from \cref{nodegbilinformadjunction}.
\end{proof}

From this we can deduce analogous results to \cite[Theorem 4.7]{BS2},  \cite[Corollary 4.8]{BS2} and  \cite[Corollary 4.9]{BS2} with the same proofs by replacing  \cite[Theorem 4.2]{BS2} with the analogous statement for $e\tilde{K}_\Lambda e$, resulting in the following corollary.

\begin{cor}\label{geombimodadjunctionnuclear}
	We have an adjoint pair of functors 
	\begin{equation*}
		(\tilde{G}^{t^*}_{\Gamma\Lambda}\langle\cu(t)-\ca(t)\rangle, \tilde{G}^t_{\Lambda\Gamma})
	\end{equation*} giving rise to a degree $0$ adjunction between $\Mod_{lf}(e\tilde{K}_\Gamma e)$ and $\Mod_{lf}(e\tilde{K}_\Lambda e)$.
\end{cor}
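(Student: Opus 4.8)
The plan is to mirror the type A argument of \cite[Section 4]{BS2}, but working throughout in the quotient algebras and with the idempotent truncation, using the bimodule map $\tilde\phi$ from \cref{nodegbilinformadjunctionnuclear} as the counit. First I would record the precise shape of the two functors involved. Write $B\coloneqq e\tilde K^t_{\Lambda\Gamma}e$ and $B^*\coloneqq e\tilde K^{t^*}_{\Gamma\Lambda}e$; then $\tilde G^t_{\Lambda\Gamma}=B\langle-\ca(t)\rangle\otimes_{e\tilde K_\Gamma e}\_$ and $\tilde G^{t^*}_{\Gamma\Lambda}\langle\cu(t)-\ca(t)\rangle=B^*\langle\cu(t)-\ca(t)\rangle\otimes_{e\tilde K_\Lambda e}\_$. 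To produce an adjunction $(\tilde G^{t^*}_{\Gamma\Lambda}\langle\cu(t)-\ca(t)\rangle,\tilde G^t_{\Lambda\Gamma})$ it suffices, by the standard criterion for bimodule functors, to exhibit a unit $e\tilde K_\Gamma e\to B^*\otimes_{e\tilde K_\Lambda e}B$ and a counit $B\otimes_{e\tilde K_\Gamma e}B^*\to e\tilde K_\Lambda e$ (with the appropriate grading shifts absorbed) satisfying the triangle identities. The counit on $B\otimes B^*$ is, after applying the involution $*$ of \eqref{definvolution} to the right tensor factor, nothing but the map $\tilde\phi$ with the roles of $\Lambda$ and $\Gamma$ exchanged; here one must check that $B\otimes_{e\tilde K_\Gamma e}B^*\cong (B^*\otimes_{e\tilde K_\Lambda e}B$ with $\Lambda,\Gamma$ swapped$)^*$ as bimodules, which is immediate from the antimultiplicativity square \eqref{antimultgeombimod} passed to the nuclear quotient via \cref{mutliplicationnuclear}. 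So the degree count $-2\ca(t)$ of $\tilde\phi$, together with the shifts $\langle-\ca(t)\rangle$ and $\langle\cu(t)-\ca(t)\rangle$ and the identity $\cu(t^*)=\ca(t)$, produces a degree $0$ map, which is why the adjunction is degree $0$.

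The substantive step is the construction of the unit map and the verification of the triangle identities. Here I would follow \cite[Theorem 4.7]{BS2} essentially verbatim: the unit is constructed by a diagrammatic "co-multiplication'' that glues $t$ on top of $t^*$, orients all the newly created internal circles anticlockwise (the minimal-degree orientation), and then applies the surgery/merge procedure to land in $B^*\otimes_{e\tilde K_\Lambda e}B$; that the image actually lies in the nuclear quotient and respects the idempotent truncation by super weight diagrams is the only place a new check is needed, and it runs exactly as in the proof of \cref{nodegbilinformadjunctionnuclear} — a nonpropagating line in $t^*t$ forces a nonpropagating line in the reduction, and a component of $at$ partly in $t$ gets reoriented anticlockwise, so membership in $e\mathbb I e$ is preserved on the relevant side. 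The triangle identities then reduce, after unwinding, to the statement $\tilde\phi\circ(\mathrm{unit})=\mathrm{id}$ up to sign, which in turn is exactly the content of the associativity identity $\phi=\omega\circ m$ established inside the proof of \cref{nodegbilinformadjunction} together with the fact that the surgeries that merge an anticlockwise circle with a component leave that component unchanged. I would phrase this as: the analogues of \cite[Theorem 4.7, Corollary 4.8, Corollary 4.9]{BS2} hold with the same proofs, replacing every appeal to \cite[Theorem 4.2]{BS2} by its $e\tilde K_\Lambda e$-version and every appeal to \cite[Lemma 4.6]{BS2} by \cref{nodegbilinformadjunctionnuclear}, and that the corollary is \cite[Corollary 4.9]{BS2} in this guise.

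The main obstacle I anticipate is bookkeeping rather than anything conceptual: in type B one must track, at every step of the unit/counit construction, both the dot-parity on each arc and the constraint that all weight diagrams stay super weight diagrams, and one must make sure the sign factors $(-1)^{\po(i)}$, $(-1)^{\po(i)+1}$ that were pinned down in the five-case induction inside the proof of \cref{nodegbilinformadjunction} are the same ones that appear when verifying the triangle identity — i.e. that the inductive sign convention chosen for $\phi$ is compatible with the co-multiplication. Since $\tilde\phi$ was \emph{defined} by pushing $\phi$ through the quotient, and $\phi$ was forced to equal $\omega\circ m$, this compatibility is automatic, but it needs to be spelled out. Given \cref{nodegbilinformadjunctionnuclear} in hand, however, essentially no new diagrammatic work is required beyond citing \cite{BS2} and \cite{Nehme}, so the proof can be kept to one paragraph.

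\begin{proof}
	Combining \cref{nodegbilinformadjunctionnuclear} with \cref{mutliplicationisonuclear}, the arguments of \cite[Theorem 4.7, Corollary 4.8 and Corollary 4.9]{BS2} carry over, replacing each use of \cite[Theorem 4.2]{BS2} by its $e\tilde{K}_\Lambda e$-analogue (valid by \cref{defiprojfunctornuclear} and the remark following it), each use of \cite[Lemma 4.6]{BS2} by \cref{nodegbilinformadjunctionnuclear}, and each use of \cite[Theorem 3.6]{BS2} by \cref{reductiontogeombimodnuclear}. Concretely, the counit of the adjunction is the map induced on $e\tilde{K}^t_{\Lambda\Gamma}e\otimes_{e\tilde{K}_\Gamma e}e\tilde{K}^{t^*}_{\Gamma\Lambda}e\to e\tilde{K}_\Lambda e$ by $\tilde{\phi}$ (with the roles of $\Lambda$ and $\Gamma$ interchanged and the tensor factors identified via the involution $*$ of \eqref{definvolution}, using the antimultiplicativity square \eqref{antimultgeombimod} passed to the nuclear quotient by \cref{mutliplicationnuclear}); the unit is constructed diagrammatically as in \cite[Theorem 4.7]{BS2} by gluing $t$ on top of $t^*$, orienting all resulting internal circles anticlockwise and applying the surgery procedure, and this lands in the nuclear quotient and respects the truncation by super weight diagrams by the same reasoning as in the proof of \cref{nodegbilinformadjunctionnuclear}. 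The triangle identities follow from the identity $\phi=\omega\circ m$ established in the proof of \cref{nodegbilinformadjunction} together with the degree count there, which, in view of $\cu(t^*)=\ca(t)$ and the shifts $\langle-\ca(t)\rangle$ and $\langle\cu(t)-\ca(t)\rangle$, makes the adjunction degree $0$. See also \cite{Nehme} for a detailed verification.
\end{proof}
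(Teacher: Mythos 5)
Your proposal is correct and follows exactly the route the paper takes: the paper simply observes that, with \cref{nodegbilinformadjunctionnuclear} in hand, the proofs of \cite[Theorem 4.7, Corollaries 4.8 and 4.9]{BS2} carry over verbatim (replacing \cite[Theorem 4.2]{BS2} by its $e\tilde{K}_\Lambda e$-analogue), and your fleshed-out unit/counit discussion is just an expansion of that same argument. The only quibble is a harmless bookkeeping slip: for the adjunction as stated, the counit is induced directly by $\tilde\phi\colon e\tilde{K}^{t^*}_{\Gamma\Lambda}e\otimes e\tilde{K}^t_{\Lambda\Gamma}e\to e\tilde{K}_\Gamma e$ with no exchange of the roles of $\Lambda$ and $\Gamma$ needed.
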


From \cref{nodegbilinformadjunctionnuclear} we get with the same proof as in \cite[Theorem 4.10]{BS2} (using that $\mathbb{I}^t_{\Lambda\Gamma}$ and $\mathbb{I}_\Lambda$ are preserved under $\dual$) the following theorem.
\begin{thm}\label{geombimodcommwithdualitynuclear}
	Given any proper $\Lambda\Gamma$-matching $t$ and any graded $\tilde{K}_\Gamma$-module $M$, there exists a natural isomorphism $\tilde{G}^t_{\Lambda\Gamma}(M^{\dual})\cong(\tilde{G}^t_{\Lambda\Gamma}M)^\dual$ of graded $\tilde{K}_\Lambda$-modules.
\end{thm}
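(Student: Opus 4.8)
The plan is to transport the proof of \cite[Theorem 4.10]{BS2} to the nuclear setting; the one genuinely new input is that the duality $\dual$, the pairing of \cref{nodegbilinformadjunctionnuclear} and the adjunction of \cref{geombimodadjunctionnuclear} all descend along the quotients by the nuclear ideals. First I would record this descent. The anti-involution $*$ of $K$ and its bimodule version $*\colon K^t_{\Lambda\Gamma}\to K^{t^*}_{\Gamma\Lambda}$ are given by horizontal reflection, which sends a nonpropagating strand to a nonpropagating strand; hence $*(\mathbb{I}_\Lambda)=\mathbb{I}_\Lambda$ and $*(\mathbb{I}^t_{\Lambda\Gamma})=\mathbb{I}^{t^*}_{\Gamma\Lambda}$. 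Moreover $*$ merely interchanges the two (super weight) diagrams in an oriented stretched circle diagram $a\lambda t\mu b^*$, so it restricts to $eKe$ and $eK^t_{\Lambda\Gamma}e$. Consequently $*$ induces anti-involutions of $e\tilde{K}_\Lambda e$ and $e\tilde{K}_\Gamma e$ and a degree-preserving isomorphism of side-twisted bimodules $e\tilde{K}^t_{\Lambda\Gamma}e\xrightarrow{\ \sim\ }e\tilde{K}^{t^*}_{\Gamma\Lambda}e$; and since $\mathbb{I}_\Lambda$ is $*$-stable, if a $K_\Lambda$-module $M$ is annihilated by $\mathbb{I}_\Lambda$ then so is $M^{\dual}$, so $\dual$ is a well-defined duality on $\Mod_{lf}(e\tilde{K}_\Lambda e)$, compatible with the one on $\Mod_{lf}(K_\Lambda)$ through the quotient functor. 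With this in place, \cref{nodegbilinformadjunctionnuclear} and \cref{geombimodadjunctionnuclear} play exactly the roles that \cite[Lemma 4.6, Corollary 4.9]{BS2} play in \emph{loc.\ cit.}

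Next I would reduce to projectives. The functor $\tilde{G}^t_{\Lambda\Gamma}$ is right exact, being a tensor functor, and it has a left adjoint by \cref{geombimodadjunctionnuclear}, hence is exact. Therefore $\dual\circ\tilde{G}^t_{\Lambda\Gamma}$ and $\tilde{G}^t_{\Lambda\Gamma}\circ\dual$ are both contravariant exact functors, so a natural isomorphism between them is determined by, and can be assembled from, a natural isomorphism on the projective generators $\overline{P}(\gamma)$ (apply both functors to a projective presentation and compare kernels). On $\overline{P}(\gamma)$ one uses the analogue of \cite[Theorem 4.2]{BS2} for $\tilde{G}^t_{\Lambda\Gamma}\overline{P}(\gamma)$ noted right after \cref{defiprojfunctornuclear} — realizing $(e\tilde{K}^t_{\Lambda\Gamma}e)e_\gamma\langle-\ca(t)\rangle$ as a grading shift of $\overline{P}(\lambda)\otimes R^{\otimes n}$ — together with the self-duality, up to a grading shift, of the indecomposable projectives $\overline{P}$ recalled in \cref{sec:nuclear} and of $R^{\otimes n}$. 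Then both $\tilde{G}^t_{\Lambda\Gamma}(\overline{P}(\gamma)^{\dual})$ and $(\tilde{G}^t_{\Lambda\Gamma}\overline{P}(\gamma))^{\dual}$ are grading shifts of the same module $\overline{P}(\lambda)\otimes R^{\otimes n}$, and one checks, as in \cite[Theorem 4.10]{BS2}, that the two shifts agree; this naturality in $\gamma$ (compatibility with maps between projectives) then propagates to all modules, giving the asserted natural isomorphism.

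I expect the main obstacle to be precisely the last point: verifying that the grading shifts occurring in the two sides on $\overline{P}(\gamma)$ coincide. This is what the normalization $\langle-\ca(t)\rangle$ in \cref{defiprojfunctornuclear} is designed to ensure, and it requires tracking the degree contributions of the internal circles removed in the reduction of $t$ (via \cref{degreeformula}, \cref{degreeformulaupperreduction}) against the symmetry of the Poincaré polynomial of $R^{\otimes n}$ and the shift built into the self-duality of the $\overline{P}(\lambda)$; all the remaining steps are formal consequences of Step 1 and the exactness of $\tilde{G}^t_{\Lambda\Gamma}$.
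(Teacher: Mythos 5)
Your first step --- that the horizontal reflection $*$ sends nonpropagating strands to nonpropagating strands, hence preserves $\mathbb{I}_\Lambda$ and carries $\mathbb{I}^t_{\Lambda\Gamma}$ to $\mathbb{I}^{t^*}_{\Gamma\Lambda}$, so that $\dual$ descends to $\Mod_{lf}(e\tilde{K}_\Lambda e)$ compatibly with the quotient --- is exactly the one genuinely new observation the paper's proof records (its parenthetical ``using that $\mathbb{I}^t_{\Lambda\Gamma}$ and $\mathbb{I}_\Lambda$ are preserved under $\dual$''). After that, however, you diverge from the paper, and the divergence opens a gap. The paper, following \cite[Theorem 4.10]{BS2}, obtains the natural isomorphism by exhibiting an explicit isomorphism at the level of bimodules between $e\tilde{K}^t_{\Lambda\Gamma}e\langle-\ca(t)\rangle$ and its $*$-twisted graded dual; the verification that this explicit linear map is a bimodule homomorphism is precisely what the hard \cref{nodegbilinformadjunctionnuclear} (the map $\phi$) delivers. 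Once one has a bimodule isomorphism, naturality of the induced isomorphism of functors is automatic.

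Your route instead reduces to projectives and identifies $\tilde{G}^t_{\Lambda\Gamma}(\overline{P}(\gamma)^{\dual})$ and $(\tilde{G}^t_{\Lambda\Gamma}\overline{P}(\gamma))^{\dual}$ object by object as grading shifts of $\overline{P}(\lambda)\otimes R^{\otimes n}$. This shows the two contravariant exact functors take isomorphic values on each projective, but it does not produce a \emph{natural} isomorphism: for functors of the form $B\otimes_{e\tilde{K}_\Gamma e}{-}$, a compatible system of isomorphisms on the projectives $\overline{P}(\gamma)=e\tilde{K}_\Gamma e\,e_\gamma$ is equivalent to an isomorphism of the representing \emph{bimodules}, and knowing $Be_\gamma\cong B'e_\gamma$ as left modules for every $\gamma$ does not give $B\cong B'$ as bimodules (the right action is exactly what encodes compatibility with maps $\overline{P}(\gamma)\to\overline{P}(\gamma')$). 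You flag this point (``compatibility with maps between projectives'') but supply no mechanism for it; the mechanism is the bimodule property of $\phi$, which you cite in your first paragraph but never actually use to construct the map. So the decisive step of the proof --- building the canonical isomorphism rather than checking that both sides have the same graded character --- is missing. The shift bookkeeping you describe (via $\langle-\ca(t)\rangle$, the self-duality of $\overline{P}(\lambda)$ up to $\langle-2\Def(\lambda)\rangle$, and $R^{\dual}\cong R\langle-2\rangle$) is a necessary consistency check but is not a substitute for it.
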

Now we have all the ingredients to state the equivalent of \cite[Theorem 4.11]{BS2} in the setting of nuclear diagrams.

\begin{thm}
	\label{projfunctorsonsimplenuclear}
	Suppose we are given a proper $\Lambda\Gamma$-matching $t$ and $\gamma\in\Gamma$. Then
	\begin{enumerate}
		\item\label{projfunctorsonsimpleinuclear} in the graded Grothendieck group of $\Mod_{lf}(e\tilde{K}_\Lambda e)$
		\begin{equation*}
			[\tilde{G}^t_{\Lambda\Gamma}\overline{L}(\gamma)] = \sum_{\mu}(q+q^{-1})^{n_\mu}[\overline{L}(\mu)],
		\end{equation*}
		where $n_\mu$ denotes the number of lower circles in $\underline{\mu}t$ and we sum over all $\mu\in\Lambda$ such that 
		\begin{enumerate}[label=\normalfont{(\alph*)}]
			\item\label{projfunctorsonsimpleianuclear} $\underline{\gamma}$ is the lower reduction of $\underline{\mu}t$,
			\item\label{projfunctorsonsimpleibnuclear} there exists no lower line in $\underline{\mu}t$,
		\end{enumerate}
		\item\label{projfunctorsonsimplenonzeronuclear} the module $\tilde{G}^t_{\Lambda\Gamma}\overline{L}(\gamma)$ is nonzero if and only if all cups of $t\gamma$ are anticlockwise oriented and
		\item\label{projfunctorsonsimpleexplicitnuclear} under the assumptions of \cref{projfunctorsonsimplenonzeronuclear} define $\lambda\in\Lambda$ such that $\overline{\lambda}$ is the upper reduction of $t\overline{\gamma}$ or alternatively $\lambda t\gamma$ is oriented and every cup and cap is oriented anticlockwise. In this case $\tilde{G}^t_{\Lambda\Gamma}\overline{L}(\gamma)$ is a self-dual indecomposable module with irreducible head $\overline{L}(\lambda)\langle -\ca(t)\rangle$.
	\end{enumerate}
\end{thm}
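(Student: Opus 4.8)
The strategy is to mimic the proof of \cite[Theorem 4.11]{BS2}, but carefully tracking the extra structure coming from the quotient by nuclear diagrams. Throughout I would use the adjunction of \cref{geombimodadjunctionnuclear}, duality (\cref{geombimodcommwithdualitynuclear}), and the description of $\tilde G^t_{\Lambda\Gamma}\overline P(\gamma)$ via the analogue of \cite[Theorem 4.2]{BS2} (so that $\tilde G^t_{\Lambda\Gamma}\overline P(\gamma)\cong(e\tilde K^t_{\Lambda\Gamma}e)e_\gamma\langle-\ca(t)\rangle$ has a grading filtration whose subquotients are computed by lower reduction), together with \cref{reductiontogeombimodnuclear}.

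\textbf{Step 1 (part \ref{projfunctorsonsimpleinuclear}).} I would compute $[\tilde G^t_{\Lambda\Gamma}\overline P(\gamma)]$ in the graded Grothendieck group using the bimodule description and \cref{decompofbarproj}: the basis vectors of $(e\tilde K^t_{\Lambda\Gamma}e)e_\gamma$ are the non-nuclear oriented circle diagrams $a\nu t\overline\gamma$, and grouping them by their lower reduction $c=\underline\gamma$ one reads off, via \cref{degreeformulaupperreduction}, a formula
\[
[\tilde G^t_{\Lambda\Gamma}\overline P(\gamma)]=\sum_{\mu}(q+q^{-1})^{n_\mu}[\overline P(\mu)]\,q^{\text{(shift)}},
\]
the sum over $\mu\in\Lambda$ whose cup diagram $\underline\mu$ has lower reduction $\underline\gamma$ under $t$ and admits no lower line (a lower line would force the diagram to be nuclear). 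Then I would invert the (unitriangular) Cartan-type change of basis from $[\overline P(\cdot)]$ to $[\overline L(\cdot)]$; since $\overline L$ and $\overline P$ for $e\tilde K_\Lambda e$ are related by the same combinatorics as in \cref{decompofbarproj}, the inversion is formal and yields the stated formula for $[\tilde G^t_{\Lambda\Gamma}\overline L(\gamma)]$. Alternatively, and perhaps more cleanly, one can use exactness of $\tilde G^t_{\Lambda\Gamma}$ (it is tensoring with a bimodule that is projective as a right module, by \cref{reductiontogeombimodnuclear}) plus the BGG-type resolution of $\overline L(\gamma)$ by $\overline P$'s, reducing the claim about $\overline L$ to the claim about $\overline P$.

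\textbf{Step 2 (part \ref{projfunctorsonsimplenonzeronuclear}).} Nonvanishing: by the Grothendieck-group formula, $\tilde G^t_{\Lambda\Gamma}\overline L(\gamma)\neq0$ iff there exists at least one $\mu\in\Lambda$ satisfying \ref{projfunctorsonsimpleianuclear}--\ref{projfunctorsonsimpleibnuclear}. One then checks combinatorially that such a $\mu$ exists precisely when every cup of $t\gamma$ is anticlockwise: if some cup of $t\gamma$ is clockwise, any attempt to extend $\underline\gamma$ upward through $t$ produces either a clockwise lower circle obstructing orientability in $e\tilde K_\Lambda e$ or forces a lower line; conversely, if all cups of $t\gamma$ are anticlockwise, the weight $\lambda$ defined in \ref{projfunctorsonsimpleexplicitnuclear} works. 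This is the direct analogue of the argument in \cite{BS2}, with "nuclear" replacing the role of certain vanishing, and it uses \cref{rightmostvertex} and \cref{onemoreoneless} to control orientations of circles built from $t$.

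\textbf{Step 3 (part \ref{projfunctorsonsimpleexplicitnuclear}).} Assuming nonvanishing, self-duality of $\tilde G^t_{\Lambda\Gamma}\overline L(\gamma)$ is immediate from \cref{geombimodcommwithdualitynuclear} and $\overline L(\gamma)^\dual\cong\overline L(\gamma)$. For indecomposability with irreducible head $\overline L(\lambda)\langle-\ca(t)\rangle$: using the adjunction \cref{geombimodadjunctionnuclear},
\[
\Hom\bigl(\tilde G^t_{\Lambda\Gamma}\overline L(\gamma),\overline L(\nu)\langle j\rangle\bigr)\cong\Hom\bigl(\overline L(\gamma),\tilde G^{t^*}_{\Gamma\Lambda}\overline L(\nu)\langle j+\ca(t)-\cu(t)\rangle\bigr),
\]
and by part \ref{projfunctorsonsimpleinuclear} applied to $t^*$ the right-hand side is one-dimensional exactly when $\nu=\lambda$ and $j=-\ca(t)$, and zero otherwise; this shows the head is $\overline L(\lambda)\langle-\ca(t)\rangle$ and is simple, hence the module is indecomposable. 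Self-duality then forces the socle to be $\overline L(\lambda)\langle-\ca(t)\rangle$ as well. Finally, the two descriptions of $\lambda$ (as upper reduction of $t\overline\gamma$, versus: $\lambda t\gamma$ oriented with all cups/caps anticlockwise) agree by a direct inspection of the reduction procedure in \cref{defiuplowreduction}, exactly as in \cite{BS2}.

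\textbf{Main obstacle.} The delicate point is Step 2 together with the bookkeeping of nuclear diagrams in Step 1: one must be sure that passing to the quotient $e\tilde K^t_{\Lambda\Gamma}e$ kills precisely the circle diagrams containing a lower line and nothing more, and that the surviving contributions reassemble into the clean $(q+q^{-1})^{n_\mu}$ formula after inverting the $\overline P\to\overline L$ change of basis. In type A there is no nuclear ideal, so this is where the proof genuinely departs from \cite{BS2}; the key technical inputs are that $\mathbb I^t_{\Lambda\Gamma}$ is a bisubmodule (proof of \cref{mutliplicationnuclear}), that $a\t b$ is nuclear iff $aub$ is (used in \cref{reductiontogeombimodnuclear}), and that lower lines are preserved under upper reduction and under the relevant surgeries (as in the proof of \cref{nodegbilinformadjunctionnuclear}). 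Once these are in hand, the argument is a careful but routine adaptation of the type A template.
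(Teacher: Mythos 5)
Your architecture is close to the paper's for parts (i) and the self-duality claim, but you miss the one observation that makes (ii) and (iii) essentially free: since $(e\mathbb{I}_\Gamma e)L(\gamma)=0$, one has $\tilde{G}^t_{\Lambda\Gamma}\overline{L}(\gamma)\cong G^t_{\Lambda\Gamma}L(\gamma)/\bigl((e\mathbb{I}_\Lambda e)G^t_{\Lambda\Gamma}L(\gamma)\bigr)$, i.e.\ the module is literally a quotient, as an $eK_\Lambda e$-module, of the \emph{non-nuclear} $G^t_{\Lambda\Gamma}L(\gamma)$, for which the type B analogue of \cite[Theorem 4.11]{BS2} already gives both the nonvanishing criterion and the irreducible head $L(\lambda)\langle-\ca(t)\rangle$. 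A nonzero quotient of a module with irreducible head has the same irreducible head, and part (i) shows the quotient is nonzero because conditions \ref{projfunctorsonsimpleianuclear} and \ref{projfunctorsonsimpleibnuclear} hold automatically for $\lambda$ (a nonpropagating line would force a clockwise cup or cap). Your replacement arguments are where the trouble lies.

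The genuine gap is in your Step 3. From the adjunction you get $\Hom(\tilde{G}^t_{\Lambda\Gamma}\overline{L}(\gamma),\overline{L}(\nu)\langle j\rangle)\cong\Hom(\overline{L}(\gamma),\tilde{G}^{t^*}_{\Gamma\Lambda}\overline{L}(\nu)\langle j'\rangle)$ and then claim this is one-dimensional exactly for $\nu=\lambda$, $j=-\ca(t)$ ``by part (i) applied to $t^*$''. But part (i) only computes graded composition multiplicities, and $\dim\Hom(\overline{L}(\gamma),M)$ is bounded above by, not equal to, $[M:\overline{L}(\gamma)]$; here that multiplicity is $(q+q^{-1})^{n}$ with $n$ the number of lower circles in $\underline{\gamma}t^*$, which spreads over several degrees with binomial coefficients whenever $n>0$. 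Condition \ref{projfunctorsonsimpleianuclear} does force $\nu=\lambda$ uniquely, but it pins down neither $j$ nor the dimension of the Hom space, and you never show the Hom space is nonzero for $\nu=\lambda$. Determining it exactly amounts to knowing the socle of $\tilde{G}^{t^*}_{\Gamma\Lambda}\overline{L}(\nu)$, which is precisely part (iii) for $t^*$ — the argument is circular. Either insert the quotient argument above, or establish rigidity/simplicity of the socle of $\tilde{G}^{t^*}_{\Gamma\Lambda}\overline{L}(\nu)$ by independent means first. (Secondary remarks: your Step 1 route of inverting the unitriangular $[\overline{P}]\to[\overline{L}]$ transition, or resolving $\overline{L}(\gamma)$ by $\overline{P}$'s, can work but is left uncomputed; the standard and shorter route is the adjunction together with the description of $\tilde{G}^{t^*}_{\Gamma\Lambda}\overline{P}(\mu)$. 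Also the shift in your displayed adjunction has the wrong sign.)
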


\begin{proof} Part (i) can be proven as \cite[Corollary 4.11]{BS2}.
	
	For \cref{projfunctorsonsimplenonzeronuclear} and \cref{projfunctorsonsimpleexplicitnuclear} observe that a Jordan--Hölder series for $\tilde{G}^t_{\Lambda\Gamma}\overline{L}(\gamma)$ as $e\tilde{K}_\Lambda e$-module is the same as one Jordan--Hölder series as $eK_\Lambda e$-module. Thus we can look at a Jordan--Hölder series as $eK_\Lambda e$-modules. Now note that $(e\mathbb{I}_\Gamma e) L(\gamma) = 0$ and thus $\tilde{G}^t_{\Lambda\Gamma}\overline{L}(\gamma) = (G^t_{\Lambda\Gamma}L(\gamma))/((e\mathbb{I}_\Lambda e)G^t_{\Lambda\Gamma}L(\gamma))$. Hence $\tilde{G}^t_{\Lambda\Gamma}\overline{L}(\gamma)$ is a quotient of $G^t_{\Lambda\Gamma}L(\gamma)$ as $eK_\Lambda e$-modules and thus it can be only nonzero if each cup of $t\gamma$ is oriented anticlockwise by the type B analog of \cite[Theorem 4.11]{BS2}. But in this case $G^t_{\Lambda\Gamma}L(\gamma)$ has irreducible head $L(\lambda)\langle-\ca(t)\rangle$, where $\lambda$ is such that $\lambda t\gamma$ oriented and all every cup and cap is oriented anticlockwise. So $\tilde{G}^t_{\Lambda\Gamma}\overline{L}(\gamma)$ is zero or it has the same irreducible head. But this composition factor can only occur by \cref{projfunctorsonsimpleinuclear} if $\lambda$ satisfies \cref{projfunctorsonsimpleianuclear} and \cref{projfunctorsonsimpleibnuclear}. These two conditions are automatically satisfied as any nonpropagating line needs to have a clockwise oriented cup or cap in $t$. That $\tilde{G}^t_{\Lambda\Gamma}\overline{L}(\gamma)$ is self-dual follows from \cref{geombimodcommwithdualitynuclear} and the fact that $\overline{L}(\gamma)^\dual\cong \overline{L}(\gamma)$.
\end{proof}

\begin{rem}\label{remprojfunctorsobtainproj}
	Note that the formulas in \cref{decompofbarproj} and \cref{projfunctorsonsimplenuclear} share many similarities.
	Let $\gamma\in\Gamma$ and $\mu\in\Lambda$ denote any super weight diagrams. Define $t$ to be $\overline{\mu}\underline{\gamma}$, i.e.~we draw the cap diagram of $\mu$ under the cup diagram of $\gamma$ and connect the rays from left to right. A basis for $e\tilde{K}^t_{\Lambda\Gamma}e$ is given by $a\nu t\eta b$. As we apply $\tilde{G}^t_{\Lambda\Gamma}$ to the irreducible module $\overline{L}(\gamma)$, it follows from the definition of $\overline{L}(\gamma)$ that a basis for  $\tilde{G}^t_{\Lambda\Gamma}\overline{L}(\gamma)$ is given by $a\nu t\gamma\overline{\gamma}$. By definition of $t$ this can then be easily identified with the basis $a\nu\overline{\mu}$ for $P(\mu)$. Thus we have $\tilde{G}^t_{\Lambda\Gamma}\overline{L}(\gamma)\cong \overline{P}(\mu)\langle -\Def(\mu)\rangle$. Note the degree shift in \cref{defiprojfunctornuclear} and that $\ca(t)=\Def(\mu)$. In this case, we also see that the assumption \cref{projfunctorsonsimpleianuclear} in \cref{projfunctorsonsimplenuclear} is automatically satisfied.
\end{rem}
\begin{rem}\label{preservingfindimnuclear}
	 \cref{geombimodadjunctionnuclear} tells us, in particular, that $\tilde{G}^t_{\Lambda\Gamma}$ is exact and then we can use the same argument as in \cite[Corollary 4.12]{BS2}, replacing \cite[Theorem 4.11]{BS2} by \cref{projfunctorsonsimplenuclear}\cref{projfunctorsonsimpleinuclear} to show that $\tilde{G}^t_{\Lambda\Gamma}$ preserves finite dimensional modules.
\end{rem}


\section{Graded Brauer algebras} \label{sec:Brauer}

In this section we are going to prove the main theorem stated in the introduction. We will relate the category $\cF$ and $i$-translation with a Khovanov algebra of type B and the corresponding projective functors. Throughout this chapter $eKe$ will denote the idempotent truncation of $K$ by the super weight diagrams and $e\tilde{K}e$ the quotient of $eKe$ by the nuclear ideal, similar to \cref{sec:nuclear}.
We fix $r$, $n\in\mathbb{Z}_{\geq 0}$, set $m\coloneqq \lfloor\frac{r}{2}\rfloor$ and furthermore $\delta=r-2n$. 

\begin{figure}[H]
\begin{tabular}{rrr}
	\itemtab \begin{tabular}{m{0.75cm}m{0.75cm}m{0.75cm}m{0.75cm}}
		\begin{tikzpicture}[scale=0.5, inner xsep=0, outer xsep=0]
			\FPset\stddiff{2}
			\wdiagnoline{- o}\rays{1 0}\wdiagnoline{o -}
			\draw (-0.25, {\currh-0.3}) -- ++(1.5, 0) -- (1.25, 0.3) -- ++(-1.5, 0) -- cycle;
			\node[anchor=north, white] at (0.5, {\currh-0.3}) {$\Theta_{-\frac{1}{2}}$};
			\node[anchor=north] at (0.5, {\currh-0.3}) {$\Theta_{-i}$};
		\end{tikzpicture}&
		\begin{tikzpicture}[xscale=-1, scale=0.5, inner xsep=0, outer xsep=0]
			\FPset\stddiff{2}
			\wdiagnoline{- x}\rays{1 0}\wdiagnoline{x -}
			\draw (-0.25, {\currh-0.3}) -- ++(1.5, 0) -- (1.25, 0.3) -- ++(-1.5, 0) -- cycle;
			\node[anchor=north, white] at (0.5, {\currh-0.3}) {$\Theta_{-\frac{1}{2}}$};
			\node[anchor=north] at (0.5, {\currh-0.3}) {$\Theta_{-i}$};
		\end{tikzpicture}&
		\begin{tikzpicture}[scale=0.5, inner xsep=0, outer xsep=0]
			\FPset\stddiff{2}
			\wdiagnoline{- x}\rays{1 0}\wdiagnoline{x -}
			\draw (-0.25, {\currh-0.3}) -- ++(1.5, 0) -- (1.25, 0.3) -- ++(-1.5, 0) -- cycle;
			\node[anchor=north, white] at (0.5, {\currh-0.3}) {$\Theta_{-\frac{1}{2}}$};
			\node[anchor=north] at (0.5, {\currh-0.3}) {$\Theta_{i}$};
		\end{tikzpicture}&
		\begin{tikzpicture}[xscale=-1, scale=0.5, inner xsep=0, outer xsep=0]
			\FPset\stddiff{2}
			\wdiagnoline{- o}\rays{1 0}\wdiagnoline{o -}
			\draw (-0.25, {\currh-0.3}) -- ++(1.5, 0) -- (1.25, 0.3) -- ++(-1.5, 0) -- cycle;
			\node[anchor=north, white] at (0.5, {\currh-0.3}) {$\Theta_{-\frac{1}{2}}$};
			\node[anchor=north] at (0.5, {\currh-0.3}) {$\Theta_{i}$};
		\end{tikzpicture}
	\end{tabular}&
	\itemtab \begin{tabular}{m{0.75cm}m{0.75cm}m{0.75cm}m{0.75cm}}
		\begin{tikzpicture}[scale=0.5, inner xsep=0, outer xsep=0]
			\FPset\stddiff{2}
			\wdiagnoline{x o}\caps{0 1}\wdiagnoline{- -}
			\draw (-0.25, {\currh-0.3}) -- ++(1.5, 0) -- (1.25, 0.3) -- ++(-1.5, 0) -- cycle;
			\node[anchor=north, white] at (0.5, {\currh-0.3}) {$\Theta_{-\frac{1}{2}}$};
			\node[anchor=north] at (0.5, {\currh-0.3}) {$\Theta_{-i}$};
		\end{tikzpicture}&
		\begin{tikzpicture}[xscale=-1, scale=0.5, inner xsep=0, outer xsep=0]
			\FPset\stddiff{2}
			\wdiagnoline{- -}\cups{0 1}\wdiagnoline{x o}
			\draw (-0.25, {\currh-0.3}) -- ++(1.5, 0) -- (1.25, 0.3) -- ++(-1.5, 0) -- cycle;
			\node[anchor=north, white] at (0.5, {\currh-0.3}) {$\Theta_{-\frac{1}{2}}$};
			\node[anchor=north] at (0.5, {\currh-0.3}) {$\Theta_{-i}$};
		\end{tikzpicture}&
		\begin{tikzpicture}[scale=0.5, xscale=-1, inner xsep=0, outer xsep=0]
			\FPset\stddiff{2}
			\wdiagnoline{x o}\caps{0 1}\wdiagnoline{- -}
			\draw (-0.25, {\currh-0.3}) -- ++(1.5, 0) -- (1.25, 0.3) -- ++(-1.5, 0) -- cycle;
			\node[anchor=north, white] at (0.5, {\currh-0.3}) {$\Theta_{-\frac{1}{2}}$};
			\node[anchor=north] at (0.5, {\currh-0.3}) {$\Theta_{i}$};
		\end{tikzpicture}&
		\begin{tikzpicture}[scale=0.5, inner xsep=0, outer xsep=0]
			\FPset\stddiff{2}
			\wdiagnoline{- -}\cups{0 1}\wdiagnoline{x o}
			\draw (-0.25, {\currh-0.3}) -- ++(1.5, 0) -- (1.25, 0.3) -- ++(-1.5, 0) -- cycle;
			\node[anchor=north, white] at (0.5, {\currh-0.3}) {$\Theta_{-\frac{1}{2}}$};
			\node[anchor=north] at (0.5, {\currh-0.3}) {$\Theta_{i}$};
		\end{tikzpicture}
	\end{tabular}&\\
\itemtab \begin{tabular}{m{0.75cm}m{0.75cm}m{0.75cm}m{0.75cm}}
	\begin{tikzpicture}[scale=0.5, inner xsep=0, outer xsep=0]\FPset\stddiff{2}
		\wdiagnoline{d o}\rays{1 0 d}\wdiagnoline{o -}
		\draw (-0.25, {\currh-0.3}) -- ++(1.5, 0) -- (1.25, 0.3) -- ++(-1.5, 0) -- cycle;
		\node[anchor=north, white] at (0.5, {\currh-0.3}) {$\Theta_{-\frac{1}{2}}$};
		\node[anchor=north] at (0.5, {\currh-0.3}) {$\Theta_{-\frac{1}{2}}$};
	\end{tikzpicture}&
	\begin{tikzpicture}[scale=0.5, inner xsep=0, outer xsep=0]\FPset\stddiff{2}
		\wdiagnoline{d o}\rays{1 0}\wdiagnoline{o -}
		\draw (-0.25, {\currh-0.3}) -- ++(1.5, 0) -- (1.25, 0.3) -- ++(-1.5, 0) -- cycle;
		\node[anchor=north, white] at (0.5, {\currh-0.3}) {$\Theta_{-\frac{1}{2}}$};
		\node[anchor=north] at (0.5, {\currh-0.3}) {$\Theta_{-\frac{1}{2}}$};
	\end{tikzpicture}&
	\begin{tikzpicture}[scale=0.5, inner xsep=0, outer xsep=0]\FPset\stddiff{2}
		\wdiagnoline{o -}\rays{0 1 d}\wdiagnoline{d o}
		\draw (-0.25, {\currh-0.3}) -- ++(1.5, 0) -- (1.25, 0.3) -- ++(-1.5, 0) -- cycle;
		\node[anchor=north, white] at (0.5, {\currh-0.3}) {$\Theta_{-\frac{1}{2}}$};
		\node[anchor=north] at (0.5, {\currh-0.3}) {$\Theta_{\frac{1}{2}}$};
	\end{tikzpicture}&
	\begin{tikzpicture}[scale=0.5, inner xsep=0, outer xsep=0]\FPset\stddiff{2}
		\wdiagnoline{o -}\rays{0 1}\wdiagnoline{d o}
		\draw (-0.25, {\currh-0.3}) -- ++(1.5, 0) -- (1.25, 0.3) -- ++(-1.5, 0) -- cycle;
		\node[anchor=north, white] at (0.5, {\currh-0.3}) {$\Theta_{-\frac{1}{2}}$};
		\node[anchor=north] at (0.5, {\currh-0.3}) {$\Theta_{\frac{1}{2}}$};
	\end{tikzpicture}
\end{tabular}&
	\itemtab \begin{tabular}{m{0.75cm}m{0.75cm}m{0.75cm}m{0.75cm}}
		\begin{tikzpicture}[scale=0.5, inner xsep=0, outer xsep=0]\FPset\stddiff{2}
			\wdiagnoline{d -}\cups{0 1 d}\wdiagnoline{o x}
			\draw (-0.25, {\currh-0.3}) -- ++(1.5, 0) -- (1.25, 0.3) -- ++(-1.5, 0) -- cycle;
			\node[anchor=north, white] at (0.5, {\currh-0.3}) {$\Theta_{-\frac{1}{2}}$};
			\node[anchor=north] at (0.5, {\currh-0.3}) {$\Theta_{-\frac{1}{2}}$};
		\end{tikzpicture}&
		\begin{tikzpicture}[scale=0.5, inner xsep=0, outer xsep=0]\FPset\stddiff{2}
			\wdiagnoline{d -}\cups{0 1}\wdiagnoline{o x}
			\draw (-0.25, {\currh-0.3}) -- ++(1.5, 0) -- (1.25, 0.3) -- ++(-1.5, 0) -- cycle;
			\node[anchor=north, white] at (0.5, {\currh-0.3}) {$\Theta_{-\frac{1}{2}}$};
			\node[anchor=north] at (0.5, {\currh-0.3}) {$\Theta_{-\frac{1}{2}}$};
		\end{tikzpicture}&
		\begin{tikzpicture}[scale=0.5, inner xsep=0, outer xsep=0]\FPset\stddiff{2}
			\wdiagnoline{o x}\caps{0 1 d}\wdiagnoline{d -}
			\draw (-0.25, {\currh-0.3}) -- ++(1.5, 0) -- (1.25, 0.3) -- ++(-1.5, 0) -- cycle;
			\node[anchor=north, white] at (0.5, {\currh-0.3}) {$\Theta_{-\frac{1}{2}}$};
			\node[anchor=north] at (0.5, {\currh-0.3}) {$\Theta_{\frac{1}{2}}$};
		\end{tikzpicture}&
		\begin{tikzpicture}[scale=0.5, inner xsep=0, outer xsep=0]\FPset\stddiff{2}
			\wdiagnoline{o x}\caps{0 1}\wdiagnoline{d -}
			\draw (-0.25, {\currh-0.3}) -- ++(1.5, 0) -- (1.25, 0.3) -- ++(-1.5, 0) -- cycle;
			\node[anchor=north, white] at (0.5, {\currh-0.3}) {$\Theta_{-\frac{1}{2}}$};
			\node[anchor=north] at (0.5, {\currh-0.3}) {$\Theta_{\frac{1}{2}}$};
		\end{tikzpicture}
	\end{tabular}&
	\itemtab \begin{tabular}{m{0.75cm}}
		\begin{tikzpicture}[scale=0.5, inner xsep=0, outer xsep=0]\FPset\stddiff{2}
			\wdiagnoline{-}\rays{0 0 d}\wdiagnoline{-}
			\draw (-0.25, {\currh-0.3}) -- ++(0.5, 0) -- (.25, 0.3) -- ++(-0.5, 0) -- cycle;
			\node[anchor=north, white] at (0, {\currh-0.3}) {$\Theta_{-\frac{1}{2}}$};
			\node[anchor=north] at (0, {\currh-0.3}) {$\Theta_0$};
		\end{tikzpicture}
	\end{tabular}
\end{tabular}
\caption{Local moves}
\label{localmoves}
\end{figure}
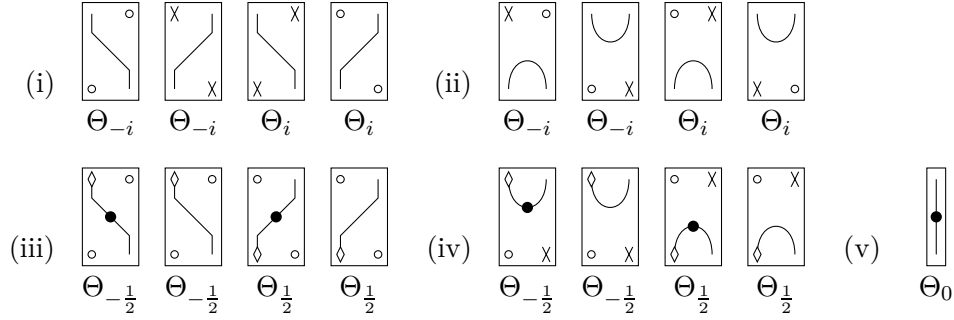

\begin{defi}\label{defitildethetai}
	Let $i\in\bbZ+\frac{\delta+1}{2}$. Given a block $\Gamma$ of Deligne weight diagrams, suppose that the number line of $\Gamma$ agrees at the vertices $\abs{i}\pm\frac{1}{2}$ with the bottom line of one of the pictures $C$ in \cref{localmoves}. By adding vertical strands $C$ can be extended to a unique $\Gamma_{t_i^C}\Gamma$-matching $t_i^C$, where $\Gamma_{t_i^C}$ is the block which is obtained from $\Gamma$, when replacing the symbols at positions $\abs{i}\pm\frac{1}{2}$ with the top of the picture $C$. 
	
	We then define the functor $\Theta_i^{\Gamma}\coloneqq \bigoplus_{C}G_{\Gamma_{t_i^C}\Gamma}^{t_i^C}\colon\Mod_{lf}(K_\Gamma)\to\Mod_{lf}(K)$, where the direct sum runs through all possible pictures, which can be put at positions $\abs{i}\pm\frac{1}{2}$ onto $\Gamma$.
	We remark here, that whenever $i\neq-\frac{1}{2}$ there is always at most one choice and if $i=-\frac{1}{2}$ and the block sequence of $\Gamma$ starts with $\diamond\circ$ we have two choices.
	
	Given this we define $\Theta_i\colon \Mod_{lf}(K)\to \Mod_{lf}(K)$ as $\bigoplus_\Gamma\Theta_i^{\Gamma}$.
	
	In the same way we define $\tilde{\Theta}_i^{\Gamma}\coloneqq \bigoplus_{C}\tilde{G}_{\Gamma_{t_i^C}\Gamma}^{t_i^C}\colon e\tilde{K}_\Gamma e\text{-mod}\to e\tilde{K}e\text{-mod}$ and $\tilde{\Theta}_i\coloneqq \bigoplus_{\Gamma}\tilde{\Theta}_i^{\Gamma}\colon e\tilde{K}e\text{-mod}\to e\tilde{K}e\text{-mod}$.
\end{defi}

\begin{defi}
	Define $T_d\coloneqq \bigoplus_{\mathbf{i}\in(\mathbb{Z}+\frac{\delta+1}{2})^d}\Theta_{\mathbf{i}}L(\emptyset_\delta)$, where
	$\Theta_{\mathbf{i}}\coloneqq \Theta_{i_d}\dots\Theta_{i_1}$ if $\mathbf{i}=(i_1,\dots i_d)$.
\end{defi}


\begin{thm}\label{blackboxformainthm}
	There exist isomorphisms of algebras $\xi_d\colon\Br_d(\delta)\overset{\cong}{\to}\End_K(T_d)$ such that the following diagram commutes
	\begin{tikzcd}
		\Br_d(\delta)\arrow[r, "\xi_d"]\arrow[d, "i\mind"]&\End_K(T_d)\arrow[d, "\Theta_i"]\\
		\Br_{d+1}(\delta)\arrow[r, "\xi_{d+1}"]&\End_K(T_{d+1})\\
	\end{tikzcd}.
\end{thm}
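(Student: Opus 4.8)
The plan is to reduce Theorem \ref{blackboxformainthm} entirely to results already available in the literature, namely the comparison of graded lifts of the Brauer algebra in \cite{OSPII}*{Section 11} and \cite{diss}*{Section 4}, together with Ehrig--Stroppel's description of the Khovanov algebra in terms of Brauer algebras \cite{OSPII}*{Theorem 6.22}. I would begin by making the target side explicit: unravelling Definition \ref{defitildethetai} and the definition of $T_d$, one sees that $\End_K(T_d)=\bigoplus_{\mathbf{i},\mathbf{j}}\Hom_K(\Theta_{\mathbf{j}}L(\emptyset_\delta),\Theta_{\mathbf{i}}L(\emptyset_\delta))$, and that each summand is computed by stacking the local-move matchings from \cref{localmoves} and applying the bimodule machinery of Section \ref{sec:Proj} (in particular \cite[Theorem 3.6]{BS2}) to the composite $\LAMBDA$-matchings. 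The first real step is therefore to identify $\End_K(T_d)$ with the algebra $C_d(\delta)$ of \cite{OSPII}*{Section 11}\cite{diss}*{Section 4}: this is essentially the content of \cite{diss}, where $C_d(\delta)$ is \emph{defined} as a certain endomorphism algebra of iterated geometric bimodules over $K$, so the identification should be a matter of matching conventions (the degree shifts $\langle-\ca(t)\rangle$, the direct sum over eigenvalue-labels $\mathbf{i}\in(\mathbb{Z}+\tfrac{\delta+1}{2})^d$, and the $\diamond\circ$-ambiguity at $i=-\tfrac12$ noted in Definition \ref{defitildethetai}).

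Next I would invoke the second graded lift $G_d(\delta)$ of Ge Li \cite{GELI}, which by construction has the idempotents projecting onto the generalized eigenspaces of the Jucys--Murphy elements $\xi_k$ built into its generators; Li proves $G_d(\delta)$ is isomorphic to $\Br_d(\delta)$ as an (ungraded) algebra. The key external input, already cited in the introduction as established in \cite{diss}, is an isomorphism of graded algebras $G_d(\delta)\cong C_d(\delta)\cong\End_K(T_d)$ under which Li's eigenvalue idempotents correspond to the block decomposition $T_d=\bigoplus_{\mathbf{i}}\Theta_{\mathbf{i}}L(\emptyset_\delta)$. Composing these gives the desired algebra isomorphism $\xi_d\colon\Br_d(\delta)\xrightarrow{\cong}\End_K(T_d)$. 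For the commutativity of the square, I would note that $i$-induction $i\mind\colon\Br_d(\delta)\to\Br_{d+1}(\delta)$ is, after transport through $G_d(\delta)$, nothing but the map that adds one strand to the right and then post-composes with the idempotent picking out the $i$-eigenvalue of $\xi_{d+1}$; on the $\End_K(T_d)$ side, adding a strand and projecting to the $\Theta_i$-summand is \emph{literally} the definition of how $\Theta_i$ acts on endomorphisms (it is the functor $\Theta_i$ applied to morphisms, followed by inclusion into $T_{d+1}$). Thus the square commutes because both composites equal ``add a strand, project to eigenvalue $i$'' once we have matched the eigenvalue idempotents on both sides, which is exactly the content of the $G_d\cong C_d$ identification. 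I would phrase this as: by construction of $\xi_d$ via \cite{diss}, the isomorphisms intertwine the $\xi_{d+1}$-eigenspace decompositions with the $\Theta_i$-decompositions, and $i$-induction on $\Br_d(\delta)$ is defined (Definition \ref{iinddef}) precisely as the $i$-eigenspace projection of the induction functor.

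The main obstacle, I expect, is the bookkeeping in the first step: verifying that the iterated application of the functors $\Theta_{i_d}\cdots\Theta_{i_1}$, each a direct sum of $G^{t}_{\Lambda\Gamma}$'s with the shift $\langle-\ca(t)\rangle$, produces exactly the graded algebra $C_d(\delta)$ with its standard presentation, rather than some Morita-equivalent or regraded variant. This requires care with (i) the associativity isomorphism \eqref{assocgeombimod} and the reduction theorem \cite[Theorem 3.6]{BS2} to collapse composite $\LAMBDA$-matchings to single $\Lambda\Gamma$-matchings, (ii) tracking the powers of the polynomial ring $R$ that appear (the $R^{\otimes n}$ factors in Theorem \ref{reductiontogeombimodnuclear}'s $K$-analogue) and checking they match the $\delta$-evaluation of closed circles on the Brauer side, and (iii) the sign subtleties from dotted cups/caps, though here the relevant signs have already been handled in Section \ref{sec:Proj} via \cref{rightmostvertex} and \cref{nodegbilinformadjunction}. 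Since \cite{diss} carries out exactly this comparison, the honest statement of the proof is a citation; what remains to write is a paragraph making the translation of conventions transparent and confirming that the square of functors corresponds to the square of the theorem.
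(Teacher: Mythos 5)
Your proposal matches the paper's proof essentially step for step: the paper also introduces the two graded lifts $G_d(\delta)$ (Ge Li's KLR-type algebra with the eigenvalue idempotents built in) and the cup-cap algebra $C_d(\delta)$, identifies $\End_K(T_d)$ with $C_d(\delta)$ via the diagrammatic description of $\Theta_i$, and cites \cite{diss} for the explicit isomorphism $G_d(\delta)\cong C_d(\delta)$ which by construction intertwines $i$-induction with $\Theta_i$. The only difference is emphasis: you spell out the bookkeeping (degree shifts, reduction of composite matchings, eigenvalue idempotents) in more detail than the paper, which treats the theorem as a black box resting on \cite{GELI} and \cite{diss}.
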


\begin{proof} In order to prove this theorem we replace the Brauer algebra with two graded lifts. 
	The problem is that the idempotents picking out the eigenspaces for the $i$-induction are not part of the definition of the Brauer algebra and very hard to handle. 
	So one would like to find a variant of the Brauer algebra that has these idempotents build in the definition. This is the algebra $G_d(\delta)$ provided by \cite{GELI}. This algebra $G_d(\delta)$ is the Brauer analogue of the cyclotomic Khovanov--Lauda--Rouquier algebra $R_d$ in \cite{BK} and plays the same role as $R_d$ for the degenerate affine Hecke algebra.
	
	Via the same definition of $i$-induction for $G_d(\delta)$ one can easily verify that the isomorphism between $\Br_d(\delta)$ and $G_d(\delta)$ in \cite{GELI} is compatible with $i$-induction.
	
	On the other hand we have the so called cup-cap algebra $C_d(\delta)$. It consists out of so called oriented stretched circle diagrams of height $d$ and a multiplication, which is also given by some surgery procedure, for details see \cite{OSPII}*{Section 11} and \cite{diss}*{Section 4}. This can then easily be identified with $\End_K(T_d)$ using the diagrammatic description of $\Theta_i$. For the cup-cap algebra one can also define a version of $\Theta_i$ which is given by inserting the local moves in the middle of an oriented stretched circle diagram.
	
	The most difficult and important part is the identification of $G_d(\delta)$ with $C_d(\delta)$. An explicit isomorphism can be found in \cite{diss}. By construction this isomorphism swaps $i$-induction and $\Theta_i$.
\end{proof}

Now we have all the ingredients to prove the main theorem from the introduction.

\begin{thm}\label{mainthm}
	We have an equivalence of categories $\Psi\colon(e\tilde{K}e)\text{-mod}\to \cF$ such that $\theta_i\circ\Psi\cong\Psi\circ\tilde{\Theta}_i$, which maps $\overline{L}(\lambda^\owedge_\eps)$ to $L(\lambda, \eps)$ for every $(\lambda, \eps)\in s\Gamma_\delta(m,n)\cong X^+(\Osp[r][2n])$.
\end{thm}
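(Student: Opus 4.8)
The equivalence $\Psi$ together with the labelling rule $\Psi(\overline L(\lambda_\eps^\owedge))=L(\lambda,\eps)$ is \cite{OSPII}*{Theorem 10.5}, which identifies $e\tilde{K}e$ with the locally finite endomorphism algebra of a projective generator of $\cF$ (the match of simple labels combines this with \cref{daggermapgiveshwofrlambda} and \cref{projsuperweightdiag}). So the only real content is the natural isomorphism $\theta_i\circ\Psi\cong\Psi\circ\tilde\Theta_i$, and the plan is as follows. Both sides are exact; $\theta_i$ is the summand of $-\otimes V=\bigoplus_i\theta_i$ cut out by the generalized eigenvalue $2i$ of the Casimir $C$ (\cref{defiitranslation}), and $\tilde\Theta_i$ is the summand of the total projective functor $\bigoplus_i\tilde\Theta_i$ cut out diagrammatically by the local moves at the vertices $\abs{i}\pm\tfrac12$ (\cref{defitildethetai}). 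So I would first reduce to producing a natural isomorphism of the \emph{total} functors $(-\otimes V)\circ\Psi\cong\Psi\circ(\bigoplus_i\tilde\Theta_i)$, and then read off the summand-by-summand statement from \cref{iinductionswapsitranslation} and \cref{blackboxformainthm}.

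For the total functors I would compare the two sides on the ``tensor-space'' objects. Set $\tilde T_d:=\bigoplus_{\mathbf i}\tilde\Theta_{\mathbf i}\overline L((\emptyset_\delta)^\owedge_+)\in e\tilde{K}e\text{-mod}$. By \cref{daggermapgiveshwofrlambda} (with $\lambda=\emptyset$) and the labelling rule, $\Psi$ sends $\overline L((\emptyset_\delta)^\owedge_+)$ to the trivial representation $\mathbf 1=\mathbb{F}\R_\delta(\emptyset)$; every indecomposable projective of $\cF$ occurs as a summand of some $V^{\otimes d}$ by the same theorem, while every indecomposable projective of $e\tilde{K}e\text{-mod}$ occurs as a summand of some $\tilde T_d$ by \cref{remprojfunctorsobtainproj}. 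Hence the $\tilde T_d$ together with the morphisms between them generate $e\tilde{K}e\text{-mod}$, and a natural isomorphism of the total functors amounts to a compatible system of isomorphisms $\psi_d\colon\Psi(\tilde T_d)\xrightarrow{\;\sim\;}V^{\otimes d}$ that is equivariant for the two $\Br_d(\delta)$-actions --- on $V^{\otimes d}$ via the surjection $\Phi_{d,\delta}$ of \eqref{FromBrauerToOspIsFull}, on $\tilde T_d$ via the action assembled from the $\tilde\Theta_i$ --- and compatible with the transition maps $\theta_i$, $\tilde\Theta_i$ raising $d$ to $d+1$.

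To construct the $\psi_d$ I would start from \cref{blackboxformainthm}: $\xi_d\colon\Br_d(\delta)\xrightarrow{\sim}\End_K(T_d)$, $T_d=\bigoplus_{\mathbf i}\Theta_{\mathbf i}L(\emptyset_\delta)$, intertwining $i$-induction with $\Theta_i$. Truncating by the super weight diagram idempotents and then quotienting by the nuclear ideal carries $\Theta_i$ to $\tilde\Theta_i$ and $T_d$ to $\tilde T_d$, yielding a surjective algebra homomorphism $\Br_d(\delta)\twoheadrightarrow\End_{e\tilde{K}e}(\tilde T_d)$ compatible with the $i$-functors; transporting along $\Psi$ and using \cref{iinductionswapsitranslation} to identify the image of $i$-induction with $\theta_i$ gives a surjection $\Br_d(\delta)\twoheadrightarrow\End_\cF(\Psi(\tilde T_d))$ compatible with $\theta_i$. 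On the other hand $\Phi_{d,\delta}\colon\Br_d(\delta)\twoheadrightarrow\End_\cF(V^{\otimes d})$ is surjective and likewise compatible with $\theta_i$. The crucial step is then to prove that these two surjections have the same kernel; given this, matching indecomposable summands by means of the classification in \cref{classificationofindecomposablesummandsvialayernumberofdelignewdiag} together with \cref{daggermapgiveshwofrlambda} produces the $\Br_d(\delta)$-equivariant isomorphisms $\psi_d$, which are automatically natural in $d$, and the compatibility with the summand decompositions follows once more from \cref{iinductionswapsitranslation}.

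I expect the kernel comparison to be the main obstacle: one must show that the composite $\Br_d(\delta)\xrightarrow{\xi_d}\End_K(T_d)\to\End_{e\tilde{K}e}(\tilde T_d)$ annihilates exactly $\ker\Phi_{d,\delta}$. One inclusion is comparatively soft, since $\xi_d$ intertwines $i$-induction with $\Theta_i$ and hence relations among Brauer generators that hold after applying the Deligne functor $\mathbb{F}$ must persist through truncation and the nuclear quotient. The reverse inclusion is delicate: it requires that passing to super weight diagrams and killing nuclear diagrams does not collapse $\tilde T_d$ any further than $\mathbb{F}$ collapses $\R_\delta(\ydiagram{1})^{\otimes d}$, which one should be able to control using the fullness of $\mathbb{F}$ (equivalently the surjectivity of $\Phi_{d,\delta}$, Lehrer--Zhang), the explicit formula for $\tilde\Theta_i$ on simple modules from \cref{projfunctorsonsimplenuclear}, and the classification of the indecomposable summands of $V^{\otimes d}$ in terms of tensor and super weight diagrams. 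Everything outside this kernel computation --- exactness, the adjunction of \cref{geombimodadjunctionnuclear}, commutation with duality (\cref{geombimodcommwithdualitynuclear}), and the reduction to a projective generator --- is formal.
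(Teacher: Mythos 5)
Your overall architecture --- routing everything through $\Br_d(\delta)$ via \cref{blackboxformainthm}, \cref{iinductionswapsitranslation} and the fullness of $\mathbb{F}$ --- matches the paper's, but two steps do not hold up as written. First, the claim that truncating by the super weight idempotents and passing to the nuclear quotient ``carries $\Theta_i$ to $\tilde{\Theta}_i$ and $T_d$ to $\tilde T_d$'' is false on the nose: $\Theta_i$ can strictly increase the layer number, so it does not commute with the inclusion of $eKe$ into $\varinjlim\End_K(T_d)$. One only recovers $\theta_i\circ\bar{\Psi}=\bar{\Psi}\circ e\Theta_ie$ after observing that the layer-increasing contributions are annihilated by the comparison map, which requires \cite{OSPII}*{Proposition 8.8} together with \cref{classificationofindecomposablesummandsvialayernumberofdelignewdiag}; the paper devotes an explicit commutative diagram to exactly this point, and your sketch skips it.

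Second, and more seriously, the step you yourself flag as ``the main obstacle'' --- that the two surjections from $\Br_d(\delta)$ have the same kernel --- is the entire content of the theorem, and you only gesture at how it might be controlled. The paper never proves such a kernel equality. Instead it restricts the limit map $\Psi'=\Phi\circ\psi$ to the subalgebra $eKe=\bigoplus\Hom_K(P(\lambda),P(\mu))$ of $\varinjlim\End_K(T_d)$, shows via \cref{classificationofindecomposablesummandsvialayernumberofdelignewdiag} and \cref{daggermapgiveshwofrlambda} that it maps surjectively onto $A_{(r|2n)}=\bigoplus\Hom_{\cF}(P(\lambda,\eps),P(\mu,\eps'))$ (embedded in $\varinjlim\End_{\cF}(V^{\otimes d})$ by \cite{OSPII}*{Proposition 5.10}), checks that it kills the nuclear ideal using \cite{OSPII}*{Lemma 10.4}, and then imports injectivity from the known isomorphism $e\tilde{K}e\cong A_{(r|2n)}$ of \cite{ES3}*{Theorem 5.1}. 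In other words, the delicate ``no further collapse'' statement is an external input rather than something re-derived from fullness plus \cref{projfunctorsonsimplenuclear}. Without either that citation or an actual proof of your kernel equality, the argument is incomplete.
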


\begin{proof}
	By \cref{blackboxformainthm}, we have algebra isomorphisms $\psi_d\colon\End_{K}(T_d)\to\Br_d(\delta)$ for every $d\geq 0$ such that $\psi_{d+1}\circ\Theta_i=i\mind \circ\psi_d$.
	
	We can take the direct limit of $\End_k(T_d)$ with respect to the inclusion \begin{equation*}
		\bigoplus_{i\in\bbZ+\frac{\delta+1}{2}}\Theta_i\colon\End_k(T_d)\to\End_k(T_{d+1})
	\end{equation*} and we can take the direct limit of $\Br_d(\delta)$ with respect to the natural inclusion $\Br_d(\delta)\to\Br_{d+1}(\delta), f\mapsto f\otimes 1$. Note that this natural inclusion is the same as $\bigoplus_{i\in\bbZ+\frac{\delta+1}{2}}i\mind $ and thus we obtain an isomorphism \begin{equation}
		\psi\colon\varinjlim\End_k(T_d)\to\varinjlim\Br_d(\delta)
	\end{equation} with $\psi\circ\Theta_i=i\mind \circ\psi$.
	
	By \cref{FromBrauerToOspIsFull} we have a surjective algebra homomorphism $\Br_d(\delta)\to\End_{\cF}(V^{\otimes d})$. Taking the direct limit of $\End_{\cF}(V^{\otimes d})$ with respect to the embedding $f\mapsto f\otimes 1$, we obtain a surjective algebra homomorphism \begin{equation}\Phi\colon\varinjlim\Br_d(\delta)\to\varinjlim\End_{\cF}(V^{\otimes d})
	\end{equation} such that $\Phi\circ i\mind =\theta_i\circ\Phi$ (the compatibility follows from \cref{iinductionswapsitranslation}).
	
	Putting this together we obtain a surjective algebra homomorphism \begin{equation}\Psi'\coloneqq \Phi\circ\psi\colon\varinjlim\End_k(T_d)\to\varinjlim\End_{\cF}(V^{\otimes d})
	\end{equation} such that $\theta_i\circ\Psi'=\Psi'\circ\Theta_i$.

	Now we take a look at the algebra $A_{(r|2n)}$ which we define as
	\begin{equation*}
		A_{(r|2n)}\coloneqq \bigoplus_{(\lambda,\eps),(\mu,\eps')\in X^+(\Osp[r][2n])}\Hom_{\cF}(P(\lambda, \eps), P(\mu, \eps')).
	\end{equation*}
	Let $f\in A_{(r|2n)}$. By definition this can be identified with some $f\in\End_{\cF}(P,P)$ for some projective module $P$ which is the direct sum of finitely many nonisomorphic indecomposable projective objects in $\cF$. We may assume that they lie in the same block. Then by \cite{OSPII}*{Proposition 5.10} there exists $V^{\otimes d}$ containing $P$ as a summand, and thus we can consider $f$ as an endomorphism of $V^{\otimes d}$. In this way we can realize $A_{(r|2n)}$ as a subalgebra of $\varinjlim\End_{\cF}(V^{\otimes d})$.
	
	By \cref{classificationofindecomposablesummandsvialayernumberofdelignewdiag} and \cref{daggermapgiveshwofrlambda} $\Psi'$ restricts to a surjective algebra homomorphism $\bar{\Psi}\colon eKe \to A_{(r|2n)}$ which identifies the idempotent corresponding to $P(\lambda, \eps)\in\cF$ with the idempotent corresponding to the super weight diagram associated to $(\lambda, \eps)$\footnote{Actually it identifies the idempotent corresponding to the super weight diagram with the \underline{reversed} sign rule (see \cref{swdiagdifferentsignrules}) associated to $(\lambda, \eps)$. But changing the parity of the dot on the leftmost ray in $e\tilde{K}e$ is an automorphism, so we just twist in the end by this automorphism and obtain the desired result. On the super side, this would correspond to tensoring with $L(\emptyset, -)$.}. We have a commutative diagram
	\begin{center}
	\begin{tikzcd}
		\varinjlim\End_{K}(T_d)\arrow[r, "\Psi'"]&\varinjlim\End_{\cF}(V^{\otimes d})\\
		eKe\arrow[u, hook, "\iota_1"]\arrow[r, "\bar{\Psi}"]&A_{(r|2n)}\arrow[u, hook, "\iota_2"],
	\end{tikzcd}
\end{center}
when we identify $eKe=\bigoplus_{\lambda, \mu}\Hom_K(P(\lambda), P(\mu))$, where we sum over pairs of super weight diagrams with a subalgebra of $\varinjlim\End_{K}(T_d)$.
	We clearly have $\theta_i\circ\iota_2=\iota_2\circ\theta_i$, but unfortunately we do not have $\Theta_i\circ\iota_1=\iota_1\circ e\Theta_ie$. The problem is that in $K$ (and thus in $\varinjlim\End_{K}(T_d)$) we are allowed to have circle diagrams $\underline{\lambda}\nu\overline{\mu}$ such that $\kappa(\lambda)\neq\kappa(\mu)$, but in $eKe$ this is not possible by \cref{defieke} as it is the idempotent truncation by super weight diagrams and by \cref{defsuperweightdiag} every super weight diagram $\mu$ satisfies $\kappa(\mu)=\min(m,n)$. By \cite{OSPII}*{Proposition 8.8}, we know that $\Theta_i$ never decreases the layer number, but it might increase it. However, in this case $\Psi'$ produces $0$ by \cref{classificationofindecomposablesummandsvialayernumberofdelignewdiag}.
	Thus we have $\theta_i\circ\bar{\Psi}=\bar{\Psi}\circ e\Theta_ie$.
	
	By \cite{OSPII}*{Lemma 10.4} and \cref{classificationofindecomposablesummandsvialayernumberofdelignewdiag} $\bar{\Psi}$ factors through the nuclear ideal $e\mathbb{I}e$ giving rise to $\Psi\colon e\tilde{K}e\to A_{(r|2n)}$. By additionally looking at \cite{OSPII}*{Proposition 8.8} and the definition of $\tilde{\Theta}_i$ we also see that $\theta_i\circ\Psi=\Psi\circ\tilde{\Theta}_i$. By \cite{ES3}*{Theorem 5.1} $\Psi$ is an isomorphism, so we get an equivalence of categories $\Psi\colon e\tilde{K}e\text{-mod}\to\cF$ such that $\theta_i\circ\Psi\cong\Psi\circ\tilde{\Theta}_i$.
	This equivalence maps $\overline{L}(\lambda^\owedge_\eps)$ to $L(\lambda, \eps)$ as the isomorphism $\phi\colon e\tilde{K}e\to A_{(r|2n)}$ identifies the idempotent corresponding to $P(\lambda, \eps)$ in $A_{(r|2n)}$ with $e_{\lambda^\owedge_\eps}$ in $e\tilde{K}e$.
\end{proof}

\begin{rem}\label{indecsummandinkhovanov}
If we summarize our results so far in terms of understanding direct summands of $V^{\otimes d}$, we know by \cref{defiitranslation} that it suffices to understand $\theta_{i_1}\circ\dots\circ\theta_{i_d}L(\emptyset,+)$. By \cref{mainthm} this is the same as $\Psi\circ\tilde{\Theta}_{i_1}\circ\dots\circ\tilde{\Theta}_{i_d}L((\emptyset_\delta)^\owedge_+)$.
Forgetting the grading on $e\tilde{K}e$ we know that (by \cref{defitildethetai}) $\tilde{\Theta}_i$ is given by tensoring with some $\bigoplus_j e\tilde{K}^{t_j}_{\Lambda_j\Gamma_j}e$ for certain blocks $\Lambda_j$ and $\Gamma_j$ and $\Lambda_j\Gamma_j$-matchings $t_j$. Note that by definition each of these $t_j$ is a translation diagram.
\Cref{mutliplicationisonuclear} then tells us that $\tilde{\Theta}_{i_1}\circ\dots\circ\tilde{\Theta}_{i_d}$ is actually given by $\bigoplus_j e\tilde{K}^{\t_j}_{\LAMBDA_j}e$ for some sequences of blocks $\LAMBDA_j$ and $\LAMBDA_j$-matchings $\t_j$.
Finally using \cref{reductiontogeombimodnuclear} we see that the sum $\bigoplus_j e\tilde{K}^{\t_j}_{\LAMBDA_j}e$ can be reduced to $\bigoplus_{j'} e\tilde{K}^{t_{j'}}_{\Lambda_{j'}\Gamma_{j'}}e$. Furthermore by \cref{projfunctorsonsimplenuclear}\cref{projfunctorsonsimpleexplicitnuclear} we know that $e\tilde{K}^{t_{j'}}_{\Lambda_{j'}\Gamma_{j'}}e\otimes_{e\tilde{K}e}\overline{L}((\emptyset_\delta)^\owedge_+)$ is indecomposable.

As the equivalence $\Psi$ from \cref{mainthm} is necessarily additive, every indecomposable summand  of $V^{\otimes d}$ is then of the form $\Psi(e\tilde{K}^t_{\Lambda\Gamma}e\otimes_{e\tilde{K}e}\overline{L}((\emptyset_\delta)^\owedge_+))$ for some blocks $\Lambda$, $\Gamma$ and a $\Lambda\Gamma$-matching $t$. This is the same (forgetting the grading) as writing that every indecomposable summand is of the form $\Psi(\tilde{G}^t_{\Lambda\Gamma}\overline{L}((\emptyset_\delta)^\owedge_+))$.

Conversely every such choice of $\Lambda$, $\Gamma$ and $t$ gives in this way an indecomposable summand in some $V^{\otimes d}$.
\end{rem}



\section{Indecomposable tensors} \label{applications}
Recall that the indecomposable modules in $V^{\otimes d}$ are parametrized by partitions which give rise to Deligne weight diagrams. The indecomposable summands are then given by $\{\mathbb{F}\R_\delta(\lambda)\mid\kappa(\lambda_\delta)\leq\min(m,n)\}$ by \cref{classificationofindecomposablesummandsvialayernumberofdelignewdiag}.

By \cref{indecsummandinkhovanov} we know that each $\mathbb{F}\R_\delta(\lambda)$ arises as $\Psi(\tilde{G}^t_{\Lambda\Gamma}\overline{L}((\emptyset_\delta)^\owedge_+))$ for some blocks $\Lambda$ and $\Gamma$ in $e\tilde{K}e$ and some $\Lambda\Gamma$-matching $t$. It also follows from \cite{OSPII}*{Theorem 12.1} that $\mathbb{F}\R_\delta(\lambda)$ is self-dual.

\begin{prop}\label{indecradicalandsoclefiltrationagree}
	The radical and socle filtration of $\tilde{G}^t_{\Lambda\Gamma}\overline{L}((\emptyset_\delta)^\owedge_+)$ agrees with the grading filtration. In particular the radical and socle filtration of $\mathbb{F}\R_\delta(\lambda)$ is induced by the grading filtration on $\tilde{G}^t_{\Lambda\Gamma}\overline{L}((\emptyset_\delta)^\owedge_+)$, it is rigid and has Loewy length $ll(\mathbb{F}\R_\delta(\lambda))=2d(\lambda_\delta)+1$, where $d(\lambda_\delta)$ denotes the number of caps in the cap diagram of $\lambda_\delta$.
\end{prop}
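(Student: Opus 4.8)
The proof splits into two logically separate parts. First, we must show that the grading filtration on $M := \tilde{G}^t_{\Lambda\Gamma}\overline{L}((\emptyset_\delta)^\owedge_+)$ coincides with both its radical and its socle filtration. Second, we must compute the Loewy length and observe rigidity. The key structural input is \cref{projfunctorsonsimplenuclear}\cref{projfunctorsonsimpleexplicitnuclear}: $M$ is a self-dual indecomposable module with irreducible head $\overline{L}(\lambda)\langle -\ca(t)\rangle$ for a combinatorially determined $\lambda$. Self-duality immediately forces the socle to be irreducible as well (dual of the head), and by \cref{geombimodcommwithdualitynuclear} the duality $\dual$ sends the grading filtration to the (shifted, reversed) grading filtration; so it suffices to prove the statement for the radical filtration and then dualize.

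\emph{Part 1 (grading filtration = radical filtration).} The strategy is the standard one for Koszul-type situations. First I would record, using the analog of \cite[Theorem 4.2]{BS2} for $e\tilde{K}e$ (valid by the remark after \cref{defiprojfunctornuclear}), that $M$ has a graded basis given by oriented diagrams $a\nu t\gamma\overline{\gamma}$, with $\gamma = (\emptyset_\delta)^\owedge_+$; equivalently, from \cref{remprojfunctorsobtainproj}, $M$ is (up to a shift) of the form $\overline{P}(\mu)\langle -\Def(\mu)\rangle$ when $t = \overline{\mu}\underline{\gamma}$, but in general $M$ is a quotient/submodule governed by the diagrammatics. The point is that $e\tilde{K}e$ is generated in degrees $0$ and $1$ (\cref{nucleargenerateddegreeone}), so the degree-$\geq 1$ part $J$ of $e\tilde{K}_\Lambda e$ is generated by its degree-$1$ component and satisfies $J = \operatorname{rad}(e\tilde{K}_\Lambda e)$ (the degree-$0$ part being semisimple, a product of copies of $\bbC$ indexed by super weight diagrams). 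Consequently, for \emph{any} graded $e\tilde{K}_\Lambda e$-module $N$ generated in a single degree, one has $\operatorname{rad}^j N = J^j N = N_{\geq j}$ (the submodule spanned by homogeneous elements of degree $\geq j$, after normalizing the generating degree to $0$). Since $M$ has irreducible head concentrated in a single degree $-\ca(t)$, it is generated in that degree, and therefore $\operatorname{rad}^j M = M_{\geq -\ca(t)+j}$ — i.e.\ the radical filtration is the grading filtration. Applying $\dual$ and using \cref{geombimodcommwithdualitynuclear} together with $\overline{L}(\gamma)^\dual \cong \overline{L}(\gamma)$ gives that the socle filtration is also the grading filtration; in particular $M$ is rigid.

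\emph{Part 2 (Loewy length).} The grading on $M$ runs over an interval of integers, and the Loewy length equals the length of that interval. By \cref{projfunctorsonsimpleinuclear}, the composition factors of $M$ are the $\overline{L}(\mu)\langle j\rangle$ with $\mu$ satisfying conditions \cref{projfunctorsonsimpleianuclear}--\cref{projfunctorsonsimpleibnuclear} and $j$ ranging over $\deg(\underline{\mu} t \gamma \overline{\gamma})$ (adjusted by the shift $\langle -\ca(t)\rangle$) as the orientations of the $n_\mu$ lower circles vary — each circle contributing $\pm 1$ via the factor $(q+q^{-1})^{n_\mu}$. The head sits in degree $0$ (after the shift), corresponding to all cups and caps of $\lambda t \gamma$ anticlockwise; the socle sits in the maximal degree. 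The maximal degree is achieved by the diagram obtained from the head-diagram $\underline{\lambda}$ by reorienting, which by \cref{onemoreoneless}/\cref{degreeformulaupperreduction} and the self-duality means the top degree is $2 \ca(t)$ relative to the bottom, but one must express this in terms of $\lambda_\delta$ rather than $t$. The translation is via \cref{remprojfunctorsobtainproj} and \cref{indecsummandinkhovanov}: $\ca(t)$ matches the number $d(\lambda_\delta)$ of caps in the cap diagram of $\lambda_\delta$ (equivalently, $\Def$ of the relevant weight). Hence the grading of $M$ occupies degrees $0, 1, \ldots, 2d(\lambda_\delta)$, giving Loewy length $2 d(\lambda_\delta) + 1$. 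Finally, $\mathbb{F}\R_\delta(\lambda) \cong \Psi(M)$ by \cref{indecsummandinkhovanov}, and since $\Psi$ is an equivalence of (graded) categories, it transports the grading filtration, the rigidity, the Loewy length, and the identification of radical/socle filtrations verbatim; self-duality of $\mathbb{F}\R_\delta(\lambda)$ was already noted (or follows from \cref{geombimodcommwithdualitynuclear} via $\Psi$).

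\emph{Main obstacle.} The delicate point is Part 2: pinning down the exact top degree $2d(\lambda_\delta)$ and checking it is realized by a genuine composition factor (not killed in the passage from $K$ to $e\tilde{K}e$, i.e.\ not nuclear, and with $\mu$ satisfying conditions \cref{projfunctorsonsimpleianuclear}--\cref{projfunctorsonsimpleibnuclear}). For the head this is guaranteed by \cref{projfunctorsonsimpleexplicitnuclear}; by self-duality the socle is the dual, forcing its degree to be twice the "center of mass" of the grading. Making the bookkeeping of $\ca(t)$ versus $d(\lambda_\delta)$ precise — via the chain $\lambda \leadsto \mathbb{F}\R_\delta(\lambda) \leadsto M$ of \cref{indecsummandinkhovanov} and the fact that $t$ may be taken of the form $\overline{\mu}\underline{\gamma}$ with $\gamma$ the empty Deligne diagram — is the one genuinely non-formal step; everything else is a direct application of the cited results.
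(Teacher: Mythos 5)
Your overall strategy coincides with the paper's. Your Part 1 is essentially a hands-on unpacking of \cite{BGS96}*{Prop. 2.4.1}, which is exactly what the paper invokes: the nonnegative grading, semisimplicity of the degree-zero part, generation in degrees $0$ and $1$ (\cref{nucleargenerateddegreeone}), and the simple head in a single degree together with self-duality (\cref{projfunctorsonsimplenuclear}\cref{projfunctorsonsimpleexplicitnuclear}) force radical filtration $=$ grading filtration $=$ socle filtration, hence rigidity, with the grading supported in degrees $-\ca(t),\dots,\ca(t)$ and Loewy length $2\ca(t)+1$. This part is correct and matches the paper.

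The gap is in your Part 2, precisely at the step you yourself flag: the identity $\ca(t)=d(\lambda_\delta)$. Your proposed route --- ``$t$ may be taken of the form $\overline{\mu}\underline{\gamma}$ with $\gamma$ the empty Deligne diagram'' --- is only available when $\mathbb{F}\R_\delta(\lambda)$ is projective; that is the setting of \cref{remprojfunctorsobtainproj}, where $\tilde{G}^t_{\Lambda\Gamma}\overline{L}(\gamma)\cong\overline{P}(\mu)\langle-\Def(\mu)\rangle$. For a general tensor weight diagram with $\kappa(\lambda_\delta)<\min(m,n)$ the matching $t$ produced by \cref{indecsummandinkhovanov} is the reduction of a composite of elementary translation matchings and need not have that shape, so the bookkeeping does not reduce to the projective case. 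What is actually needed --- and what the paper supplies --- is that $\overline{\lambda_\delta}$ is the upper reduction of $t\overline{\emptyset_\delta}$ (\cite{OSPII}*{Theorem 8.7}); since $\overline{\emptyset_\delta}$ is cap-free, no cap of $t$ is removed in the upper reduction (\cref{defiuplowreduction}), whence $\ca(t)=d(\lambda_\delta)$. Without this input your argument establishes Loewy length $2\ca(t)+1$ but not the stated formula in terms of $\lambda_\delta$.
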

\begin{proof}
	Let us abbreviate $X\coloneqq \tilde{G}^t_{\Lambda\Gamma}\overline{L}((\emptyset_\delta)^\owedge_+)$. This admits a filtration by submodules $X(j)$ which is spanned by all graded pieces of degree $\geq j$ as $e\tilde{K}e$ is a nonnegatively graded algebra. So for some $m<n$ we have
	\[
	X = X(m)\supset X(m+1)\supset\dots\supset X(n)
	\]
	and $X(n+1)=0$.
	Using \cref{nucleargenerateddegreeone} and the fact that the degree $0$ part of the Khovanov algebra is semisimple (it is a direct sum of irreducible modules) we can apply \cite{BGS96}*{Prop. 2.4.1} in conjunction with \cref{projfunctorsonsimplenuclear}\cref{projfunctorsonsimpleexplicitnuclear} to get that the socle and radical filtration agree with the grading filtration up to a shift and that $m$ respectively $n$ has to be $-\ca(t)$ respectively $\ca(t)$. The Loewy length $ll(X)$ is thus given by $2\ca(t)+1$.
	This then translates to $\mathbb{F}\R_\delta(\lambda)$ by using $\Psi$ from \cref{mainthm}.
	It remains to see that $\ca(t)=d(\lambda_\delta)$. But \cite{OSPII}*{Theorem 8.7} tells us that $\overline{\lambda_\delta}$ is the upper reduction of $t\overline{\emptyset_\delta}$. Thus as $\overline{\emptyset_\delta}$ is cap-free (as follows easily from the definitions) we have that $\ca(t)=d(\lambda_\delta)$ as no cap in $t$ gets removed during the process of upper reduction (see \cref{defiuplowreduction}).
\end{proof}

\begin{prop}\label{eachBlockHasUniqueIrredSummand}
	Each block has a unique irreducible $\mathbb{F}\R_\delta(\lambda)$.
\end{prop}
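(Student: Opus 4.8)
The plan is to reduce the statement to a purely combinatorial fact about super weight diagrams and their blocks. By \cref{indecsummandinkhovanov} every indecomposable summand $\mathbb{F}\R_\delta(\lambda)$ of some $V^{\otimes d}$ corresponds under $\Psi$ to $\tilde{G}^t_{\Lambda\Gamma}\overline{L}((\emptyset_\delta)^\owedge_+)$, and by \cref{projfunctorsonsimplenuclear}\cref{projfunctorsonsimpleexplicitnuclear} such a module is indecomposable with irreducible head $\overline{L}(\lambda_\delta^\dagger)^\owedge$ (up to a grading shift); it is irreducible if and only if it equals its head, equivalently if and only if the underlying circle/cap datum carries no caps, i.e.~$\ca(t) = d(\lambda_\delta) = 0$. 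By \cref{indecradicalandsoclefiltrationagree} this is the same as saying $\mathrm{ll}(\mathbb{F}\R_\delta(\lambda)) = 1$. So the claim becomes: \emph{in each block $\Lambda$ (of Deligne weight diagrams with $\kappa \le \min(m,n)$) there is exactly one $\lambda \in \Lambda(d,r,n)$ whose cap diagram $\underline{\lambda_\delta}$ is cap-free.}

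First I would establish uniqueness. Two weight diagrams in the same block $\Lambda$ have the same positions of $\circ$'s and $\times$'s and the same parity of the number of $\wedge$'s (or both start with $\diamond$); see \cref{defiblock}. A cap-free cup diagram $\underline{\mu}$ means that in step (C-1)--(C-7) of \cref{assoccupdiag} no $\wedge\wedge$-pair and no lone $\wedge/\diamond$ survives the first pass, i.e.~after forming all $\vee\wedge$-cups every remaining $\wedge$ has been consumed. Combined with the block constraints this forces the labelling of the non-$\circ$/$\times$ positions: scanning from the right (where all symbols are $\vee$ by admissibility) the $\vee\wedge$-matching condition together with the fixed parity of $\#\wedge$ uniquely determines which positions carry $\wedge$ and which carry $\vee$ (or $\diamond$ at position $0$). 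Hence at most one weight diagram in $\Lambda$ is cap-free. One must also check that this unique candidate lies in $\Lambda(d,r,n)$, i.e.~satisfies $\kappa(\mu) \le \min(m,n)$; but a cap-free diagram has defect $0$, so $\kappa(\mu) = \mathrm{rk}(\mu) = \min(\#\circ,\#\times)$, which is fixed by the block and is $\le \min(m,n)$ precisely because the block $\Lambda$ was assumed to arise in our setting (every block occurring in $e\tilde{K}e$ satisfies this, as it contains a super weight diagram with layer number $\min(m,n)$, and $\mathrm{rk}$ is a block invariant bounded by that).

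For existence, I would exhibit the cap-free diagram directly: given the block data (positions of $\circ$ and $\times$, and the parity $p \in \{0,1\}$ of $\#\wedge$, or the ``starts with $\diamond$'' case), define $\mu$ by placing $\vee$ everywhere except possibly one $\wedge$ at the leftmost free position when $p = 1$ (and $\diamond$ at position $0$ in that case). Running \cref{assoccupdiag} on this $\mu$: the single $\wedge$, if present, pairs with the nearest $\vee$ to its right in step (C-1), after which only $\vee$'s remain, all of which receive rays in step (C-2); no $\wedge\wedge$-cups or dotted rays are created, so $\underline{\mu}$ is cap-free. Then $\mu$ lies in $\Lambda$ by construction, and by the uniqueness argument it is \emph{the} cap-free diagram; by \cref{classificationofindecomposablesummandsvialayernumberofdelignewdiag} it is a tensor weight diagram since $\kappa(\mu) = \mathrm{rk}(\mu) \le \min(m,n)$.

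The main obstacle I anticipate is the bookkeeping around the $\diamond$/dot conventions: the marker-placement steps (C-5)--(C-6) of \cref{assoccupdiag} and the behaviour of $\diamond$ at position $0$ interact with the block definition in a slightly delicate way (e.g.~when $\#\wedge$ is odd one may be forced to start with $\diamond$, changing which parity class a cap-free diagram can belong to). Getting the parity case analysis exactly right — in particular verifying that the candidate really has $\#\wedge$ of the prescribed parity and that no spurious dotted ray gets created in step (C-4) — is where care is needed. An alternative, cleaner route would be to transport the question through $\Psi$ to $\cF$ and invoke the fact that each block of $\Osp[r][2n]$-mod contains a unique irreducible that is a summand of some $V^{\otimes d}$, which could be read off from \cite{GS10}\cite{GS13} or from the highest-weight structure of \cref{upperfinitehighweight}; but the diagrammatic argument above is self-contained and I would prefer it.
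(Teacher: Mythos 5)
Your overall strategy is the same as the paper's: reduce via \cref{indecradicalandsoclefiltrationagree} to the claim that each block contains exactly one Deligne weight diagram whose cup diagram is cup-free, and then settle this by a combinatorial count using the block invariants (positions of $\circ$ and $\times$, parity of $\#\wedge$). Your additional check that the cup-free candidate satisfies $\kappa=\rk\leq\min(m,n)$, so that it really is a tensor weight diagram, is a worthwhile point that the paper leaves implicit.

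However, the combinatorial verification contains a concrete error. Step (C-1) of \cref{assoccupdiag} matches a $\vee$ with a neighbored $\wedge$ \emph{to its right} (the pattern $\vee\wedge$), not a $\wedge$ with a $\vee$ to its right. Hence for your candidate --- a single $\wedge$ at the leftmost free position followed by $\vee$'s --- the $\wedge$ is \emph{not} matched in (C-1); it survives and receives a \emph{dotted ray} via (C-4)/(C-5), contrary to your claim that ``no dotted rays are created''. More seriously, if the $\wedge$ really did ``pair with the nearest $\vee$ to its right'', that pairing would be a cup, so $d(\lambda_\delta)\geq 1$ and $\mathbb{F}\R_\delta(\lambda)$ would \emph{not} be irreducible: as written, your existence argument describes the creation of a cup and then concludes there are none. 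The same confusion affects the uniqueness step, where you characterize cup-freeness as ``every remaining $\wedge$ has been consumed'' after forming the $\vee\wedge$-cups; cup-freeness requires that \emph{no} cups at all are formed, including those of (C-1). The correct characterization, which is what the paper uses, is: $\underline{\mu}$ is cup-free if and only if $\mu$ contains at most one $\wedge$ and, if one is present, no $\vee$ occurs to its left, forcing it onto the leftmost free position. With that correction both halves of your argument go through and coincide with the paper's proof.
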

\begin{proof}
	By \cref{indecradicalandsoclefiltrationagree}, we have that $\mathbb{F}\R_\delta(\lambda)$ is irreducible if and only if $d(\lambda_\delta)=0$, i.e.~the cup diagram $\underline{\lambda_\delta}$ contains no cup.
	Now recall that by \cref{defiblock} any block is uniquely determined by the positions of $\circ$ and $\times$ as well as the parity of the number of $\wedge$'s. Note that for each of these choices we can create exactly one Deligne weight diagram $\lambda_\delta$ such that its cup diagram does not contain a cup.
	
	This follows because the positions of $\circ$ and $\times$ are fixed and by observing that whenever we have at least two $\wedge$'s, we necessarily create a cup. Therefore, we have at most one $\wedge$. In the case of one $\wedge$, note further that no $\vee$ can appear to the left of it without creating a cup, thus $\wedge$ has to appear on the leftmost free position.
	Hence we only have one choice.
	It is additionally easy to see that this choice of a weight diagram gives rise to an irreducible $\mathbb{F}\R_\delta(\lambda)$. 
\end{proof}

\subsection{Kazhdan--Lusztig polynomials of type B and Deligne--Kostant weights}\label{kostandkazhdan}

In \cite{BS2}*{Section 5} Brundan and Stroppel defined certain polynomials $p_{\lambda, \mu}(q)$ associated to weight diagrams $\lambda$, $\mu$ in type $A$ via labellings of diagrams. These also agree (up to a scaling factor) with the polynomials $Q^v_w(q)$ defined in \cite{LS}*{Section 6}, which are shown in \cite{LS}*{Théorème
7.8} to be equal to the (geometric) Kazhdan--Lusztig polynomials associated to
Grassmannians in the sense of \cite{KL}. In \cite{TS} the diagrammatic definition was extended for weight diagrams of type $D$, which is very similar to our situation. The only difference is that the weight diagrams of type $D$ are finite, whereas ours are infinite.

In \cite{BS2}*{Section 5} Brundan and Stroppel related the polynomials $p_{\lambda, \mu}(q)$ to the dimension of some extension group and gave a diagrammatical condition when these polynomials are monomials (for a fixed $\mu$). 
The same results were obtained in \cite{TS}*{Section 5} for type $D$ and we will prove these statements also for type $B$. The main advantage of the diagrammatical description of the Kazhdan--Lusztig polynomials $p_{\lambda, \mu}(q)$ in \cite{BS2} is that the definition also makes sense for unbounded weights and this gives us the possibility to directly adapt the proofs of \cite{TS}*{Section 5}.

Throughout this section we fix $\delta\in\bbZ$ and consider two Deligne weight diagrams $\lambda\leq\mu$. 
%

\begin{defi}
	A cap $\gamma$ in $\overline{\mu}$ is called \emph{$D$-nested} inside a cap $\gamma'$ if either $\gamma$ lies under $\gamma'$, or $\gamma'$ is dotted and $\gamma$ lies to the left of $\gamma'$.
	
	Suppose we are given two Deligne weight diagrams $\lambda_\delta\leq\mu_\delta$ such that $l_0(\lambda, \mu)=2k$. A \emph{$\lambda$-labelling $C$} of the oriented cap diagram $\mu\overline{\mu}$ assigns to every cap a natural number, such that the following properties are satisfied:
	\begin{enumerate}
		\item If the left end of an undotted cap is at position $i$, its label is at most $l_i(\lambda, \mu)$.
		\item The label of any dotted cap is even and at most $l_0(\lambda, \mu)$.
		\item If a cap $\gamma$ is $D$-nested inside another cap $\gamma'$, the label of $\gamma$ is greater or equal to the label of $\gamma'$.
		\item A cap may only have an odd label if there is some other cap above it or to the left of it, which has a strictly smaller label, or if there is a ray to the left of it.
	\end{enumerate}

We denote the set of $\lambda$-labellings of $\mu\overline{\mu}$ by $D(\lambda, \mu)$. The \emph{value} $\abs{C}$ of a labelling $C\in D(\lambda, \mu)$ is defined to be the sum of the labels in $C$.
\end{defi}

\begin{defi}
For two Deligne weight diagram $\lambda\leq\mu$	we define the \emph{dual Kazhdan--Lusztig polynomial $p_{\lambda, \mu}(q)$} to be
\begin{equation*}
	p_{\lambda, \mu}(q)=q^{l(\lambda, \mu)}\sum_{C\in D(\lambda, \mu)} q^{-2\abs{C}}.
\end{equation*} 
\end{defi}

The following theorem is proven in \cite{HNS}.

\begin{thm}\label{standardkoszul}
	For every standard module $V(\lambda)$ there exists a linear projective resolution
	\begin{equation*}
		\dots\to P^k\to P^{k-1}\to\dots\to P^1\to P^0\to V(\lambda)
	\end{equation*}
such that $P^k\cong \bigoplus_{\mu}p_{\lambda, \mu}^{(k)} P(\mu)\langle k\rangle$, where $p_{\lambda, \mu}^{(k)}$ denotes the coefficient of $q^k$ in $p_{\lambda, \mu}(q)$.
\end{thm}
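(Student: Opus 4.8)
The plan is to construct the linear projective resolution of $V(\lambda)$ by the standard BGG-type reciprocity argument, exploiting the upper finite highest weight structure from \cref{upperfinitehighweight} together with the diagrammatic computation of graded $\operatorname{Hom}$- and $\operatorname{Ext}$-spaces in $\Mod_{lf}(K)$. Concretely, I would first recall that for an upper finite highest weight category the standard modules $V(\lambda)$ have projective resolutions, and that the multiplicity of $P(\mu)\langle k\rangle$ in the $k$-th term of a \emph{minimal} such resolution is governed by $\dim_q \operatorname{Ext}^k_K(V(\lambda),L(\mu))$ (up to the usual grading bookkeeping). So the key identity to establish is
\begin{equation*}
\sum_{k\geq 0}\dim \operatorname{Ext}^k_K(V(\lambda),L(\mu))\,q^k \;=\; p_{\lambda,\mu}(q),
\end{equation*}
i.e.\ the graded Euler/Poincar\'e form of these Ext-groups is exactly the dual Kazhdan--Lusztig polynomial defined diagrammatically, and in particular each such Ext-group is concentrated in a single internal degree (purity), which is what forces the resolution to be \emph{linear} (the $k$-th term sits in degree $k$).

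The main steps, in order, would be: (1) Compute $\operatorname{Hom}_K(P(\mu),V(\lambda))$ and more generally the graded composition-factor multiplicities $[V(\lambda):L(\mu)]_q$ from the explicit cellular basis of $V(\lambda)$ (the symbols $(\underline{\gamma}\mu|$) recorded after \eqref{deficellmodule}; this is a purely combinatorial count of oriented cap diagrams and matches, up to $q$-powers, the labelling combinatorics entering $p_{\lambda,\mu}$. (2) Set up the comparison with the type $D$ (and type $A$) arguments of \cite{TS}*{Section 5} and \cite{BS2}*{Section 5}: since the diagrammatic definition of $p_{\lambda,\mu}(q)$ via $\lambda$-labellings makes sense verbatim for our infinite weight diagrams, and since $\lambda\leq\mu$ restricts the support to a finite interval (as $\lambda(i)=\mu(i)=\vee$ for $i$ large), one can reduce to the finite type $D$ picture locally and transport the proof. (3) Deduce linearity: show that $P^k$ is built only from $P(\mu)\langle k\rangle$ by proving $\operatorname{Ext}^k_K(V(\lambda),L(\mu))$ vanishes unless it lives in degree $k$ — this uses the Koszulity / standard Koszulity input, for which the positivity and purity of the $p_{\lambda,\mu}(q)$ (all monomials $q^{2|C|}$ with the global shift $q^{l(\lambda,\mu)}$, and the parity constraint $l(\lambda,\mu)\equiv |C| \pmod{?}$) is exactly the combinatorial shadow. (4) Assemble the resolution term by term using projective covers in the highest weight category, reading off $p^{(k)}_{\lambda,\mu}$ as the relevant multiplicity.

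The hard part will be step (3): establishing that the projective resolution can be chosen \emph{linear}, not merely minimal, which is the statement that $\Mod_{lf}(K)$ is standard Koszul in the appropriate graded sense. In the finite type $A$/$D$ settings this rests on known Koszulity results (\cite{BGS96}, \cite{BS2}), but here one must handle the upper-finite/infinite weight-diagram situation carefully: one needs that the degree-zero part of $K$ is semisimple and that $K$ is generated in degrees $0$ and $1$ (the analogue of \cref{tildekgendegreeone} for $K$ itself, available from \cite{ES1}*{Theorem 6.10}), and then combine this with the finiteness coming from $\lambda\leq\mu$ to run the inductive Koszul-complex construction. I expect the bulk of the work (and where one genuinely cannot just cite \cite{BS2}) is verifying that the combinatorial bijections between $\lambda$-labellings and paths in the projective resolution survive the presence of dots and of the type $D$ move, exactly as in the adaptation carried out in \cite{TS}, and checking the sign/parity constraints on labels match the internal degrees appearing in $K$.

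Since this theorem is attributed in the excerpt to \cite{HNS}, the cleanest route is in fact to cite that reference for the statement and merely sketch how it follows from the standard Koszulity of $\Mod_{lf}(K)$ together with the diagrammatic $\operatorname{Ext}$-computation; a self-contained proof would essentially reproduce the argument of \cite{TS}*{Section 5} with dots inserted, which is routine but lengthy.
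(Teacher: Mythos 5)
The paper does not actually prove this statement: immediately above the theorem it says ``The following theorem is proven in \cite{HNS}'' and leaves it at that. Your closing paragraph --- cite \cite{HNS} for the statement --- is therefore exactly what the paper does, and to that extent your proposal matches the paper.

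Regarding your sketched self-contained argument: the overall route (identify the $k$-th term of a minimal resolution with $\bigoplus_\mu \dim\Ext^k_K(V(\lambda),L(\mu))\,P(\mu)$ and match the graded Ext-Poincar\'e series with $p_{\lambda,\mu}(q)$, adapting \cite{BS2}*{Section 5} and \cite{TS}*{Section 5}) is the expected one and is presumably what \cite{HNS} carries out. But as written, step (3) is close to circular: ``standard Koszulity'' of $\Mod_{lf}(K)$ \emph{is} essentially the assertion that the $V(\lambda)$ admit linear projective resolutions, so you cannot invoke it as an input to prove linearity. The facts you do cite --- semisimplicity of the degree-zero part and generation of $K$ in degrees $0$ and $1$ --- are necessary but nowhere near sufficient for Koszulity (they only give a quadratic-presentation candidate, not acyclicity of the Koszul complex). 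The proofs in \cite{BS2} and \cite{TS} do not deduce linearity from an abstract Koszulity theorem; they \emph{construct} the resolution explicitly, defining the differentials $P^k\to P^{k-1}$ combinatorially in terms of the labellings and verifying exactness by a diagrammatic induction. If you want a genuinely self-contained proof you would need to reproduce that construction with the dots and the type~$D$ move inserted, including the sign verifications for $d^2=0$; that is the actual content, and your sketch currently treats it as a citation to a purity statement you have not established. Given that the paper itself defers entirely to \cite{HNS}, the safe version of your proposal is the one-line citation, and the sketch should be flagged as heuristic rather than a proof.
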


\begin{defi}
	A Deligne weight diagram $\mu$ is called a \emph{Deligne--Kostant weight} if 
	\begin{equation*}
		\sum_{k\geq 0}\dim\Ext^k_K(V(\lambda), L(\mu))\leq 1
	\end{equation*} for all $\lambda$ in the same block as $\mu$.
\end{defi}
This definition agrees with the definition of Kostant weights for the Khovanov algebra of type $A$ given by Brundan and Stroppel in \cite{BS2}*{Section 7} (which is motivated from \cite{BH}). We speak of Deligne--Kostant weights to distinguish them from Kostant weights in the sense of \cite{GH}, which we will use in \cref{charcdirectsummands}.

\begin{prop}\label{kostantiffmonomial}
	A weight diagram $\mu$ is a Deligne--Kostant weight if and only if $p_{\lambda, \mu}(q)=q^{l(\lambda, \mu)}$ for all $\lambda\leq\mu$.
\end{prop}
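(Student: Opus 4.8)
The plan is to deduce this from the linear projective resolution in \cref{standardkoszul}. Recall that for any $\lambda \leq \mu$ we have
\[
\dim\Ext^k_K(V(\lambda), L(\mu)) = p^{(k)}_{\lambda,\mu},
\]
the coefficient of $q^k$ in $p_{\lambda,\mu}(q)$: indeed, applying $\Hom_K(-, L(\mu))$ to the linear projective resolution $\dots \to P^k \to \dots \to P^0 \to V(\lambda)$ with $P^k \cong \bigoplus_\nu p^{(k)}_{\lambda,\nu} P(\nu)\langle k\rangle$ and using that $\Hom_K(P(\nu)\langle k\rangle, L(\mu))$ is nonzero (and then one-dimensional) precisely when $\nu = \mu$ and the internal degree matches, shows that the $k$-th term of the complex computing $\Ext$ contributes exactly $p^{(k)}_{\lambda,\mu}$ copies of the ground field, and linearity of the resolution forces the differentials in this $\Hom$-complex to vanish (they would have to raise internal degree, but all generators of $\Hom_K(P^k, L(\mu))$ sit in internal degree $k$, matching the homological degree). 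Hence $\sum_{k\geq 0}\dim\Ext^k_K(V(\lambda), L(\mu)) = \sum_{k\geq 0} p^{(k)}_{\lambda,\mu} = p_{\lambda,\mu}(1)$.

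With this identity in hand the proposition is almost immediate. First I would observe that $p_{\lambda,\mu}(q)$, being a sum $q^{l(\lambda,\mu)}\sum_{C\in D(\lambda,\mu)} q^{-2|C|}$ with nonnegative integer coefficients, satisfies $p_{\lambda,\mu}(1) = \#D(\lambda,\mu) \geq 1$, since the zero labelling (all labels $0$) always lies in $D(\lambda,\mu)$ — one checks the four defining conditions are vacuous for the zero labelling. Therefore $\sum_{k}\dim\Ext^k_K(V(\lambda),L(\mu)) = \#D(\lambda,\mu)$, and this is $\leq 1$ for a fixed $\lambda$ if and only if the zero labelling is the only $\lambda$-labelling of $\mu\overline{\mu}$, i.e. $\#D(\lambda,\mu) = 1$, i.e. $p_{\lambda,\mu}(q) = q^{l(\lambda,\mu)}$ is a monomial. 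Running this over all $\lambda \leq \mu$ in the block of $\mu$ gives exactly the equivalence: $\mu$ is a Deligne--Kostant weight $\iff$ $p_{\lambda,\mu}(q) = q^{l(\lambda,\mu)}$ for all $\lambda\leq\mu$. (For $\lambda$ in the block of $\mu$ but not $\leq\mu$, both sides are vacuous since $\Ext^k_K(V(\lambda),L(\mu)) = 0$ by the highest weight structure of \cref{upperfinitehighweight}, so we may restrict to $\lambda\leq\mu$ throughout.)

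The one point that needs genuine care — and which I expect to be the main obstacle — is the vanishing of the differentials in the $\Hom$-complex, i.e. the claim that the linear resolution of \cref{standardkoszul} actually computes $\Ext$ as a sum of one-dimensional pieces with no cancellation, so that $\dim\Ext^k = p^{(k)}_{\lambda,\mu}$ on the nose rather than merely $\dim\Ext^k \leq p^{(k)}_{\lambda,\mu}$. This is a standard Koszulity argument: because $e\tilde Ke$ (and likewise $K$ in the relevant truncated sense) is generated in degrees $0$ and $1$ with semisimple degree-zero part — which is exactly the content of \cref{nucleargenerateddegreeone} together with the remark that the degree-zero part of the Khovanov algebra is semisimple — the minimal projective resolution of a module concentrated in a single degree is linear, and a linear resolution is automatically minimal, so after applying $\Hom_K(-, L(\mu))$ all differentials vanish. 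I would cite \cite{BGS96}*{Prop. 2.4.1} here, exactly as in the proof of \cref{indecradicalandsoclefiltrationagree}, to justify that the resolution in \cref{standardkoszul} is minimal; once minimality is known, $\dim\Ext^k_K(V(\lambda), L(\mu))$ equals the multiplicity of $P(\mu)\langle k\rangle$ in $P^k$, which is $p^{(k)}_{\lambda,\mu}$ by the statement of \cref{standardkoszul}. Everything else is the elementary observation above that a polynomial with nonnegative coefficients and constant term... — more precisely, value $1$ at $q=1$ — is a monomial with coefficient $1$.
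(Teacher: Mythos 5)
Your proposal is correct and follows essentially the same route as the paper: both deduce $\sum_{k}\dim\Ext^k_K(V(\lambda),L(\mu))=p_{\lambda,\mu}(1)$ from the linear projective resolution of \cref{standardkoszul}, observe that the zero labelling always contributes the term $q^{l(\lambda,\mu)}$, and conclude that the Kostant condition forces $p_{\lambda,\mu}(q)$ to be exactly that monomial. The only difference is that you spell out the minimality/no-cancellation argument for the linear resolution, which the paper leaves implicit in its citation of \cref{standardkoszul}.
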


\begin{proof}
	This is a direct application of \cref{standardkoszul}. Note that first of all \begin{equation*}
		\sum_{k\geq 0}\dim\Ext^k_K(V(\lambda), L(\mu))=p_{\lambda, \mu}(1)\in\bbZ\cup\{\infty\}
	\end{equation*} if $\lambda\leq\mu$ and $0$ otherwise. So we only have to consider the case $\lambda\leq\mu$. There we observe that the term $q^{l(\lambda, \mu)}$ always occurs (just take the $0$-labelling). Thus $\mu$ is a Deligne--Kostant weight if and only if $p_{\lambda, \mu}(q)=q^{l(\lambda, \mu)}$ for all $\lambda\leq\mu$.
\end{proof}
We can also directly characterize Deligne--Kostant weights in terms of the weight diagram.

\begin{defi}Let $\chi$ be a finite sequence of $\wedge$'s and $\vee$'s. A weight diagram $\lambda$ is called $\chi$-avoiding if $\chi$ does not occur as a subsequence of $\lambda$.
\end{defi}

\begin{prop}\label{delignekostant}
	For a Deligne weight diagram $\mu$ the following are equivalent:
	\begin{enumerate}
		\item\label{delignekostantone} $\mu$ is a Deligne--Kostant weight.
		\item\label{delignekostanttwo} $\mu$ is $\vee\wedge$-avoiding, $\wedge\wedge$-avoiding and $\diamond\wedge$-avoiding.
		\item\label{delignekostantthree} $\overline{\mu}$ contains no caps.
		\item\label{delignekostantfour} $\mu_\delta$ is maximal in the Bruhat order from \cref{defbruhatmove}.
	\end{enumerate}
\end{prop}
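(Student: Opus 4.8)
The strategy is to prove the cycle of implications $\ref{delignekostantone}\Leftrightarrow\ref{delignekostanttwo}$, $\ref{delignekostanttwo}\Leftrightarrow\ref{delignekostantthree}$, $\ref{delignekostantthree}\Leftrightarrow\ref{delignekostantfour}$, where the two combinatorial equivalences are essentially immediate from the definitions and the analytic input comes from \cref{kostantiffmonomial}.

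First I would dispose of the equivalence $\ref{delignekostanttwo}\Leftrightarrow\ref{delignekostantthree}$ by unwinding \cref{assoccupdiag}. A cap appears in $\overline{\mu}$ precisely when the cup-construction algorithm, run on $\mu$ (since $\overline\mu$ is the mirror of $\underline\mu$, having no caps is the same as $\underline\mu$ having no cups), produces either an undotted cup (step (C-1), joining a neighbored $\vee\wedge$) or a dotted cup (step \ref{enum:dotone}, joining two neighbored $\wedge$'s, with $\diamond$ read as $\wedge$). So $\underline\mu$ is cup-free iff $\mu$ has no neighbored $\vee\wedge$ and no neighbored pair among $\{\wedge,\diamond\}$; ``neighbored'' means separated only by $\circ$'s and $\times$'s, hence this is exactly the statement that $\mu$ is $\vee\wedge$-avoiding, $\wedge\wedge$-avoiding and $\diamond\wedge$-avoiding as subsequences (a $\diamond$ can only occur at position $0$, so $\wedge\diamond$ cannot occur and need not be listed). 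This is a short argument tracing through the cases of the algorithm.

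Next, $\ref{delignekostantthree}\Leftrightarrow\ref{delignekostantfour}$. Being maximal in the Bruhat order of \cref{defbruhatmove} means no type A move ($\vee\wedge\mapsto\wedge\vee$ on neighbored symbols) and no type D move (leading $\wedge\wedge\mapsto\vee\vee$, with $\diamond$ interpretable as $\wedge$) can be applied to $\mu$. A type A move is available exactly when $\mu$ has a neighbored $\vee\wedge$; a type D move is available exactly when $\mu$ begins (up to $\circ,\times$) with a neighbored $\wedge\wedge$ or $\diamond\wedge$. Combined with the previous paragraph, the unavailability of both moves is the same as $\overline\mu$ having no caps; one has to check the leading-$\wedge\wedge$/$\diamond\wedge$ case carefully, noting that if $\mu$ is $\vee\wedge$-avoiding then any occurrence of two $\wedge$'s (or a $\diamond$ followed later by a $\wedge$) forces such a pair to be among the leftmost symbols, so ``$\wedge\wedge$/$\diamond\wedge$-avoiding'' and ``no leading $\wedge\wedge$/$\diamond\wedge$ move'' coincide once $\vee\wedge$-avoidance is assumed. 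This bookkeeping is the fiddliest part and the place where I expect to spend the most care.

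Finally, for $\ref{delignekostantone}\Leftrightarrow\ref{delignekostanttwo}$ I would invoke \cref{kostantiffmonomial}: $\mu$ is Deligne--Kostant iff $p_{\lambda,\mu}(q)=q^{l(\lambda,\mu)}$ for all $\lambda\le\mu$, i.e.\ iff the set $D(\lambda,\mu)$ of $\lambda$-labellings of $\mu\overline\mu$ consists of the zero labelling alone for every $\lambda\le\mu$. If $\overline\mu$ has no caps (so, having already shown $\ref{delignekostantthree}\Leftrightarrow\ref{delignekostanttwo}$, equivalently $\mu$ avoids all the listed patterns) then there is nothing to label and $p_{\lambda,\mu}(q)=q^{l(\lambda,\mu)}$ trivially. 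Conversely, if $\overline\mu$ has a cap $\gamma$, I would exhibit a weight $\lambda\le\mu$ and a nonzero labelling: apply the corresponding Bruhat move at $\gamma$ (available by the equivalence with \ref{delignekostantfour}) to get $\lambda<\mu$ with $l_i(\lambda,\mu)>0$ at the relevant vertex, and assign label $1$ (or label $2$ to the dotted cap, which is always legal since $l_0\ge 2$) to $\gamma$ and $0$ elsewhere, checking it satisfies the four labelling axioms; this gives a term $q^{l(\lambda,\mu)-2}$ in $p_{\lambda,\mu}(q)$, so the polynomial is not a monomial and $\mu$ is not Deligne--Kostant. This mirrors the type $A$ argument of \cite{BS2}*{Section 7} and the type $D$ argument of \cite{TS}*{Section 5}, so the main obstacle is purely the careful verification that the chosen one-box labelling is admissible in each of the cases (undotted cap vs.\ dotted cap vs.\ the $\diamond\wedge$ situation), together with the leading-symbol bookkeeping in paragraph three.
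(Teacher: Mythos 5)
Your overall architecture is the same as the paper's: the equivalences \cref{delignekostanttwo}$\Leftrightarrow$\cref{delignekostantthree}$\Leftrightarrow$\cref{delignekostantfour} are read off from \cref{assoccupdiag} and \cref{defbruhatmove}, the implication \cref{delignekostantthree}$\Rightarrow$\cref{delignekostantone} is immediate because a cap-free $\overline{\mu}$ admits only the zero labelling, and the remaining direction is proved in contrapositive form by exhibiting a nontrivial labelling. The gap is in that last construction. First, the direction of your Bruhat move is wrong: a cap of $\overline{\mu}$ arising from a neighbored $\vee\wedge$ in $\mu$ sits at the \emph{bottom} of a type A move, so ``applying the corresponding Bruhat move at $\gamma$'' produces a weight strictly \emph{greater} than $\mu$, not a $\lambda<\mu$; and if instead you take $\lambda<\mu$ via a single inverse type A move elsewhere, the number of $\wedge$'s is unchanged and the $\vee$-counts up to the left endpoint $i$ of the cap agree, so $l_i(\lambda,\mu)=0$ and axiom (1) forces the label $0$ on $\gamma$. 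Second, even granting $l_i(\lambda,\mu)\geq 1$, the label $1$ is forbidden by axiom (4) precisely in the generic situation where $\gamma$ is the leftmost component with no ray or cap to its left (already for $\mu=\vee\wedge\vee\vee\cdots$), and your claim that label $2$ on a dotted cap ``is always legal since $l_0\ge 2$'' is false for such a $\lambda$: $l_0(\lambda,\mu)$ is twice the surplus of $\wedge$'s in $\lambda$ over $\mu$, which is $0$ after any type A move.

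The paper's construction repairs exactly this: given the offending pair in $\mu$, choose two further positions $j<k$ to the right of it where $\mu$ has $\vee$ (possible by admissibility) and let $\lambda$ be $\mu$ with those two $\vee$'s turned into $\wedge$'s. Then $l_\bullet(\lambda,\mu)\ge 0$ everywhere, so $\lambda\le\mu$ by the lemma preceding \cref{assoccupdiag}, while $l_i(\lambda,\mu)=2$ at the left end of the cap (respectively $l_0(\lambda,\mu)=2$ in the dotted case), and the labelling placing $2$ on that single cap and $0$ elsewhere satisfies all four axioms because $2$ is even, so axiom (4) never intervenes. For the $\wedge\wedge$ and $\diamond\wedge$ patterns one notes, as you do, that $\vee\wedge$-avoidance forces the offending pair to be the leading one, producing a small dotted cap which is then labelled $2$. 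With this replacement for your ``one-box'' labelling the argument goes through; as written, the proposed labelling is inadmissible in the base cases.
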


\begin{proof}
	That \cref{delignekostanttwo} is equivalent to \cref{delignekostantthree} follows directly from \cref{assoccupdiag}.
	The equivalence of \cref{delignekostanttwo} and \cref{delignekostantfour} is a direct consequence of \cref{defbruhatmove}.
	If there are no caps in $\overline{\mu}$, we only have the $0$-labelling, so (iii) directly implies (i).
	
	
	Lastly it suffices to construct a nontrivial labelling for some $\lambda\leq\mu$ in case of a subsequence $\vee\wedge$ or $\wedge\wedge$. For this we may assume that $\mu$ does not contain any $\circ$ or $\times$. 
	
	If $\mu$ contains $\vee\wedge$, we choose $i<i+1<j<k$ labelled $\vee\wedge\vee\vee$ (this is always possible as $\mu$ is admissible) and define $\lambda$ to be obtained from $\mu$ by replacing the $\vee$'s at positions $j$ and $k$ by $\wedge$'s. Then we have $l_i(\lambda, \mu)=2$ and $\overline{\mu}$ has a small cap at positions $i$, $i+1$. Labelling this cap $2$ and all other ones by $0$ gives a nontrivial $\lambda$-labelling of $\mu\overline\mu$.
	
	If $\mu$ however contains $\wedge\wedge$ and no $\vee\wedge$, the first two symbols are necessarily $\wedge\wedge$. Let $\lambda$ be the weight obtained from $\mu$ by replacing two $\vee$'s from $\mu$ by $\wedge$'s. Then $\lambda\leq\mu$ and $l_0(\lambda, \mu)=2$. Labelling the small dotted cup of $\mu$ coming from $\wedge\wedge$ by $2$ and all other ones by $0$ we obtain a nontrivial $\lambda$-labelling of $\mu\overline{\mu}$.
\end{proof}
%

\section{Characterizations of direct summands \texorpdfstring{$L(\lambda)$}{L(λ)} in \texorpdfstring{$V^{\otimes d}$}{tensor powers of V}}\label{charcdirectsummands}

The aim of this section is to give a characterization of those weights $\lambda$ such that $L(\lambda)$ appears as a direct summand of some $V^{\otimes d}$ for $\Osp[r][2n]$. So we are interested in the cases such that $\mathbb{F}\R_\delta(\mu)\cong L(\lambda, \eps)$ for some partition $\mu$ and some $(\lambda, \eps)\in s\Gamma_\delta(m,n)$. Similar results were obtained by \cite{H} for $\mathrm{GL}(m|n)$.

This gives us two different viewpoints to characterize these summands. For example we could classify those Deligne weight diagrams $\mu$ such that $\mathbb{F}\R_\delta(\mu)$ is irreducible, or we could classify those pairs $(\lambda, \eps)$ such that $L(\lambda, \eps)$ appears as a direct summand in some $V^{\otimes d}$. For the first point of view we get the following characterization

\begin{cor}\label{mudeltairred}
	For a Deligne weight diagram $\mu_\delta$ the following are equivalent.
	\begin{itemize}
		\item $\mathbb{F}\R_\delta(\mu)$ is irreducible.
		\item $\overline{\mu_\delta}$ contains no caps.
		\item $\mu_\delta$ is $\vee\wedge$, $\wedge\wedge$ and $\diamond\wedge$-avoiding.
		\item $\mu_\delta$ is maximal in the Bruhat order from \cref{defbruhatmove}.
		\item $\mu_\delta$ is a Deligne--Kostant weight.
		\item $p_{\lambda_\delta, \mu_\delta}(q)$ is a monomial for all Deligne weight diagrams $\lambda_\delta\leq\mu_\delta$.
	\end{itemize}
\end{cor}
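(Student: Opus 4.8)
The plan is to assemble \Cref{mudeltairred} as a bookkeeping corollary: all the hard analytic content has already been established, so the task reduces to (i) matching the property ``$\mathbb{F}\R_\delta(\mu)$ is irreducible'' to a combinatorial condition on $\mu_\delta$, and (ii) invoking \Cref{delignekostant} to close the cycle of equivalences. First I would recall from \Cref{indecradicalandsoclefiltrationagree} that the Loewy length of $\mathbb{F}\R_\delta(\mu)$ is $2d(\mu_\delta)+1$, where $d(\mu_\delta)$ is the number of caps in the cap diagram $\overline{\mu_\delta}$. Since a nonzero module is irreducible exactly when its Loewy length is $1$, this gives immediately that $\mathbb{F}\R_\delta(\mu)$ is irreducible if and only if $d(\mu_\delta)=0$, i.e.\ if and only if $\overline{\mu_\delta}$ contains no caps. (I should note for safety that $\mathbb{F}\R_\delta(\mu)$ is indeed nonzero here: the hypothesis $\kappa(\mu_\delta)\le\min(m,n)$ that makes $\mu_\delta$ a tensor weight diagram, via \Cref{classificationofindecomposablesummandsvialayernumberofdelignewdiag}, is exactly what guarantees this.)

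Next I would observe that the remaining four bullet points of \Cref{mudeltairred} --- ``$\overline{\mu_\delta}$ contains no caps'', ``$\mu_\delta$ is $\vee\wedge$-, $\wedge\wedge$- and $\diamond\wedge$-avoiding'', ``$\mu_\delta$ is maximal in the Bruhat order of \Cref{defbruhatmove}'', and ``$\mu_\delta$ is a Deligne--Kostant weight'' --- are literally the four equivalent conditions \cref{delignekostantthree}, \cref{delignekostanttwo}, \cref{delignekostantfour}, \cref{delignekostantone} of \Cref{delignekostant}. So these are equivalent to each other for free. Finally, the last bullet, ``$p_{\lambda_\delta,\mu_\delta}(q)$ is a monomial for all $\lambda_\delta\le\mu_\delta$'', is equivalent to $\mu_\delta$ being a Deligne--Kostant weight by \Cref{kostantiffmonomial}, since $p_{\lambda_\delta,\mu_\delta}(q)$ is a monomial precisely when it equals its leading term $q^{l(\lambda_\delta,\mu_\delta)}$.

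Putting these together: the first bullet is equivalent to the second (cap-free) by \Cref{indecradicalandsoclefiltrationagree}; the second through fifth are mutually equivalent by \Cref{delignekostant}; the fifth and sixth are equivalent by \Cref{kostantiffmonomial}. Hence all six statements are equivalent, which is the assertion. I do not anticipate a genuine obstacle here --- the proof is a few-line deduction from results already proved --- but the one place to be careful is the nonvanishing point just mentioned: one must make explicit that throughout this section $\mu$ ranges over partitions with $\kappa(\mu_\delta)\le\min(m,n)$ (equivalently $\mu\in\Lambda(d,r,n)$ for suitable $d$), so that $\mathbb{F}\R_\delta(\mu)\ne 0$ and the Loewy-length formula of \Cref{indecradicalandsoclefiltrationagree} genuinely applies; otherwise the chain ``irreducible $\iff$ Loewy length $1$ $\iff$ $d(\mu_\delta)=0$'' would have a gap.
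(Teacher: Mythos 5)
Your proof is correct and follows exactly the paper's route: the first two bullets are identified via the Loewy-length formula of \cref{indecradicalandsoclefiltrationagree}, the middle four via \cref{delignekostant}, and the last two via \cref{kostantiffmonomial}. Your extra remark about the implicit nonvanishing hypothesis $\kappa(\mu_\delta)\le\min(m,n)$ is a reasonable precaution that the paper leaves tacit.
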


\begin{proof}
	The equivalence of the first two properties is \cref{indecradicalandsoclefiltrationagree}. The middle four are equivalent by \cref{delignekostant} and the last two by \cref{kostantiffmonomial}.
\end{proof}

The main idea for the classification of the $(\lambda,\eps)$ is to compute $\mu_\delta^\dagger$ of a Deligne weight diagram $\mu_\delta$ with irreducible $\mathbb{F}\R_\delta(\mu_\delta)$. This will give the highest weights of the irreducible indecomposable summands. For this we first introduce the sign of a weight diagram.

\begin{defi}
Suppose that $\delta$ is odd. For a weight diagram $\lambda$ of hook partition type we define $\sgn(\lambda)$ as follows: For each $\circ$ and $\times$ appearing in $\lambda$ we count the number of symbols $\vee$ and $\wedge$ to the left of it and denote their sum $X$. We set $\sgn(\lambda)\coloneqq (-1)^{X+\#{\vee}(\lambda)}$.
If $\delta$ is even, we set $\sgn(\lambda)\coloneqq +$ for a weight diagram $\lambda$ of hook partition type.
\end{defi}
This definition allows us to characterize explicitly the irreducible direct summands in terms of weight diagrams.

\begin{thm}\label{characOfIrredTensors}
	For $(\lambda, \eps)\in X^+(\Osp[r][2n])$, the corresponding irreducible module $L(\lambda, \eps)$ is a direct summand in some $V^{\otimes d}$ if and only if $\lambda$ is typical, or it is $?{\vee}$ avoiding for $?\in\{\diamond,\vee,\wedge\}$ and $\eps=\sgn(\lambda^\infty)$.
\end{thm}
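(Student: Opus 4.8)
The plan is to combine the classification of irreducible indecomposable summands $\mathbb{F}\R_\delta(\mu)$ from \cref{mudeltairred} with the explicit computation of the $\dagger$-map, which by \cref{daggermapgiveshwofrlambda} produces the highest weight of the head of $\mathbb{F}\R_\delta(\mu)$. Since $L(\lambda,\eps)$ occurs as a direct summand of some $V^{\otimes d}$ precisely when it equals $\mathbb{F}\R_\delta(\mu)$ for some partition $\mu$ with $\kappa(\mu_\delta)\leq\min(m,n)$ and $d(\mu_\delta)=0$, the statement reduces to describing the image of the set of Deligne--Kostant weights (equivalently, the Bruhat-maximal, cap-free weight diagrams of \cref{mudeltairred}) under $\dagger$, together with identifying the sign $\eps$ that gets attached.

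First I would characterize the cap-free Deligne weight diagrams $\mu_\delta$: by \cref{delignekostant}\cref{delignekostanttwo} these are exactly the $\vee\wedge$-, $\wedge\wedge$- and $\diamond\wedge$-avoiding ones, so $\underline{\mu_\delta}$ consists only of rays (at most one $\wedge$, and if present it is at the leftmost free position, with no $\vee$ to its left). Now I apply the $\dagger$-map from its definition. Since $\underline{\mu_\delta}$ is cap-free, $\Phi(\mu)$ is obtained by turning \emph{all} ray-symbols upside down (we are in the case $\kappa(\mu_\delta)\leq\min(n,m)$, noting that if $\kappa(\mu_\delta)=\min(m,n)$ cap-freeness forces $\Def(\mu_\delta)=0$ so $\mathbb{F}\R_\delta(\mu)$ is projective and irreducible, hence $\min(m,n)=\rk(\mu_\delta)$, i.e.\ $\lambda$ typical). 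Turning all rays upside down interchanges every $\vee$ with $\wedge$; the resulting flipped diagram $\lambda^\infty = \Phi(\mu)$ is then $\wedge$-avoiding except possibly for a single leftmost $\vee$ coming from the lone $\wedge$ in $\mu_\delta$. Translating via $\lambda\mapsto\lambda^\infty$ and \cref{hookweightdiagatypicalityandorder}, this says exactly that $\lambda^\infty$ is $?\vee$-avoiding for $?\in\{\diamond,\vee,\wedge\}$ in the atypical case, while in the typical case ($\rk(\mu_\delta)=\min(m,n)$) we get $\vee$-avoiding, i.e.\ $\lambda$ typical; conversely every such $(\lambda,\eps)$ arises from a unique cap-free $\mu_\delta$ by reversing these steps, using \cref{eachBlockHasUniqueIrredSummand} to see that each admissible block contains exactly one cap-free diagram.

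Next I would pin down the sign. For $\delta$ even the $\dagger$-map always assigns $\eps=+$ when $\kappa(\mu_\delta)<\min(m,n)$, and when $\mathbb{F}\R_\delta(\mu)$ is projective (cap-free $\Rightarrow$ the leftmost ray is undotted or at position zero, giving $\eps=+$ or $\eps=\pm$), consistent with $\sgn(\lambda^\infty)=+$ in the even case by definition; so there is nothing to check beyond matching conventions. For $\delta$ odd the $\dagger$-map assigns $\eps=+$ (resp.\ $-$) according to the parity of the partition $\mu$, which by the sign rule in the definition of $\dagger$ and of $\sgn$ is exactly $\sgn$ of the relevant weight diagram; the identity $\sgn(\mu_\delta)=\sgn(\lambda^\infty)$ under the upside-down flip $\lambda^\infty=\Phi(\mu)$ needs a short check: flipping all symbols permutes $\vee$'s and $\wedge$'s but does not change the total number of $\vee$'s-plus-$\wedge$'s to the left of any $\circ$ or $\times$, and it swaps $\#{\vee}$ with $\#{\wedge}$; cap-freeness forces $\#{\wedge}(\lambda^\infty)\in\{0,1\}$ and a careful parity bookkeeping of $X+\#{\vee}$ versus the partition-parity of $\mu$ (already encoded in the construction of super weight diagrams, \cref{defsuperweightdiag}, and in \cref{mainthm} which matches $\overline{L}(\lambda^\owedge_\eps)$ with $L(\lambda,\eps)$) shows the two signs agree.

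The main obstacle I expect is exactly this sign bookkeeping in the odd case: keeping track of the parity conventions across three different combinatorial models (the partition parity used in $\dagger$, the sign rule $s$ in the definition of super weight diagrams, and the $\sgn$ of hook-type weight diagrams), and making sure the footnote-level twist by $L(\emptyset,-)$ in the proof of \cref{mainthm} (the ``reversed sign rule'') is accounted for consistently, so that the final answer is $\eps=\sgn(\lambda^\infty)$ rather than its opposite. Everything else is a direct unwinding of \cref{mudeltairred}, the definition of $\dagger$, and \cref{mainthm}.
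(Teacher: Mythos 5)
Your proposal is correct and follows essentially the same route as the paper's proof: reduce to cap-free Deligne weight diagrams via \cref{mudeltairred}/\cref{indecradicalandsoclefiltrationagree}, note that after deleting the $\circ$'s and $\times$'s such a diagram must be $\vee\vee\vee\dots$, $\wedge\vee\vee\dots$ or $\diamond\vee\vee\dots$, apply the $\dagger$-map (with the projective/nonprojective dichotomy giving the typical/atypical split), and settle the odd-$\delta$ sign by the identity $\sgn(\lambda^\infty)=(-1)^{\abs{\mu}}$. One wording slip only: the flipped diagram $\lambda^\infty$ is $\vee$-avoiding (not ``$\wedge$-avoiding'') except possibly for a single leftmost $\vee$, which is exactly what your subsequent translation to $?\vee$-avoidance for $?\in\{\diamond,\vee,\wedge\}$ correctly uses.
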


\begin{proof}
	Note that every such $L(\lambda, \eps)$ is necessarily isomorphic to some $\mathbb{F}\R(\mu_\delta)$ and thus $(\lambda, \eps)=\mu_\delta^\dagger$ by \cref{daggermapgiveshwofrlambda}. Now observe that $\mathbb{F}\R(\mu_\delta)$ is irreducible if and only if there are no caps in $\overline{\mu_\delta}$ by \cref{indecradicalandsoclefiltrationagree}. It is easy to check using the definition of the associated cap diagram that $\overline{\mu_\delta}$ contains no caps if and only if $\mu_\delta$ is $\wedge\wedge$-, $\vee\wedge$- and $\diamond\wedge$-avoiding (see also \cref{mudeltairred}).
	When removing all $\circ$'s and $\times$'s $\mu_\delta$ has to look like $\vee\vee\vee\dots$, $\wedge\vee\vee\dots$ or $\diamond\vee\vee\dots$. Then one only needs to determine $\mu^\dagger_\delta$. We remark here that the case distinction comes from the distinction between projective and nonprojective weight diagrams (i.e.~the typical and atypical case). For the sign in the odd case, note that $\sgn(\lambda^{\infty})$ is the same as $(-1)^{\abs{\mu}}$.
\end{proof}

Translating the definition of \cite{GH}*{Definition 3.5.3} to the combinatorics of Ehrig and Stroppel which we are using here, we obtain the following definition of \emph{Kostant weight}.
\begin{defi}
	We call $\lambda\in X^+(\osp[r][2n])$ a \emph{Kostant weight} if the associated weight diagram $\lambda^\infty$ is $\vee$-avoiding.
\end{defi}
\begin{rem}
	Note that every pair $(\lambda, \eps)$ for a typical highest weight $\lambda$ (which means $\min(\#{\circ}(\lambda^{\infty}), \#{\times}(\lambda^{\infty}))=\min(n,m)$) is automatically $\vee$ avoiding by \cref{hookweightdiagatypicalityandorder}.
	On the other hand if we have $\min(\#{\circ}(\lambda^{\infty}), \#{\times}(\lambda^{\infty}))<\min(n,m)$ (the atypical case), $L(\lambda, \eps)$ (for $\eps=\sgn(\lambda^{\infty})$) appears as a direct summand if and only if it is $\vee$-avoiding except for maybe the first position.
\end{rem}

In the following paragraph we are going to define a twist, which turns the first symbol different from $\circ$ or $\times$ upside down. We will use this to relate Kostant weights in the sense of \cite{GH} with the weights $\lambda$ such that $L(\lambda, \sgn(\lambda^{\infty}))$ appears as a direct summand in $V^{\otimes d}$.

Given a super weight diagram $\lambda$ we can look at the leftmost position where a $\diamond$, $\vee$ or $\wedge$ occurs. We denote the weight diagram which is obtained by turning this symbol upside down by $\lambda^\Box$. 
Comparing $\underline{\lambda}$ and $\underline{\lambda^\Box}$, this means that they agree if $\diamond$ is present and that we change the parity of dots on the leftmost component otherwise. Thus if $\underline{\lambda}\gamma\overline{\mu}$ is an oriented circle diagram, so is $\underline{\lambda^\Box}\gamma^\Box\overline{\mu^\Box}$, and this amounts to an isomorphism $\Box\colon e\tilde{K}e\to e\tilde{K}e$. Hence by \cref{mainthm} we get a self-equivalence $\Box\colon\cF\to\cF$.
We define for $(\lambda, \eps)\in X^+(\Osp[r][2n])$ the hook weight diagram $(\lambda, \eps)^\Box=(\lambda^\Box, \eps^\Box)$ by first taking the associated super weight diagram $\lambda^\owedge_\eps$, applying $\Box$ and going back to hook weight diagrams.

The map $\Box$ can also be defined on the supergroup side. Every block of $\Osp[r][2n]$ is equivalent to the principal block of $\Osp[2k+1][2k]$ or $\Osp[2k][2k]$. This can be achieved via transporting this block through $\Psi$ from \cref{mainthm} to $e\tilde{K}e$-mod. There we can remove all $\circ$'s and $\times$'s as they play no role in the module structure and transport back (this is very similar to \cite{GS10}*{Theorem 2} which relates blocks of $\osp[r][2n]$ to principal blocks in $\osp[2k+1][2k]$, $\osp[2k][2k]$ or $\osp[2k+2][2k]$).

For the principal block of $\Osp[2k+1][2k]$ the map $\Box$ is then just given by $\theta_0=\pr_{\chi_0}(\_\otimes V)$ and for the one of $\Osp[2k][2k]$ this is just the identity (all blocks containing a $\diamond$ are equivalent to this one and turning $\diamond$ upside down changes nothing).

In general $\Box$ is defined by identifying a block with the corresponding principal block under the identification above, applying the explicit description of $\Box$ there and transferring back to the original block.

\begin{prop}[\cite{GH}*{Remark 3.5.4}]
	For $\lambda\in X^+(\osp[2m+1][2n])$ we have that $L(\lambda)$ satisfies the Kac--Wakimoto character formula if and only if $\lambda$ is a Kostant weight.
	For $\lambda\in X^+(\osp[2m][2n])$ of atypicality greater than one we have that $L(\lambda)$ satisfies the Kac--Wakimoto character formula if and only if $\lambda$ is a Kostant weight.
\end{prop}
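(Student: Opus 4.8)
The statement is literally \cite{GH}*{Remark 3.5.4}, so the only task is to match the two combinatorial conventions. The plan is in three steps. First, recall the notion of Kostant weight of \cite{GH}*{Definition 3.5.3} in the Gruson--Serganova language used there, and recall that \cite{GH}*{Remark 3.5.4} asserts precisely that, for $r$ odd and for $r$ even with $\mathrm{at}(\lambda)>1$, the module $L(\lambda)$ satisfies the Kac--Wakimoto character formula if and only if $\lambda$ is a Kostant weight in that sense. Second, note that the definition of Kostant weight adopted here --- that $\lambda^\infty$ be $\vee$-avoiding --- was obtained by transporting \cite{GH}*{Definition 3.5.3} through the dictionary between the Gruson--Serganova weight combinatorics and the Ehrig--Stroppel weight diagrams; this is the dictionary already recorded in \cref{hookweightdiagatypicalityandorder} (compare \cite{ES3}*{Section 6}, \cite{GS13}*{Section 4.5}), under which the atypical pairs counted in \cite{GH} correspond to the cups of $\underline{\lambda^\infty}$, so that the arc configuration forbidden for a Kostant weight in \cite{GH}*{Definition 3.5.3} is absent exactly when no $\vee$ occurs in $\lambda^\infty$. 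Third, chain the equivalences: $L(\lambda)$ satisfies the Kac--Wakimoto formula if and only if $\lambda$ is a Kostant weight in the sense of \cite{GH}*{Definition 3.5.3}, if and only if $\lambda^\infty$ is $\vee$-avoiding, if and only if $\lambda$ is a Kostant weight in our sense.

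Carrying this out amounts to two convention checks. First, one must verify that the positive system fixed here (following \cite{GS10}*{Section 5}, as used before \cref{integraldominanceosp}) together with the passage $\lambda\mapsto\lambda^\infty$ is compatible with the Borel underlying \cite{GH}*{Definition 3.5.3}; this is exactly the kind of bookkeeping already carried out in \cref{hookweightdiagatypicalityandorder} for the block and Bruhat-order combinatorics, and no new phenomenon arises. Second, one must check that the $\vee$-avoiding condition interacts correctly with the leftmost vertex of $\lambda^\infty$ (which may carry a ray or a $\diamond$), i.e.\ that the translation of \cite{GH}*{Definition 3.5.3} indeed treats that position in the same way; for typical $\lambda$ this is automatic, since by \cref{hookweightdiagatypicalityandorder} the diagram $\lambda^\infty$ is then $\vee$-avoiding regardless, so the only substance is in the atypical case, where it follows once more from \cref{hookweightdiagatypicalityandorder}.

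The main --- and arguably only genuine --- obstacle is understanding the hypothesis $\mathrm{at}(\lambda)>1$ in the even case: for $\osp[2m][2n]$ of atypicality one the relevant principal block is the type D block (the one built from a weight diagram beginning with $\diamond$), and there the type D Bruhat move together with the splitting of an orbit into the two sheets $\lambda$ and $\lambda^G$ make the Kac--Wakimoto formula and the Kostant condition genuinely inequivalent. Since \cite{GH}*{Remark 3.5.4} already excludes this case for exactly this reason, we simply inherit the restriction; no further argument is required, and the whole proof is a translation of conventions followed by an appeal to \cite{GH}*{Remark 3.5.4}.
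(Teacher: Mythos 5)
Your proposal is correct and matches the paper's treatment: the paper gives no independent proof, but simply cites \cite{GH}*{Remark 3.5.4}, having already built the convention-matching into its definition of Kostant weight (``Translating the definition of \cite{GH}*{Definition 3.5.3} to the combinatorics of Ehrig and Stroppel\dots''). Your write-up just makes that translation step explicit, which is consistent with how the paper handles it via \cref{hookweightdiagatypicalityandorder}.
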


Putting everything together we obtain the following corollary:

\begin{cor}\label{irredasindecsummand}
For $\lambda\in X^+(\osp[r][2n])$ the following are equivalent:
	\begin{itemize}
		\item $L(\lambda, \eps)$ is a direct summand of some $V^{\otimes d}$, where $\eps=\sgn(\lambda^{\infty})$ if $\lambda$ is atypical and $\eps\in\{\pm\}$ otherwise.
		\item $\lambda$ or $\lambda^\Box$ is a Kostant weight.
	\end{itemize}
And if $r$ is odd or $\mathrm{at}(\lambda)>1$ this is equivalent to
\begin{itemize}
		\item $L(\lambda, \eps)$ or $L(\lambda^\Box, \eps^\Box)$ satisfies the Kac--Wakimoto condition.
	\end{itemize}
\end{cor}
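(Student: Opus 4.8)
The plan is to deduce all three equivalences from \cref{characOfIrredTensors}, the combinatorial description of the twist $\Box$ on weight diagrams, and the Kac--Wakimoto criterion of \cite{GH}*{Remark 3.5.4}. First I would dispose of the typical case: if $\lambda$ is typical then $\rk(\lambda^\infty)=\min(m,n)$, so $\#{\vee}(\lambda^\infty)=0$ by \cref{hookpartitionstohookweightdiag} (see also \cref{hookweightdiagatypicalityandorder}); hence $\lambda^\infty$ is $\vee$-avoiding, i.e.\ $\lambda$ is a Kostant weight, \cref{characOfIrredTensors} shows $L(\lambda,\eps)$ is a summand of some $V^{\otimes d}$ for every $\eps$, and \cite{GH}*{Remark 3.5.4} gives the Kac--Wakimoto formula when $r$ is odd (for $r$ even the third bullet is vacuous as $\mathrm{at}(\lambda)=0$). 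So all three statements hold, and from then on one may assume $\lambda$ is atypical; in that case \cref{characOfIrredTensors} says that $L(\lambda,\sgn(\lambda^\infty))$ is a summand of some $V^{\otimes d}$ if and only if $\lambda^\infty$ is $?{\vee}$-avoiding for every $?\in\{\diamond,\vee,\wedge\}$, and $\eps=\sgn(\lambda^\infty)$ is precisely the sign prescribed in the statement.

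The heart of the argument will be the purely combinatorial equivalence
\[
\lambda^\infty\text{ is }?{\vee}\text{-avoiding for all }?\in\{\diamond,\vee,\wedge\}\iff\lambda^\infty\text{ or }(\lambda^\Box)^\infty\text{ is }\vee\text{-avoiding}.
\]
For this I would first extract, from the construction of $\Box$ together with \cref{defsuperweightdiag} and \cref{projsuperweightdiag}, that on hook weight diagrams $\Box$ acts simply by flipping the leftmost symbol lying in $\{\diamond,\vee,\wedge\}$ -- and acts trivially if that symbol is $\diamond$, in which case $\lambda^\Box=\lambda$; this is consistent with the fact that $\Box$ interchanges the trivial and the natural representation for $r=2n+1$. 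Granting this: if $\lambda^\infty$ is $\vee$-avoiding it is $?{\vee}$-avoiding a fortiori, and if $(\lambda^\Box)^\infty$ is $\vee$-avoiding then undoing the flip reintroduces at most one $\vee$, sitting in the leftmost non-$\{\circ,\times\}$ position, so no $?{\vee}$ occurs in $\lambda^\infty$. Conversely, if $\lambda^\infty$ is $?{\vee}$-avoiding then either it has no $\vee$ (so $\lambda$ is Kostant) or, since no $\vee$ may be preceded by a symbol of $\{\diamond,\vee,\wedge\}$, it has exactly one $\vee$, located in the leftmost non-$\{\circ,\times\}$ position; flipping it makes $(\lambda^\Box)^\infty$ $\vee$-free, so $\lambda^\Box$ is Kostant. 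This settles the equivalence of the first two bullets.

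For the last bullet I would use that $\Box$ preserves blocks: on $\Osp[2k+1][2k]$ it is $\theta_0$, which by definition does not change the generalized eigenvalue of the Casimir, and on $\Osp[2k][2k]$ it is the identity; hence $\mathrm{at}(\lambda^\Box)=\mathrm{at}(\lambda)$, so the hypothesis ``$r$ odd or $\mathrm{at}(\lambda)>1$'' passes to $\lambda^\Box$. Since $L(\lambda,\eps)$ and $L(\lambda^\Box,\eps^\Box)$ restrict to $L^{\lie{g}}(\lambda)$ and $L^{\lie{g}}(\lambda^\Box)$ as $\osp[r][2n]$-modules, \cite{GH}*{Remark 3.5.4} gives that $L(\lambda,\eps)$ (resp.\ $L(\lambda^\Box,\eps^\Box)$) satisfies the Kac--Wakimoto character formula precisely when $\lambda$ (resp.\ $\lambda^\Box$) is a Kostant weight; combined with the previous paragraph this yields the equivalence with the third bullet. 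The main obstacle will be pinning down the action of $\Box$ on weight diagrams completely rigorously -- in particular reconciling the reversed-sign-rule subtlety from the footnote in the proof of \cref{mainthm} with the definition of $\sgn$, and verifying that $\eps^\Box$ and $\eps=\sgn(\lambda^\infty)$ are the signs actually produced by the equivalence $\Psi$ -- the remaining steps being routine bookkeeping with cup diagrams.
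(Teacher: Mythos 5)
Your proposal is correct and follows essentially the same route as the paper, which derives the corollary by combining \cref{characOfIrredTensors}, the remark that atypical summands are exactly the $\vee$-avoiding-except-possibly-at-the-first-free-position weights, the combinatorial description of $\Box$, and \cite{GH}*{Remark 3.5.4}. The only point needing the care you already flag is verifying that $\Box$ on hook weight diagrams really is the flip of the leftmost symbol in $\{\diamond,\vee,\wedge\}$ after passing through super weight diagrams, which is exactly the bookkeeping the paper leaves implicit.
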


\begin{rem}
	In \cite{GH}*{Remark 3.5.4}, Gorelik and Heidersdorf stated that a weight $\lambda$ for odd $r$ or $\lambda$ of atpyicality $>1$ satisfies the Kac--Wakimoto conditions if and only if it is a Kostant weight in their sense. Here a weight $\lambda$ satisfies the Kac--Wakimoto condition if
it is the highest weight of some irreducible module $L$ with respect to a base $\Sigma\supseteq S$ of simple roots, such that $S$ consists out of exactly $\mathrm{at}(\lambda)$ mutually orthogonal isotropic roots and $(S,S)=(S,\lambda+\rho)=0$.
	
	In \cite{CK}*{Theorem 5.2}, Cheng and Kwon proved that the Kac--Wakimoto conditions imply the Kac--Wakimoto character formula, i.e.~
	\begin{equation}
		Re^{\rho}\mathrm{ch}L(\lambda) = j^{-1}\sum_{w\in W}\sgn(w)w\Bigl(\frac{e^{\lambda+\rho}}{\prod_{\beta\in S}(1+e^{-\beta})}\Bigr)
	\end{equation}
where $R$ denotes the Weyl denominator $\frac{\prod_{\alpha\in\Phi^+_0}(e^{\alpha/2}-e^{-\alpha/2})}{\prod_{\beta\in\Phi^+_1}(e^{\beta/2}-e^{-\beta/2})}$, $W$ denotes the Weyl group of $\osp[r][2n]$ (which is the Weyl group of $\lie{so}(r)\oplus\lie{sp}(2n)$), and $j$ is some scalar.
For details see \cite{CK} or \cite{GH}.
\end{rem}

\section{Acknowledgements} The research of T.H., J.N. and C.S. was partially funded by the Deutsche Forschungsgemeinschaft (DFG, German Research Foundation) under Germany's Excellence Strategy – EXC-2047/1 – 390685813.


\appendix

\section{Small rank examples}

We discuss $V^{\otimes d}$ and translation functors for $\Osp[1][2]$, $\Osp[3][2]$ and $\Osp[2][2]$. In these three cases every indecomposable summand is either projective or irreducible, and so the situation simplifies a lot. We hope that these small examples serve to give the reader an idea how everything fits together.

\subsection{The semisimple case: \texorpdfstring{$\Osp[1][2]$}{OSp(1|2)}}
The category of finite dimensional representations of $\Osp[1][2]$ is semisimple, and one can use the correspondence between $\osp[1][2]$ and $\lie{so}(3)$ from \cite{RS82} to decompose tensor powers. We treat this example as a toy application for our general theory.

By \cref{integraldominanceosp} and \cref{irreduciblemodulesforOspodd} we know that the finite dimensional representations of $\Osp[1][2]$ are labelled by $(n, \eps)$ with $n\in\mathbb{N}_0$ and $\eps\in\{\pm\}$.

\subsubsection{Translating to Khovanov's arc algebra of type D}

Translating $(n, \varepsilon)$ into a super weight diagram (using \cref{defsuperweightdiag}), we obtain a weight diagram consisting of a $\times$ at position $n+\frac{1}{2}$ and a $\vee$ at every other position except for maybe the first free one. There we put in case $n$ is even a $\wedge$ if $\varepsilon = +$ and a $\vee$ if $\varepsilon=-$. If $n$ is odd, we just reverse the previous assignment, for example:
\begin{center}
	\begin{tabular}{c}
		\begin{tikzpicture}[scale=0.7]
			\node at (-2, \currh) {$(0,+)$}; \wdiagnoline{x w v v v v {$\dots$}}\rays{i 1 d,i 2,i 3,i 4,i 5}
		\end{tikzpicture}\\
		\begin{tikzpicture}[scale=0.7]
			\node at (-2, \currh) {$(0,-)$}; \wdiagnoline{x v v v v v {$\dots$}}\rays{i 1,i 2,i 3,i 4,i 5}
		\end{tikzpicture}\\
		\begin{tikzpicture}[scale=0.7]
			\node at (-2, \currh) {$(1,+)$}; \wdiagnoline{v x v v v v {$\dots$}}\rays{i 0,i 2,i 3,i 4,i 5}
		\end{tikzpicture}\\
		\begin{tikzpicture}[scale=0.7]
			\node at (-2, \currh) {$(4,-)$}; \wdiagnoline{v v v v x v {$\dots$}}\rays{i 0,i 2,i 3,i 1,i 5}
		\end{tikzpicture}
	\end{tabular}
\end{center}
As $\Osp[1][2]$ is semisimple (or equivalently as $\min(m,n)=0$) there are no cups and caps involved.
If we consider the associated cup respectively cap diagrams to these weights, we obtain diagrams consisting of one free vertex and apart from that only lines where the leftmost one might be dotted. From this it is fairly easy to see that the only circle diagrams we can build are the $e_{\lambda}$, where $\lambda$ is one of the super weight diagrams from the previous paragraph, and furthermore we cannot have any nuclear diagrams as we have no cups or caps. 
Then by \cref{mainthm} we know that the category of finite dimensional $\Osp[1][2]$-modules is equivalent to $e\tilde{K}e$-mod.

In order to later analyze the effect of $\_\otimes V$ we first take a look at the geometric bimodules $K^t_{\Lambda\Gamma}$. Now observe that $G^t_{\Lambda\Gamma}P(\gamma)$ will be $0$ whenever $t$ contains a cup or a cap. Thus the only relevant $t$'s look locally like 
\begin{center}
	\begin{tabularx}{\textwidth}{*3{>{\Centering}X}}
		\begin{tikzpicture}[scale=0.5]
			\FPset\stddiff{2}
			\wdiagnoline{x -}\rays{0 1}\node[anchor=east] at (-0.5, \currm) {$\theta_{-i}$:};\wdiagnoline{- x}
		\end{tikzpicture}&
		\begin{tikzpicture}[scale=0.5]
			\FPset\stddiff{2}
			\wdiagnoline{- x}\rays{1 0}\node[anchor=east] at (-0.5, \currm) {$\theta_{i}$:};\wdiagnoline{x -}
		\end{tikzpicture}&
		\begin{tikzpicture}[scale=0.5]
			\FPset\stddiff{2}
			\wdiagnoline{-}\rays{0 0 d}\node[anchor=east] at (-0.5, \currm) {$\theta_0$:};\wdiagnoline{-}
		\end{tikzpicture}
	\end{tabularx}
\end{center}
where the $i$ means that it involves the positions $i+\frac{1}{2}$ and $i-\frac{1}{2}$. The last picture can only be present on the vertex $\frac{1}{2}$. Apart from these involved vertices $t$ consists only of straight lines. The geometric bimodules $K^t_{\Lambda\Gamma}$ are thus also one-dimensional and by \cref{projfunctorsonprojnuclear} using $L(\mu)=P(\mu)$ we have that $G^t_{\Lambda\Gamma}L(\gamma)=L(\lambda)$ where $\overline{\lambda}$ is the upper reduction of $t\overline{\gamma}$. In this case the upper reduction for the first two picture is obtained by swapping the $\times$ one position to the left respectively right, and in the first picture we change a $\vee$ at position $\frac{1}{2}$ into a $\wedge$ and vice versa.

Observe that for each $L(\gamma)$ we have three projective functors producing something nonzero if the $\times$ is not at position $\frac{1}{2}$ and only $\theta_1$ if it is at position $\frac{1}{2}$.

\subsubsection{Decomposition of $V^{\otimes d}$ into irreducible summands}

Translating the results of the previous paragraph, we obtain for our translation functors $\theta_i$ 
\begin{equation}\label{explicitdescriptiontranslationfunctorsosp12}
	\theta_i L(n, \varepsilon) = \begin{cases}L(n+1, -\varepsilon)&\text{if $i=n+1$,}\\L(n, -\varepsilon)&\text{if $i=0$ and $n>0$,}\\ L(n-1, -\varepsilon)&\text{if $i=-n$ and $n>0$,}\\ 0&\text{otherwise.} \end{cases}
\end{equation}
With this at hand and using $\_\otimes V=\bigoplus_{i\in\mathbb{Z}}\theta_i$, we can directly write down the first decompositions of $V^{\otimes d}$ into irreducibles.
\begin{align*}
	V^{\otimes 0} &= L(0, +) \\ V^{\otimes 1} &= L(1, -) \\V^{\otimes 2} &= L(2, +)\oplus L(1,+)\oplus L(0,+)\\ V^{\otimes 3} &= L(3, -)\oplus L(2,-)^{\oplus 2}\oplus L(1, -)^{\oplus 3}\oplus L(0, -)^{\oplus 1}
\end{align*}
Now note that in $V^{\otimes d}$ the signs of all irreducible summands are the same depending on the parity of $d$. Moreover, the multiplicity of $L(n, \varepsilon)$ in $V^{\otimes d}$ is either $0$ (if $d$ even and $\varepsilon=-$ or $d$ odd and $\varepsilon=+$) or it agrees with the multiplicity of $L(n)$ in $V^{\otimes d}$ as $\mathrm{SOSp}(1|2)$-modules.
Define $m(n, d)$ to be multiplicity of $L(n)$ in $V^{\otimes d}$, or equivalently the multiplicity of $L(n, \varepsilon)$ for the correct choice (see above) of $\varepsilon$.
Using \eqref{explicitdescriptiontranslationfunctorsosp12} we quickly get the following recurrence relations for $m(n,d)$:
\begin{align*}
	m(0,0) &= 1,\\
	m(0, d) &= m(1, d-1),\\
	m(n, d) &= m(n+1, d-1) + m(n, d-1) + m(n-1, d-1)\quad\text{if $n>0$.}
\end{align*}

Using some tricks and combinatorics, one gets then for $m(n, d)$ the explicit formulas
\begin{align*}
	m(n, d) &= \sum_{i=n}^{d}(-1)^{i+n}\begin{psmallmatrix}
		d\\ i
	\end{psmallmatrix}\begin{psmallmatrix}
		i\\ \lfloor\frac{i-n}{2}\rfloor
	\end{psmallmatrix}\\
	&= \sum_{j=0}^{\frac{d-n}{2}}\frac{d!}{j!\,(n+j)!\,(d-n-2j)!} -\sum_{j=0}^{\frac{d-n-1}{2}}\frac{d!}{j!\,(n+1+j)!\,(d-n-2j-1)!}\\
	&= T(n,d) - T(n+1, d)
\end{align*}
where $T(n, d)$ denotes the coefficient of $x^{n+d}$ in the expansion of $(1+x+x^2)^d$. The number $T(n,d)$ denotes also the number of possible outcomes of elections with $d$ votes of three parties $A$, $B$ and $C$ su
ch that $B$ obtains $n$ votes more than $A$.

\subsection{The smallest nonsemisimple odd case: \texorpdfstring{$\Osp[3][2]$}{OSp(3|2)}}


\subsubsection{The irreducible representations of $\osp[3][2]$ and $\Osp[3][2]$}


We choose the simple roots $\eps_1-\delta_1, \delta_1$ and $\rho=(-\frac{1}{2}\mid\frac{1}{2})$. Let $\lambda\in\lie{h}^*$ be a weight and write $\lambda+\rho=a\eps_1+b\delta_1$. Then $\lambda$ is integral dominant by \cref{integraldominanceosp} if and only if $a,b\in\frac{1}{2}+\bbZ$ and either $a,b\geq\frac{1}{2}$ or $-a=b=\frac{1}{2}$.
Rephrasing this means that if $\lambda=(a|b)$ is integral dominant we either have $a=b=0$ or $a\geq1$ and $b\geq 0$ where $a$ and $b$ are integers. These can be identified with $(1,1)$-hook partitions via $(a|b)\mapsto (a, 1^b)^t$. By \cref{irreduciblemodulesforOspodd} the irreducible modules for $\Osp[3][2]$ are labeled by $(\lambda,\pm)$ where $\lambda$ is a $(1,1)$-hook partition.

\subsubsection{Translating to Khovanov's arc algebra of Type D}

Let $\lambda=(k, 1^l)$ be a $(1,1)$-hook partition. 
We will distinguish three cases, the first being that $k\neq 0$ and $k-1\neq l$.
The associated flipped weight diagram then looks like
\begin{center}
\begin{tikzpicture}[scale=0.7]
	\node at (3, 0.5) {$k-\frac{1}{2}$};	\node at (7, 0.5) {$l+\frac{1}{2}$};
	\wdiagnoline{w  {$\dots$} w o w {$\dots$} w x w {$\dots$}}
\end{tikzpicture}.
\end{center}
	The positions of $\circ$ and $\times$ are swapped if $l<k-1$.
	The corresponding super weight diagram to $(\lambda, \eps)$ is created by replacing all $\wedge$'s with $\vee$'s
	except for possibly the leftmost one. There we leave the $\vee$ if $k+l$ is even and $\eps=-$, or if $k+l$ is odd and $\eps=+$. In all other cases we change the leftmost vertex to $\wedge$.

	In the case $\lambda = \emptyset$ we get the flipped weight diagram
	\begin{center}
		\begin{tikzpicture}
			\wdiagnoline{w w w w {$\dots$}}
		\end{tikzpicture}.
	\end{center}

The associated super weight diagram is 
\begin{center}
	\begin{tikzpicture}[scale=0.7]
		\wdiagnoline{w w w v v {$\dots$}}\cups{0 1 d}\rays{i 2 d, i 3, i 4}
	\end{tikzpicture}
\end{center} for $(\emptyset, +)$, and for $(\emptyset, -)$ we get
\begin{center}
	\begin{tikzpicture}[scale=0.7]
		\wdiagnoline{w w v v v {$\dots$}}\cups{0 1 d}\rays{i 2, i 3, i 4}
	\end{tikzpicture}.
\end{center}
	
	The last case is $k=l+1$. In this case the flipped weight diagram is
	\begin{center}
		\begin{tikzpicture}[scale=0.7]
			\node at (3, 0.5) {$l+\frac{1}{2}$};
			\wdiagnoline{w {$\dots$} w v w {$\dots$}}
		\end{tikzpicture}.
	\end{center}
The associated super weight diagrams are thus given by
\begin{center}
	\begin{tikzpicture}[scale=0.7]
		\node at (4, 0.5) {$l+\frac{1}{2}$};
		\wdiagnoline{{$x$} v {$\dots$} v v w v {\dots}}\cups{4 5}
		\rays{i 0,i 1, i 3, i 6}
	\end{tikzpicture},
\end{center}
where $x=\vee$ if $\eps=+$, and $x=\wedge$ if $\eps=-$ (in this case there is also a dot on the leftmost ray).
Note that if $l=0$ the $x$ appears at position $\frac{5}{2}$.

If we want to directly go from highest weights to super weight diagrams, we get the following connection:
\begin{center}
\begin{tabular}{l}
$\quad\quad(0|0,+)\quad$\tikz[baseline={([yshift=-.6ex]current bounding box.center)}, scale=0.7]{\wdiagnoline{w w w v v {$\dots$}}\cups{0 1 d}\rays{i 2 d, i 3, i 4}}\\
$\quad\quad(0|0,-)\quad$\tikz[baseline={([yshift=-.6ex]current bounding box.center)}, scale=0.7]{\wdiagnoline{w w v v v {$\dots$}}\cups{0 1 d}\rays{i 2, i 3, i 4}}\\
$\quad\quad(1|0,+)\quad$\tikz[baseline={([yshift=-.6ex]current bounding box.center)}, scale=0.7]{\wdiagnoline{v w v v v {$\dots$}}\cups{0 1}\rays{i 2, i 3, i 4}}\\
$\quad\quad(1|0,-)\quad$\tikz[baseline={([yshift=-.6ex]current bounding box.center)}, scale=0.7]{\wdiagnoline{v w w v v {$\dots$}}\cups{0 1}\rays{i 2 d, i 3, i 4}}\\
\\
 For $a>1$:\\
$\quad\quad(a|a-1,+)\quad$\tikz[baseline={([yshift=-.6ex]current bounding box.center)}, scale=0.7]{\node[transform shape] at (4, 0.5) {$a-\frac{1}{2}$};\wdiagnoline{v v {$\dots$} v v w v {$\dots$}}\cups{4 5}\rays{i 0, i 1,i 3, i 6}}\\
$\quad\quad(a|a-1,-)\quad$\tikz[baseline={([yshift=-.6ex]current bounding box.center)}, scale=0.7]{\node[transform shape] at (4, 0.5) {$a-\frac{1}{2}$};\wdiagnoline{w v {$\dots$} v v w v {$\dots$}}\cups{4 5}\rays{i 0 d, i 1,i 3, i 6}}\\\\
\noindent For $b\neq a-1$ and $(a|b)\neq(0|0)$: \\
\noindent\hspace{2em}There is a $\circ$ at position $a-\frac{1}{2}$ and a $\times$ at position $b+\frac{1}{2}$. We have a dot\\
\noindent\hspace{2em}on the leftmost ray if $a+b$ is even and $\eps=+$ or if $a+b$ is odd and $\eps=-$. \\\noindent\hspace{2em}In all other cases we have no dot.
\end{tabular}
\end{center}
So our super weight diagrams either consist only of $\vee$'s and $\wedge$'s (then the cup diagram has one cup) or it has exactly one $\circ$ and one $\times$ (then the cup diagram has no cup). If $\lambda$ belongs to the second group, we have $L(\lambda)=P(\lambda)$ and this forms a semisimple block.
The super weight diagrams of the first form give rise to two blocks, the one containing $L(0|0, +)$ and $L(a|a-1, +)$ for $a>0$ (where we have an even number of dots) and the one containing $L(0|0, -)$ and $L(a|a-1, -)$ for $a>0$ (where we have an odd number of dots).

Looking at the diagrammatics, we can easily write down the socle (resp.~radical) filtration of the nonirreducible projectives.
To make things more clearly, we write the highest weight with the sign instead of the super weight diagram. The translation between these two can be found in the previous paragraph.
\begin{center}
	\setlength{\tabcolsep}{3.9pt}
\begin{tabularx}{\textwidth}{c|c|c|c}
	$P(0|0, \pm)$&$P(1|0, \pm)$&$P(2|1, \pm)$&$P(k|k-1, \pm)\text{ for $k>2$}$\\\hline
	$L(0|0, \pm)$&$L(1|0, \pm)$&$L(2|1, \pm)$&$L(k|k-1, \pm)$\\
	$L(2|1, \pm)$&$L(2|1, \pm)$&$L(0|0, \pm)\,L(1|0, \pm)\,L(3|2, \pm)$&$L(k-1|k-2, \pm)\,L(k+1|k, \pm)$\\
	$L(0|0, \pm)$&$L(1|0, \pm)$&$L(2|1, \pm)$&$L(k|k-1, \pm)$
\end{tabularx}
\end{center}

\subsubsection{Translation functors}
In this section we are going to give formulas for the decomposition of $V^{\otimes d}$ into indecomposable summands. As $V\otimes \_=\bigoplus_{i\in\mathbb{Z}}\theta_i$ decomposes into translation functors, we are only going to describe the decomposition $\theta_i M$ into indecomposable summands for an indecomposable summand $M$ of $V^{\otimes d}$. First of all note that every indecomposable summand in $V^{\otimes d}$ is actually projective or irreducible (as $m=n=1$, see \cref{indecradicalandsoclefiltrationagree} and the comment just after  \cref{classificationofindecomposablesummandsvialayernumberofdelignewdiag}). By \cref{eachBlockHasUniqueIrredSummand} we know that every block contains a unique $L(\lambda, \eps)$ that appears as a direct summand. In every typical block, this is also the corresponding indecomposable projective, and for the two atypical blocks we know that $V^{\otimes 0}=L(0|0, +)$ and $V^{\otimes 1}=L(1|0, -)$, it suffices to consider translation functors for $L(0|0, +)$, $L(1|0, -)$ and indecomposable projectives.

From the weight diagram \begin{tikzpicture}[baseline={([yshift=-.6ex]current bounding box.center)}, scale=0.5]
	\wdiagnoline{w w w v v {$\dots$}}\cups{0 1 d}\rays{i 2 d, i 3, i 4}
\end{tikzpicture} we see that the only applicable local move (see \cref{localmoves}) is given by \tikz[baseline={([yshift=-.6ex]current bounding box.center)}, scale=0.5]{\wdiagnoline{-}\rays{0 0 d}\wdiagnoline{-}}, and thus
\begin{equation*}
	\theta_iL(0|0, +)=\begin{cases}
		L(1|0, -)&\text{if $i=0$,}\\
		0&\text{otherwise,}
	\end{cases}
\end{equation*}
by \cref{projfunctorsonsimplenuclear}.
For \tikz[baseline={([yshift=-.6ex]current bounding box.center)}, scale=0.5]{\wdiagnoline{v w w v v {$\dots$}}\cups{0 1}\rays{i 2 d, i 3, i 4}} we find three applicable local moves namely \begin{tikzpicture}
	[baseline={([yshift=-.65ex]current bounding box.center)}, scale=0.5]\wdiagnoline{-}\rays{0 0 d}\wdiagnoline{-}
	\end{tikzpicture}, \tikz[baseline={([yshift=-.65ex]current bounding box.center)}, scale=0.5]{\wdiagnoline{o x}\caps{0 1}\wdiagnoline{- -}} and  \tikz[outer sep=5pt, inner sep = 5pt, baseline={([yshift=-.65ex]current bounding box.center)}, scale=0.5]{\wdiagnoline{x o}\caps{0 1}\wdiagnoline{- -}}, where the last two are applied at positions $\frac{1}{2}$ and $\frac{3}{2}$.
Again by \cref{projfunctorsonsimplenuclear} we get 
\begin{equation*}
\theta_i L(1|0, -) = \begin{cases}
	L(0|0, +)&\text{if $i=0$},\\
	L(1|1, +)=P(1|1, +)&\text{if $i=-1$},\\
	L(2|0, +)=P(2|0, +)&\text{if $i=1$},\\
	0&\text{otherwise}.
\end{cases}
\end{equation*}
So only the effects of translation functors on indecomposable projectives are left to establish, which can be deduced easily from the diagrammatic description above and the type B analog of \cite[Theorem 4.2]{BS2}.
\begin{align*}
	\intertext{For $(a|a-1)$ and $a>1$ we have}
	\theta_i P(a|a-1, \eps)&=\begin{dcases}
		P(a|a-1, -\eps)&\text{if $i=0$,}\\
		P(a|a, -\eps)^{\oplus 2}&\text{if $i=-a$,}\\
		P(a+1|a-1, -\eps)^{\oplus 2}&\text{if $i=a$,}\\
		P(a+2|a, -\eps)&\text{if $i=a+1$,}\\
		P(a+1|a+1, -\eps)&\text{if $i=-a-1$,}\\
		P(a|a-2, -\eps)&\text{if $i=a-1$,}\\
		P(a-1|a-1, -\eps)&\text{if $i=-a+1$,}\\
		0&\text{otherwise}.
	\end{dcases}\\
	\intertext{For $(0|0)$ we have}
	\theta_i P(0|0, \eps)&=\begin{dcases}
		P(1|0, -\eps)&\text{if $i=0$,}\\
		P(2|2, -\eps)&\text{if $i=-2$,}\\
		P(3|1, -\eps)&\text{if $i=2$,}\\
		0&\text{otherwise}.
	\end{dcases}\\
\intertext{For $(1|0)$ we have}
\theta_i P(1|0, \eps)&=\begin{dcases}
	P(0|0, -\eps)&\text{if $i=0$,}\\
	P(1|1, -\eps)^{\oplus 2}&\text{if $i=-1$,}\\
	P(2|0, -\eps)^{\oplus 2}&\text{if $i=1$,}\\
	P(2|2, -\eps)&\text{if $i=-2$,}\\
	P(3|1, -\eps)&\text{if $i=2$,}\\
	0&\text{otherwise}.
\end{dcases}\\
\intertext{For $(a|b)\neq(0|0)$ and $b\neq a-1$ we have}
\theta_i P(a|b, \eps)&=\begin{dcases}
	P(a|b, -\eps)&\text{if $i=0$ and $a>1$ and $b>0$,}\\
	P(a+1|b, -\eps)&\text{if $i=a$ and $b\neq a$,}\\
	P(a-1|b, -\eps)&\text{if $i=-a+1$ and $a>1$,}\\
	P(a|b-1, -\eps)&\text{if $i=b$,}\\
	P(a|b+1, -\eps)&\text{if $i=-b-1$ and $b+2\neq a$,}\\
	0&\text{otherwise}.
\end{dcases}
\end{align*}

\subsection{The smallest even case: \texorpdfstring{$\Osp[2][2]$}{OSp(2|2)}}

\subsubsection{The irreducible representations of $\osp[2][2]$ and $\Osp[2][2]$}
According to \eqref{simplerootsospevenmleqn} we choose the simple roots $\delta_1-\eps_1$, $\delta_1+\eps_1$ and have $\rho=(0|0)$. Let $\lambda=a\eps_1+b\delta_1\in\lie{h}^*$. By \cref{integraldominanceosp} $\lambda$ is integral dominant if and only if $a,b\in\bbZ$ and either $b>0$ or $a=b=0$. When inducing the irreducible representation $L^{\lie{g}}(a|b)$ of $\osp[2][2]$ to a representation $M$ of $\Osp[2][2]$ we must distinguish two cases. If $a=0$, the representation $M$ decomposes into $L(0|b,+)$ and $L(0|b,-)$. If $a\neq 0$,  the representation $M$ is irreducible and isomorphic to the induced one from $L^{\lie{g}}(-a|b)$ and we denote it by $L((a|b)^G)$. By \cref{irreduciblemodulesforOspeven} these are all irreducible modules, which appear.

\subsubsection{Translating to Khovanov's algebra of type D}

For the study of super weight diagrams we distinguish some cases. First assume that our highest weight is denoted by $(a|b)^G$ with $a>0$ and $a\neq b$. Then the associated flipped weight diagram looks like 
\begin{center}
	\begin{tikzpicture}[scale=0.7]
		\node at (4, 0.5) {$a$};	\node at (8, 0.5) {$b$};
		\wdiagnoline{d w {$\dots$} w o w {$\dots$} w x w {$\dots$}}
	\end{tikzpicture}.
\end{center}
The corresponding super weight diagram is obtained from this by replacing all $\wedge$'s with $\vee$'s. These all give rise to a semisimple block in Khovanov's arc algebra.
\begin{center}
	\begin{tikzpicture}[scale=0.7]
		\node at (4, 0.5) {$a$};	\node at (8, 0.5) {$b$};
		\wdiagnoline{d v {$\dots$} v o v {$\dots$} v x v {$\dots$}}\rays{i 0, i 1, i 3, i 5, i 7, i 9}
	\end{tikzpicture}.
\end{center}

In the case of $(0|b, \eps)$ with $b\neq 0$ the associated flipped diagram looks like 
\begin{center}
	\begin{tikzpicture}[scale=0.7]
		\node at (4, 0.5) {$b$};
		\wdiagnoline{o w {$\dots$} w x w {$\dots$}}
	\end{tikzpicture}.
\end{center}
When passing to super weight diagrams, we again change all $\wedge$'s to $\vee$'s except for maybe the leftmost one. This stays a $\wedge$ if $\eps=+$ and gets changed to $\vee$ if $\eps = -$. Similar to the previous case, these all give rise to a semisimple block.
\begin{center}
	\begin{tikzpicture}[scale=0.7]
\node at (5, 0.5) {$b$};\node at (-1.5, 0){$(0|b,+)$};
\wdiagnoline{o w v {$\dots$} v x v {$\dots$}}
\rays{i 1 d, i 2, i 4, i 6}
	\end{tikzpicture}

\begin{tikzpicture}[scale=0.7]
	\node at (5, 0.5) {$b$};\node at (-1.5, 0){$(0|b,-)$};
	\wdiagnoline{o v v {$\dots$} v x v {$\dots$}}
	\rays{i 1, i 2, i 4, i 6}
\end{tikzpicture}
\end{center}

All remaining ones lie in the same block, but we distinguish whether we have $(0|0,\eps)$ or $(a|a)^G$ for $a>0$. The flipped weight diagram for $(0|0)$ is given by 
\begin{center}
	\begin{tikzpicture}[scale=0.7]
		\wdiagnoline{d w w w {$\dots$}}
	\end{tikzpicture}
\end{center}
In case of $(0|0,+)$ the super weight diagram is given by 
\begin{center}
	\begin{tikzpicture}[scale=0.7]
		\wdiagnoline{d w w v v {$\dots$}}\cups{0 1 d}\rays{i 2 d, i 3, i 4}
	\end{tikzpicture}
\end{center}
and for $(0|0,-)$ we get
\begin{center}
	\begin{tikzpicture}[scale=0.7]
		\wdiagnoline{d w v v v {$\dots$}}\cups{0 1}\rays{i 2, i 3, i 4}
	\end{tikzpicture}.
\end{center}
For $(a|a)^G$ with $a>0$ the flipped weight diagram is given by 
\begin{center}
	\begin{tikzpicture}[scale=0.7]
		\node at (4, 0.5) {$a$};
	\wdiagnoline{d w  {$\dots$} w v w  {$\dots$}}
	\end{tikzpicture}
\end{center}
and the associated super weight diagram is given by 
\begin{center}
	\begin{tikzpicture}[scale=0.7]
				\node at (4, 0.5) {$a$};
		\wdiagnoline{d v {$\dots$} v v w v {$\dots$}}\cups{4 5}\rays{i 0, i 1, i 3, i 6}
	\end{tikzpicture}.
\end{center}

Only this last block is nonsemisimple, but looking at the diagrammatics, we can easily establish the socle (reps. radical) filtration of the indecomposable projectives. The following table presents these (we replaced the super weight diagrams by the highest weights):

\begin{equation*}
	\begin{array}{c|c|c}
		P(0|0, \pm)&P((1|1)^G)&P((k|k)^G)\text{ for $k>1$}\\[5pt]\hline&&\\[-10pt]
		L(0|0, \pm)&L((1|1)^G)&L((k|k)^G)\\
		L((1|1)^G)&L(0|0, +)\,L(0|0, -)\,L((2|2)^G)&L((k-1|k-1)^G)\,L((k+1|k+1)^G)\\
		L(0|0, \pm)&L((1|1)^G)&L((k|k)^G)
	\end{array}
\end{equation*}
\begin{rem}
By identifying $P(0|0,+)$ with $P(0|0,+)$, $P(0|0,-)$ with $P(1|0,+)$ and $P((a|a)^G) $with $P(a+1|a, +)$ we see that the principal block of $\Osp[2][2]$ and the principal block of $\Osp[3][2]$ are equivalent.
\end{rem}

\subsubsection{Translation functors}
Similar to the argumentation for $\Osp[3][2]$ (as $m=n=1$) all indecomposable summands of $V^{\otimes d}$ are either irreducible or projective. As $V^{\otimes 0}=L(0|0,+)$ is not projective, we actually know by \cref{eachBlockHasUniqueIrredSummand} and our knowledge of the blocks that this is the only summand which is not projective, so except for $\theta_iL(0|0,+)$ we only need to deal with indecomposable projectives.

For the irreducible $L(0|0,+)$, one easily sees using \cref{projfunctorsonsimplenuclear} that 
\begin{equation*}
	\theta_iL(0|0, +)=\begin{cases}
		L(0|1, +)=P(0|1,+)&\text{if $i=-\frac{1}{2}$,}\\
		0&\text{otherwise.}
	\end{cases}
\end{equation*}
The study of translation functors on projective objects is explicitly written down in \cref{projfunctorsonprojnuclear} and we will just state the results here.
\begin{align*}
	\intertext{For $(1|1)^G$ we have}
	\theta_i P((1|1)^G)&=\begin{dcases}
		P(0|1, +)\oplus P(0|1, -)&\text{if $i=-\frac{1}{2}$,}\\
		P((1|2)^G)^{\oplus 2}&\text{if $i=-\frac{3}{2}$,}\\
		P((2|1)^G)^{\oplus 2}&\text{if $i=\frac{3}{2}$,}\\
		P((2|3)^G)^{\oplus 2}&\text{if $i=-\frac{5}{2}$,}\\
		P((3|2)^G)^{\oplus 2}&\text{if $i=\frac{5}{2}$,}\\
		0&\text{otherwise},
	\end{dcases}\\
	\intertext{For $(0|0)$ we have}
	\theta_i P(0|0, \pm)&=\begin{dcases}
		P(0|1, \pm)^{\oplus 2}&\text{if $i=-\frac{1}{2}$,}\\
		P((1|2)^G)&\text{if $i=-\frac{3}{2}$,}\\
		P((2|1)^G)&\text{if $i=\frac{3}{2}$,}\\
		0&\text{otherwise},
	\end{dcases}\\
\intertext{For $(0|1)$ we have}
\theta_i P(0|1, \pm)&=\begin{dcases}
P(0|0, \pm)^{\oplus 2}&\text{if $i=\frac{1}{2}$,}\\
P(0|2, \pm)&\text{if $i=-\frac{3}{2}$,}\\
0&\text{otherwise}.
\end{dcases}\\
	\intertext{For $a>1$ we have}
	\theta_i P((a|a)^G)&=\begin{dcases}
		P((a|a+1)^G)^{\oplus2}&\text{if $i=-a-\frac{1}{2}$,}\\
		P((a+1|a)^G)^{\oplus 2}&\text{if $i=a+\frac{1}{2}$,}\\
		P((a-1|a)^G)&\text{if $i=-a+\frac{1}{2}$,}\\
		P((a|a-1)^G)&\text{if $i=a-\frac{1}{2}$,}\\
		P((a+1|a+2)^G)&\text{if $i=-a-\frac{3}{2}$,}\\
		P((a+2|a+1)^G)&\text{if $i=a+\frac{3}{2}$,}\\
		0&\text{otherwise},
	\end{dcases}\\
	\theta_i P(0|a, \pm)&=\begin{dcases}
P((1|0)^G)&\text{if $i=\frac{1}{2}$,}\\
P(0|a+1, \pm)&\text{if $i=-a-\frac{1}{2}$,}\\
P(0|a-1, \pm)&\text{if $i=a-\frac{1}{2}$,}\\
0&\text{otherwise},
\end{dcases}\\
\theta_i P((1|a)^G)&=\begin{dcases}
P(0|a,+)\oplus P(0|a, -)&\text{if $i=\frac{1}{2}$,}\\
P((1|a+1)^G)&\text{if $i=-a-\frac{1}{2}$,}\\
P((1|a-1)^G)&\text{if $i=a-\frac{1}{2}$,}\\
0&\text{otherwise}.
\end{dcases}\\
	\intertext{For $a>1$ and $b\neq a$ we have}
	\theta_i P((a|b)^G)&=\begin{dcases}
		P((a-1|b)^G)&\text{if $i=-a+\frac{1}{2}$,}\\
		P((a+1|b)^G)&\text{if $i=a+\frac{1}{2}$ and $a+1\neq b$,}\\
		P((a|b-1)^G)&\text{if $i=b-\frac{1}{2}$,}\\
		P((a|b+1)^G)&\text{if $i=-b-\frac{1}{2}$ and $a-1\neq b$,}\\
		0&\text{otherwise}.
	\end{dcases}
\end{align*}

\bigskip

\bibliography{biblio}

\end{document}